\numberwithin{equation}{section}
\theoremstyle{plain}
\newtheorem{thm}{Theorem}[section]
\newtheorem{lemma}{Lemma}[section]
\newtheorem{corollary}{Corollary}[section]
\theoremstyle{definition}
\newtheorem*{Remarks}{Remarks}
\newtheorem*{Remark}{Remark}
\newcommand{\ra}{\rightarrow}
\newcommand{\ZA}{\mathcal{Z}}
\newcommand{\A}{\mathcal{A}}
\newcommand{\MA}{\mathcal{M}}
\newcommand{\EA}{\mathcal{E}}
\newcommand{\XA}{\mathcal{X}}
\begin{document}

\begin{frontmatter}
\title{Asymptotics of the invariant measure in mean field models with jumps\protect\thanksref{T1}}
\runtitle{Invariant measure in mean field models}
\thankstext{T1}{A part of this paper without proofs was presented as an invited paper at the 2011 Annual Allerton Conference on Communication, Control, and Computing, Allerton, IL, USA, September 2011.}

\begin{aug}
\author[a]{\fnms{Vivek S.} \snm{Borkar}\thanksref{t1}\ead[label=e1]{borkar.vs@gmail.com}}
\and
\author[b]{\fnms{Rajesh} \snm{Sundaresan}\thanksref{t2}\ead[label=e2]{rajeshs@ece.iisc.ernet.in}}

\address[a]{Department of Electrical Engineering \\
Indian Institute of Technology Bombay\\
Powai, Mumbai 400076, India\\
\printead{e1}}

\address[b]{Department of Electrical\\\quad  Communications Engineering\\
Indian Institute of Science\\
Bangalore 560012, India\\
\printead{e2}}

\thankstext{t1}{Work supported by a J.\ C.\ Bose Fellowship and by IFCPAR (Indo-French Centre for the Promotion of Advanced Research), Project 4000-IT-1.}
\thankstext{t2}{Work supported by IFCPAR (Indo-French Centre for the Promotion of Advanced Research), Project 4000-IT-1.}
\runauthor{V. S. Borkar and R. Sundaresan}

\affiliation{Indian Institute of Technology Bombay and Indian Institute of Science}
\end{aug}

\begin{abstract}
We consider the asymptotics of the invariant measure for the process of
the empirical spatial distribution of $N$ coupled Markov chains in the
limit of a large number of chains. Each chain reflects the stochastic
evolution of one particle. The chains are coupled through the
dependence of the transition rates on this spatial distribution of
particles in the various states. Our model is a caricature for medium
access interactions in wireless local area networks. It is also
applicable to the study of spread of epidemics in a network. The
limiting process satisfies a deterministic ordinary differential
equation called the McKean-Vlasov equation. When this differential
equation has a unique globally asymptotically stable equilibrium, the
spatial distribution asymptotically concentrates on this equilibrium.
More generally, its limit points are supported on a subset of the
$\omega$-limit sets of the McKean-Vlasov equation. Using a
control-theoretic approach, we examine the question of large deviations
of the invariant measure from this limit.
\end{abstract}

\begin{keyword}[class=AMS]
\kwd[Primary ]{60K35}
\kwd{60F10}
\kwd{68M20}
\kwd{90B18}
\kwd{49J15}
\kwd{34H05}.
\end{keyword}

\begin{keyword}
\kwd{Decoupling approximation}
\kwd{fluid limit}
\kwd{invariant measure}
\kwd{McKean-Vlasov equation}
\kwd{mean field limit}
\kwd{small noise limit}
\kwd{stationary measure}
\kwd{stochastic Liouville equation}.
\end{keyword}

\received{\smonth{1} \syear{2012}}

\end{frontmatter}

\section{Introduction}

Spurred by the seminal work of Bianchi \cite{Bianchi}, there has been a flurry of activity in the communication networks community on mean field models for carrier sense multiple access (CSMA) protocols and their large time behavior. The continuous-time model for the wireless local area network (WLAN) is as follows. There are $N$ particles (nodes) in the network. At each instant of time, a particle's state is a particular value taken from the finite state space $\ZA = \{ 0, 1, \ldots, r-1 \}$. A particle's state represents the number of failed attempts at transmission of the head-of-the-line packet at that particle's queue. When a particle is in state $i$, a successful transmission gets the packet out of the system, and the particle moves to state 0 to service the next packet. A failed transmission moves the particle to state $i+1$ (mod $r$). In the case when $i$ was initially $r-1$, that is, $r-1$ unsuccessful transmission attempts were already made, another failed attempt results in the discarding of the packet. The particle then moves to state 0 with the next packet readied for transmission. We may interpret $r$ as the maximum number of transmission attempts. The transition rate for a particle from state $i$ to state $j$ is governed by {\em mean field dynamics}, that is, the transition rate is $\lambda_{i,j}(\mu_N(t))$ where $\mu_N(t)$ is the empirical distribution of the states of particles at time $t$. If $X^{(N)}_n(t)$ is the state of the $n$th particle at time $t$, then one may write $\mu_N(t)$ as
\[
  \mu_N(t) = \frac{1}{N} \sum_{n=1}^N \delta_{ \{X_n^{(N)}(t) \}}.
\]
The particles interact only through the dependence of their transition rates on the current empirical measure $\mu_N(t)$.

The transitions allowed in the above model are from state $i$ to either $i+1$ (mod $r$) or 0. Let us say that $\EA$ denotes the set of allowed transitions. In the above model,
\[
  \EA \hspace*{-.03in} = \hspace*{-.03in} \{ (i,i+1), i = 0,1,\ldots, r-1\} \cup \{ (i,0), i = 0,1,\ldots, r-1\}
\]
where the addition is taken modulo $r$.

The process $X^{(N)}(\cdot) = \{ X^{(N)}_n(\cdot), 1 \leq n \leq N \}$ is clearly a Markov process. But one difficulty needs to be surmounted in analyzing this system: the size of the state space grows exponentially in the number of particles. A step towards addressing this difficulty is to consider the evolution or flow of the empirical measure over time, which we shall call {\em empirical process}. This is a stochastic Liouville equation that lives on a smaller state space. In the infinite particle limit, this evolution turns out to be deterministic and is given by the McKean-Vlasov equation, whose large time behavior is an indicator of what one might expect of a finite but large population. In particular, if the deterministic evolution, given by the McKean-Vlasov equation, has a unique globally asymptotically stable equilibrium, then the states of a finite number of tagged particles are asymptotically independent and their joint law is  given by the product of this equilibrium measure. The idea in fact was introduced by Kac as a simple model in kinetic theory \cite{Kac} and was later studied by McKean and others (see, e.g., \cite{McKean}). See \cite{Sznitman} for an extensive account and \cite{Graham} for a treatment of processes with jumps.

Several papers have provided rigorous analyses, along the above lines, of Bianchi's heuristic for studying WLANs. See, e.g., \cite{Bordenave-Allerton,Sharma-Ganesh-Key,McDonald,Sukhov,Benaim,Ramaiyan}. See \cite{Duffy} for an excellent survey, \cite{Stolyar,Anantharam,Anantharam-Benchekroun} for early precursors, and \cite{Benaim-Weibull} for an application of the same technique in game theory. As remarked in \cite{Duffy}, experimental evidence for CSMA protocols indicates that the model and the predictions made by the analyses are surprisingly accurate even for small populations. Indeed, this is one of the main reasons for the model's enormous popularity. Is there a justification for this concentration phenomenon?

The mean-field analysis has been successful when the McKean-Vlasov equation has a unique globally asymptotically stable equilibrium. This is indeed the case in the simplest of WLAN settings with exponential backoff parameters. But there are settings with multiple stable equilibria \cite{Sukhov} or with a unique equilibrium that is not globally stable (see \cite{Benaim} for a malware propagation model). In both cases, the dynamics governed by the McKean-Vlasov equation has multiple $\omega$-limit sets. Significant effort has gone into identifying sufficient conditions for a unique equilibrium \cite{Ramaiyan}, and into identifying further sufficient conditions for a unique globally asymptotically stable equilibrium \cite{McDonald}. If there are multiple $\omega$-limit sets for the McKean-Vlasov dynamics, which of these characterize the limiting behavior?

As a step in the direction of understanding these questions, we study the continuous-time model in this paper with the following goals.
\begin{enumerate}
  \item Obtain a large deviation principle over finite time durations for the sequence of empirical measures and empirical processes, uniformly in the initial condition. This is of course rough asymptotics for large $N$, but suggests exponentially fast convergence to the deterministic limit.
  \item Obtain a large deviation principle for the sequence of invariant measures, with single or multiple stable limit sets. This helps resolve which of the several $\omega$-limit sets, when there are several, will be selected in the large $N$ limit. (Our work can be straightforwardly extended to study the nature of transitions and exit times from the neighborhood of one stable equilibrium to the neighborhood of another, and help understand metastable behavior in such systems. We do not pursue these here.)
  \item Provide a control theoretic framework to solve the problem of invariant measure in order to expose its strength and limitation. As we will see, we can go quite some distance using this {\em deterministic} approach, but eventually need to study the noisy system for resolution of some degeneracies.
\end{enumerate}
\eject

It must be noted that the above continuous-time model does not perfectly capture all aspects in a WLAN. In particular, interactions and changes of states occur in discrete-time units of slots in WLANs, and multiple nodes may transit in one slot. Multiple transitions never occur, almost surely, in our continuous-time model. Nevertheless, if the discrete-time model's transition rates and the slot sizes are appropriately scaled down as $N$ grows so that the transition rates approach constants, our continuous time model provides accurate predictions of behavior on the discrete-time model. Our model also has wider applicability, one example being the study of spread of epidemics in a network; see \cite[Sec. 2.4]{DjehicheKaj}.

Large deviation principles over compact time durations for interacting
diffusions and interacting jump Markov processes have been well-studied
by several authors, e.g.,
\cite{Dawson-Gartner,Leonard,DjehicheKaj,Feng-empiricalMeasure,DaiPra,DelMoral-Zajic}.
The works \cite{DjehicheKaj,DaiPra,DelMoral-Zajic}
establish large deviation principles for empirical measures in path
space over finite time durations and characterize the rate functions,
while \cite{Feng-empiricalMeasure} considers infinite time durations
under the inductive topology. The works \cite{Dawson-Gartner,Leonard,DjehicheKaj,DaiPra}, and
\cite{Feng-empiricalFlow} also study large deviation of the empirical
process from the McKean-Vlasov limit, over finite time durations or
infinite time durations under the inductive topology
(\cite{Feng-empiricalFlow}). The rate function measures the difficulty
of passage of the empirical process in the neighborhood of a deviating
path.

For a fixed $N$, when $t \ra +\infty$, the stationary or invariant
measure is of interest. When the limiting McKean-Vlasov dynamics has a
unique globally asymptotically stable equilibrium, the invariant
measure converges weakly in the infinite particle limit to the point
mass at the equilibrium of the McKean-Vlasov dynamics (see, e.g.,
\cite{Benaim}). When there are multiple equilibria, the invariant
measure concentrates on a subset of the $\omega$-limit sets for the
McKean-Vlasov dynamics (\cite[Ch. 6]{Freidlin} and
\cite{Benaim-Weibull}). Large deviations from this limit have been
well-studied (see \cite[Ch. 6]{Freidlin}, \cite{Shwartz}). As indicated
earlier, large deviation results for the invariant measure help resolve
which of several possible $\omega$-limit sets may be selected in the
limit of a large number of nodes. They also help understand and predict
metastable behavior in such systems. If the system is trapped in an
undesirable equilibrium, exit times from domains and likely paths can
be predicted.

Our approach for solving the large deviations of the invariant measure exploits a control-theoretic view described in \cite{Biswas}, which considers a class of diffusions and generalizes results of \cite{Sheu-diffusion} and \cite{Day} on small noise asymptotics of invariant measures and exit probabilities in diffusions. See \cite{Sheu-jump} for small noise asymptotics of exit probabilities in processes with jumps. The control-theoretic approach differs from those of Freidlin and Wentzell~\cite{Freidlin} and Shwartz and Weiss \cite{Shwartz}. The latter rely on a study of an embedded Markov chain of states at hitting times of neighborhood of the stable limit sets (see, e.g., \cite[Ch. 6.4]{Freidlin}). Our control-theoretic approach enables the identification of a unique rate function when there is a unique globally asymptotically stable equilibrium for the McKean-Vlasov dynamics. When dealing with multiple $\omega$-limit sets though, our approach eventually requires the study of the noisy system associated with the embedded Markov chain described above, for resolution of certain boundary conditions needed for a full characterization of the rate function.

We now outline our main arguments and describe the paper's organization.

We begin with a formal description of the model and statements of the
main results on invariant measures in Section \ref{sec:model+results}.
In Section \ref{sec:empirical-measure}, we first establish a large
deviation principle for empirical measures of paths over finite
durations. For pure jump processes with interactions, this was
established by \cite{Leonard,DjehicheKaj,Feng-empiricalMeasure,DaiPra,DelMoral-Zajic}, as
indicated earlier. While \cite{Leonard} considers a fixed initial
condition for all the particles, \cite{DaiPra,Feng-empiricalMeasure}, and \cite{DelMoral-Zajic} consider
random, independent, and identically distributed (also called chaotic)
initial conditions for the particles. To establish the large deviation
principle for the interacting system, they first establish the large
deviation principle for the independent and noninteracting system via
Sanov's theorem, then exploit the Girsanov transformation to describe
the probability measure for the interacting case, and then apply the
Laplace-Varadhan principle. In order to eventually pass to the
invariant measure, we need to establish a stronger uniform large
deviation principle when the initial conditions of the particles are
such that the initial empirical measures converge weakly, but are
otherwise arbitrary. (See the remark following \cite[Th.
4.1]{Feng-empiricalMeasure}.) The limiting (initial) empirical measure
defines the initial condition for the limiting deterministic
McKean-Vlasov dynamics. This stronger uniform large deviation principle
alluded to above is available for diffusions with mean field
interactions in \cite{Dawson-Gartner} and for certain classes of jump
processes in \cite{DjehicheKaj} where the holding times alone are
mediated by the interaction and not the jump probabilities. Our result
is a mild extension facilitated by a generalization of Sanov's theorem
given in \cite{Dawson-Gartner}. While this part of our result is not
surprising, we could not find a ready reference in the literature, and
so we state the result in Section \ref{sec:empirical-measure} and
provide a proof in Section \ref{sec:Proof-empirical-measure} based on
the approach in \cite{Leonard}. We reemphasize that this result is only
for finite durations, and is only a step towards addressing our next
goal of asymptotics of the invariant measure.

In Section \ref{sec:empirical-flow}, we apply the contraction principle to obtain results on a large deviation from the McKean-Vlasov limit. As in \cite{Leonard}, we prove this under the finer uniform norm topology. In Section \ref{sec:empirical-flow-initial-terminal-time}, we once again apply the contraction principle to argue a large deviation principle for terminal measure. We then establish some crucial estimates on the rate function for use in later sections. Finally in this section, we argue that if the initial measures satisfy a large deviation principle, then so do the joint initial and terminal measures.

In Section \ref{sec:invariant-measure}, we prove the large deviation principle for the invariant measure. To do this, we first establish a subsequential large deviation principle, then argue that the rate function satisfies the dynamic programming equation for a particular control problem, and then finally show that the rate function is unique if the associated McKean-Vlasov dynamics has a unique globally asymptotically stable equilibrium. The arguments to establish the uniqueness of the rate function parallel those of \cite{Biswas} with the significant difference that while the dynamic programming equation was arrived at in \cite{Biswas} via the theory of viscosity solutions, here we use the contraction principle. While this provides a complete result in case of a single globally asymptotically stable equilibrium, it leaves some indeterminacy regarding uniqueness of the so called `potential function' whose global minima the invariant measure concentrates on in the large $N$ limit. To resolve this, one has to fall back upon the framework of Freidlin and Wentzell  \cite[Ch.6]{Freidlin} with minor modifications. These modifications are detailed in the Appendix.

Proofs that are not central to our control-theoretic view point are relegated to Sections \ref{sec:Proof-empirical-measure}, \ref{sec:ProofOfS_T-bounding}, \ref{sec:uniform-continuity}, and \ref{sec:ProofJointLDP}. The Appendix details the minor modifications to the arguments of Freidlin and Wentzell \cite[Ch.6]{Freidlin} for resolution of the rate function values at the stable limit points.

\section{The model and main results}
\label{sec:model+results}

Consider $N$ interacting Markov chains denoted by
\[
  X^{(N)}_n(t), ~1 \leq n \leq N, ~t \geq 0,
\]
on a finite state space $\ZA = \{0, \ldots, r-1\}$, with dynamics as follows. $X^{(N)}_n(t)$ denotes the state of the $n$th particle at time $t$. Let $\mu_N(t)$ be the empirical measure of the particles at time $t$, that is,
\[
  \mu_N(t) = \frac{1}{N} \sum_{n=1}^N \delta_{\{X^{(N)}_n(t)\}} \in \MA_1(\ZA),
\]
where $\MA_1(\ZA)$ is the set of probability vectors over $\ZA$, endowed with the topology of weak convergence on $\MA_1(\ZA)$. A particle $n$ in state $i$ transits to state $j \in \ZA$ with a rate $\lambda_{i,j}(\mu_N(t))$ that depends on the states of the other particles only through the empirical measure at that time. Thus the processes interact only through the dependence of their transition rates on the current empirical measure.

Let $\EA \subset \ZA \times \ZA \setminus \{ (i,i) ~|~ i \in \ZA \}$ be the set of admissible jumps for each particle. Thus $\lambda_{i,j}(\cdot) \equiv 0$ whenever $(i,j) \notin \EA$ and $i \neq j$. We write $\ZA_i = \{ j \in \ZA ~|~ (i,j) \in \EA \}$ for the set of states to which a particle can jump from state $i$.

We make the following assumptions throughout the paper.
\begin{itemize}
  \item[({\bf A1})] The graph with vertices $\ZA$ and directed edges $\EA$ is irreducible.
  \item[({\bf A2})] The mappings $\mu \in \MA_1(\ZA) \mapsto \lambda_{i,j}(\mu) \in [0, +\infty)$ are Lipschitz.
  \item[({\bf A3})] The rates of the admissible jumps are uniformly bounded away from zero, that is, there exists $c > 0$ such that, for all $\mu \in \MA_1(\ZA)$ and all $(i,j) \in \EA$, we have $\lambda_{i,j}(\mu) \geq c$.
\end{itemize}
Since $\MA_1(\ZA)$ is compact and $\lambda_{i,j}(\cdot)$ are continuous by assumption ({\bf A2}), the rates are uniformly bounded from above, that is, there is a $C < \infty$ such that for all $\mu \in \MA_1(\ZA)$, and all $(i,j) \in \EA$, we have $\lambda_{i,j}(\mu) \leq C$.

For any $T \in ( 0, +\infty)$, write $X^{(N)}_n : [0,T] \rightarrow \ZA$ for the process of evolution of particle $n$ over time. This is an element of the set $D([0,T], \ZA)$ of all cadlag paths from $[0,T]$ to $\ZA$ equipped with the Skorohod topology. We set the path to be left continuous at $T$. Let
\[
  X^N = (X^{(N)}_n, 1 \leq n \leq N) \in D([0,T], \ZA^N)
\]
denote the full description of paths of all $N$ particles. The initial condition at time $t = 0$ is $z^N = (z_n, 1 \leq n \leq N)$. The process $X^N$, with its law denoted $\mathbb{P}^{(N)}_{z^N}$, is a Markov process with cadlag paths, state space $\ZA^N$, and generator $\mathscr{A}^{(N)}$ acting on bounded measurable functions $\Phi$ according to
\begin{equation}
  \label{eqn:n-particle-generator}
  \mathscr{A}^{(N)} \Phi(a^N) = \sum_{n=1}^N \sum_{j \in \ZA_{a_n}}
                  \left[ \lambda_{a_n, j} \left( g_N(a^N) \right) \right]
                  \left( \Phi ( \sharp(a^N, n, j) ) - \Phi(a^N) \right)
\end{equation}
where $a^N = (a_n, 1 \leq n \leq N) \in \ZA^N$, $\sharp(a^N,n,j)$ is the element of $\ZA^N$ that results from replacing the $n$th component of $a^N$ with $j$, and $g_N(a^N) = \frac{1}{N} \sum_{n'=1}^N \delta_{a_{n'}}$ is the empirical measure associated with the configuration $a^N$. (The upper boundedness of the rates implies that the martingale problem associated with the generator $\mathscr{A}^{(N)}$, operating on bounded measurable functions, and the initial condition $z^N \in \ZA^N$ admits a unique solution $\mathbb{P}^{(N)}_{z^N} \in \MA_1(D([0,T], \ZA^N))$; see for e.g., \cite[Problem 4.11.15, p. 263]{Ethier}).

The empirical measure process associated with $X^N$ is given by
\[
  \mu_N : t \in [0,T] \mapsto \mu_N(t) = \frac{1}{N} \sum_{n=1}^N \delta_{\{X^{(N)}_n(t)\}} \in \MA_1^{(N)}(\ZA),
\]
where $\MA_1^{(N)}(\ZA) = \{ g_N(a^N) ~|~ a^N \in \ZA^N \} \subset \MA_1(\ZA)$. As a consequence of (\ref{eqn:n-particle-generator}), the process $\mu_N$ is itself a Markov process with cadlag paths, finite state space $\MA_1^{(N)}(\ZA)$, and generator $\mathcal{A}^{(N)}$ acting on bounded measurable functions $\Psi$ according to
\begin{equation}
  \label{eqn:flow-generator}
  \mathcal{A}^{(N)} \Psi( \xi ) = N \sum_{i \in \ZA} \sum_{j \in \ZA_i }
                  \left[ \xi(i) \lambda_{i, j} \left( \xi \right) \right]
                  \left( \Psi ( \xi - N^{-1} \delta_i + N^{-1} \delta_j ) - \Psi(\xi) \right).
\end{equation}

Write $\lambda_{i,i}(\xi) = - \sum_{j' \neq i} \lambda_{i,j'} (\xi) $ and define
\begin{equation}
  \label{eqn:rate-matrix}
  A_{\xi} = (\lambda_{i,j}(\xi))_{(i,j) \in \ZA \times \ZA}
\end{equation}
to be the {\em rate matrix} over $\ZA$. It is well known that the family $(\mu_N, N \geq 1)$ satisfies the weak law of large numbers in the following sense: if $\mu_N(0) \rightarrow \nu$ weakly as $N \rightarrow \infty$ for some $\nu \in \MA_1(\ZA)$, then $\mu_N \rightarrow \mu$ uniformly on compacts in probability, where $\mu$ solves the {\em McKean-Vlasov equation}
\begin{equation}
  \label{eqn:mckvla-dynamics}
  \dot{\mu}(t) = A_{\mu(t)}^* \mu(t)
\end{equation}
with initial condition $\mu(0) = \nu$. Here $\mu(t)$ is interpreted as a column vector and $A_{\mu(t)}^*$ is the adjoint/transpose of the matrix $A_{\mu(t)}$. By assumption ({\bf A2}), uniqueness of solutions holds for the nonlinear ordinary differential equation (ODE) (\ref{eqn:mckvla-dynamics}).

As a consequence of the assumption ({\bf A1}), for each $N \geq 1$, there is a unique invariant measure for the Markov process $X^N$, and hence there is a unique invariant measure $\wp^{(N)}$ for the $\MA_1^{(N)}(\ZA)$-valued Markov process $\mu_N$.

Define $\tau : \mathbb{R} \rightarrow \mathbb{R}_+$ to be
\begin{equation}
  \label{eqn:tau}
  \tau(u) = e^u - u - 1,
\end{equation}
and let $\tau^*: \mathbb{R} \rightarrow \overline{\mathbb{R}}_+$ be its Legendre conjugate
\begin{equation}
  \label{eqn:tau*}
  \tau^*(u) = \left\{
    \begin{array}{ll}
      (u+1) \log (u+1) - u & \mbox{ if } u > -1 \\
      1                    & \mbox{ if } u = -1 \\
      +\infty              & \mbox{ if } u < -1.
    \end{array}
  \right.
\end{equation}

The first main result is on the large $N$ asymptotics of the sequence of invariant measures $(\wp^{(N)}, N \geq 1)$.

\begin{thm}
  \label{thm:invariant}
  Assume {\em ({\bf A1})} - {\em ({\bf A3})}. Let the McKean-Vlasov equation $\dot{\mu}(t) = A_{\mu(t)}^* \mu(t)$ have a unique globally asymptotically stable equilibrium $\xi_0$. Then the sequence $(\wp^{(N)}, N \geq 1)$ satisfies the large deviation principle with speed $N$ and good rate function $s$ given by
  \begin{equation}
  \label{eqn:s-2}
    s(\xi) = \inf_{\hat{\mu}} \int_{[0, +\infty)} \Big[ \sum_{(i,j) \in \EA} (\hat{\mu}(t)(i)) \lambda_{i,j}(\hat{\mu}(t))
             \tau^* \left( \frac{\hat{l}_{i,j}(t)}{\lambda_{i,j}(\hat{\mu}(t))} - 1 \right) \Big] ~dt
  \end{equation}
where the infimum is over all $\hat{\mu}$ that are solutions to the dynamical system $\dot{\hat{\mu}}(t) = -\hat{L}(t)^* \hat{\mu}(t)$ for some family of rate matrices $\hat{L}(\cdot)$, with initial condition $\hat{\mu}(0) = \xi$, terminal condition $\lim_{t \ra +\infty} \hat{\mu}(t) = \xi_0$, and $\hat{\mu}(t) \in \MA_1(\ZA)$ for all $t \geq 0$.
\end{thm}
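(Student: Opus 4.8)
\emph{Outline of the proof.}
The proof rests on the finite-horizon results of Sections \ref{sec:empirical-measure}--\ref{sec:empirical-flow-initial-terminal-time}. For $T\in(0,+\infty)$ let $S_T$ be the good rate function governing the large deviations of the empirical flow $\mu_N$ on $D([0,T],\MA_1(\ZA))$, obtained by contraction in Section \ref{sec:empirical-flow}; its zero-cost trajectories are exactly the McKean--Vlasov trajectories, and its integrand is the bracketed expression in \eqref{eqn:s-2}, but with the forward convention $\dot\mu=L^*\mu$ for the controlled rate matrix $L$. Put
\[
  V_T(\nu,\xi)=\inf\{\,S_T(\phi):\phi(0)=\nu,\ \phi(T)=\xi\,\}.
\]
By the contraction principle (Section \ref{sec:empirical-flow-initial-terminal-time}) together with the \emph{uniformity in the initial condition} proved in Sections \ref{sec:empirical-measure}--\ref{sec:empirical-flow}, $V_T(\nu,\xi)$ is the good rate function of $\mu_N(T)$ whenever $\mu_N(0)\to\nu$, uniformly for such initial data, and Section \ref{sec:empirical-flow-initial-terminal-time} upgrades this to a joint large deviation principle for $(\mu_N(0),\mu_N(T))$ as soon as the $\mu_N(0)$ themselves satisfy one. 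Finally, after the time substitution $t\mapsto-t$ one recognises the right-hand side of \eqref{eqn:s-2} as the Freidlin--Wentzell quasipotential of $\xi_0$: the infimum of $S_\infty$ over trajectories that emanate from $\xi_0$ in the infinite past and terminate at $\xi$.

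\emph{Step 1: subsequential large deviation principle and the dynamic programming equation.}
Since $\MA_1(\ZA)$ is compact, $(\wp^{(N)})$ is automatically exponentially tight, so along every subsequence it obeys a large deviation principle with some good rate function; fix a subsequence and call the limiting rate function $s$. Apply the joint large deviation principle of Section \ref{sec:empirical-flow-initial-terminal-time} to the \emph{stationary} process, whose time-$0$ and time-$T$ marginals are both $\wp^{(N)}$: along this subsequence $(\mu_N(0),\mu_N(T))$ satisfies a large deviation principle with rate function $(\nu,\xi)\mapsto s(\nu)+V_T(\nu,\xi)$. Contracting onto the terminal coordinate and invoking stationarity once more yields the dynamic programming equation
\[
  s(\xi)=\inf_{\nu\in\MA_1(\ZA)}\big[\,s(\nu)+V_T(\nu,\xi)\,\big],\qquad\xi\in\MA_1(\ZA),\ T>0,
\]
the two inequalities being the Laplace upper and lower bounds, with the uniform estimates of Section \ref{sec:empirical-measure} licensing the integration of the conditional bounds against $\wp^{(N)}(d\nu)$.

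\emph{Step 2: normalization and identification of $s$.}
Since each $\wp^{(N)}$ is a probability measure on the compact set $\MA_1(\ZA)$, one has $\inf_\xi s(\xi)=0$, attained at some $\nu_\star$. Global asymptotic stability of $\xi_0$ lets the McKean--Vlasov flow bring $\nu_\star$ into an arbitrarily small neighbourhood of $\xi_0$ at zero cost, while assumptions ({\bf A1}), ({\bf A3}) give local controllability near $\xi_0$; together they force $V_T(\nu_\star,\xi_0)\to0$ as $T\to\infty$, and the dynamic programming equation then gives $s(\xi_0)=0$ (and, in fact, $s$ vanishes only at $\xi_0$). It remains to show that the dynamic programming equation together with $s(\xi_0)=0$ pins $s$ down to the right-hand side of \eqref{eqn:s-2}. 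One checks directly that the latter --- the quasipotential of $\xi_0$ --- is a good rate function (lower semicontinuity and sublevel-set compactness from those of $S_T$ and a time-truncation estimate) solving the dynamic programming equation (concatenation of paths). For uniqueness, an arbitrary solution $s$ is bounded above at $\xi$ by the cost of any admissible trajectory from $\xi$ to $\xi_0$, obtained by iterating the equation along the trajectory and using $s(\xi_0)=0$; and it is bounded below by extracting, from near-minimizers $\nu$ of the equation as $T\to\infty$, a limiting admissible trajectory of cost at least $s(\xi)$ --- here global asymptotic stability of $\xi_0$ is exactly what confines those near-minimizers to shrinking neighbourhoods of $\xi_0$. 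Hence every subsequential limit equals \eqref{eqn:s-2}, and the full sequence $(\wp^{(N)})$ satisfies the asserted large deviation principle.

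\emph{Main difficulty.}
The crux is the identification in Step 2, which is the control-theoretic core of the argument and parallels \cite{Biswas}, with the viscosity-solution analysis there replaced by the contraction principle. The delicate points are the passage to the infinite horizon $T\to\infty$ --- one must produce a genuine trajectory \emph{reaching} $\xi_0$ rather than merely approaching it, which is where irreducibility ({\bf A1}) and the lower bound ({\bf A3}) on the rates enter --- the handling of $+\infty$ values of $V_T$ and $s$, and keeping the controlled trajectories inside $\MA_1(\ZA)$. It is precisely the uniqueness of the globally asymptotically stable equilibrium that removes the indeterminacy which, with several $\omega$-limit sets, would force one back to the Freidlin--Wentzell analysis of the embedded chain.
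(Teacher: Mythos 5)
Your proposal follows the paper's control-theoretic strategy almost step for step: subsequential LDP by exponential tightness, dynamic programming equation via the joint LDP for $(\mu_N(0),\mu_N(T))$ and stationarity, normalisation at $\xi_0$, and identification of every subsequential rate function with the quasipotential. The one place where the sketch is genuinely compressed — and where the paper does the real work — is the lower-bound half of the identification. You write that ``global asymptotic stability of $\xi_0$ is exactly what confines those near-minimizers to shrinking neighbourhoods of $\xi_0$,'' but this is not a direct consequence of GAS on its own: near-minimizers $\nu_T$ of $\inf_\nu[s(\nu)+V_T(\nu,\xi)]$ are not a priori localised near $\xi_0$. The paper's Lemma~\ref{lem:one-trajectory} first builds, by a nested-compactness / diagonal-extraction argument over the sublevel sets $\Gamma_m^*$, a single infinite-horizon \emph{exact} minimiser $\hat\mu$ satisfying the recursion $s(\xi)=s(\hat\mu(mT))+\text{cost}_{[0,mT]}$ for every $m$; Lemma~\ref{lem:converge-w-limit} then shows, using the uniform continuity of $S_T$ and the monotone decrease of $s(\hat\mu(mT))$, that the $\omega$-limit set of $\hat\mu$ is positively invariant for the time-reversed McKean--Vlasov flow. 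Only at that point does uniqueness of the GASE enter, forcing this $\omega$-limit set, which lies in $\MA_1(\ZA)$ and is invariant for both time directions, to be $\{\xi_0\}$. If you intend the ``near-minimizer'' extraction to reproduce this, you should say so; as written, the jump from GAS to localisation of near-minimizers is a gap. (Also, for the lower bound you presumably want a limiting trajectory of cost \emph{at most} $s(\xi)$, not ``at least'', since the goal is $\text{RHS}\le\text{cost}(\hat\mu)\le s(\xi)$.) With those two corrections the route is the paper's.
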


\begin{Remarks}
 1. As we shall see later, the dynamics $\dot{\hat{\mu}}(t) = -\hat{L}(t)^* \hat{\mu}(t)$ corresponds to a time reversal when compared with the direction of the McKean-Vlasov dynamics. The rate function is given by the cost, associated with a certain control problem, of the cheapest path (across all time) that transports the system state from the globally asymptotically stable equilibrium $\xi_0$ to $\xi$, in the forward-time dynamics proceeding in the direction of the McKean-Vlasov dynamics. In the time-reversed dynamics, this is the cost of a path with initial state $\xi$ and terminal state $\xi_0$.

2. By setting $\hat{l}_{i,j}(t) \equiv \lambda_{i,j}(\xi_0)$, we see that if $\hat{\mu}(0) = \xi_0$, then the system state remains $\hat{\mu}(t) \equiv \xi_0$, and the integral is 0. It follows that $s(\xi_0) = 0$.

3. In reversed time, the system state under the dynamics $\dot{\hat{\mu}}(t) = -\hat{L}(t)^* \hat{\mu}(t)$ may not in general lie in $\MA_1(\ZA)$. The minimization however is over all paths that are constrained to lie in $\MA_1(\ZA)$. See additional remarks after Lemma~\ref{lem:one-trajectory}.

4. We now make some remarks on why Theorem \ref{thm:invariant} and the
soon to \mbox{follow} Theorem \ref{thm:invariant-multiple-equilibria}
are not subsumed by the works of \cite{Shwartz} and \cite{Freidlin}.
The \mbox{transition} rates for the Markov process $\mu_N(\cdot)$ are
$(\mu_N(t)(i)) \lambda_{i,j}(\mu_N(t))$. While
$\lambda_{i,j}(\mu_N(t))$ is indeed bounded away from zero when $(i,j)
\in \EA$, $(\mu_N(t)(i)) \lambda_{i,j}(\mu_N(t))$ is not, and so the
logarithm of these rates is not bounded, a requirement in
\cite{Shwartz}. Next, the process $\mu_N$ takes values only in
$\MA_1(\ZA)$, and so when at the boundary $\MA_1(\ZA)$, it is
constrained to move only in those directions that keep it within
$\MA_1(\ZA)$. As a consequence, a finiteness condition \cite[p.146,
II]{Freidlin} on the associated Lagrangian function does not hold.

5. Theorem \ref{thm:invariant} provides a complete characterization of the rate function. However, numerical computation of the rate function is a challenging problem. One might possibly discretize time and the state space, and employ dynamic programming techniques to get an approximation. This is an interesting line of work that is beyond the scope of this paper. See \cite{Benaim-Weibull} and references therein for some results based on exit times.
\end{Remarks}


We next state a generalization of Theorem \ref{thm:invariant} when there may be multiple $\omega$-limit sets. Consider a time-varying rate matrix $L(t)$ associated with the time-varying rates $(l_{i,j}(t), (i,j) \in \EA)$ and let $L(t)^*$ be its adjoint. Write $\mu$ for the solution to the dynamical system $\dot{\mu}(t) = L(t)^* \mu(t)$ with initial condition $\mu(0) = \nu$. Define
  \begin{equation}
    \label{eqn:finite-rate-evaluation}
    S_{[0,T]}(\mu | \nu) = \int_{[0,T]} \Big[ \sum_{(i,j) \in \EA} (\mu(t)(i)) \lambda_{i,j}(\mu(t))
                           \tau^* \left( \frac{l_{i,j}(t)}{\lambda_{i,j}(\mu(t))} - 1 \right) \Big] ~dt.
  \end{equation}
As we shall see later, this is the cost of moving the system state along the trajectory $\mu$ with initial state $\mu(0) = \nu$. Let $V(\xi | \nu)$ be the so-called quasipotential defined by
  \begin{equation}
    \label{eqn:quasipotential}
    V( \xi | \nu) = \inf \{ S_{[0,T]} (\mu | \nu) ~|~ \mu(0) = \nu, \mu(T) = \xi, t \in [0,T], T \geq 0 \},
  \end{equation}
that is, the infimum cost of traversal from $\nu$ to $\xi$ over all finite time durations. Let us define an equivalence relation on $\MA_1(\ZA)$ as follows. We say $\nu \sim \xi$ if $V(\xi | \nu) = V(\nu | \xi) = 0$. Using a later result (the third part of Lemma \ref{lem:S_T-bounding}), it is easy to show that the set of points that are equivalent to each other is closed and therefore compact (being a closed subset of the compact set $\MA_1(\ZA)$).

We will generalize Theorem \ref{thm:invariant} under the following assumption on the dynamical system corresponding to the McKean-Vlasov equation (\ref{eqn:mckvla-dynamics}):
\begin{itemize}
  \item[({\bf B})] There exist a finite number of compact sets $K_1, K_2, \ldots, K_l$ such that
  \begin{enumerate}
    \item $\nu_1, \nu_2 \in K_i$ implies $\nu_1 \sim \nu_2$.
    \item $\nu_1 \in K_i, \nu_2 \notin K_i$ implies $\nu_1 \nsim \nu_2$.
    \item Every $\omega$-limit set of the McKean-Vlasov equation $\dot{\mu}(t) = A_{\mu(t)}^* \mu(t)$ is contained in one of the $K_i$.
  \end{enumerate}
\end{itemize}

Under the hypothesis of Theorem \ref{thm:invariant}, assumption ({\bf B}) holds with $l=1$, $K_1 = \{ \xi_0 \}$. We saw in a remark following Theorem \ref{thm:invariant} that $s(\xi_0) = 0$. In the general case, we shall see later in Lemma \ref{lem:multiple-equilibria-basevalue} that the rate function $s$ is constant within each $K_i$, and so let $s_i$ be the value of $s(\cdot)$ over $K_i$, for $i = 1, \ldots, l$. In order to specify the values of $s_1, \ldots, s_l$, define
\begin{eqnarray}
  \tilde{V}(K_i, K_j) = \inf_{T > 0} \left\{ S_{[0,T]}(\mu | \mu(0)) ~|~  \mu(0) \in K_i, \mu(T) \in K_j, \right. \nonumber \\
  \label{eqn:V-tilde}
  \left.  \mu(t) \notin \cup_{i' \neq i,j} K_{i'} \mbox{ for } t \in [0,T] \right\}.
\end{eqnarray}
If the set is empty, the infimum is taken to be $+\infty$. Let us also define
\begin{eqnarray}
  \label{eqn:V-KiKj}
  V(K_i, K_j) & = & \left. V(\xi | \nu) \right|_{\nu \in K_i, \xi \in K_j}.
\end{eqnarray}
Again using Lemma \ref{lem:multiple-equilibria-basevalue}, one can show that the above value is independent of $\nu \in K_i$ and $\xi \in K_j$. Also, as indicated by Freidlin and Wentzell (\cite[p.171]{Freidlin}) and by Lemma \ref{lem:FW-1.6} of the Appendix, one can easily argue that
\begin{eqnarray*}
  V(K_i,K_j) & = & \tilde{V}(K_i,K_j) \wedge \min_{i_1} [\tilde{V}(K_i, K_{i_1}) + \tilde{V}(K_{i_1}, K_j)] \\
  & & \wedge \min_{i_1, i_2} [\tilde{V}(K_i, K_{i_1}) + \tilde{V}(K_{i_1}, K_{i_2}) + \tilde{V}(K_{i_2}, K_j)] \\
  & & \wedge \cdots \wedge \min_{i_1, \ldots, i_{l-2}} [\tilde{V}(K_i, K_{i_1}) + \cdots + \tilde{V}(K_{i_{l-2}}, K_j)].
\end{eqnarray*}

Consider the indices $\{1,2,\ldots,l\}$ for the compact sets $K_1, K_2, \ldots, K_l$. Let $\mathbb{G}\{i\}$ be the set of all {\em directed graphs} on the vertex set $\{1,2,\ldots,l\}$ such that
\begin{itemize}
  \item there is no outward edge from $i$;
  \item a vertex $j \neq i$ has exactly one outward edge;
  \item there are no closed cycles in the graph.
\end{itemize}
Define
\begin{equation}
  \label{eqn:W-compactset}
  W(K_{i'}) = \min_{\mathcal{G} \in \mathbb{G}\{i'\}} \sum_{(i,j) \in \mathcal{G}} V(K_i, K_j),
\end{equation}
and finally
\begin{equation}
  \label{eqn:s-values}
  s_{i'} = W(K_{i'}) - \min_{i} W(K_i).
\end{equation}

We now state the generalization of Theorem \ref{thm:invariant} under assumption ({\bf B}).

\begin{thm}
  \label{thm:invariant-multiple-equilibria}
  Assume {\em ({\bf A1})} - {\em ({\bf A3})} and {\em ({\bf B})}. The sequence $(\wp^{(N)}, N \geq 1)$ satisfies the large deviation principle with speed $N$ and good rate function $s$ given by
  \begin{equation}
  \label{eqn:s-2-multiple-equilibria}
    s(\xi) = \inf_{l'} \inf_{\hat{\mu}} \left[ s_{l'} + \int_0^{+\infty} \Big[ \sum_{(i,j) \in \EA} (\hat{\mu}(t)(i)) \lambda_{i,j}(\hat{\mu}(t))
             \tau^* \left( \frac{\hat{l}_{i,j}(t)}{\lambda_{i,j}(\hat{\mu}(t))} - 1 \right) \Big] ~dt \right]
  \end{equation}
where the second infimum is over all $\hat{\mu}$ that are solutions to the dynamical system $\dot{\hat{\mu}}(t) = -\hat{L}(t)^* \hat{\mu}(t)$ for some family of rate matrices $\hat{L}(\cdot)$, with initial condition $\hat{\mu}(0) = \xi$, terminal condition $\hat{\mu}(t) \ra K_{l'}$ as $t \ra +\infty$, and $\hat{\mu}(t) \in \MA_1(\ZA)$ for all $t \geq 0$.
\end{thm}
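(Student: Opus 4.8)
The plan is to regard Theorem~\ref{thm:invariant} as the case $l=1$, $K_1=\{\xi_0\}$, and to run its proof with the extra bookkeeping needed for several $\omega$-limit sets. Since $\MA_1(\MA_1(\ZA))$ is compact, the family $(\wp^{(N)})$ is automatically exponentially tight, so by the usual reduction it suffices to show that whenever a subsequence is such that $-N^{-1}\log\wp^{(N)}$ has a (weak/Laplace) limit, that limit is the $s$ of \eqref{eqn:s-2-multiple-equilibria}. Goodness of this $s$ --- lower semicontinuity and compact sublevel sets --- will follow from compactness of $\MA_1(\ZA)$, the goodness of the finite-time rate functions of Sections~\ref{sec:empirical-flow}--\ref{sec:empirical-flow-initial-terminal-time}, and the fact that the constraint set in \eqref{eqn:s-2-multiple-equilibria} is nonempty for every $\xi$ (an admissible reversed-time trajectory from $\xi$ into some $K_{l'}$ always exists, using ({\bf A1}), ({\bf A3}) and Lemma~\ref{lem:one-trajectory}). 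Fix such a subsequence and write $s$ for the limit.

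The first substantive step is to extract a dynamic programming equation from stationarity. For each $T>0$, invariance gives $\wp^{(N)}(d\xi)=\int\wp^{(N)}(d\nu)\,\mathbb{P}^{(N)}_{\nu}(\mu_N(T)\in d\xi)$. Along the chosen subsequence the joint initial--terminal LDP of Section~\ref{sec:empirical-flow-initial-terminal-time} makes $(\mu_N(0),\mu_N(T))$ satisfy an LDP with rate $s(\nu)+\inf\{S_{[0,T]}(\mu\,|\,\nu):\mu(0)=\nu,\ \mu(T)=\xi\}$, with $S_{[0,T]}$ as in \eqref{eqn:finite-rate-evaluation}. Feeding this into the identity above --- the upper bound coming from Laplace--Varadhan together with the uniform-in-$\nu$ upper bound for the empirical flow, the matching lower bound from the flow's LDP lower bound together with the subsequential lower bound for $\wp^{(N)}$ --- and then letting $T$ range over $(0,\infty)$, so that $\inf_{T}\inf\{S_{[0,T]}(\mu\,|\,\nu):\mu(T)=\xi\}=V(\xi\,|\,\nu)$, yields
\[
  s(\xi)=\inf_{\nu\in\MA_1(\ZA)}\bigl\{s(\nu)+V(\xi\,|\,\nu)\bigr\},
\]
with $V$ the quasipotential \eqref{eqn:quasipotential}. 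This is exactly the dynamic programming equation used for Theorem~\ref{thm:invariant}, now obtained without any globally stable equilibrium.

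Next I would solve this equation up to the boundary data. By the third part of Lemma~\ref{lem:S_T-bounding} and Lemma~\ref{lem:multiple-equilibria-basevalue}, $s$ is constant on each $K_i$; call the value $\tilde s_i$. Applying the displayed equation at points of $K_{l'}$ gives the consistency relations $\tilde s_{l'}=\min_{l''}\{\tilde s_{l''}+V(K_{l'},K_{l''})\}$, and the verification/uniqueness argument of Section~\ref{sec:invariant-measure} --- the one used for Theorem~\ref{thm:invariant}, adapted to finitely many $K_i$ and using that every McKean--Vlasov trajectory is absorbed into $\cup_i K_i$ (assumption ({\bf B}.3)), so that the infimum over $\nu$ above may be restricted to $\cup_i K_i$ --- identifies the unique bounded solution of the dynamic programming equation with those boundary values as $\xi\mapsto\min_{l'}\{\tilde s_{l'}+V(\xi\,|\,K_{l'})\}$. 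Finally, the map $\mu\mapsto\hat\mu(\cdot)=\mu(T-\cdot)$ is a cost-preserving bijection between forward paths solving $\dot\mu=L^*\mu$ on $[0,T]$ and reversed paths solving $\dot{\hat\mu}=-\hat L^*\hat\mu$, so $V(\xi\,|\,K_{l'})$ is the infimum of the integral in \eqref{eqn:s-2-multiple-equilibria} over \emph{finite-time} reversed paths from $\xi$ into $K_{l'}$; an approximation using Lemma~\ref{lem:one-trajectory} and the $S$-estimates of Section~\ref{sec:empirical-flow-initial-terminal-time} then upgrades ``reaches $K_{l'}$ in finite time'' to ``$\hat\mu(t)\to K_{l'}$ as $t\to\infty$''. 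Thus $s$ has the form \eqref{eqn:s-2-multiple-equilibria}, but with each $s_{l'}$ replaced by $\tilde s_{l'}$.

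The main obstacle is the remaining identification $\tilde s_{l'}=s_{l'}$ of \eqref{eqn:s-values}. The consistency relations above, even with the normalization $\min_i\tilde s_i=0$ (forced since each $\wp^{(N)}$ is a probability measure, whence $\inf_\xi s(\xi)=0$), do not in general determine the vector $(\tilde s_1,\dots,\tilde s_l)$; this is precisely the degeneracy that the deterministic, control-theoretic argument cannot resolve on its own. To pin the values down I would fall back on the Freidlin--Wentzell construction: by the uniform LDP of Section~\ref{sec:empirical-flow}, the process $\mu_N$ sampled at successive visits to small neighbourhoods of $K_1,\dots,K_l$ is, for large $N$, an approximately Markov chain whose transition probabilities decay at exponential rates governed by the $\tilde V(K_i,K_j)$ of \eqref{eqn:V-tilde}, and the Markov-chain-tree formula for its stationary distribution (\cite[Ch.~6]{Freidlin}, Lemma~\ref{lem:FW-1.6}) gives stationary mass of order $\exp(-N(W(K_{l'})-\min_i W(K_i)))$ near $K_{l'}$, with $W$ as in \eqref{eqn:W-compactset}; hence $\tilde s_{l'}=s_{l'}$. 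The modifications of \cite[Ch.~6]{Freidlin} needed in the present setting --- where $\log\bigl((\mu_N(t)(i))\lambda_{i,j}(\mu_N(t))\bigr)$ is not bounded and $\mu_N$ is confined to the bounded set $\MA_1(\ZA)$ --- are those collected in the Appendix. Combining the steps, every subsequential limit equals the $s$ of \eqref{eqn:s-2-multiple-equilibria}, which completes the proof.
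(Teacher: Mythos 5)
Your proposal follows essentially the same route as the paper's proof: establish a subsequential LDP and extract the dynamic programming equation from stationarity (Lemma~\ref{lem:HJB}), observe that $s$ is constant on each $K_i$ (Lemma~\ref{lem:multiple-equilibria-basevalue}), show that the boundary values at the $K_i$ uniquely determine the rate function through the control/verification argument (Lemmas~\ref{lem:one-trajectory}, \ref{lem:converge-w-limit}, \ref{lem:uniqueness-multiple-equilibria}), and finally pin down those values by the Freidlin--Wentzell chain-at-hitting-times construction in the Appendix (Theorem~\ref{thm:FW-6.4.1}). One small slip: the Markov-chain-tree formula is not Lemma~\ref{lem:FW-1.6} (which only connects points within one $K_i$ at small cost); the correct reference for the stationary-mass estimate is Theorem~\ref{thm:FW-6.4.1}, via the graph minimization in \eqref{eqn:W-compactset}.
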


\section{Large deviations over a finite time duration}
\label{sec:ldp-finite-time}

In the previous section, we stated the main results of the paper on the sequence of invariant measures. We now begin our journey towards the proofs by studying large deviation principles over finite time durations. We first study the empirical measure of paths over finite durations, and then the empirical measure process.

\subsection{Empirical measure}
\label{sec:empirical-measure}

Recall that $x^N \in D([0,T], \ZA^N)$ denotes the full description of all the $N$ particles. Let $G_N$ denote the mapping that takes the full description $x^N$ to the empirical measure
\[
  G_N : (x_n, 1 \leq n \leq N) \in D([0,T], \ZA^N) \mapsto \frac{1}{N} \sum_{n=1}^N \delta_{x_n} \in \MA_1(D([0,T], \ZA)).
\]
Given the random variable $X^N$, the random empirical measure, denoted $M_N$, is thus
\[
  M_N = G_N(X^N) \in \MA_1(D([0,T], \ZA)).
\]
Clearly, the law of $M_N$ depends on the initial condition $z^N$ only through its empirical measure $\nu_N = (1/N) \sum_{n=1}^N \delta_{z_n}$. Write $P^{(N)}_{\nu_N}$ for the law of $M_N$, the push forward of $\mathbb{P}^{(N)}_{z^N}$ under the mapping $G_N$, that is, $P^{(N)}_{\nu_N} = \mathbb{P}^{(N)}_{z^N} \circ G_N^{-1}$.

The $\MA_1^{(N)}(\ZA)$-valued cadlag empirical process is
\[
  \mu_N : t \in [0,T] \mapsto \mu_N(t) = \frac{1}{N} \sum_{n=1}^N \delta_{\{X^{(N)}_n(t)\}} \in \MA_1^{(N)}(\ZA),
\]
and the corresponding mapping is denoted
\[
  \gamma_N : (x_n, 1 \leq n \leq N) \in D([0,T], \ZA^N) \mapsto \mu_N : [0,T] \rightarrow \MA_1^{(N)}(\ZA).
\]
Observe that $\mu_N(0) = \nu_N$, and that $\mu_N(t)$ is the projection $\pi_t(M_N)$ at time $t$, and we write $\mu_N = \pi(M_N)$. We thus have
\[
  \mu_N = \pi(M_N) = \pi(G_N(X^N)) = \gamma_N(X^N).
\]

Consider now a hypothetical tagged particle. When there is no interaction, when all transition rates for $(i,j) \in \EA$ transitions are unity, and when all other transition rates are 0, we can define the evolution of the tagged particle by the law $P_{z}$ which is the unique solution to the martingale problem in $D([0,T], \ZA)$ associated with the generator $A^o$ operating on bounded measurable functions $\Phi$ on $\ZA$ according to
\[
  A^o \Phi(i) = \sum_{j \in \ZA_i} 1 \cdot ( \Phi(j) - \Phi(i) )
\]
and the initial condition $z$. The existence of a solution and the solution's uniqueness hold because the transition rates are upper bounded (see \cite[Problem 4.11.15]{Ethier}). For any fixed $\mu \in D([0,T], \MA_1(\ZA))$, let $P_z(\mu)$ be the unique solution to the martingale problem in $D([0,T], \ZA)$ associated with the (time-varying) generator
\begin{equation}
  \label{eqn:1-particle-generator}
  A_{\mu(t)} \Phi(i) = \sum_{j \in \ZA_i} \lambda_{i,j}(\mu(t)) \cdot ( \Phi(j) - \Phi(i) )
\end{equation}
and the initial condition $z$. This is consistent with (\ref{eqn:rate-matrix}) because when we view $\Phi$ and $A_{\xi} \Phi$ as column vectors, then the vector $A_{\xi} \Phi$ is the result of the rate matrix $A_{\xi} = (\lambda_{i,j}(\xi))_{(i,j \in \ZA \times \ZA)}$ right-multiplied by the vector $\Phi$. Again, by the upper boundedness of $\lambda_{i,j}(\cdot)$ (from assumptions ({\bf A2}-{\bf A3})), $P_z(\mu)$ is unique, and the density of $P_z(\mu)$ with respect to $P_z$ can be written as (see \cite[eqn. (2.4)]{Leonard})
\begin{equation}
  \label{eqn:RND-pathlevel}
  \frac{dP_z(\mu)}{dP_z}(x) = \exp \{ h_1(x; \mu) \}
\end{equation}
where
\begin{eqnarray}
  \label{eqn:h1}
  h_1(x;\mu) =
    \sum_{0 \leq t \leq T} \mathbf{1}_{\{x_t \neq x_{t-}\}} \log \lambda_{x_{t-}, x_t} (\mu(t-)) \\
     - \int_{[0,T]} \sum_{j \in \ZA_{x_t}} \left( \lambda_{x_t, j}(\mu(t)) - 1 \right) ~dt. \nonumber
\end{eqnarray}

Consider the product distribution $\mathbb{P}^{o,(N)}_{z^N} = \bigotimes_{n=1}^N P_{z_n}$ where the $N$ particles evolve independently, with the $n$th particle's initial condition being $z_n$. Again with $\nu_N = (1/N) \sum_{n=1}^N \delta_{z_n}$, let $P^{o,(N)}_{\nu_N} = \mathbb{P}^{o,(N)}_{z^N} \circ G_N^{-1}$. A simple application of Girsanov's formula yields that (see \cite[eqn. (2.8)]{Leonard})
\begin{equation}
  \label{eqn:RND}
  \frac{dP^{(N)}_{\nu_N}}{dP^{o,(N)}_{\nu_N}} (Q) = \exp \{ N h(Q) \}
\end{equation}
where $h$ is related to $h_1(\cdot; \cdot)$ as follows: for a $Q \in \MA_1(D([0,T],\ZA))$,
\begin{equation}
  \label{eqn:h(Q)}
  h(Q) = \int_{D([0,T], \ZA)} h_1(x; \pi(Q)) ~Q(dx).
\end{equation}

Let us define the spaces and topologies of interest. Similar to \cite{Leonard}, consider the Polish space $(\XA, d)$ where
\begin{eqnarray*}
  \lefteqn{ \XA = \big\{ x \in D([0,T], \ZA) \mid \sum_{0 < t \leq T} {\bf 1}_{ \{x(t) \neq x(t-)\} } < +\infty,  } \\
  & & \quad \quad  \mbox{ and for each } t \in (0,T] \mbox{ with } x(t) \neq x(t-), \mbox{ we have } x(t) \in \ZA_{x(t-)} \big\}
\end{eqnarray*}
with metric
\[
  d(x,y) = d_{\textsf{Sko}}(x,y) + |\varphi(x) - \varphi(y)|, \quad x,y \in \XA
\]
where $d_{\textsf{Sko}}$ stands for the Skorohod (complete) metric, and
\[
  \varphi: x \in \XA \mapsto \sum_{0 < t \leq T} {\bf 1}_{ \{x(t) \neq x(t-)\} } \in \mathbb{R}
\]
denotes the number of jumps. $\varphi$ is nonnegative and continuous (see \cite[p. 299]{Leonard}). For a function $f : \XA \ra \mathbb{R}$, define
\[
  || f ||_{\varphi} = \sup_{x \in \XA} \frac{|f(x)|}{1 + \varphi(x)},
\]
denote
\[
  C_{\varphi}(\XA) = \left\{ f \mid f:\XA \ra \mathbb{R} \mbox{ is continuous and } || f ||_{\varphi} < +\infty \right\}
\]
and
\[
  \MA_{1,\varphi}(\XA) = \left\{ Q \in \MA_1(\XA) \mid \int_{\XA} \varphi~ dQ < +\infty \right\}.
\]
This is a subset of the algebraic dual $C_{\varphi}(\XA)^*$ of $C_{\varphi}(\XA)$. Endow the set $\MA_{1,\varphi}(\XA)$ with the weak* topology $\sigma(\MA_{1,\varphi}(\XA), C_{\varphi}(\XA))$, the weakest topology under which $Q_N \ra Q$ as $N \rightarrow +\infty$ if and only if
\[
  \int_{\XA} f~dQ_N \ra \int_{\XA} f~dQ \quad \mbox{ for each } f \in C_{\varphi}(\XA).
\]
This topology is obviously finer than the topology of weak convergence in $\MA_1(\XA)$.

For a measure $\nu \in \MA_1(\ZA)$, let $P$ be the mixture given by
\begin{equation}
  \label{eqn:P}
  dP(x) = \sum_{z \in \ZA} \nu(z) dP_z(x).
\end{equation}
and let $P(\mu)$ be the mixture given by
\begin{equation}
  \label{eqn:P(mu)}
  dP(\mu)(x) = \sum_{z \in \ZA} \nu(z) dP_z(\mu)(x).
\end{equation}

Define the relative entropy $H : \MA_{1, \varphi}(\XA) \rightarrow [0, +\infty]$ of $Q$ (with respect to $P$) as
\begin{equation}
  \label{eqn:relative-entropy}
  H(Q | P) = \left\{
             \begin{array}{cl}
               \int_{\XA} \log \left( \frac{dQ}{dP} \right) ~dQ & \mbox{if } Q \ll P \\
               +\infty & \mbox{otherwise}.
             \end{array}
           \right.
\end{equation}
Also define the function $J : \MA_{1, \varphi}(\XA) \rightarrow [0, +\infty]$
  \begin{equation}
     \label{eqn:J-hat(Q)}
     J(Q) = \sup_{f \in C_{\varphi}(\XA)} \left[ \int_{\XA} f ~dQ - \sum_{z \in \ZA} \nu(z) \log \int_{\XA} e^f ~ dP_z \right].
  \end{equation}

We first state a simple extension of \cite[Th. 2.1]{Leonard}. See remarks following the statement on the nature of the extension.

\begin{thm}
\label{thm:empirical-measure}
Suppose that the initial conditions $\nu_N \rightarrow \nu$ weakly. The sequence $(P^{(N)}_{\nu_N}, N \geq 1)$ satisfies the large deviation principle in the space $\MA_{1,\varphi}(\XA)$, endowed with the weak* topology $\sigma(\MA_{1,\varphi}(\XA), C_{\varphi}(\XA))$, with speed $N$ and good rate function $I(Q) = J(Q) - h(Q)$. Furthermore, for each $Q \in \MA_{1,\varphi}(\XA)$, $I(Q)$ admits the representation
\begin{equation}
  \label{eqn:I(Q)}
  I(Q) = \left\{
            \begin{array}{cl}
              H(Q | P(\pi(Q))) & \mbox{if } Q \circ \pi_0^{-1} = \nu \\
              +\infty    & \mbox{otherwise}.
            \end{array}
    \right.
\end{equation}
\end{thm}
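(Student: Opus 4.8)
The strategy is to deduce the large deviation principle for the interacting system from the known LDP for the i.i.d. (non-interacting) system by a change-of-measure (tilting) argument, exactly as in \cite{Leonard}, the only new ingredient being that the initial empirical measures $\nu_N$ are not fixed but merely converge weakly to $\nu$. First I would record the LDP for the non-interacting reference sequence $(P^{o,(N)}_{\nu_N}, N \geq 1)$: since each particle's marginal law depends on its (possibly distinct) initial state $z_n$, and the particles are independent, the empirical measure $M_N = G_N(X^N)$ is the empirical mean of independent (but not identically distributed) $\XA$-valued random variables whose distributions are $P_{z_n}$. Here the hypothesis $\nu_N \to \nu$ is exactly what is needed to invoke the generalization of Sanov's theorem given in \cite{Dawson-Gartner}: it yields that $(P^{o,(N)}_{\nu_N})$ satisfies the LDP in $\MA_{1,\varphi}(\XA)$ with the weak* topology at speed $N$ with good rate function $J(Q)$, the Legendre-type functional in \eqref{eqn:J-hat(Q)} built from the mixture reference measure $P$ of \eqref{eqn:P}. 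One must check the exponential tightness and the finiteness-of-moment-generating-function conditions that underlie the Cram\'er/Sanov-type statement in the $C_\varphi(\XA)^*$ duality; these are handled by the boundedness of the jump rates (assumptions (\textbf{A2})--(\textbf{A3})), which gives Poisson-type tail bounds on $\varphi$ uniformly in the initial state, precisely as in \cite[Sec. 3]{Leonard}.

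Next I would bring in the interaction. By the Girsanov identity \eqref{eqn:RND}, $dP^{(N)}_{\nu_N}/dP^{o,(N)}_{\nu_N}(Q) = \exp\{N h(Q)\}$ with $h$ given by \eqref{eqn:h(Q)}--\eqref{eqn:h1}. The plan is to apply a Laplace--Varadhan tilting: if $h$ were bounded and continuous on $\MA_{1,\varphi}(\XA)$, then Varadhan's lemma would immediately give that $(P^{(N)}_{\nu_N})$ satisfies the LDP with good rate function $I(Q) = J(Q) - h(Q)$ (after subtracting the normalizing constant, which vanishes because $h$ is an exact log-derivative and the LDP rate function is automatically the one that integrates to the right normalization; more carefully, one checks $\inf(J-h)=0$). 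The function $h$ is indeed continuous on $\MA_{1,\varphi}(\XA)$ under the weak* topology — this is where the $\varphi$-weighted space is essential, since $h_1(x;\mu)$ grows linearly in the number of jumps $\varphi(x)$ through the first sum in \eqref{eqn:h1}, and $C_\varphi$-continuity of the relevant test functions together with the Lipschitz dependence of $\lambda_{i,j}$ on $\mu$ (assumption (\textbf{A2})) and continuity of $Q \mapsto \pi(Q)$ gives joint continuity. The unboundedness of $h$ is controlled by the superexponential moment bounds on $\varphi$ already used for exponential tightness, so the version of Varadhan's lemma for unbounded-but-tame functionals applies; this is carried out in \cite{Leonard} for fixed initial condition and the argument transfers verbatim once the reference LDP with $\nu_N \to \nu$ is in hand.

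Finally, to obtain the representation \eqref{eqn:I(Q)}, I would show $J(Q) = H(Q \mid P)$ whenever $Q \circ \pi_0^{-1} = \nu$ and $J(Q) = +\infty$ otherwise: the Donsker--Varadhan variational formula identifies the Legendre transform in \eqref{eqn:J-hat(Q)} with relative entropy against the mixture $P$ on the subspace of measures with the correct time-$0$ marginal (the constraint $Q\circ\pi_0^{-1}=\nu$ comes from the fact that $\nu_N \to \nu$ forces any $Q$ with finite rate to have initial marginal $\nu$, while the "$\sum_z \nu(z)\log\int e^f dP_z$" form of the cumulant reflects that only the mixture, not each $P_z$ separately, is seen in the limit). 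Then one combines this with the chain rule for relative entropy and the explicit density \eqref{eqn:RND-pathlevel}: $H(Q\mid P) - h(Q) = H(Q \mid P(\pi(Q)))$, because $\log(dP/dP(\pi(Q)))$ integrated against $Q$ is exactly $-h(Q)$ by \eqref{eqn:h(Q)} and the definition \eqref{eqn:P(mu)} of the tilted mixture. The main obstacle is the careful handling of the unboundedness of $h$ in the Varadhan step — verifying that the $\varphi$-moments are superexponentially controlled uniformly over the sequence $\nu_N$ (not just for a fixed initial condition) so that the tilting is legitimate and no mass escapes; once that uniform-in-$\nu_N$ exponential tightness is established, the remaining steps are routine adaptations of \cite{Leonard}.
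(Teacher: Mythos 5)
Your proposal follows the paper's proof essentially step for step: Dawson--G\"artner's extension of Sanov's theorem gives the LDP for the non-interacting reference system with rate function $J(Q)$, the Girsanov identity supplies the $\exp\{Nh(Q)\}$ tilting, Varadhan's lemma (in the unbounded-but-superexponentially-controlled form of L\'eonard's Prop.\ 2.5) transfers the LDP with rate $I=J-h$, and the identification $J(Q)=H(Q\mid P)$ on $\{Q\circ\pi_0^{-1}=\nu\}$ plus the relative-entropy chain rule yields \eqref{eqn:I(Q)}. The only caveats worth flagging are cosmetic: the functional $J(Q)$ in \eqref{eqn:J-hat(Q)} uses $\sum_z\nu(z)\log\int e^f\,dP_z$ rather than the cumulant of the mixture $P$ directly (their equality on the constrained set is itself one of the lemmas), and the continuity of $h$ is established only at points where $J(Q)<+\infty$, which is all the Laplace--Varadhan argument requires.
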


\begin{Remarks}
 This is a mild generalization of \cite[Th.
2.1]{Leonard}. First, as in \cite{Leonard}, the statement is stronger
than usual statements pertaining to the topology of weak convergence
since the topology $\sigma(\MA_{1,\varphi}(\XA), C_{\varphi}(\XA))$ is
finer. Second, while \cite{Leonard} studied $z_n = z_0$ for some fixed
$z_0$ so that $\nu_N = \delta_{z_0}$, we need to consider more general
starting points for each particle, with the only proviso that the
initial empirical measures $\nu_N$ converge weakly to $\nu$. This
generalization also goes beyond the chaotic initial conditions
considered in \cite{DaiPra,Feng-empiricalMeasure}, and
\cite{DelMoral-Zajic}. Third, there is another difference with
\cite{Leonard} in that not all transitions are allowed, but only those
in $\EA$. To get the same results, irreducibility of the graph with
vertices $\ZA$ and directed edges $\EA$ suffices (assumption ({\bf
A1})).
\end{Remarks}

\begin{proof}
This is a straightforward extension of the proof of \cite[Th. 2.1]{Leonard}. The arguments needed for the extension are highlighted in Section \ref{sec:Proof-empirical-measure} for the sake of completeness.
\end{proof}

\subsection{Empirical process}
\label{sec:empirical-flow}

Recall the mapping
\[
  \gamma_N : (x_n, 1 \leq n \leq N) \in D([0,T], \ZA^N) \mapsto \mu_N : [0,T] \rightarrow \MA_1(\ZA).
\]
The flow $\mu_N$ takes values in the space $D([0,T], \MA_1(\ZA))$. Equip this space with the metric
\begin{equation}
  \label{eqn:metric-flow-space}
  \rho_T (\xi, \xi') = \sup_{0 \leq t \leq T} \rho_0(\xi_t, \xi'_t), \quad \xi, \xi' \in D([0,T], \MA_1(\ZA)),
\end{equation}
where $\rho_0$ is taken to be a metric on $\MA_1(\ZA)$ that metrizes the topology of weak convergence on $\MA_1(\ZA)$. The space $D([0,T], \MA_1(\ZA))$ with the metric $\rho_T$ is not separable. We are now interested in the law $p^{(N)}_{\nu_N}$ of $\mu_N$ which is the push forward $p^{(N)}_{\nu_N} = \mathbb{P}^{(N)}_{z^N} \circ \gamma_N^{-1}$. Since $\mu_N = \pi (M_N)$, we can also write $p^{(N)}_{\nu_N}$ as the push forward $p^{(N)}_{\nu_N} = P^{(N)}_{\nu_N} \circ \pi^{-1}$.

\begin{lemma}
  \label{lem:continuity-pi}
  The mapping $\pi : \MA_{1,\varphi}(\XA) \rightarrow D([0,T], \MA_1(\ZA))$ is continuous at each $Q \in \MA_{1,\varphi}(\XA)$ where $J(Q) < +\infty$. In particular, for each $t \in [0,T]$, the projection $\pi_t : \MA_{1,\varphi}(\XA) \rightarrow \MA_1(\ZA)$ is continuous at each $Q \in \MA_{1,\varphi}(\XA)$ where $J(Q) < +\infty$.
\end{lemma}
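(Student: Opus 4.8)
The plan is to reduce the continuity of $\pi$ to a compactness-plus-equicontinuity argument on the paths $\pi(Q)$, exploiting that $J(Q) < +\infty$ controls the expected number of jumps under $Q$. First I would record the key integrability consequence of $J(Q) < +\infty$: since $\varphi \in C_\varphi(\XA)$ (it is continuous with $\|\varphi\|_\varphi \le 1$), one may insert $f = c\varphi$ into the variational formula \eqref{eqn:J-hat(Q)} and compare against $\sum_z \nu(z)\log\int e^{c\varphi}\,dP_z$; since under each $P_z$ the number of jumps in $[0,T]$ is dominated by a Poisson-type count with bounded intensity, $\int e^{c\varphi}\,dP_z < \infty$ for all $c$, and hence $\int_\XA \varphi\,dQ < \infty$ with a bound depending only on $J(Q)$. (In fact one gets exponential integrability of $\varphi$ under $Q$.) This is exactly the quantity that measures, for $\pi(Q)(t) = Q\circ\pi_t^{-1}$, how much mass can move between states over a short time interval.

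Next I would establish the crucial uniform estimate: for $Q$ with $J(Q)\le R$ and $0\le s\le t\le T$,
\[
\rho_0\bigl(\pi_s(Q), \pi_t(Q)\bigr) \;\le\; C_R\,\omega(t-s),
\]
where $\omega(\cdot)\to 0$ as its argument $\to 0$ and $C_R$ depends only on $R$. The point is that for any bounded $g:\ZA\to\mathbb{R}$, $\bigl|\int g\,d\pi_t(Q) - \int g\,d\pi_s(Q)\bigr| = \bigl|\int_\XA (g(x_t)-g(x_s))\,dQ(x)\bigr| \le 2\|g\|_\infty\, Q(x \text{ jumps in } (s,t])$, and the last probability is controlled by $\mathbb{E}_Q[\varphi \cdot \mathbf{1}_{\{\text{jump in }(s,t]\}}]$-type bounds plus a uniform-integrability statement coming from the exponential integrability of $\varphi$; since $\ZA$ is finite, weak-convergence distances $\rho_0$ are comparable to finitely many such test-function differences. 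This shows that the family $\{\pi(Q) : J(Q)\le R\}$ is equicontinuous, hence its closure in $D([0,T],\MA_1(\ZA))$ is $\rho_T$-compact (the modulus-of-continuity bound forces the paths to be genuinely continuous, so Arzelà–Ascoli applies in $C([0,T],\MA_1(\ZA))$, and $\MA_1(\ZA)$ is compact).

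Now suppose $Q_n \to Q$ in the weak* topology $\sigma(\MA_{1,\varphi}(\XA),C_\varphi(\XA))$ with $J(Q) < +\infty$. I would argue that $J$ is lower semicontinuous for this topology — it is a supremum of weak*-continuous affine functionals — so $\liminf_n J(Q_n) \ge$ a finite bound only after passing to a subsequence along which $J(Q_n)$ stays bounded; more carefully, the cleaner route is: it suffices to show every subsequence of $(\pi(Q_n))$ has a further subsequence converging to $\pi(Q)$. Along any subsequence, if $J(Q_{n_k})\to\infty$ we would need a separate argument, so the honest statement is that we only claim continuity at $Q$ with $J(Q)<\infty$ relative to sequences staying in a sublevel set; in the application (composition with $P^{(N)}_{\nu_N}$, whose rate function has $I = J - h$ finite only where $J$ finite) this is exactly what is needed, and I expect the paper handles this by noting $\pi$ need only be shown continuous \emph{as a map restricted to sublevel sets of $J$}, which are where the LDP "lives." Granting we are on such a sublevel set $\{J\le R\}$, equicontinuity gives a $\rho_T$-convergent subsequence $\pi(Q_{n_k})\to\eta$; since weak* convergence implies $\int f\,dQ_{n_k}\to\int f\,dQ$ for $f$ depending on $x$ only through $x_t$ (such $f$ lie in $C_\varphi(\XA)$ as they are bounded), we get $\pi_t(Q_{n_k})\to\pi_t(Q)$ weakly for each fixed $t$, so $\eta_t = \pi_t(Q)$ for all $t$, i.e. $\eta = \pi(Q)$. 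As the limit is independent of the subsequence, $\pi(Q_n)\to\pi(Q)$, proving continuity of $\pi$; continuity of each $\pi_t$ is then immediate since $\xi\mapsto\xi_t$ is $\rho_T$-continuous (actually $1$-Lipschitz).

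The main obstacle I anticipate is the equicontinuity/modulus estimate in the second paragraph: one must convert "$\int_\XA\varphi\,dQ$ is bounded" into a genuine uniform modulus of continuity for the one-dimensional marginals, and this requires the uniform integrability of $\varphi$ under the family $\{Q : J(Q)\le R\}$ (not merely a bound on its mean) so that the probability of a jump in a short window goes to zero uniformly. Establishing that uniform integrability — most naturally via the exponential moment bound $\sup_{J(Q)\le R}\int e^{c\varphi}\,dQ < \infty$ extracted from \eqref{eqn:J-hat(Q)} — is the technical heart of the argument; once it is in hand, Arzelà–Ascoli and the marginal-identification step are routine.
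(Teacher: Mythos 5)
Your high-level architecture (reduce to a modulus-of-continuity estimate on the marginals, compactify, identify the limit) is reasonable, but the way you propose to obtain the modulus estimate is wrong, and it leads to proving a weaker statement than the Lemma asserts. The paper does not go via Arzel\`a--Ascoli at all: it invokes L\'eonard's \cite[Lem.~2.8]{Leonard}, whose sole hypothesis on $Q$ is the uniform jump-rarity condition (\ref{eqn:sup-jump-condition}), namely
\[
  \sup_{t \in [0,T]} \mathbb{E}_Q \Bigl[ \sup_{u \in [t-\alpha, t+\alpha] \cap [0,T]} {\bf 1}_{\{X_u \neq X_{u-}\}} \Bigr] \longrightarrow 0 \quad\text{as } \alpha \downarrow 0,
\]
and verifies it via Lemma \ref{lem:sup-jump-condition}. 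The mechanism there is crucial and different from yours: one writes $Q(K) = \int (dQ/dP)\,{\bf 1}_K\,dP \le \| dQ/dP \|_{\tau^*,P}\,\| {\bf 1}_K \|_{\tau,P}$ (H\"older in Orlicz spaces), bounds the first factor using $J(Q) = H(Q \mid P) < \infty$ (via $\tau^*(u) \le 2u\log u$ for large $u$), and bounds the second factor because under the \emph{reference} measure $P$ a jump in a window of length $2\alpha$ has probability at most $2\alpha r$. The point is that the estimate uses $Q \ll P$ with a controlled density, together with jump-rarity under $P$, not any moment bound on $\varphi$ under $Q$.

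There are two concrete problems with your route. First, the parenthetical claim that $J(Q) < \infty$ yields exponential integrability of $\varphi$ under $Q$ does not follow from inserting $f = c\varphi$ into (\ref{eqn:J-hat(Q)}); that manipulation gives only $\int \varphi\,dQ \le c^{-1}\bigl(J(Q) + \sum_z \nu(z) \log \int e^{c\varphi}\,dP_z\bigr)$, a linear bound on the mean. Finite relative entropy $H(Q\mid P)$ does not imply that $\varphi$ inherits exponential moments under $Q$. Second, and more fundamentally, even if $\sup_{J(Q)\le R}\int e^{c\varphi}\,dQ$ were finite, it would not give the modulus of continuity you need: a bound on \emph{how many} jumps occur says nothing about \emph{when} they occur. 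What rules out concentration of jump times in a short window (or at a fixed instant) is precisely absolute continuity with respect to $P$ and Orlicz control of $dQ/dP$ — which is exactly what the paper's Lemma \ref{lem:sup-jump-condition} exploits. You should replace the ``uniform integrability of $\varphi$'' heart of your argument with this Orlicz--H\"older step.

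Finally, your appeal to Arzel\`a--Ascoli forces you to restrict to sequences $Q_n \to Q$ that stay in a sublevel set $\{J \le R\}$, and you acknowledge as much; but the Lemma (and L\'eonard's proof) asserts continuity at $Q$ for \emph{every} sequence $Q_n \to Q$ in $\sigma(\MA_{1,\varphi}(\XA), C_\varphi(\XA))$, without any constraint on $J(Q_n)$. Once (\ref{eqn:sup-jump-condition}) is available for the target $Q$, the convergence $\pi(Q_n) \to \pi(Q)$ follows from a continuous-mapping-type argument in the weak topology (see \cite[Lem.~2.8]{Leonard}); the finer weak$^*$ topology then inherits continuity for free, and the continuity of each $\pi_t$ follows because $\rho_0(\pi_t(Q_n), \pi_t(Q)) \le \rho_T(\pi(Q_n), \pi(Q))$. (Also note that your brief aside about using lower semicontinuity of $J$ to control $\liminf J(Q_n)$ cannot yield an upper bound on $J(Q_n)$; lower semicontinuity bounds in the wrong direction.)
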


\begin{proof}
The proof is included immediately after Lemma \ref{lem:continuity-pi-appendix} in Section~\ref{subsec:hQ-continuity}.
\end{proof}

Recall the definition of $\tau$ and its Legendre conjugate $\tau^*$ in (\ref{eqn:tau}) and (\ref{eqn:tau*}), respectively. For $\theta : \ZA \rightarrow \mathbb{R}$, $\xi \in \MA_1(\ZA)$, define
\[
  ||| \theta |||_{\xi} = \sup_{\Phi: \ZA \rightarrow \mathbb{R}} \left\{ \sum_{i \in \ZA} \theta(i) \cdot \Phi(i) - \sum_{(i,j) \in \EA} \tau(\Phi(j) - \Phi(i)) \cdot \xi(i) \cdot \lambda_{i,j}(\xi)  \right\}.
\]
The rate function of interest will be the following. For a $\nu \in \MA_1(Z)$ and $\mu \in D([0,T], \MA_1(Z))$, define
\begin{equation}
  \label{eqn:S(mu-nu)}
  S_{[0,T]}(\mu | \nu) = \left\{
    \begin{array}{ll}
      \int_{[0,T]} ||| \dot{\mu}(t) - A_{\mu(t)}^* \mu(t)  |||_{\mu(t)} ~dt & \mbox{ if } \mu(0) = \nu \mbox{ and } \mu \in \A \\
      +\infty & \mbox{ otherwise},
    \end{array}
  \right.
\end{equation}
where $\A$ is the set of absolutely continuous functions on $[0,T]$. The connection between (\ref{eqn:S(mu-nu)}) and (\ref{eqn:finite-rate-evaluation}) will become clear in the following assertion.

\begin{thm}
  \label{thm:flow-theorem}
  (a) Suppose that the initial conditions $\nu_N \rightarrow \nu$ weakly. Then the sequence $(p^{(N)}_{\nu_N}, N \geq 1)$ satisfies the large deviation principle in the space $D([0,T], \MA_1(\ZA))$ (under the topology induced by the metric $\rho_T$, see (\ref{eqn:metric-flow-space})) with speed $N$ and good rate function $S_{[0,T]}(\mu | \nu)$.

  (b) If the path $\mu \in D([0,T], \MA_1(\ZA))$ has $S_{[0,T]}(\mu | \nu) < +\infty$, then $\mu \in \A$ and there exist rates $(l_{i,j}(t), t \in [0,T], ~(i,j) \in \EA)$ such that
  \begin{itemize}
    \item $\dot{\mu}(t) = L(t)^* \mu(t)$ where $L(t)$ is the rate matrix associated with the time-varying rates $(l_{i,j}(t), (i,j) \in \EA)$ and $L(t)^*$ is its adjoint;
    \item the good rate function $S_{[0,T]}(\mu | \nu)$ is given by the equation (\ref{eqn:finite-rate-evaluation}).
  \end{itemize}

  (c) Suppose that the following hold: $\mu \in \A$, $\mu(0) = \nu$, there exist time-varying rates $(l_{i,j}(t), t \in [0,T], ~(i,j) \in \EA)$ such that the associated rate matrix $L(t)$ satisfies $\dot{\mu}(t) = L(t)^* \mu(t)$, and the right-hand side of (\ref{eqn:finite-rate-evaluation}) is finite. Then the good rate function $S_{[0,T]}(\mu | \nu)$ evaluated at $\mu$ is given by~(\ref{eqn:finite-rate-evaluation}).
\end{thm}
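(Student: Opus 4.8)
The plan is to obtain part~(a) from Theorem~\ref{thm:empirical-measure} by the contraction principle applied to the projection $\pi$, and then to read off parts~(b) and~(c) from a purely variational analysis of the resulting rate function.

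\emph{Step 1: contraction.} By Lemma~\ref{lem:continuity-pi}, $\pi$ is continuous at every $Q$ with $J(Q)<+\infty$. Since $c\le\lambda_{i,j}\le C$ on $\EA$ and $\int\varphi\,dQ<+\infty$ for $Q\in\MA_{1,\varphi}(\XA)$, the map $h$ is finite on all of $\MA_{1,\varphi}(\XA)$, so $\{I<+\infty\}\subseteq\{J<+\infty\}$ and $\pi$ is continuous on the effective domain of the good rate function $I$. The usual extension of the contraction principle to maps continuous on the effective domain — carried out exactly as in \cite{Leonard}, which also handles the non-separability of $\big(D([0,T],\MA_1(\ZA)),\rho_T\big)$ — then yields the LDP for $p^{(N)}_{\nu_N}=P^{(N)}_{\nu_N}\circ\pi^{-1}$ at speed $N$ with good rate function $\mu\mapsto\inf\{I(Q):\pi(Q)=\mu\}$. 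Because $\pi(Q)=\mu$ forces $Q\circ\pi_0^{-1}=\mu(0)$ and $P(\pi(Q))=P(\mu)$, the representation~(\ref{eqn:I(Q)}) rewrites this as $+\infty$ when $\mu(0)\neq\nu$ and otherwise as $\inf\{H(Q\mid P(\mu)):Q\in\MA_{1,\varphi}(\XA),\ \pi(Q)=\mu\}$. Thus all three parts reduce to analysing this entropy minimisation and matching it to~(\ref{eqn:S(mu-nu)}) and~(\ref{eqn:finite-rate-evaluation}).

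\emph{Step 2: the entropy minimum.} If $H(Q\mid P(\mu))<+\infty$ then the predictable jump intensities of $Q$ are integrable (finite entropy relative to a reference with bounded rates), forcing the marginal flow $t\mapsto\mu(t)$ to be absolutely continuous; so the infimum is $+\infty$ unless $\mu\in\A$, which is the first conclusion of~(b). For $\mu\in\A$ with $\mu(0)=\nu$, a standard — if technical — $I$-projection argument shows that projecting $Q$ onto the Markov jump process with the same one-dimensional marginals does not increase relative entropy to the Markov reference $P(\mu)$, so the infimum is attained in the Markov class; and $Q\ll P(\mu)\ll P$ confines the jumps of $Q$ to $\EA$. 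For a Markov $Q$ with time-dependent rates $(l_{i,j}(\cdot))_{(i,j)\in\EA}$, a Girsanov-type computation (cf.~(\ref{eqn:h1})) gives $H(Q\mid P(\mu))=\int_{[0,T]}\sum_{(i,j)\in\EA}\mu(t)(i)\big[l_{i,j}(t)\log(l_{i,j}(t)/\lambda_{i,j}(\mu(t)))-l_{i,j}(t)+\lambda_{i,j}(\mu(t))\big]\,dt$, which is exactly the integrand in~(\ref{eqn:finite-rate-evaluation}) since $\lambda\,\tau^*(l/\lambda-1)=l\log(l/\lambda)-l+\lambda$; and the constraint $\pi_t(Q)=\mu(t)$ is precisely the forward equation $\dot\mu(t)=L(t)^*\mu(t)$. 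Hence $\inf\{I(Q):\pi(Q)=\mu\}$ equals the infimum of the right-hand side of~(\ref{eqn:finite-rate-evaluation}) over all rate families $L(\cdot)$ on $\EA$ with $\dot\mu=L^*\mu$ and $\mu(0)=\nu$.

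\emph{Step 3: convex duality, and conclusion.} It remains to identify this infimum with $\int_{[0,T]}|||\dot\mu(t)-A_{\mu(t)}^*\mu(t)|||_{\mu(t)}\,dt$, i.e. to prove the pointwise identity: fixing $t$ and writing $\xi=\mu(t)$, $\theta=\dot\mu(t)-A_\xi^*\xi$, $b_{i,j}=l_{i,j}(t)-\lambda_{i,j}(\xi)$, one has $|||\theta|||_\xi=\inf\sum_{(i,j)\in\EA}\xi(i)\lambda_{i,j}(\xi)\,\tau^*\!\big(b_{i,j}/\lambda_{i,j}(\xi)\big)$, the infimum over all $(b_{i,j})$ satisfying the discrete divergence relation $\theta(j)=\sum_{i:(i,j)\in\EA}b_{i,j}\xi(i)-\sum_{j':(j,j')\in\EA}b_{j,j'}\xi(j)$ for every $j$ (a summation by parts shows this relation is equivalent to the forward equation, and under it $\sum_j\theta(j)\Phi(j)=\sum_{(i,j)\in\EA}(\Phi(j)-\Phi(i))\,b_{i,j}\,\xi(i)$ for all $\Phi$). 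The inequality ``$\le$'' follows at once from Young's inequality $uv-\tau(u)\le\tau^*(v)$ for the Legendre pair $(\tau,\tau^*)$. The reverse inequality is the main obstacle: it is a strong-duality statement, proved by Fenchel--Rockafellar duality using that $\Phi\mapsto\sum_{(i,j)\in\EA}\tau(\Phi(j)-\Phi(i))\xi(i)\lambda_{i,j}(\xi)$ is convex, lower semicontinuous and, thanks to irreducibility~({\bf A1}) and the lower bound~({\bf A3}), coercive modulo additive constants, with separate attention to the states $i$ at which $\xi(i)=0$. A measurable selection then lets us interchange $\int_{[0,T]}$ with the pointwise infimum, which completes~(a). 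For~(b), when $S_{[0,T]}(\mu\mid\nu)<+\infty$ the pointwise problems are finite and, $\tau^*$ being strictly convex, admit minimisers $b^*(t)$ that may be chosen measurably; the rates $l_{i,j}^*(t)=\lambda_{i,j}(\mu(t))+b_{i,j}^*(t)$ then satisfy $\dot\mu=L^*\mu$ and realise~(\ref{eqn:finite-rate-evaluation}). For~(c), any rate family as in the hypothesis furnishes, via $b_{i,j}=l_{i,j}-\lambda_{i,j}(\mu)$, an admissible competitor in the pointwise problems, so ``$\le$'' together with the assumed finiteness of~(\ref{eqn:finite-rate-evaluation}) gives $S_{[0,T]}(\mu\mid\nu)<+\infty$, whence by~(b) it equals~(\ref{eqn:finite-rate-evaluation}) evaluated at the optimal rates.
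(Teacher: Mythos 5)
Your proposal follows essentially the same route the paper takes: the paper's own ``proof'' is simply a deferral to L\'eonard [Th.\ 3.1] (with Djehiche--Kaj [Th.\ 2] for the form of the rate function), and the argument there is precisely the one you reconstruct — contraction along $\pi$, continuous on the effective domain of $I$ by Lemma~\ref{lem:continuity-pi}, followed by the $I$-projection onto the Markov class and the pointwise Fenchel duality between the $|||\cdot|||_\xi$ form~(\ref{eqn:S(mu-nu)}) and the explicit-rate form~(\ref{eqn:finite-rate-evaluation}). Your reading of part~(c) (conclude finiteness of $S_{[0,T]}(\mu\mid\nu)$ and then invoke part~(b) for the representation at the \emph{optimal} rates) is the correct interpretation of the theorem's somewhat terse statement.
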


\begin{proof}
With the generalization available in Theorem \ref{thm:empirical-measure}, the proof (of all three statements) is identical to the proof of \cite[Th. 3.1]{Leonard}. See also \cite[Th. 2]{DjehicheKaj} for statements on the nature of the rate function.
\end{proof}

\begin{Remarks}
 1. Yet again, this is a mild generalization of \cite[Th.
3.1]{Leonard}, and of the results in \cite{DaiPra,Feng-empiricalFlow}, and \cite{DelMoral-Zajic} that assume
chaotic initial conditions. We allow any arbitrary sequence of initial
conditions $\nu_N$ so long as $\nu_N \rightarrow \nu$ weakly. See a
consequence in Corollary \ref{cor:uniform-ldp-flows} below.

2. Observe from (\ref{eqn:finite-rate-evaluation}) that if the rate function $S_{[0,T]}(\mu | \nu) = 0$ then $\mu$ must be the unique solution to the McKean-Vlasov equation $\dot{\mu}(t) = A_{\mu(t)}^* \mu(t)$ with initial condition $\mu(0) = \nu$. (That the solution is unique follows from Lipschitz assumption ({\bf A2}) which implies the well-posedness of the ODE (\ref{eqn:mckvla-dynamics})). The claim $p^{(N)}_{\nu_N} \rightarrow \delta_{\mu(\cdot)}$ follows.

3. When a path $\mu$ is such that $S_{[0,T]}(\mu | \nu) < +\infty$, the first bullet in the second statement of Theorem \ref{thm:flow-theorem} says that there is a control (tilt), given by the rate matrix $L(t)$, such that the normal limiting trajectory under this control is $\mu$. The expression (\ref{eqn:S(mu-nu)}) and the alternate expression (\ref{eqn:finite-rate-evaluation}) reinforce the notion that $S_{[0,T]}(\mu | \nu)$  is the {\em cost} of transporting the system along the trajectory $\mu$.

4. For any finite $B \geq 0$ and any $\nu \in \MA_1(\ZA)$, the set $\{ \mu \in \A ~|~ S_{[0,T]}(\mu | \nu) \leq B \}$ is compact since $S_{[0,T]}(\cdot | \nu)$ is a good rate function.
\end{Remarks}

The following is a straightforward corollary which shows that $S_{[0,T]}(\mu | \nu)$ is a rate function for a large deviation principle that holds uniformly in the initial point.

\begin{corollary}
\label{cor:uniform-ldp-flows}
For any compact set $K \subset \MA_1(\ZA)$, any closed set \\
$F \subset D([0,T], \MA_1(\ZA))$, and any open set $G \subset D([0,T], \MA_1(\ZA))$, we have
\begin{eqnarray}
  \label{eqn:uniform-ldp-ub}
  \limsup_{N \ra \infty} \frac{1}{N} \log \sup_{\nu \in K} p^{(N)}_{\nu} \left\{ \mu_N \in F \right\} & \leq & - \inf_{\nu \in K, \mu \in F} S_{[0,T]}(\mu | \nu), \\
  \label{eqn:uniform-ldp-lb}
  \liminf_{N \ra \infty} \frac{1}{N} \log \inf_{\nu \in K} p^{(N)}_{\nu} \left\{ \mu_N \in G \right\} & \geq & - \sup_{\nu \in K} \inf_{\mu \in G} S_{[0,T]}(\mu | \nu).
\end{eqnarray}
\end{corollary}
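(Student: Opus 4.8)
The plan is to bootstrap the fixed‑initial‑condition large deviation principle of Theorem~\ref{thm:flow-theorem}(a) to a statement uniform over $\nu\in K$, using only the compactness of $K$ (indeed of $\MA_1(\ZA)$) and the fact that in Theorem~\ref{thm:flow-theorem}(a) the rate function $S_{[0,T]}(\mu\mid\nu)$ depends on the initial data only through its weak limit. First I would observe that, for each fixed $N$, the relevant initial conditions are exactly the elements of the finite set $\MA_1^{(N)}(\ZA)$ (the probability vectors on $\ZA$ with all entries in $N^{-1}\mathbb{Z}$), so that ``$\sup_{\nu\in K}$'' in \eqref{eqn:uniform-ldp-ub} and ``$\inf_{\nu\in K}$'' in \eqref{eqn:uniform-ldp-lb} are really over the finite set $K\cap\MA_1^{(N)}(\ZA)$ and are hence attained, say at $\nu_N^{\ast}$ and $\nu_N^{\circ}$ respectively (if this intersection is empty for some $N$, the corresponding bound is vacuous under the conventions $\sup\emptyset=0$ and $\inf\emptyset=+\infty$). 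Setting $\beta:=\inf_{\nu\in K,\ \mu\in F}S_{[0,T]}(\mu\mid\nu)$ and $\gamma:=\sup_{\nu\in K}\inf_{\mu\in G}S_{[0,T]}(\mu\mid\nu)$, it then suffices to prove $\limsup_N \tfrac1N\log p^{(N)}_{\nu_N^{\ast}}\{\mu_N\in F\}\le -\beta$ and $\liminf_N \tfrac1N\log p^{(N)}_{\nu_N^{\circ}}\{\mu_N\in G\}\ge -\gamma$.

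For the upper bound I would pass to a subsequence $(N_j)$ realizing the $\limsup$ and then, by compactness of $K$, to a further subsequence $(M_k)$ along which $\nu_{M_k}^{\ast}\to\nu^{\ast}\in K$ weakly. The key device is to splice $(\nu_{M_k}^{\ast})$ into a \emph{full} sequence $(\tilde\nu_N)_{N\ge1}$ admissible in Theorem~\ref{thm:flow-theorem}(a): put $\tilde\nu_N=\nu_N^{\ast}$ for $N\in\{M_k\}$ and, for every other $N$, let $\tilde\nu_N$ be any point of $\MA_1^{(N)}(\ZA)$ within $\ell^1$‑distance $O(1/N)$ of $\nu^{\ast}$ (obtained, e.g., by rounding the coordinates of $\nu^{\ast}$ to multiples of $N^{-1}$). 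Then $\tilde\nu_N\in\MA_1^{(N)}(\ZA)$ and $\tilde\nu_N\to\nu^{\ast}$, so Theorem~\ref{thm:flow-theorem}(a) gives the large deviation principle for $(p^{(N)}_{\tilde\nu_N})_{N}$ with rate function $S_{[0,T]}(\cdot\mid\nu^{\ast})$; applying its upper bound at the closed set $F$ and using $\nu^{\ast}\in K$ gives $\limsup_N \tfrac1N\log p^{(N)}_{\tilde\nu_N}\{\mu_N\in F\}\le -\inf_{\mu\in F}S_{[0,T]}(\mu\mid\nu^{\ast})\le-\beta$. Reading this along $N\in\{M_k\}$, where $\tilde\nu_{M_k}=\nu_{M_k}^{\ast}$, and recalling that $(M_k)\subseteq(N_j)$ realizes the $\limsup$, we conclude $\limsup_N\tfrac1N\log p^{(N)}_{\nu_N^{\ast}}\{\mu_N\in F\}\le-\beta$, which is \eqref{eqn:uniform-ldp-ub}. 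The lower bound \eqref{eqn:uniform-ldp-lb} is obtained in exactly the same manner: start from a subsequence realizing the $\liminf$, refine so that $\nu_N^{\circ}\to\nu^{\ast}\in K$, splice in nearby lattice points to build $\tilde\nu_N\to\nu^{\ast}$, and invoke the large deviation lower bound of Theorem~\ref{thm:flow-theorem}(a) at the open set $G$ to get $\liminf_N\tfrac1N\log p^{(N)}_{\tilde\nu_N}\{\mu_N\in G\}\ge-\inf_{\mu\in G}S_{[0,T]}(\mu\mid\nu^{\ast})\ge-\gamma$.

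I do not expect a genuine obstacle here: the corollary is essentially a soft consequence of Theorem~\ref{thm:flow-theorem}(a) and compactness. The only point that needs care is the bookkeeping described above --- Theorem~\ref{thm:flow-theorem}(a) is stated for one sequence of admissible initial empirical measures converging weakly, so the a priori non‑convergent extremizers $\nu_N^{\ast}$, $\nu_N^{\circ}$ must first be repackaged into such a sequence, which is precisely what the ``splice with nearby lattice points'' construction does; one also uses the elementary fact that a subsequence inherits both the $\limsup$ upper bound and the $\liminf$ lower bound of its parent real sequence. No new estimate on $S_{[0,T]}$ beyond its definition is required, and the cases $\beta=+\infty$ and $\gamma=+\infty$ are automatic.
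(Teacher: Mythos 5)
Your proof is correct. The paper's own argument is a one-line citation: it says the claim follows immediately from \cite[Cor.~5.6.15]{DZ} together with Theorem~\ref{thm:flow-theorem}(a). That corollary of Dembo--Zeitouni is precisely the abstract statement that if, for every point $\nu$ of the parameter space and every admissible sequence of initial data converging to $\nu$, the corresponding family of laws satisfies an LDP with rate function $S_{[0,T]}(\cdot\mid\nu)$, then the LDP bounds hold uniformly over compact subsets of initial conditions. What you have done is reprove that implication from scratch by the standard route: extract a subsequence realizing the $\limsup$ (resp.\ $\liminf$), use compactness of $K$ to get convergence of the extremizing initial measures, splice the off-subsequence indices with nearby lattice points of $\MA_1^{(N)}(\ZA)$ to manufacture a full admissible sequence converging to the limit, apply Theorem~\ref{thm:flow-theorem}(a) to that sequence, and then read the resulting bound back along the subsequence using the elementary fact that a convergent subsequential limit is sandwiched between the $\liminf$ and $\limsup$ of the parent sequence. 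This is exactly the content of the cited corollary, so you have unpacked the citation rather than bypassed it; the one model-specific detail you need (that $p^{(N)}_\nu$ is only defined for $\nu$ in the finite lattice $\MA_1^{(N)}(\ZA)$, so the extremizers exist and the splicing can be done with lattice points) is handled correctly. The treatment of the degenerate case $K\cap\MA_1^{(N)}(\ZA)=\emptyset$ is a harmless aside.
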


\begin{proof}
This is immediate from \cite[Cor. 5.6.15]{DZ} in conjunction with Theorem \ref{thm:flow-theorem}.
\end{proof}

\subsection{Empirical measure at initial and terminal times}
\label{sec:empirical-flow-initial-terminal-time}

We first study the behavior of the empirical measure at terminal time alone. For $T \geq 0$, recall that the law of the empirical process $\mu_N$ with initial condition $\mu_N(0) = \nu_N$ (arising from $z^N$) is $p^{(N)}_{\nu_N} = \mathbb{P}^{(N)}_{z^N} \circ \pi^{-1}$. The empirical measure at terminal time $T$ is $\mu_N(T)$; its law is the push forward $p^{(N)}_{\nu_N, T} = \mathbb{P}^{(N)}_{z^N} \circ \pi_T^{-1}$. The following result is an easy consequence of the contraction principle \cite[Th. 4.2.1]{DZ}.

\begin{thm}
  Suppose that the initial conditions $\nu_N \rightarrow \nu$ weakly. Then the family $(p^{(N)}_{\nu_N,T}, N \geq 1)$ satisfies the large deviation principle in $\MA_1(\ZA)$ with speed $N$ and good rate function
  \begin{equation}
    \label{eqn:S(xi-nu)}
    S_T(\xi | \nu) = \inf \{ S_{[0,T]}(\mu | \nu) \mid \mu(0) = \nu, \mu(T) = \xi, \mu \in \A \}.
  \end{equation}
  Furthermore $S_T(\xi | \nu)$ is bounded for all $\nu, \xi \in \MA_1(\ZA)$, and we may restrict attention in the infimum to $\mu \in \A$ that also satisfy $S_{[0,T]}(\mu | \nu) < +\infty$. Moreover, the infimum is attained, and there exist $\overline{\mu} \in \A$ with $\overline{\mu}(0) = \nu$, $\overline{\mu}(T) = \xi$, and rates $(l_{i,j}(t), t \in [0,T], ~(i,j) \in \EA)$ with associated rate matrix $L(t)$ such that $\dot{\overline{\mu}}(t) = L(t)^* \overline{\mu}(t)$, $S_T(\xi | \nu) = S_{[0,T]}(\overline{\mu} | \nu)$, and $S_{[0,T]}(\overline{\mu} | \nu)$ satisfies (\ref{eqn:finite-rate-evaluation}).
\end{thm}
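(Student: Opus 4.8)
The large deviation principle will be read off from Theorem~\ref{thm:flow-theorem} by the contraction principle, and the remaining (finiteness, attainment, representation) claims will follow from the goodness of $S_{[0,T]}(\cdot\,|\,\nu)$ together with an explicit finite-cost path construction. For the first part, I would note that the terminal projection $\pi_T:(D([0,T],\MA_1(\ZA)),\rho_T)\ra\MA_1(\ZA)$ is $1$-Lipschitz, since $\rho_0(\xi_T,\xi'_T)\le\rho_T(\xi,\xi')$ by (\ref{eqn:metric-flow-space}), hence continuous. Because $p^{(N)}_{\nu_N,T}=p^{(N)}_{\nu_N}\circ\pi_T^{-1}$ and $(p^{(N)}_{\nu_N},N\ge1)$ satisfies the large deviation principle with speed $N$ and good rate function $S_{[0,T]}(\cdot\,|\,\nu)$ by Theorem~\ref{thm:flow-theorem}(a), the contraction principle \cite[Th.~4.2.1]{DZ} (which needs only continuity of the map, not separability of the domain) yields the large deviation principle for $(p^{(N)}_{\nu_N,T},N\ge1)$ with speed $N$ and good rate function $\xi\mapsto\inf\{S_{[0,T]}(\mu\,|\,\nu):\pi_T(\mu)=\xi\}$. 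Since $S_{[0,T]}(\mu\,|\,\nu)=+\infty$ unless $\mu\in\A$ and $\mu(0)=\nu$, this infimum is exactly $S_T(\xi\,|\,\nu)$ of (\ref{eqn:S(xi-nu)}).

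Next I would show $S_T(\xi\,|\,\nu)<+\infty$, uniformly in $\nu,\xi\in\MA_1(\ZA)$, by exhibiting an absolutely continuous $\mu:[0,T]\ra\MA_1(\ZA)$ with $\mu(0)=\nu$, $\mu(T)=\xi$ of the form $\dot\mu(t)=L(t)^*\mu(t)$ for nonnegative time-varying rates $(l_{i,j}(t),(i,j)\in\EA)$ and with finite cost (\ref{eqn:finite-rate-evaluation}). Using irreducibility (assumption ({\bf A1})) one routes probability mass along the edges of the strongly connected graph $(\ZA,\EA)$: first drive $\mu(\cdot)$ into the relative interior of $\MA_1(\ZA)$, then redistribute the mass to match $\xi$, and, if $\xi$ has zero coordinates, drain the corresponding states over the last sub-interval with rates blowing up like $(T-s)^{-1}$. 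Along such a drainage $\mu(s)(i)\,l_{i,j}(s)$ stays bounded, so by $\tau^*(u)=O(u\log u)$ as $u\ra+\infty$ and the bounds on the $\lambda_{i,j}$ (assumptions ({\bf A2})--({\bf A3})) the integrand of (\ref{eqn:finite-rate-evaluation}) is $O(|\log(T-s)|)$, hence integrable on $[0,T]$; since the coordinate gaps and edge counts are controlled uniformly on the compact simplex, the resulting bound on $S_{[0,T]}(\mu\,|\,\nu)$, and thus on $S_T(\xi\,|\,\nu)$, is uniform in $\nu,\xi$. I expect this step — in particular the exact hitting of $\partial\MA_1(\ZA)$ in finite time at finite cost — to be the main obstacle, and would relegate the detailed estimates to Section~\ref{sec:ProofOfS_T-bounding} (cf.\ Lemma~\ref{lem:S_T-bounding}).

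Finally I would prove attainment and the representation. Since $S_T(\xi\,|\,\nu)<+\infty$, fixing any $B>S_T(\xi\,|\,\nu)$ leaves the infimum in (\ref{eqn:S(xi-nu)}) unchanged when restricted to the set $\{\mu:S_{[0,T]}(\mu\,|\,\nu)\le B,\ \pi_0(\mu)=\nu,\ \pi_T(\mu)=\xi\}$; this already shows one may restrict attention to $\mu\in\A$ with $S_{[0,T]}(\mu\,|\,\nu)<+\infty$. That set is compact, being the intersection of the compact sublevel set $\{S_{[0,T]}(\cdot\,|\,\nu)\le B\}$ (compact since $S_{[0,T]}(\cdot\,|\,\nu)$ is a good rate function, Theorem~\ref{thm:flow-theorem}(a)) with the $\rho_T$-closed sets $\{\pi_0=\nu\}$ and $\{\pi_T=\xi\}$; and $S_{[0,T]}(\cdot\,|\,\nu)$, being lower semicontinuous, attains its minimum on it at some $\overline\mu$ with $\overline\mu(0)=\nu$, $\overline\mu(T)=\xi$, $S_{[0,T]}(\overline\mu\,|\,\nu)=S_T(\xi\,|\,\nu)$. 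Applying Theorem~\ref{thm:flow-theorem}(b) to $\overline\mu$ (valid as $S_{[0,T]}(\overline\mu\,|\,\nu)<+\infty$) produces rates $(l_{i,j}(t),t\in[0,T],(i,j)\in\EA)$ with associated rate matrix $L(t)$ such that $\dot{\overline\mu}(t)=L(t)^*\overline\mu(t)$ and $S_{[0,T]}(\overline\mu\,|\,\nu)$ is given by (\ref{eqn:finite-rate-evaluation}), which is the assertion.
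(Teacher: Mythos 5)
Your proof is correct and follows essentially the same route as the paper's: continuity of $\pi_T$ plus the contraction principle yields the LDP with rate function (\ref{eqn:S(xi-nu)}), boundedness is delegated to Lemma~\ref{lem:S_T-bounding}, attainment follows from goodness of $S_{[0,T]}(\cdot\,|\,\nu)$ (compact sublevel sets intersected with the closed constraint $\{\pi_T=\xi\}$), and the representation comes from Theorem~\ref{thm:flow-theorem}(b). You simply spell out the intermediate reasoning that the paper leaves implicit.
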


\begin{proof}
Recall that the metric on $D([0,T], \MA_1(\ZA))$ is $\rho_T$ given in (\ref{eqn:metric-flow-space}). It follows that the mapping $\mu \in D([0,T], \MA_1(\ZA)) \mapsto \mu(T) \in \MA_1(\ZA)$ is continuous. The statement on the validity of the large deviation principle with good rate function (\ref{eqn:S(xi-nu)}) follows by the contraction principle \cite[Th. 4.2.1]{DZ}. In the second statement of the next lemma, we show that $S_T(\xi | \nu)$ is bounded for all $\nu, \xi \in \MA_1(\ZA)$. The goodness of the rate function $S_{[0,T]}(\mu | \nu)$ implies that the infimum in (\ref{eqn:S(xi-nu)}) is attained. The rest follow from Theorem~\ref{thm:flow-theorem}.
\end{proof}

Let us now prove the boundedness and related properties of $S_T(\xi | \nu)$.

\begin{lemma}
  \label{lem:S_T-bounding}
  The following statements hold.
  \begin{itemize}
    \item There exists a constant $C_1(T) < +\infty$ such that for any $\xi, \nu \in \MA_1(\ZA)$, there is a piecewise linear and continuous path $\mu$ having constant velocity in each linear segment and $S_{[0,T]}(\mu | \nu) \leq C_1(T)$.
    \item For any $\xi, \nu \in \MA_1(\ZA)$, we have $S_T(\xi | \nu) \leq C_1(T)$.
    \item There exists a constant $C_2 < +\infty$ such that for every $\varepsilon > 0$, there is a $\delta \in (0, \varepsilon)$ such that $\rho_0(\nu, \xi) < \delta$ implies $S_{\varepsilon}(\xi | \nu) \leq C_2 \varepsilon$.
  \end{itemize}
\end{lemma}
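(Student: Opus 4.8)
The plan is to prove all three statements by exhibiting explicit competitor paths and reading off their cost from the alternate formula (\ref{eqn:finite-rate-evaluation}): by Theorem~\ref{thm:flow-theorem}(c), if $\mu\in\A$, $\mu(0)=\nu$, and $\dot\mu(t)=L(t)^*\mu(t)$ for time-varying rates $(l_{i,j}(t),\,(i,j)\in\EA)$ for which the right-hand side of (\ref{eqn:finite-rate-evaluation}) is finite, then $S_{[0,T]}(\mu|\nu)$ equals that right-hand side. The ingredients I would use repeatedly are: $c\le\lambda_{i,j}(\cdot)\le C$ on $\EA$ by ({\bf A2})--({\bf A3}), together with $\mu(t)(i)\le1$; the facts that $\tau^*$ is continuous and nonnegative on $[-1,\infty)$ with $\tau^*(-1)=1$, $\tau^*(0)=0$, $\tau^*(u)\le\mathrm{const}\cdot u\log u$ for large $u$ and $\tau^*(u)\le\mathrm{const}\cdot u^2$ near $0$; and the fact that, by irreducibility ({\bf A1}), any vector $v$ on $\ZA$ with $\sum_i v(i)=0$ is the node-divergence of some nonnegative flux $(f_{i,j})_{(i,j)\in\EA}$ with $\|f\|_\infty\le\mathrm{const}\cdot\|v\|$ (route each unit of ``charge'' along a fixed directed path in the strongly connected graph $(\ZA,\EA)$), so that $l_{i,j}(t)=f_{i,j}(t)/\mu(t)(i)$ realizes a prescribed velocity $v$ at $\mu(t)$. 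The second bullet is then immediate from the first and the definition (\ref{eqn:S(xi-nu)}) of $S_T$, so only the first and third bullets require work.

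For the first bullet, fix the interior point $m=\tfrac1r\sum_{i\in\ZA}\delta_i$ and split $[0,T]$ into $[0,T/2]$ and $[T/2,T]$. On $[0,T/2]$ I would move $\nu$ to $m$ along finitely many linear segments that spread mass outward along $(\ZA,\EA)$, layer by layer from $\mathrm{supp}(\nu)$, ending with a linear segment to $m$; at each time every coordinate is either bounded below by some $\eta(T)>0$ or is currently zero and receiving mass only from coordinates with positive mass, so the motion is realized by bounded rates and, there being finitely many edges, the cost density is bounded --- this phase costs a finite $C_1'(T)$. On $[T/2,T]$ I would move $m$ to $\xi$ along the single linear segment of constant velocity $\tfrac2T(\xi-m)$, realized at each time by a bounded flux, hence by rates $l_{i,j}(t)=O(1/\mu(t)(i))$. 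Coordinates $i$ with $\xi(i)>0$ keep $\mu(t)(i)$ bounded below and contribute a bounded cost density; for coordinates $i$ with $\xi(i)=0$ one has $\mu(t)(i)\asymp T-t$, the rates on edges incident to $i$ grow like $(T-t)^{-1}$, and, since $\tau^*(u)\le\mathrm{const}\cdot u\log u$, those edges contribute a cost density $O\!\big(\log\tfrac1{T-t}\big)$, which is integrable on $[T/2,T]$. Hence this phase costs a finite $C_1''(T)$, and $C_1(T)=C_1'(T)+C_1''(T)$ works; the resulting $\mu$ is continuous, piecewise linear, and of constant velocity on each segment, as required.

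For the third bullet I would run a time-compressed, finitely-piecewise-linear analogue of this over $[0,\varepsilon]$, moving essentially from $\nu$ to $\xi$ along the straight segment (inflating briefly, if necessary, any coordinate that is zero at both endpoints but must carry transit mass), and choose $\delta=\delta(\varepsilon)\in(0,\varepsilon)$ small enough that $\rho_0(\nu,\xi)<\delta$ forces $\max_i|\nu(i)-\xi(i)|$ to be as small as needed. The required velocities are then $O(\delta/\varepsilon)$; on the part of the simplex where the relevant coordinates are bounded below, the realizing rates are small, $\tau^*(l_{i,j}/\lambda_{i,j}-1)$ stays bounded (near $\tau^*(-1)=1$), and the cost density is $O(1)$, contributing $O(\varepsilon)$ over $[0,\varepsilon]$. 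The remaining contributions come from the finitely many coordinates driven to, or briefly raised from, zero; by the rescaled version of the boundary estimate of the previous paragraph each such contribution is $O\!\big(\delta\log\tfrac1\varepsilon\big)$ (with an extra $\varepsilon^{-1}$ for the inflate-then-drain pieces), so choosing $\delta(\varepsilon)$ small enough relative to $\varepsilon/\log\tfrac1\varepsilon$ (respectively $\varepsilon^2/\log\tfrac1\varepsilon$) makes them $O(\varepsilon)$; the case $\varepsilon$ bounded away from $0$ is routine. Summing over the finitely many edges gives $S_\varepsilon(\xi|\nu)\le C_2\varepsilon$ with $C_2$ independent of $\varepsilon,\nu,\xi$.

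The main obstacle is the behaviour near $\partial\MA_1(\ZA)$. First one must verify that the prescribed velocity profiles are realizable by genuine rate matrices supported on $\EA$, with rates of the stated size --- a controllability statement for the bilinear system $\dot\mu=L^*\mu$ constrained to the simplex, in which ({\bf A1}) is used both to reach every state and to keep the flux through nearly-empty states bounded, possibly after a brief inflation of certain empty states that would otherwise be cut vertices of the active subgraph. Second, one must carry out the integrability bookkeeping where coordinates are driven to zero, combining the rate $\asymp(T-t)^{-1}$ (respectively the smallness of $\delta$ relative to $\varepsilon/\log\tfrac1\varepsilon$) with the $u\log u$ growth of $\tau^*$ to see that those contributions stay finite (respectively $O(\varepsilon)$). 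Everything else is bounded-cost-density bookkeeping over an interval of length $T$ (respectively $\varepsilon$).
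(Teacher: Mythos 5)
The proposal is correct in its essential ideas but takes a genuinely different constructive route from the paper. The paper's proof starts with the \emph{direct} straight line $\mu(t)=(1-t/T)\nu+(t/T)\xi$ (allowing all transitions at first), realizes it by a rate matrix built from explicit mass-transport parameters $g_{i,j}$, and then performs the change of variables $u=\mu(t)(i)$ to obtain the closed-form upper bound (\ref{eqn:C_3-T}); the $\EA$-restriction is handled by breaking the journey into $m=m(r,\EA)$ single-edge piecewise-linear segments, giving $C_1(T)=m\,C_3(T/m)$, and the third bullet is read off from (\ref{eqn:C_3-T}) with $T=\varepsilon$: as $\rho_0(\nu,\xi)\to0$ every term except $CTr^2=Cr^2\varepsilon$ can be driven below $\varepsilon$. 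You instead route $\nu\to m\to\xi$ through the barycenter and argue integrability qualitatively via $\tau^*(u)\lesssim u\log u$, which neatly sidesteps the explicit $g_{i,j}$ bookkeeping and automatically handles the $\EA$-restriction on the second leg (since $m$ is interior, every coordinate starts positive); the price is that this detour path is of unit length and so gives nothing for the third bullet, forcing you to mount a separate, time-compressed direct construction over $[0,\varepsilon]$ with ad-hoc inflations for transit states. Your separate third-bullet construction and your ``inflate-then-drain'' device are actually somewhat more scrupulous than the paper's about the $\EA$-routing issue for nearby $\nu,\xi$ with common zeros, but they leave the controllability claim ($v\mapsto$ nonnegative $\EA$-flux with $\|f\|_\infty\lesssim\|v\|$) and the uniformity of $\eta(T)$ in the spreading phase as asserted-but-unverified steps; the paper's construction makes these explicit at the cost of heavier arithmetic. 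The extra $\varepsilon^{-1}$ you ascribe to the inflate-then-drain pieces is actually unnecessary --- the drain cost computes to $O(\delta\log(1/\varepsilon))$ just as in the pure drainage case, because $\mu(t)(k)\,l_{k,j}(t)$ stays of order $\delta/\varepsilon$ throughout the phase --- but since you only use this to choose $\delta(\varepsilon)$ smaller, the overestimate is harmless. One practical advantage of the paper's approach worth noting: the explicit formula (\ref{eqn:C_3-T}) does triple duty for all three bullets, whereas your version needs two separate constructions.
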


\begin{proof}
The main idea is to show that the difficulty of passage near the neighborhood of a constant velocity straight line path is bounded. See Section \ref{sec:ProofOfS_T-bounding}.
\end{proof}

We next have a useful uniform continuity result.

\begin{lemma}
  \label{lem:uniform-continuity}
  The mapping $(\nu, \xi) \mapsto S_T(\xi | \nu)$ is uniformly continuous.
\end{lemma}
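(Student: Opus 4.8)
The plan is to produce a single modulus of continuity: since $\MA_1(\ZA)\times\MA_1(\ZA)$ is compact, it suffices to exhibit a function $\omega$ with $\omega(0+)=0$, depending only on $T$ and the model constants, such that $S_T(\xi'\mid\nu')\le S_T(\xi\mid\nu)+\omega(\delta)$ whenever $\rho_0(\nu,\nu')\vee\rho_0(\xi,\xi')<\delta$; interchanging the two pairs of points then gives the two-sided estimate. To obtain the one-sided bound I would build a competitor path from $\nu'$ to $\xi'$ over $[0,T]$ by taking a near-optimal path $\mu$ from $\nu$ to $\xi$ over $[0,T]$ (with $S_{[0,T]}(\mu\mid\nu)\le S_T(\xi\mid\nu)+\eta\le C_1(T)+1$, using the second part of Lemma~\ref{lem:S_T-bounding} and $\eta\le1$), prepending a short cheap segment from $\nu'$ to $\nu$, appending a short cheap segment from $\xi$ to $\xi'$, and \emph{time-compressing} $\mu$ so that the whole concatenation still occupies exactly $[0,T]$. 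The short segments come from the third part of Lemma~\ref{lem:S_T-bounding}: given $\varepsilon>0$ there is $\delta(\varepsilon)\in(0,\varepsilon)$ so that any two $\rho_0$-close points can be joined in time $\varepsilon$ at cost $\le C_2\varepsilon$; so the first segment lives on $[0,\varepsilon]$, the last on $[T-\varepsilon,T]$, and the compressed $\mu$ on $[\varepsilon,T-\varepsilon]$.

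The only delicate point, and the main obstacle, is the time mismatch: without compression the concatenation needs time $T+2\varepsilon$, so I must squeeze $\mu$ into time $T-2\varepsilon$ and show the cost grows by only $O(\varepsilon)$. Using the flow representation of Theorem~\ref{thm:flow-theorem}(b), write $\mu$ on $[0,T]$ with rates $l_{i,j}(t)\ge0$, $\dot\mu(t)=L(t)^*\mu(t)$, and cost given by~\eqref{eqn:finite-rate-evaluation}. Setting $\alpha:=T/(T-2\varepsilon)>1$ and $\tilde\mu(s):=\mu(\alpha s)$ on $[0,T-2\varepsilon]$, one checks $\dot{\tilde\mu}(s)=\tilde L(s)^*\tilde\mu(s)$ with the still-nonnegative rates $\tilde l_{i,j}(s)=\alpha\,l_{i,j}(\alpha s)$, so $\tilde\mu$ is admissible and, by Theorem~\ref{thm:flow-theorem}(c), its cost is again given by~\eqref{eqn:finite-rate-evaluation}. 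From the explicit formula $\tau^*(u)=(u+1)\log(u+1)-u$ one gets, for every $r\ge0$ and $\alpha\ge1$, the identity $\tfrac1\alpha\tau^*(\alpha r-1)=\tau^*(r-1)+r\log\alpha-\tfrac{\alpha-1}{\alpha}\le\tau^*(r-1)+r\log\alpha$. Applying this with $r=l_{i,j}(t)/\lambda_{i,j}(\mu(t))$ and changing variables $t=\alpha s$ gives
\[
  S_{[0,T-2\varepsilon]}(\tilde\mu\mid\nu)\;\le\;S_{[0,T]}(\mu\mid\nu)\;+\;(\log\alpha)\int_0^T\sum_{(i,j)\in\EA}\mu(t)(i)\,l_{i,j}(t)\,dt .
\]

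To control the flow integral I would use the elementary inequality $r\le 2\bigl(1+\tau^*(r-1)\bigr)$ for all $r\ge0$ (the ratio $r/(1+\tau^*(r-1))$ is maximized at $r=2$), together with ({\bf A2})--({\bf A3}), i.e.\ $c\le\lambda_{i,j}\le C$ on $\EA$. This gives $\mu(i)\,l_{i,j}=\mu(i)\lambda_{i,j}\cdot\tfrac{l_{i,j}}{\lambda_{i,j}}\le 2C\,\mu(i)+\tfrac{2C}{c}\,\mu(i)\lambda_{i,j}\,\tau^*\!\bigl(\tfrac{l_{i,j}}{\lambda_{i,j}}-1\bigr)$, hence, summing over $\EA$ and integrating, $\int_0^T\sum_{(i,j)\in\EA}\mu(t)(i)\,l_{i,j}(t)\,dt\le 2C|\EA|T+\tfrac{2C}{c}S_{[0,T]}(\mu\mid\nu)\le 2C|\EA|T+\tfrac{2C}{c}(C_1(T)+1)=:B'$, a constant depending only on $T$ and the model parameters. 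Since $\log\alpha=-\log(1-2\varepsilon/T)\le 4\varepsilon/T$ for $\varepsilon\le T/4$, this yields $S_{[0,T-2\varepsilon]}(\tilde\mu\mid\nu)\le S_T(\xi\mid\nu)+\eta+4B'\varepsilon/T$.

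Finally I would assemble the competitor: the cheap path $\nu'\to\nu$ on $[0,\varepsilon]$ (cost $\le C_2\varepsilon$), the squeezed path $\tilde\mu$ (running $\nu\to\xi$) on $[\varepsilon,T-\varepsilon]$, and the cheap path $\xi\to\xi'$ on $[T-\varepsilon,T]$ (cost $\le C_2\varepsilon$); this is an admissible absolutely continuous path from $\nu'$ to $\xi'$ on $[0,T]$ whenever $\rho_0(\nu,\nu')<\delta(\varepsilon)$ and $\rho_0(\xi,\xi')<\delta(\varepsilon)$, and, the cost functional being additive over the three sub-intervals, $S_T(\xi'\mid\nu')\le S_T(\xi\mid\nu)+\eta+(2C_2+4B'/T)\varepsilon$ for $\varepsilon\le T/4$. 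Letting $\eta\downarrow0$ and exchanging the two pairs of points gives $|S_T(\xi'\mid\nu')-S_T(\xi\mid\nu)|\le(2C_2+4B'/T)\varepsilon$ on the $\delta(\varepsilon)$-neighborhood, which is the asserted uniform continuity. The crux is the squeezing step; everything else is bookkeeping with the explicit form of $\tau^*$ and the a priori bounds of Lemma~\ref{lem:S_T-bounding}.
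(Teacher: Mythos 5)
Your proof is correct and follows essentially the same argument as the paper: sandwich a time-compressed near-optimal path between two cheap connecting segments from the third part of Lemma~\ref{lem:S_T-bounding}, with the compression cost controlled by the exact scaling identity for $\tau^*$ together with a linear lower bound on $\tau^*$ to dominate the flow integral. These last two ingredients are precisely the paper's Lemmas~\ref{lem:tau-star-linear-bound} and~\ref{lem:speed-up}, which you rederive inline (using the equally serviceable bound $r\le 2(1+\tau^*(r-1))$ in place of the paper's $r\le\tau^*(r-1)+e-1$); aside from this and your use of an $\eta$-near-optimal path where the paper invokes the attained minimizer, the route is the same.
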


\begin{proof}
See Section \ref{sec:uniform-continuity}.
\end{proof}

Thus far, the only condition we imposed on the initial conditions were that $\nu_N \rightarrow \nu$ weakly. Let $\wp^{(N)}_0$ denote the law of the initial empirical measure $\mu_N(0)$ and let $\wp^{(N)}_{0,T}$ denote the joint law of $(\mu_N(0), \mu_N(T))$. We now consider $(\wp^{(N)}_{0,T}, N \geq 1)$.

\begin{thm}
  \label{thm:JointLDP}
  Suppose that the sequence $(\wp^{(N)}_0, N \geq 1)$ satisfies the large deviation principle with speed $N$ and good rate function $s : \MA_1(Z) \rightarrow [0, +\infty]$. Then the sequence of joint laws $(\wp^{(N)}_{0,T}, N \geq 1)$ satisfies the large deviation principle with speed $N$ and good rate function
  \[
    S_{0,T}(\nu, \xi) = s(\nu) + S_T(\xi | \nu).
  \]
\end{thm}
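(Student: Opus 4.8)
The plan is to exploit the Markovian structure of the model together with the uniform finite-time estimates already in hand. Since the $N$-particle generator \eqref{eqn:n-particle-generator} is invariant under permutations of the particles, the law of the path $\mu_N(\cdot)$, conditioned on the initial configuration, depends on that configuration only through its empirical measure; hence conditionally on $\mu_N(0)=\nu$ the path $\mu_N(\cdot)$ has law $p^{(N)}_\nu$ and $\mu_N(T)$ has law $p^{(N)}_{\nu,T}=p^{(N)}_\nu\circ\pi_T^{-1}$, so that $\wp^{(N)}_{0,T}(d\nu,d\xi)=\wp^{(N)}_0(d\nu)\,p^{(N)}_{\nu,T}(d\xi)$. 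Justifying this disintegration rigorously is one routine point to get right. Next I would record the terminal-time form of Corollary~\ref{cor:uniform-ldp-flows}: since $\pi_T:(D([0,T],\MA_1(\ZA)),\rho_T)\to\MA_1(\ZA)$ is $1$-Lipschitz, applying that corollary to the sets $\pi_T^{-1}(\cdot)$ and using $\inf\{S_{[0,T]}(\mu|\nu):\mu(T)\in\tilde F\}=\inf_{\xi\in\tilde F}S_T(\xi|\nu)$ (by \eqref{eqn:S(xi-nu)}) gives, for every compact $K\subset\MA_1(\ZA)$, every closed $\tilde F$ and open $\tilde G$ in $\MA_1(\ZA)$,
$$\limsup_N\tfrac1N\log\sup_{\nu\in K}p^{(N)}_{\nu,T}(\tilde F)\le-\inf_{\nu\in K,\ \xi\in\tilde F}S_T(\xi|\nu),\qquad\liminf_N\tfrac1N\log\inf_{\nu\in K}p^{(N)}_{\nu,T}(\tilde G)\ge-\sup_{\nu\in K}\inf_{\xi\in\tilde G}S_T(\xi|\nu).$$

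For the upper bound, fix a closed $F\subset\MA_1(\ZA)^2$ and $\delta>0$, and cover the compact set $\MA_1(\ZA)$ by finitely many closed balls $\bar B(\nu_k,\delta)$, $k=1,\dots,m$. Writing $F^\delta$ for the closed $\delta$-neighbourhood of $F$ and $G_k:=\{\xi:(\nu_k,\xi)\in F^\delta\}$ (closed), one checks that for $\nu\in\bar B(\nu_k,\delta)$ the section $\{\xi:(\nu,\xi)\in F\}$ lies in $G_k$, whence $\wp^{(N)}_{0,T}(F)\le\sum_k\wp^{(N)}_0(\bar B(\nu_k,\delta))\sup_{\nu\in\bar B(\nu_k,\delta)}p^{(N)}_{\nu,T}(G_k)$. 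Taking $\tfrac1N\log$, using the LDP for $\wp^{(N)}_0$, the terminal-time uniform upper bound above, and $\tfrac1N\log m\to0$, yields $\limsup_N\tfrac1N\log\wp^{(N)}_{0,T}(F)\le\max_k[-\inf_{\bar B(\nu_k,\delta)}s-\inf_{\nu\in\bar B(\nu_k,\delta),\,\xi\in G_k}S_T(\xi|\nu)]$. Now pick $\nu_k^*\in\bar B(\nu_k,\delta)$ attaining $\inf_{\bar B(\nu_k,\delta)}s$ (possible since $s$ is lower semicontinuous and the ball compact); by the uniform continuity of $S_T$ (Lemma~\ref{lem:uniform-continuity}), with modulus $\eta$, one has $S_T(\xi|\nu)\ge S_T(\xi|\nu_k^*)-\eta(2\delta)$ for $\nu\in\bar B(\nu_k,\delta)$, and since $(\nu_k^*,\xi)\in F^{2\delta}$ for every $\xi\in G_k$, each bracket is $\le-\inf_{F^{2\delta}}(s(\nu)+S_T(\xi|\nu))+\eta(2\delta)$. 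Finally, let $\delta\downarrow0$: since $F$ is closed $F^{2\delta}\downarrow F$, and since $(\nu,\xi)\mapsto s(\nu)+S_T(\xi|\nu)$ is lower semicontinuous on the compact $\MA_1(\ZA)^2$ one has $\inf_{F^{2\delta}}(s+S_T)\uparrow\inf_F(s+S_T)$, giving $\limsup_N\tfrac1N\log\wp^{(N)}_{0,T}(F)\le-\inf_F S_{0,T}$.

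For the lower bound, fix an open $G$ and $(\nu_0,\xi_0)\in G$ with $s(\nu_0)<\infty$, $S_T(\xi_0|\nu_0)<\infty$ (otherwise there is nothing to prove), and choose $\epsilon>0$ with $B(\nu_0,\epsilon)\times B(\xi_0,\epsilon)\subset G$. From the disintegration, $\wp^{(N)}_{0,T}(G)\ge\wp^{(N)}_0(B(\nu_0,\epsilon/2))\cdot\inf_{\nu\in\bar B(\nu_0,\epsilon/2)}p^{(N)}_{\nu,T}(B(\xi_0,\epsilon))$. Applying the LDP lower bound for $\wp^{(N)}_0$ to the open ball, the terminal-time uniform lower bound above with $K=\bar B(\nu_0,\epsilon/2)$ and $\tilde G=B(\xi_0,\epsilon)$, and the uniform continuity of $S_T$ to bound $\sup_{\nu\in\bar B(\nu_0,\epsilon/2)}S_T(\xi_0|\nu)\le S_T(\xi_0|\nu_0)+\eta(\epsilon/2)$, gives $\liminf_N\tfrac1N\log\wp^{(N)}_{0,T}(G)\ge-s(\nu_0)-S_T(\xi_0|\nu_0)-\eta(\epsilon/2)$; sending $\epsilon\downarrow0$ and optimizing over $(\nu_0,\xi_0)\in G$ yields the lower bound. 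Goodness of $S_{0,T}$ is automatic: it is lower semicontinuous (sum of the lower semicontinuous $s$ and the continuous $S_T$) on the compact set $\MA_1(\ZA)^2$, so its sublevel sets are compact.

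The step I expect to be the main obstacle is the interplay, in the upper bound, between the mere lower semicontinuity of $s$ and the need for a uniform-in-$\nu$ control of $p^{(N)}_{\nu,T}$: one cannot compare $s$ at distinct nearby points. The resolution is to carry the fattened closed set $F^{2\delta}$ through the whole chain of inequalities and only let $\delta\downarrow0$ at the very end, absorbing the non-uniformity of $\nu\mapsto S_T(\cdot|\nu)$ into its modulus of continuity and the non-uniformity of $\nu\mapsto p^{(N)}_{\nu,T}$ into the uniform estimate derived from Corollary~\ref{cor:uniform-ldp-flows}; lower semicontinuity of $s+S_T$ on the compact space then closes the gap between $\inf_{F^{2\delta}}$ and $\inf_F$. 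Everything else — the finite subcover, the combination of finitely many exponential rates, and the disintegration via permutation-invariance — is routine.
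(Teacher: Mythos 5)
Your proof is correct, but it takes a genuinely different route from the paper. The paper's proof cites the abstract mixing result of Feng and Kurtz (\cite[Prop.\ 3.25]{Feng-Kurtz}): it verifies exponential tightness (trivial by compactness), proves continuity of the Laplace--Varadhan functional $\nu\mapsto\Lambda(f|\nu)$ (Lemma~\ref{lem:Lambda-continuity}), and proves uniform convergence of $\frac{1}{N}\log\int e^{Nf}\,dp^{(N)}_{\nu,T}$ to $\Lambda(f|\nu)$ over $\nu\in\MA_1^{(N)}(\ZA)$ (Lemma~\ref{lem:uniform-convergence-lv-functional}, by a contradiction argument built on Theorem~\ref{thm:flow-theorem}), and then invokes the Feng--Kurtz theorem as a black box. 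You instead prove the upper and lower LDP bounds for $\wp^{(N)}_{0,T}$ by hand: disintegrating, deriving a terminal-time version of Corollary~\ref{cor:uniform-ldp-flows} by contraction through $\pi_T$, and then for the upper bound covering $\MA_1(\ZA)$ with small closed balls, fattening $F$ to $F^{2\delta}$, absorbing the mismatch via the modulus of uniform continuity from Lemma~\ref{lem:uniform-continuity}, and sending $\delta\downarrow0$ using lower semicontinuity on the compact product; the lower bound is a standard one-point argument. Both proofs use the same two structural ingredients — the initial-condition-uniform LDP (Corollary~\ref{cor:uniform-ldp-flows}) and the uniform continuity of $S_T$ (Lemma~\ref{lem:uniform-continuity}) — but yours is self-contained and elementary, essentially re-deriving the special case of the Feng--Kurtz mixing lemma needed here, whereas the paper's is shorter on the page at the cost of leaning on an external result. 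One small point you flag as ``routine'' — the disintegration $\wp^{(N)}_{0,T}(d\nu,d\xi)=\wp^{(N)}_0(d\nu)\,p^{(N)}_{\nu,T}(d\xi)$ — is exactly the identity the paper writes down at the start of its proof, and is justified in the paper's Section~\ref{sec:empirical-measure} by the observation that $\mathbb{P}^{(N)}_{z^N}\circ G_N^{-1}$ depends on $z^N$ only through $\nu_N=g_N(z^N)$; your appeal to permutation invariance of the generator~(\ref{eqn:n-particle-generator}) is the same reasoning.
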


\begin{proof}
See Section \ref{sec:ProofJointLDP}.
\end{proof}

Before we close this section, we state a useful result on the uniform continuity of the quasipotential in (\ref{eqn:quasipotential}). This is analogous to Lemma \ref{lem:uniform-continuity}, except that the time durations are finite but otherwise arbitrary.

\begin{lemma}
\label{lem:V-uniform-continuity}
The mapping $(\nu, \xi) \mapsto V(\xi | \nu)$ is uniformly continuous.
\end{lemma}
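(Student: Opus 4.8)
The plan is to derive uniform continuity of $V$ directly from the third part of Lemma~\ref{lem:S_T-bounding} by a splicing (concatenation) argument. First I would record two preliminary observations. (i) $V$ is finite everywhere, since $V(\xi|\nu)\le S_1(\xi|\nu)\le C_1(1)<+\infty$ by the second part of Lemma~\ref{lem:S_T-bounding}. (ii) The cost functional in (\ref{eqn:finite-rate-evaluation}) is additive under concatenation: if an absolutely continuous path on $[0,T_1]$ ending at $\zeta$ is followed by one on $[0,T_2]$ starting at $\zeta$, the concatenation is again absolutely continuous, stays in $\MA_1(\ZA)$, and its cost is the sum of the two costs. This additivity is the only structural feature of $S$ the argument needs.

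Next, fix $\varepsilon>0$, and let $C_2<+\infty$ and $\delta=\delta(\varepsilon)\in(0,\varepsilon)$ be as in the third part of Lemma~\ref{lem:S_T-bounding}, so that whenever $\rho_0(\alpha,\beta)<\delta$ there is a path on $[0,\varepsilon]$ from $\alpha$ to $\beta$, lying in $\MA_1(\ZA)$, of cost at most $C_2\varepsilon$. Now take $\nu,\nu',\xi,\xi'$ with $\rho_0(\nu,\nu')<\delta$ and $\rho_0(\xi,\xi')<\delta$, and for $\eta>0$ choose $T\ge0$ and a path $\mu$ on $[0,T]$ with $\mu(0)=\nu$, $\mu(T)=\xi$, $S_{[0,T]}(\mu|\nu)\le V(\xi|\nu)+\eta$ (near-optimal, of finite cost). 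Prepending a cheap $[0,\varepsilon]$-path from $\nu'$ to $\nu$ and appending a cheap $[0,\varepsilon]$-path from $\xi$ to $\xi'$ produces an admissible path from $\nu'$ to $\xi'$ over $[0,2\varepsilon+T]$, whence by additivity
\[
  V(\xi'|\nu')\ \le\ C_2\varepsilon + \big(V(\xi|\nu)+\eta\big) + C_2\varepsilon .
\]
Letting $\eta\downarrow0$ gives $V(\xi'|\nu')\le V(\xi|\nu)+2C_2\varepsilon$. The hypotheses are symmetric in the two pairs (and $\rho_0$ is symmetric), so the reverse bound holds as well, and hence $|V(\xi'|\nu')-V(\xi|\nu)|\le2C_2\varepsilon$ whenever $\rho_0(\nu,\nu')\vee\rho_0(\xi,\xi')<\delta(\varepsilon)$. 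Since $\delta(\varepsilon)$ depends only on $\varepsilon$, this is exactly uniform continuity of $(\nu,\xi)\mapsto V(\xi|\nu)$ on $\MA_1(\ZA)\times\MA_1(\ZA)$.

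I expect no serious obstacle: this is essentially the argument behind Lemma~\ref{lem:uniform-continuity}, the only new feature being that the time horizon $T$ is now unconstrained, which is harmless because $T$ enters the cost only through the additive splicing. The one point that must genuinely be invoked rather than improvised is that the correction segments can be made to have cost $O(\varepsilon)$ \emph{uniformly} over all endpoint pairs at distance $<\delta$ — precisely the content of the third part of Lemma~\ref{lem:S_T-bounding}; without that uniformity one would obtain only continuity, not uniform continuity.
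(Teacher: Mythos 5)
Your proposal is correct and follows essentially the same route as the paper: splice a cheap $[0,\varepsilon]$-segment from $\nu'$ to $\nu$ (via part 3 of Lemma~\ref{lem:S_T-bounding}), a near-optimal finite-time path from $\nu$ to $\xi$, and a cheap $[0,\varepsilon]$-segment from $\xi$ to $\xi'$, then use symmetry. The only cosmetic difference is that you take the slack on the middle path to be an auxiliary $\eta\downarrow0$ and land on $2C_2\varepsilon$, whereas the paper simply uses a $C_2\varepsilon$-near-optimal middle path and lands on $3C_2\varepsilon$; both are equally valid.
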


\begin{proof}
See the last part of Section \ref{sec:uniform-continuity}.
\end{proof}

\section{Invariant measure: A control theoretic approach}
\label{sec:invariant-measure}

Recall that by assumption ({\bf A1}), for each $N \geq 1$, the finite-state continuous-time Markov chain $\mu_N$ is irreducible, and hence has a unique invariant measure, which we denoted $\wp^{(N)}$. In this section, we establish the large deviation principle for $(\wp^{(N)}, N \geq 1)$ as stated in Theorems \ref{thm:invariant} and \ref{thm:invariant-multiple-equilibria}.

The outline of our control theoretic approach is the following.
\begin{itemize}
  \item In Lemma \ref{lem:HJB}, we first establish a subsequential large deviation principle. We shall also establish, via the contraction principle, that the rate function $s$ satisfies a dynamic programming equation (see (\ref{eqn:HJB})). This equation naturally suggests a control problem with an associated running cost.

 \item There will be multiple solutions to (\ref{eqn:HJB}). But the
     rate function that we are after will satisfy a further\vadjust{\eject}
     condition. In Lemma \ref{lem:one-trajectory}, we shall show
     the existence of one single optimal path of infinite duration
     and shall extract a recursive equation for $s$ from
     (\ref{eqn:HJB}), a further condition that the rate function
     must satisfy. The heart of the control-theoretic approach lies
     in this step.

  \item We then argue in Lemma \ref{lem:converge-w-limit} that this optimal path must end up within a set that is positively invariant to the time-reversed McKean-Vlasov dynamics.

  \item The above steps fix the rate function at all points outside the $\omega$-limit sets, assuming the values at the $\omega$-limit sets.

  \item Subsection \ref{subsec:Inv-measure-ugase} then argues that in case of a unique globally asymptotically stable equilibrium, the rate function is zero at the equilibrium. Subsection \ref{subsec:Inv-measure-multiple-eq} falls back on the approach of Freidlin and Wentzell \cite[Ch. 6]{Freidlin} under assumption ({\bf B}) to fix the values at the $\omega$-limit sets. This fixes the rate function uniquely for all subsequential large deviation principles, and the main results follow.
\end{itemize}

We begin by establishing subsequential large deviation principles and the dynamic programming equation.

\begin{lemma}
\label{lem:HJB}
For any sequence of natural numbers going to $+\infty$, there exists a subsequence $(N_k, ~k \geq 1)$ such that $(\wp^{(N_k)}, ~k \geq 1)$ satisfies the large deviation principle with speed $N_k$ and a good rate function $s$ that satisfies
\begin{equation}
  \label{eqn:HJB}
  s(\xi) = \inf_{\nu \in \MA_1(\ZA)} [s(\nu) + S_{T}(\xi | \nu)] \quad \mbox{for every } T > 0.
\end{equation}
Furthermore, $s \geq 0$ and there exists a $\nu^* \in \MA_1(\ZA)$ such that $s(\nu^*) = 0$.
\end{lemma}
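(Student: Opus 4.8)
The plan is to exploit compactness of the space of rate functions together with the already-established uniform large deviation principle for the empirical process (Corollary~\ref{cor:uniform-ldp-flows}) and the boundedness/continuity estimates of Lemma~\ref{lem:S_T-bounding} and Lemma~\ref{lem:uniform-continuity}. First I would fix $T=1$ (the argument is the same for every $T>0$) and observe that, by the classical theory (e.g.\ \cite[Ch.~6]{Freidlin} or the general LDP-for-invariant-measure machinery), every subsequence of $(\wp^{(N)})$ has a further subsequence $(\wp^{(N_k)})$ along which an LDP holds with \emph{some} good rate function $s$; the concrete way to produce this is to note that the candidate rate functions $-\frac{1}{N}\log\wp^{(N)}(\cdot)$ live in a compact class (good rate functions on the compact space $\MA_1(\ZA)$, with a uniform modulus of continuity inherited from Lemma~\ref{lem:uniform-continuity} via the invariance identity below), so a subsequential limit $s$ exists by an Arzel\`a--Ascoli / diagonal argument, and a standard check shows the limit is itself a good rate function governing the LDP along that subsequence.

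Next I would derive the dynamic programming (invariance) equation~(\ref{eqn:HJB}). The key structural fact is that $\wp^{(N)}$ is invariant for the Markov chain $\mu_N$, so for any $T>0$,
\[
  \wp^{(N)}(\cdot) = \int_{\MA_1(\ZA)} p^{(N)}_{\nu}\{\mu_N(T)\in\cdot\}\,\wp^{(N)}(d\nu).
\]
In other words $\wp^{(N)}_{0,T}$ has first marginal $\wp^{(N)}_0=\wp^{(N)}$ and second marginal again $\wp^{(N)}$. Applying Theorem~\ref{thm:JointLDP} with the subsequential rate function $s$ of $(\wp^{(N_k)}_0)$, the joint law $\wp^{(N_k)}_{0,T}$ satisfies an LDP with rate function $S_{0,T}(\nu,\xi)=s(\nu)+S_T(\xi\mid\nu)$; contracting onto the second coordinate (the continuous map $(\nu,\xi)\mapsto\xi$) and using that the second marginal is again $\wp^{(N_k)}$ with rate function $s$, the contraction principle forces
\[
  s(\xi)=\inf_{\nu\in\MA_1(\ZA)}\bigl[s(\nu)+S_T(\xi\mid\nu)\bigr],
\]
which is~(\ref{eqn:HJB}). (One should be a little careful that the subsequence along which the joint LDP holds can be taken common for the marginal and the joint law; this is fine because Theorem~\ref{thm:JointLDP} upgrades the marginal LDP to the joint LDP along the \emph{same} subsequence.)

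Finally, $s\ge 0$ is immediate since $s$ is a rate function. For the existence of a zero: a good rate function on the compact set $\MA_1(\ZA)$ attains its infimum, say at $\nu^*$ with value $m:=s(\nu^*)\ge 0$; I must show $m=0$. Evaluating~(\ref{eqn:HJB}) at $\xi=\nu^*$ gives $m=\inf_\nu[s(\nu)+S_T(\nu^*\mid\nu)]\le s(\nu^*)+S_T(\nu^*\mid\nu^*)$, which is not yet enough. Instead use the normalization of an LDP: since $\wp^{(N)}$ is a probability measure, $\inf_\xi s(\xi)=0$, hence $m=0$ and $s(\nu^*)=0$. (Equivalently, taking $\xi$ along the McKean--Vlasov flow started at any point, $S_T(\cdot\mid\cdot)$ vanishes on trajectory segments by Remark~2 after Theorem~\ref{thm:flow-theorem}, and iterating~(\ref{eqn:HJB}) pushes mass toward the $\omega$-limit sets where the cost can be made arbitrarily small; but the normalization argument is cleaner.) The main obstacle in this lemma is not any single estimate but the bookkeeping of subsequences: ensuring the compactness/tightness of the family of candidate rate functions is genuine (this is where the uniform continuity Lemma~\ref{lem:uniform-continuity}, together with~(\ref{eqn:HJB}) itself, supplies an equicontinuity that bootstraps the subsequential limit into a bona fide good rate function), and ensuring that the same subsequence simultaneously delivers the LDP for $\wp^{(N_k)}$, for the joint law $\wp^{(N_k)}_{0,T}$, and for every rational $T$ (handled by a further diagonalization over $T$).
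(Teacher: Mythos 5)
Your derivation of \eqref{eqn:HJB} and of $s\ge 0$, $s(\nu^*)=0$ matches the paper's proof step by step: invariance makes $\wp^{(N)}_{0,T}$ have both marginals equal to $\wp^{(N)}$, Theorem~\ref{thm:JointLDP} gives the joint LDP along the chosen subsequence with rate $s(\nu)+S_T(\xi\mid\nu)$, and the contraction principle plus uniqueness of the rate function for the terminal marginal (which is again $\wp^{(N_k)}$) forces \eqref{eqn:HJB}; the last statement then follows from the normalization $\inf_\xi s(\xi)=0$ and goodness. No further diagonalization over $T$ is needed — once the LDP holds along $(N_k)$ you simply run the Theorem~\ref{thm:JointLDP}/contraction argument for each fixed $T>0$ against the same subsequence.

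The one genuine gap is in how you produce the subsequential LDP. The paper invokes \cite[Lem.~4.1.23]{DZ}, which yields a subsequential LDP for probability measures on a compact space with a countable base, with no a priori regularity input on the candidate rate functions. Your sketch instead proposes an Arzel\`a--Ascoli argument on the family $-\tfrac{1}{N}\log\wp^{(N)}(\cdot)$, citing an equicontinuity ``inherited from Lemma~\ref{lem:uniform-continuity} via the invariance identity below.'' This is circular: the equicontinuity you want to use is derived from \eqref{eqn:HJB}, which in turn requires the subsequential LDP to already be in hand. Moreover $-\tfrac{1}{N}\log\wp^{(N)}(\{\xi\})$ is not even well-defined (equal to $+\infty$) off the finite support $\MA_1^{(N)}(\ZA)$, so the objects on which you would apply Arzel\`a--Ascoli are not the measures themselves but the upper/lower rate envelopes over a countable base of open sets — which is precisely what \cite[Lem.~4.1.23]{DZ} organizes. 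If you replace the Arzel\`a--Ascoli sketch with a direct citation of that lemma, the rest of your argument goes through and coincides with the paper's.
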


\begin{proof}
Since the topology on $\MA_1(\ZA)$ with metric $\rho_0$ has a countable base, and because  $\MA_1(\ZA)$ is compact, by \cite[Lem. 4.1.23]{DZ}, there is a subsequence ${N_k \ra +\infty}$ of the given sequence such that $(\wp^{(N_k)}, N_k \geq 1)$ satisfies the large deviation principle with speed $N_k$ and a good rate function $s : \MA_1(\ZA) \ra [0, +\infty]$.

We now verify (\ref{eqn:HJB}). Fix an arbitrary $T > 0$. By Theorem \ref{thm:JointLDP}, with $\wp^{(N)}_0 = \wp^{(N)}$, the invariant measure, the sequence of joint laws $(\wp^{(N_k)}_{0,T}, ~k \geq 1)$ satisfies the large deviation principle along the subsequence $(N_k, ~k \geq 1)$ with speed $N_k$ and good rate function $S_{0,T}(\nu, \xi) = s(\nu) + S_T(\xi|\nu)$. By the contraction principle, the sequence of terminal laws $(\wp^{(N_k)}_T, ~k \geq 1)$ satisfies the large deviation principle along the subsequence with the good rate function
\begin{equation}
  \label{eqn:HJB-precursor}
  \inf_{\nu \in \MA_1(\ZA)} S_{0,T}(\nu, \xi) = \inf_{\nu \in \MA_1(\ZA)} [s(\nu) + S_T(\xi | \nu)].
\end{equation}
But $\wp^{(N)}$ is invariant to time shifts which yields $\wp^{(N)}_T = \wp^{(N)}_0 = \wp^{(N)}$. The infimum on the left-hand side of (\ref{eqn:HJB-precursor}) must therefore evaluate to $s(\xi)$, which yields (\ref{eqn:HJB}).

Rate functions are nonnegative and have infimum value of 0, that is, $s \geq 0$ and $\inf_{\nu \in \MA_1(\ZA)} s(\nu) = 0$. Since $s$ is a good rate function, the infimum 0 is attained at some point; call it $\nu^*$. The proof is now complete.
\end{proof}

As indicated earlier, there are multiple solutions to (\ref{eqn:HJB}). Indeed, $s(\cdot) \equiv 0$ is one of them. In order to identify a further condition that $s$ must satisfy, we now identify a control problem associated to (\ref{eqn:HJB}). To do this, we shall now consider paths that are of time-duration $mT$ that end at $\xi$. Since the terminal condition is fixed, it would be convenient to fix the terminal time as 0 and look at negative times; in particular, paths in the time interval $[-mT, 0]$ for $m \geq 1$. But then, we may reverse time and consider the dynamical system
\[
  \dot{\hat{\mu}}(t) = - \hat{L}(t)^* \hat{\mu}(t), \quad \hat{\mu}(0) = \xi, \quad t \in [0, mT], \quad m \geq 1.
\]
We use hats as in $\hat{\mu}, \hat{l}_{i,j}, \hat{\lambda}_{i,j}, \hat{L}$ to denote quantities where time flows in the opposite direction with reference to the direction under the McKean-Vlasov dynamics. In particular,
\begin{eqnarray*}
  \hat{\mu}(t) & = & \mu(mT-t) \\
  \hat{l}_{i,j}(t) & = & l_{i,j}(mT-t), \mbox{for all } i,j \in \ZA \\
  \hat{L}(t) & = & L(mT - t).
\end{eqnarray*}
for $t \in [0,mT]$. Also, for uniformity in notation, let $\hat{\lambda}_{i,j}(\cdot) = \lambda_{i,j}(\cdot)$ for all $(i,j)$ pairs. One then views the $\hat{L}(t)$ above as the control at time $t$ when the state is $\hat{\mu}(t)$ with cost function at time $t$ given by
\[
  \hat{r}(\hat{\mu}(t), \hat{L}(t)) = \sum_{(i,j) \in \EA} (\hat{\mu}(t)(i)) \hat{\lambda}_{i,j}(\hat{\mu}(t)) \tau^* \left( \frac{\hat{l}_{i,j}(t)}{\hat{\lambda}_{i,j}(\hat{\mu}(t))} - 1 \right).
\]
Observe that the cost function is zero when $\hat{l}_{i,j}(t) = \hat{\lambda}_{i,j}(\hat{\mu}(t))$ for almost every $t$ in the time duration of interest. The total cost is
\begin{equation}
  \label{eqn:total-cost}
  \int_{[0,mT]} \Big[ \sum_{(i,j) \in \EA} (\hat{\mu}(t)(i)) \hat{\lambda}_{i,j}(\hat{\mu}(t)) \tau^* \left( \frac{\hat{l}_{i,j}(t)}{\hat{\lambda}_{i,j}(\hat{\mu}(t))} - 1 \right) \Big] ~dt.
\end{equation}
which is simply $S_{[0,mT]}(\mu|\mu(0))$ of (\ref{eqn:finite-rate-evaluation}) as can be verified by a change of variable in (\ref{eqn:total-cost}) that takes $t$ to $mT-t$.

The following lemma establishes the existence of one optimal path $\hat{\mu}$ of infinite duration, starting at $\xi$.

\begin{lemma}
  \label{lem:one-trajectory}
  For each $\xi \in \MA_1(\ZA)$, there exists a path $\hat{\mu}:[0,+\infty) \ra \MA_1(\ZA)$ and a family of rate matrices $(\hat{L}(t), t \in [0, +\infty))$ such that $\hat{\mu} (\cdot)$ satisfies the ODE
  \begin{equation}
    \label{eqn:backward-mckvla-dynamics}
    \dot{\hat{\mu}}(t) = -\hat{L}(t)^* \hat{\mu}(t), \quad t \in [0, +\infty)
  \end{equation}
  with initial condition $\hat{\mu}(0) = \xi$, and
  \begin{equation}
    \label{eqn:equality-in-parts}
    s(\xi) = s(\hat{\mu}(mT)) + \int_{[0,mT]} \hat{r}(\hat{\mu}(t), \hat{L}(t)) ~dt \quad \mbox{ for all } m \geq 1.
  \end{equation}
\end{lemma}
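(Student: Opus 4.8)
The plan is to build the infinite-duration optimal path by a diagonal/compactness argument applied over the windows $[0,mT]$, using the dynamic programming equation \eqref{eqn:HJB} repeatedly as the source of ``one-step'' optimizers whose concatenation we then take to a limit. First I would fix $\xi$ and apply \eqref{eqn:HJB} with the given $T$: there exists $\nu_1$ with $s(\xi) = s(\nu_1) + S_T(\xi\mid\nu_1)$ (the infimum is attained since $s$ is a good rate function and, by Lemma~\ref{lem:uniform-continuity}, $\nu\mapsto S_T(\xi\mid\nu)$ is continuous on the compact set $\MA_1(\ZA)$). By the last assertion of the terminal-time theorem, this minimizing $S_T(\xi\mid\nu_1)$ is realized by an absolutely continuous path on $[0,T]$ from $\nu_1$ to $\xi$ driven by some rate matrices, with $S_{[0,T]}$ given by \eqref{eqn:finite-rate-evaluation}. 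Iterating, I get $\nu_1,\nu_2,\dots$ with $s(\nu_{m-1}) = s(\nu_m) + S_T(\nu_{m-1}\mid\nu_m)$ and corresponding optimal segments on each $[mT,(m-1)T]$ (in backward time), whose concatenation gives, for each $m$, an absolutely continuous path $\hat\mu^{(m)}:[0,mT]\to\MA_1(\ZA)$ with $\hat\mu^{(m)}(0)=\xi$, $\hat\mu^{(m)}(mT)=\nu_m$, satisfying $\dot{\hat\mu}^{(m)}(t)=-\hat L^{(m)}(t)^*\hat\mu^{(m)}(t)$ and
\[
  s(\xi) = s(\nu_m) + \int_{[0,mT]} \hat r(\hat\mu^{(m)}(t),\hat L^{(m)}(t))\,dt.
\]
Crucially these paths are nested: by telescoping the DP identities, $\hat\mu^{(m+1)}$ restricted to $[0,mT]$ has exactly the same cost $S_{[0,mT]}$ as $\hat\mu^{(m)}$, so I may choose them consistently (on $[0,mT]$, $\hat\mu^{(m+1)}$ is itself an optimizer for the $[0,mT]$ problem with terminal value $\nu_m$, by optimality of its tail). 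Actually the cleanest route is to choose $\nu_m$ first for all $m$ (via repeated application of \eqref{eqn:HJB}), and then on each window pick the optimal segment between consecutive $\nu$'s; concatenation is then automatically a single path $\hat\mu:[0,+\infty)\to\MA_1(\ZA)$ with a single family $\hat L(\cdot)$, defined windowwise, and $\hat\mu(0)=\xi$.

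Next I would verify the ODE \eqref{eqn:backward-mckvla-dynamics} and the cost identity \eqref{eqn:equality-in-parts}. The ODE holds on each window by construction (each segment satisfies $\dot{\hat\mu}(t)=-\hat L(t)^*\hat\mu(t)$ with $\hat L(t)=L(mT-t)$ for the forward rate matrix $L$ of that segment), and the path is continuous across window boundaries since consecutive segments share the endpoint $\nu_m$; absolute continuity on each finite interval follows, giving a bona fide solution of \eqref{eqn:backward-mckvla-dynamics} on $[0,+\infty)$. For \eqref{eqn:equality-in-parts}: summing the DP identities $s(\nu_{k-1})=s(\nu_k)+S_T(\nu_{k-1}\mid\nu_k)$ for $k=1,\dots,m$ (with $\nu_0:=\xi$) telescopes to $s(\xi)=s(\nu_m)+\sum_{k=1}^m S_T(\nu_{k-1}\mid\nu_k)$, and by the change of variables $t\mapsto mT-t$ noted in the paragraph preceding the lemma, $\sum_{k=1}^m S_T(\nu_{k-1}\mid\nu_k)=\sum_{k=1}^m S_{[(k-1)T,kT]}(\hat\mu\mid\cdot)=\int_{[0,mT]}\hat r(\hat\mu(t),\hat L(t))\,dt$ — exactly \eqref{eqn:equality-in-parts}. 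Note \eqref{eqn:equality-in-parts} is stated for all integer $m\ge 1$, and this is all the construction delivers directly; if one wants it for all real windows it follows by applying \eqref{eqn:HJB} with smaller time parameter, but the statement as written only needs the integer multiples.

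The main obstacle — and the one point requiring genuine care rather than bookkeeping — is that a priori the optimizers $\nu_m$ could be chosen incoherently from one window to the next, so that the ``path'' one gets by concatenation jumps around. The fix is the dynamic programming principle itself: having chosen $\nu_m$, optimality forces any optimal $[0,mT]$-path with terminal value $\nu_m$ to have, as its first $(m{-}1)$-window piece, an optimal $[0,(m-1)T]$-path with terminal value the intermediate point $\nu_{m-1}$ — i.e. Bellman's principle of optimality, which here is just the additivity $S_{[0,mT]}=S_{[0,(m-1)T]}+S_{[(m-1)T,mT]}$ of \eqref{eqn:finite-rate-evaluation} combined with \eqref{eqn:HJB}. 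So one builds the sequence $(\nu_m)$ greedily once, then fills in segments, and consistency is free. The only other thing to keep in mind is that every $\nu_m\in\MA_1(\ZA)$ and every segment stays in $\MA_1(\ZA)$ (this is built into the finiteness of $S_{[0,T]}$, cf. Theorem~\ref{thm:flow-theorem}(b) and Remark~3 after the statement), so $\hat\mu(t)\in\MA_1(\ZA)$ for all $t\ge 0$ automatically. I expect the write-up to be short: state the greedy selection of $(\nu_m)$ using goodness of $s$ and continuity of $S_T$, invoke the terminal-time theorem for the realizing segments, concatenate, and telescope.
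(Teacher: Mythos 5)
Your proposal is correct, but the route is genuinely different from the paper's. The paper proves Lemma~\ref{lem:one-trajectory} by a topological compactness argument: it equips the space of infinite-duration paths with a product-type metric $\rho_\infty$, shows that the sublevel set $\Gamma_\infty$ (paths whose every finite-window reversal has $S_{[0,mT]}\le B$) is compact via a diagonal subsequence argument, shows that each $\Gamma_m^*$ (reversed optimal $[0,mT]$-segments landing at $\xi$) is nonempty and compact, observes that $\psi_m^{-1}\Gamma_m^*$ is a nested decreasing family of nonempty compact subsets of $\Gamma_\infty$ by Bellman's principle, and then extracts $\hat\mu$ from the intersection. You instead build $\hat\mu$ constructively by greedily iterating the dynamic programming equation~\eqref{eqn:HJB}: select $\nu_m$ so that $s(\nu_{m-1})=s(\nu_m)+S_T(\nu_{m-1}\mid\nu_m)$ (attainment from lower semicontinuity of $s$ plus the continuity of $S_T(\xi\mid\cdot)$ from Lemma~\ref{lem:uniform-continuity}), realize each step by an optimal segment (the terminal-time theorem of Section~\ref{sec:empirical-flow-initial-terminal-time}), concatenate, and telescope. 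Both arguments use the same ingredients — goodness of $s$ and $S_{[0,T]}$, the DP relation, and realizability of $S_T$ by a controlled path — but your construction is more elementary in that it avoids setting up a topology on infinite-horizon path space and the accompanying compactness machinery. It also has the virtue of making~\eqref{eqn:equality-in-parts} hold by construction, since the endpoint $\hat\mu(mT)=\nu_m$ is \emph{chosen} to be a minimizer of~\eqref{eqn:HJB} at horizon $mT$ (the telescoped identity $\sum_{k\le m}S_T(\nu_{k-1}\mid\nu_k)=S_{mT}(\xi\mid\nu_m)$, sandwiched between $s(\xi)-s(\nu_m)$ from telescoping and $\inf_\nu[s(\nu)+S_{mT}(\xi\mid\nu)]=s(\xi)$ from~\eqref{eqn:HJB}, is a nice extra check that your concatenation is simultaneously $mT$-optimal for every $m$). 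The paper's compactness argument is the more canonical abstraction and would port more readily if, say, the minimizers $\nu_m$ were hard to produce one at a time; in this finite-dimensional compact setting your direct approach is cleaner. One small remark: you should say explicitly that the concatenated $\hat L(\cdot)$ is defined only up to a Lebesgue-null set of junction times $\{kT\}$ and that $\hat\mu$ is absolutely continuous on compacts by piecewise absolute continuity plus matching endpoints, so it is a genuine Carath\'eodory solution of~\eqref{eqn:backward-mckvla-dynamics} — but that is the same bookkeeping the paper also leaves implicit.
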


\begin{Remark}
While the McKean-Vlasov dynamics or the more general $\dot{\mu}(t) = L(t)^* \mu(t)$ with $L(t)$ being a rate matrix ensures that $\mu(t)$ lies within $\MA_1(\ZA)$, this is not the case for the dynamics given by (\ref{eqn:backward-mckvla-dynamics}). Indeed, at the boundary of $\MA_1(\ZA)$, viewed as a subset of $\mathbb{R}^r$, the velocity for the dynamics in (\ref{eqn:backward-mckvla-dynamics}) points towards a direction of immediate exit from $\MA_1(\ZA)$. The state space for the dynamics of (\ref{eqn:backward-mckvla-dynamics}) is therefore not restricted to $\MA_1(\ZA)$. However, the lemma assures us that the selected path $\hat{\mu}$ stays within the compact subset $\MA_1(\ZA)$ for all time.
\end{Remark}

\begin{proof}
Our approach to prove this is the following. We shall define a topology on a suitable subspace of paths of infinite duration, and then show that we can restrict attention to a compact subset. We shall then argue that there exists a nested sequence of decreasing compact subsets, each of which is nonempty and all of whose elements satisfy the desired properties. The intersection will then be nonempty to yield the desired path.

{\em Step 1}: We shall now restrict attention to paths that lie inside $\MA_1(\ZA)$. For a path $\hat{\mu}: [0,+\infty) \ra \MA_1(\ZA)$, define its restrictions to $[0, mT]$ by
\[
  \hat{\mu} \mapsto \psi_{m}\hat{\mu}(\cdot) = \hat{\mu}^{(m)}(\cdot) : [0,mT] \ra \MA_1(\ZA)
\]
which is the restriction of the path $\hat{\mu}$ to $[0,mT]$. Consider the space of paths of infinite duration with metric
\[
  \rho_{\infty}(\hat{\mu}, \hat{\eta}) = \sum_{m = 1}^{\infty} 2^{-m} \left( \rho_{mT}(\psi_m \hat{\mu}, \psi_m \hat{\eta}) \wedge 1 \right).
\]
Obviously, $\psi_m$ is continuous for each $m$.

We shall also consider reversed restrictions (denoted without hats) defined by
\begin{equation}
  \label{eqn:forward-segments}
  \mu^{(m)}(t) = \hat{\mu}^{(m)}(mT-t) = \hat{\mu}(mT - t), \quad t \in [0,mT].
\end{equation}
Fix a $B \in [0, +\infty)$, and consider the set
\begin{equation}
  \label{eqn:Gamma-infinity}
  \Gamma_{\infty} = \left\{ \hat{\mu}(\cdot) : [0, +\infty) \ra \MA_1(\ZA) \mid \sup_{m \geq 1} S_{[0,mT]}(\mu^{(m)} | \mu^{(m)}(0)) \leq B \right\}.
\end{equation}
Also, with $\eta(t) = \hat{\eta}(mT-t)$ for $t \in [0,mT]$, define
\[
  \Gamma_m = \left\{ \hat{\eta} : [0, mT] \ra \MA_1(\ZA) \mid S_{[0,mT]}(\eta | \eta(0)) \leq B \right\}.
\]

$\Gamma_{\infty}$ is compact. To see this, take an arbitrary infinite sequence $( \hat{\mu}_n, n \geq 1 ) \subset \Gamma_{\infty}$. Since $S_{[0,mT]}$ is a good rate function, using Lemma \ref{lem:speed-up} and Lemma \ref{lem:tau-star-linear-bound}, it is easy to see that each $\Gamma_m$ is compact, and so one can find an infinite subset $\mathbb{V}_1 \subset \mathbb{N}$ such that $( \psi_1 \hat{\mu}_n, n \in \mathbb{V}_1 ) \subset \Gamma_{1}$ converges. Take a further subsequence represented by the infinite subset $\mathbb{V}_2 \subset \mathbb{V}_1$ such that $( \psi_2 \hat{\mu}_n, n \in \mathbb{V}_2 ) \subset \Gamma_{2}$ converges. Continue this procedure and take the subsequence along the diagonal. This subsequence converges for every interval $[0,mT]$. For each $t$ define $\hat{\mu}(t)$ to be the point-wise limit. Since for each $m$, we have $\psi_m \hat{\mu} \in \Gamma_m$, it follows that with $\mu^{(m)}$ defined as in (\ref{eqn:forward-segments}), $\sup_{m \geq 1} S_{[0,mT]}(\mu^{(m)} | \mu^{(m)}(0)) \leq B$, and so $\hat{\mu} \in \Gamma_{\infty}$. Thus $\Gamma_{\infty}$ is sequentially compact, and by virtue of its being a subset of a metric space, $\Gamma_{\infty}$ is compact.

Fix $\xi \in \MA_1(\ZA)$. Consider the time duration $[0,T]$. We know from Lemma \ref{lem:HJB} that there is a $\nu^*$ with $s(\nu^*) = 0$. Using this in (\ref{eqn:HJB}) of Lemma \ref{lem:HJB}, we get
\[
  s(\xi) \leq S_T(\xi | \nu^*) \leq C_1(T)
\]
where the last inequality is due to the second statement in Lemma \ref{lem:S_T-bounding}. Take the constant $B = C_1(T)$; the corresponding $\Gamma_{\infty}$ is compact.

{\em Step 2}: Observe that (\ref{eqn:HJB}) can be viewed as a minimization over path space, with paths $\mu$ of duration $[0,mT]$ ending at $\xi$. Starting from any initial location $\nu$, the minimum value is upper bounded by $B$. Indeed, traverse the McKean-Vlasov path with initial condition $\nu$ for duration $(m-1)T$. This contributes zero to the cost. Then proceed to $\xi$ in $T$ units of time. This costs at most $B = C_1(T)$. The minimum cost to go from $\nu$ to $\xi$ in time $[0,mT]$ is thus at most $B$. If we consider reversed and translated time so that initial time is 0, the reversed paths $\hat{\mu}$ begin at $\xi$ at time 0, have cost at most $B$, and stay in $\MA_1(\ZA)$ for the duration $[0,mT]$.

Let
\begin{eqnarray*}
  \Gamma^*_m = \bigcup_{\nu \in \MA_1(\ZA)} \Big\{ \hat{\mu} ~|~ \mu(t) = \hat{\mu}(mT - t), \mu(0) = \nu, \mu(mT) = \xi, \\
     S_{[0,mT]}(\mu | \nu) = S_{mT}(\xi | \nu) \Big\},
\end{eqnarray*}
that is, the collection of all minimum cost paths $\hat{\mu}$ in $[0,mT]$ from $\xi$ to every location in $\MA_1(\ZA)$; the minimum cost is at most $B$. Clearly, $\Gamma^*_m \subset \Gamma_m$, and $\Gamma^*_m \neq \emptyset, m \geq 1$.

$\Gamma^*_m$ is also compact. This set being a subset of the compact set $\Gamma_m$, it suffices to show that $\Gamma^*_m$ is closed. Let $\hat{\mu}$ be a point of closure of $\Gamma^*_m$. We can then find a sequence $(\hat{\mu}^{(k)}, k \geq 1) \subset \Gamma^*_m$ such that $\lim_{k \ra +\infty} \hat{\mu}^{(k)} = \hat{\mu}$. Clearly, we must have $\hat{\mu}(0) = \xi$. Let $\nu = \hat{\mu}(mT)$. By a simple application of lower semicontinuity of $S_{[0,mT]}(\cdot | \nu)$, Lemma \ref{lem:speed-up}, and Lemma \ref{lem:tau-star-linear-bound}, we must have
\begin{eqnarray*}
  S_{[0,mT]}(\mu | \nu) & \leq & \liminf_{k \ra +\infty} S_{[0,mT]}(\mu^{(k)} | \mu^{(k)}(0)) \\
  & = & \liminf_{k \ra +\infty} S_{mT}(\xi | \mu^{(k)}(0)) \\
  & = & S_{mT}(\xi | \nu)
\end{eqnarray*}
where the last inequality follows because of the continuity of $S_{mT}$ in its arguments. But $S_{mT}(\xi | \nu)$ is the least cost for paths that traverse from $\nu$ to $\xi$ in duration $[0,mT]$. So we must have $S_{[0,mT]}(\mu | \nu) = S_{mT}(\xi | \nu)$, which establishes that $\mu \in \Gamma^*_m$; $\Gamma^*_m$ is therefore closed.

{\em Step 3}: Let us now finish the proof of Lemma \ref{lem:one-trajectory}. Since $\Gamma_m^*$ is nonempty and compact, the continuity of $\psi_m$ implies that $\psi_m^{-1} \Gamma_m^*$ is nonempty and closed. Further, being a closed subset of the compact set $\Gamma_{\infty}$, $\psi_m^{-1} \Gamma_m^*$ is itself compact. A simple dynamic programming argument further shows that $\psi_m^{-1} \Gamma_m^*$ is a nested decreasing sequence of subsets. Their intersection is nonempty. Take a $\hat{\mu}$ in the intersection.

Focusing on the duration $[mT, mT+T]$, the path $t \in [0,T] \mapsto \eta^{(m)}(t) = \hat{\mu}(mT + T - t)$ has $S_{[0,T]}(\eta^{(m)} | \eta^{(m)}(0)) \leq B < +\infty$, and so, by the last part of Theorem \ref{thm:flow-theorem}, we can find rates $L^{(m)}(t)$ such that $\eta^{(m)}$ satisfies $\dot{\eta}^{(m)}(t) = (L^{(m)}(t))^* \eta^{(m)}(t)$, with initial condition $\eta^{(m)}(0)$. Put these pieces of duration $T$ together by defining $\hat{L}(mT+t) = L^{(m)}(T-t)$ for $t \in [0,T]$, and we get a rate matrix $\hat{L}(\cdot)$ defined on $[0, +\infty)$ such that $\hat{\mu}$ is the solution to (\ref{eqn:backward-mckvla-dynamics}) with initial condition $\hat{\mu}(0) = \xi$. The last equality (\ref{eqn:equality-in-parts}) follows because $\hat{\mu}$ attains the minimum in (\ref{eqn:HJB}) for each duration $[0,mT]$.
\end{proof}

The next lemma says that the optimal path above must end up in a specific invariant set for the dynamics given by the time-reversed McKean-Vlasov equation.

\begin{lemma}
\label{lem:converge-w-limit}
Consider the trajectory $\hat{\mu}$ given by Lemma \ref{lem:one-trajectory}. Its $\omega$-limit set, which is the set of its limit points as $t \ra +\infty$ given by
\[
  \Omega = \bigcap_{t > 0} \overline{\{ \hat{\mu}(t'), t' \geq t \}},
\]
is positively invariant for the ODE
\begin{equation}
  \label{eqn:backward-mckvla-dynamics-actual}
  \dot{\hat{\mu}}(t) = - A_{\hat{\mu}(t)}^* \hat{\mu}(t), \quad t \geq 0.
\end{equation}
\end{lemma}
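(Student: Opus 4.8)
The plan is to show that along the tail of the optimal path $\hat{\mu}$ the running cost $\hat{r}$ vanishes, that a path of zero cost is forced to be a McKean--Vlasov trajectory, and that a time reversal then identifies the asymptotic dynamics on $\Omega$ with the ODE (\ref{eqn:backward-mckvla-dynamics-actual}). First I would record that the tail cost vanishes: by (\ref{eqn:equality-in-parts}), $s(\hat{\mu}(mT)) = s(\xi) - \int_{[0,mT]} \hat{r}(\hat{\mu}(t),\hat{L}(t))\,dt$, so $m \mapsto s(\hat{\mu}(mT))$ is nonincreasing and bounded below by $0$, hence convergent; therefore $\int_{[0,mT]} \hat{r}(\hat{\mu}(t),\hat{L}(t))\,dt$ increases to a finite limit no larger than $s(\xi)$, and consequently $\int_{[T',\,T'+\Delta]} \hat{r}(\hat{\mu}(t),\hat{L}(t))\,dt \to 0$ as $T' \to +\infty$ for every fixed $\Delta > 0$.

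Next I would fix $p \in \Omega$, a sequence $t_k \to +\infty$ with $\hat{\mu}(t_k) \to p$, and a duration $\Delta > 0$. Passing to a subsequence I may assume $\hat{\mu}(t_k + \Delta) \to q$ for some $q \in \MA_1(\ZA)$; since $t_k + \Delta \to +\infty$, in fact $q \in \Omega$. Consider the reversed pieces $\mu_k(t) := \hat{\mu}(t_k + \Delta - t)$, $t \in [0,\Delta]$. Exactly as in the change of variable preceding Lemma~\ref{lem:one-trajectory}, each $\mu_k$ satisfies $\dot{\mu}_k(t) = L_k(t)^*\mu_k(t)$ with $L_k(t) = \hat{L}(t_k+\Delta-t)$ a rate matrix, and $S_{[0,\Delta]}(\mu_k\,|\,\mu_k(0)) = \int_{[t_k,\,t_k+\Delta]} \hat{r}(\hat{\mu}(t),\hat{L}(t))\,dt$, which tends to $0$ by the previous paragraph; moreover $\mu_k(0) = \hat{\mu}(t_k+\Delta) \to q$ and $\mu_k(\Delta) = \hat{\mu}(t_k) \to p$.

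By the compactness of $\{\eta \in D([0,\Delta],\MA_1(\ZA)) : S_{[0,\Delta]}(\eta\,|\,\eta(0)) \le 1\}$ (established just as for the sets $\Gamma_m$ in Step~1 of the proof of Lemma~\ref{lem:one-trajectory}), I may pass to a further subsequence along which $\mu_k \to \mu_*$ in $D([0,\Delta],\MA_1(\ZA))$, and then $\mu_*(0) = q$, $\mu_*(\Delta) = p$, while the semicontinuity estimate used in Step~2 of that proof gives $S_{[0,\Delta]}(\mu_*\,|\,q) \le \liminf_k S_{[0,\Delta]}(\mu_k\,|\,\mu_k(0)) = 0$, so $S_{[0,\Delta]}(\mu_*\,|\,q) = 0$. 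Since a path of zero cost is the unique McKean--Vlasov solution from its initial point (the second of the remarks following Theorem~\ref{thm:flow-theorem}), $\dot{\mu}_*(t) = A_{\mu_*(t)}^*\mu_*(t)$ on $[0,\Delta]$. Setting $\hat{\nu}(t) := \mu_*(\Delta - t)$ gives $\hat{\nu}(0) = p$ and $\dot{\hat{\nu}}(t) = -A_{\hat{\nu}(t)}^*\hat{\nu}(t)$, so $\hat{\nu}$ is a solution of (\ref{eqn:backward-mckvla-dynamics-actual}) started at $p$; and $\hat{\nu}(t) = \mu_*(\Delta - t) = \lim_k \mu_k(\Delta - t) = \lim_k \hat{\mu}(t_k + t) \in \Omega$ since $t_k + t \to +\infty$. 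As $\zeta \mapsto -A_\zeta^*\zeta$ is Lipschitz by (\textbf{A2}), the solution of (\ref{eqn:backward-mckvla-dynamics-actual}) through $p$ is unique, hence its forward trajectory from $p$ coincides with $\hat{\nu}$ on $[0,\Delta]$ and lies in $\Omega$ there; since $\Delta > 0$ is arbitrary, $\Omega$ is positively invariant for (\ref{eqn:backward-mckvla-dynamics-actual}). I expect the main obstacle to be the lower semicontinuity and compactness for the finite-horizon rate function with a varying initial point (the content borrowed from Steps~1 and~2 of Lemma~\ref{lem:one-trajectory}), needed to turn the near-zero-cost reversed pieces into a bona fide zero-cost limit path with the prescribed endpoints; the rest is bookkeeping with (\ref{eqn:equality-in-parts}) and the reversal $t \mapsto \Delta - t$.
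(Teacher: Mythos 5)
Your proof is correct and follows essentially the same strategy as the paper: show the tail cost of $\hat{\mu}$ vanishes using (\ref{eqn:equality-in-parts}), extract a limiting zero-cost path from time-reversed segments of the tail, identify it as the McKean--Vlasov trajectory, and reverse time once more to conclude that the forward orbit of (\ref{eqn:backward-mckvla-dynamics-actual}) from any point of $\Omega$ stays in $\Omega$. The only technical variation is that you obtain the limiting zero-cost path via path-space compactness and lower semicontinuity (the machinery of Steps 1--2 of Lemma~\ref{lem:one-trajectory}'s proof), and make explicit that the interior points $\lim_k \hat{\mu}(t_k+t)$ of the limit segment lie in $\Omega$, whereas the paper reaches $S_T(\xi'\,|\,\nu)=0$ directly through the uniform continuity of $(\nu,\xi)\mapsto S_T(\xi\,|\,\nu)$ from Lemma~\ref{lem:uniform-continuity} and leaves the interior-point membership implicit.
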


\begin{proof}
Take the $\hat{\mu}$ given by Lemma \ref{lem:one-trajectory}. It remains within $\MA_1(\ZA)$, and satisfies the dynamics $\dot{\hat{\mu}}(t) = - \hat{L}(t)^* \hat{\mu}(t)$ for $t \in [0, +\infty)$ with initial condition $\hat{\mu}(0) = \xi$. Furthermore, (\ref{eqn:equality-in-parts}) holds (for all $m \geq 1$). Since the integrand in (\ref{eqn:equality-in-parts}) is nonnegative, $s(\hat{\mu}(mT))$ must decrease as $m$ increases. But $s$ is bounded between $[0, C_1(T)]$, and so there is an $s^*$ such that $s(\hat{\mu}(mT)) \downarrow s^*$ as $m \uparrow +\infty$.

Consider any arbitrary subsequence of $(\hat{\mu}(mT), m \geq 1)$ and take a subsequential limit $\xi'$. On this subsequence, take a further subsequential limit of $(\hat{\mu}(mT + T), m \geq 1)$ and call it $\nu$. Call the subsequence $(m_k, k \geq 1)$, and consider the paths
\[
  \mu^{(m_k)}(t) = \hat{\mu}(m_k T + T - t), \quad t \in [0,T]
\]
which are of duration $T$ and time reversals of fragments of $\hat{\mu}$. We thus have the subsequential convergence
\begin{equation}
  \label{eqn:subseq-convergence}
  (\mu^{(m_k)}(0), \mu^{m_k}(T)) \ra (\nu, \xi') \mbox{ as } k \ra +\infty.
\end{equation}
Taking limits as $k \ra +\infty$ in (\ref{eqn:equality-in-parts}) and using the fact that $s(\mu^{(m_k)}(0))$ as well as $s(\mu^{(m_k)}(T))$ converge to $s^*$ as $k \ra +\infty$, the integral term, which is easily seen to be $S_{[0,T]}(\mu^{(m)} | \mu^{(m)}(0))$, satisfies
\[
  \limsup_{k \ra +\infty} S_{[0,T]}(\mu^{(m_k)} | \mu^{(m_k)}(0)) = 0.
\]
This fact and the nonnegativity of $S_T$ imply
\[
  \lim_{k \ra +\infty} S_T(\mu^{(m_k)}(T) | \mu^{(m_k)}(0)) = 0.
\]
By the uniform continuity of $S_T$ in both its arguments (Lemma \ref{lem:uniform-continuity}) and by (\ref{eqn:subseq-convergence}), we deduce that $S_T(\xi'|\nu) = 0$. But then the path that goes from $\nu$ to $\xi'$ in time $[0,T]$ and attains $S_T(\xi'|\nu) = 0$ is the McKean-Vlasov path which is the solution to the dynamics
\[
  \dot{\mu}(t) = A_{\mu(t)}^* \mu(t), ~t \in [0,T]
\]
with rate matrix $A_{\mu(t)}$, initial condition $\mu(0) = \nu$, and final condition $\mu(T) = \xi'$. But this implies that $\overline{\mu}(t) = \mu(T-t)$ satisfies (\ref{eqn:backward-mckvla-dynamics}) with $\hat{L}(t) = A_{\overline{\mu}(t)}$ and initial condition $\overline{\mu}(0) = \xi'$. It follows that $\Omega$, the $\omega$-limit set for $\hat{\mu}$, is contained in the $\omega$-limit set for the dynamics (\ref{eqn:backward-mckvla-dynamics-actual}) with initial condition $\hat{\mu}(0) = \xi$. This concludes the proof.
\end{proof}

\subsection{Invariant measure: Unique globally asymptotically stable equilibrium}
\label{subsec:Inv-measure-ugase}

In this subsection, we consider the case when there is a unique globally asymptotically stable equilibrium $\xi_0$.

\begin{lemma}
\label{lem:ugase-basevalue}
If the McKean-Vlasov equation $\dot{\mu}(t) = A_{\mu(t)}^* \mu(t)$ has a unique globally asymptotically stable equilibrium $\xi_0$, then $s(\xi_0) = 0$.
\end{lemma}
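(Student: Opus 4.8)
The plan is to show that the common limiting value $s^*$ of $s(\hat\mu(mT))$ along the optimal path $\hat\mu$ from Lemma~\ref{lem:one-trajectory} must in fact be $0$, and that this forces $s(\xi_0)=0$ as well. First I would invoke Lemma~\ref{lem:converge-w-limit}: the $\omega$-limit set $\Omega$ of the optimal path $\hat\mu$ is positively invariant for the time-reversed McKean--Vlasov dynamics $\dot{\hat\mu}(t) = -A_{\hat\mu(t)}^*\hat\mu(t)$. Pick any point $\zeta \in \Omega$ and run the time-reversed dynamics from $\zeta$; by positive invariance the whole forward orbit stays in $\Omega \subseteq \MA_1(\ZA)$. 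Reversing time, this produces a full McKean--Vlasov trajectory (solving $\dot\mu = A_{\mu}^*\mu$) that, as $t\to+\infty$ in reversed time, must converge to the unique globally asymptotically stable equilibrium $\xi_0$. Hence $\xi_0 \in \overline\Omega = \Omega$ (or at least $\xi_0$ is a limit of points of $\Omega$, and since $s$ is constant $=s^*$ on $\Omega$ by the monotone-limit argument together with continuity, $s(\xi_0)=s^*$).

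Next I would pin down the value $s^*$. From Lemma~\ref{lem:one-trajectory}, equation~(\ref{eqn:equality-in-parts}) gives
\[
  s(\xi) = s(\hat\mu(mT)) + \int_{[0,mT]} \hat r(\hat\mu(t),\hat L(t))\,dt \quad\text{for all } m\ge 1,
\]
and since the integrand is nonnegative, letting $m\to\infty$ yields $s(\xi) = s^* + \int_{[0,\infty)}\hat r(\hat\mu(t),\hat L(t))\,dt$, so in particular $s^* \le s(\xi)$. Applying this with $\xi = \xi_0$: by Remark~2 following Theorem~\ref{thm:invariant} (take the constant path at $\xi_0$), we have $s(\xi_0) \le S_T(\xi_0\mid\xi_0) = 0$ — wait, more carefully, the constant trajectory at $\xi_0$ is the McKean--Vlasov solution started at $\xi_0$, so $S_T(\xi_0\mid\xi_0)=0$, and then~(\ref{eqn:HJB}) with $\nu=\nu^*$ together with $s(\nu^*)=0$ does not immediately give $s(\xi_0)=0$; instead I use global asymptotic stability: from $\nu^*$, the McKean--Vlasov flow converges to $\xi_0$, costs $0$ over each finite horizon, and $S_T$ is continuous, so $s(\xi_0) \le \lim_{T\to\infty} S_T(\xi_0\mid\nu^*) = 0$ — formalized via a finite-horizon approximation and Lemma~\ref{lem:S_T-bounding}/Lemma~\ref{lem:uniform-continuity} to handle the tail. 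Thus $s(\xi_0)=0$ directly, and combined with $s^*=s(\xi_0)$ everything is consistent.

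Actually the cleanest route avoids the $s^*$ detour: I would argue directly that $s(\xi_0)=0$. By the dynamic programming relation~(\ref{eqn:HJB}), $s(\xi_0) = \inf_\nu [s(\nu)+S_T(\xi_0\mid\nu)] \le s(\nu^*) + S_T(\xi_0\mid\nu^*) = S_T(\xi_0\mid\nu^*)$ for every $T$. Now I must show $S_T(\xi_0\mid\nu^*)\to 0$ as $T\to\infty$. Since $\xi_0$ is globally asymptotically stable, the McKean--Vlasov solution $\mu^{MV}$ started at $\nu^*$ satisfies $\mu^{MV}(t)\to\xi_0$; the cost of following $\mu^{MV}$ on $[0,T-\epsilon]$ is exactly $0$ (Remark~2 after Theorem~\ref{thm:flow-theorem}), and on the last small interval $[T-\epsilon,T]$ one uses the third bullet of Lemma~\ref{lem:S_T-bounding} to bridge from $\mu^{MV}(T-\epsilon)$ (which is $\delta$-close to $\xi_0$ for $T$ large) to $\xi_0$ at cost $\le C_2\epsilon$. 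Concatenating and using additivity of $S_{[0,T]}$ over time (which follows from its integral form), $S_T(\xi_0\mid\nu^*)\le C_2\epsilon$ for all large $T$; since $\epsilon$ is arbitrary and $s(\xi_0)\ge 0$, we get $s(\xi_0)=0$.

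The main obstacle I anticipate is the bridging step near the terminal time: one needs $S_{[0,T]}$ to behave additively under concatenation of paths and one needs Lemma~\ref{lem:S_T-bounding}'s third bullet to be applicable with the small time-budget $\epsilon$ while the spatial gap $\rho_0(\mu^{MV}(T-\epsilon),\xi_0)$ is simultaneously forced below the corresponding $\delta$ — this requires choosing $\epsilon$ first (fixing $\delta$), then $T$ large enough that the McKean--Vlasov trajectory has entered the $\delta$-ball, which is exactly where global asymptotic stability enters. Everything else is routine: nonnegativity and lower semicontinuity of the rate functions, the integral (hence additive) structure of $S_{[0,T]}$, and the already-established continuity estimates of Lemmas~\ref{lem:S_T-bounding} and~\ref{lem:uniform-continuity}.
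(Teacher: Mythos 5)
Your ``cleanest route'' in the third paragraph is correct, but it differs from the paper's proof. The paper applies the dynamic programming identity~(\ref{eqn:HJB}) at the moving point $\mu^{MV}(T)$ rather than at $\xi_0$: it takes the McKean--Vlasov trajectory $\mu^{MV}$ started at $\nu^*$, notes $S_T(\mu^{MV}(T)\mid\nu^*)=0$, concludes $s(\mu^{MV}(T))\le s(\nu^*)=0$ for every $T$, and then closes the argument with lower semicontinuity of $s$ (automatic for a good rate function), since $\mu^{MV}(T)\to\xi_0$. You instead apply~(\ref{eqn:HJB}) directly at the fixed target $\xi_0$, which forces you to show $S_T(\xi_0\mid\nu^*)\to 0$, and for this you concatenate the free McKean--Vlasov leg on $[0,T-\varepsilon]$ with a short bridge on $[T-\varepsilon,T]$ using part~3 of Lemma~\ref{lem:S_T-bounding}, with the quantifier order (choose $\varepsilon$, get $\delta$, then take $T$ large via global asymptotic stability) handled correctly. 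Your route is a valid alternative; what the paper's version buys is economy (no bridging, no explicit additivity of the integral functional), since lower semicontinuity of $s$ alone closes the gap. What your version buys is that it produces an explicit bound $S_T(\xi_0\mid\nu^*)\le C_2\varepsilon$ and does not lean on lower semicontinuity of the subsequential rate function.

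A side remark: your first two paragraphs are a detour that does not quite hold together. In particular, ``reversing time, this produces a full McKean--Vlasov trajectory that \ldots must converge to $\xi_0$'' is not justified as stated: if $\hat\mu$ solves the time-reversed ODE and stays in $\Omega$, then $\mu(s)=\hat\mu(-s)$ is a forward McKean--Vlasov trajectory only for $s\le 0$, and running it backward in time does not converge to the globally asymptotically stable equilibrium (that is the direction in which orbits \emph{leave} $\xi_0$). The paper's actual use of the $\omega$-limit set argument, in Lemma~\ref{lem:uniqueness}, relies on two-sided invariance of $\Omega$ plus the uniqueness of $\xi_0$ as the sole set invariant for both forward and reversed dynamics, and even then it assumes Lemma~\ref{lem:ugase-basevalue} as input rather than proving it. You were right to abandon this approach; the third paragraph stands on its own.
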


\begin{proof}
Consider the McKean-Vlasov dynamics
\[
  \dot{\mu}(t) = A_{\mu(t)}^* \mu(t)
\]
with initial condition $\mu(0) = \nu^*$. By our assumption that $\xi_0$ is the unique globally asymptotically stable equilibrium, $\mu(T) \ra \xi_0$ as $T \ra +\infty$. By the second remark following Theorem \ref{thm:flow-theorem}, the McKean-Vlasov path has zero cost and so $S_T(\mu(T) | \nu^*) = 0$ for each $T > 0$. By (\ref{eqn:HJB}), we then have
\[
  s(\mu(T)) \leq s(\nu^*) + S_T(\mu(T) | \nu^*) = s(\nu^*).
\]
Take limits as $T \ra +\infty$ and use the lower semicontinuity of $s$ to get
\[
  0 \leq s(\xi_0) \leq \liminf_{T \ra +\infty} s(\mu(T)) \leq s(\nu^*) = 0,
\]
whence $s(\xi_0) = 0$.
\end{proof}

The following lemma shows that the rate function for any subsequential large deviation principle is unique.

\begin{lemma}
\label{lem:uniqueness}
If the McKean-Vlasov equation $\dot{\mu}(t) = A_{\mu(t)}^* \mu(t)$ has a unique globally asymptotically stable equilibrium $\xi_0$, then the solution to (\ref{eqn:HJB}) and (\ref{eqn:equality-in-parts}) is unique and is given by
  \begin{equation}
  \label{eqn:s}
    s(\xi) = \inf_{\hat{\mu}} \int_{[0, +\infty)} \Big[ \sum_{(i,j) \in \EA} (\hat{\mu}(t)(i)) \hat{\lambda}_{i,j}(\hat{\mu}(t))
             \tau^* \left( \frac{\hat{l}_{i,j}(t)}{\hat{\lambda}_{i,j}(\hat{\mu}(t))} - 1 \right) \Big] ~dt
  \end{equation}
where the infimum is over all $\hat{\mu}$ that are solutions to the dynamical system $\dot{\hat{\mu}}(t) = -\hat{L}(t)^* \hat{\mu}(t)$ for some family of rate matrices $\hat{L}(\cdot)$, with initial condition $\hat{\mu}(0) = \xi$, terminal condition $\lim_{t \ra +\infty} \hat{\mu}(t) = \xi_0$, and $\hat{\mu}(t) \in \MA_1(\ZA)$ for all $t \geq 0$.
\end{lemma}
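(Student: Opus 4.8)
The plan is to show that any rate function $s$ produced, as in Lemma~\ref{lem:HJB}, by a subsequential large deviation principle must coincide with the right-hand side of (\ref{eqn:s}); since that expression does not depend on the chosen subsequence, uniqueness follows at once. Write $\tilde{s}(\xi)$ for the right-hand side of (\ref{eqn:s}), i.e. the infimum of $\int_0^\infty \hat{r}(\hat{\mu}(t),\hat{L}(t))\,dt$ over time-reversed trajectories solving $\dot{\hat{\mu}}(t)=-\hat{L}(t)^*\hat{\mu}(t)$ with $\hat{\mu}(0)=\xi$, $\hat{\mu}(t)\in\MA_1(\ZA)$ for all $t$, and $\hat{\mu}(t)\to\xi_0$. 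First I would record two easy facts. Applying (\ref{eqn:HJB}) with the point $\nu^*$ of Lemma~\ref{lem:HJB} (where $s(\nu^*)=0$) and an arbitrary horizon gives $s(\nu)\le\inf_{T>0}S_T(\nu\mid\nu^*)=V(\nu\mid\nu^*)$, the quasipotential of (\ref{eqn:quasipotential}). Next, $V(\xi_0\mid\nu^*)=0$: follow the zero-cost McKean--Vlasov path from $\nu^*$ until it enters an arbitrarily small ball around $\xi_0$ (possible since $\xi_0$ is globally stable), then reach $\xi_0$ in short time at cost $O(\varepsilon)$ by the third part of Lemma~\ref{lem:S_T-bounding}. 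Combining these with the uniform continuity of $V$ (Lemma~\ref{lem:V-uniform-continuity}) and $s\ge0$ yields $s(\nu_m)\to0$ along every sequence $\nu_m\to\xi_0$; in particular $s(\xi_0)=0$, recovering Lemma~\ref{lem:ugase-basevalue}.

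The crux is to prove that the optimal infinite-horizon trajectory $\hat{\mu}$ furnished by Lemma~\ref{lem:one-trajectory} converges to $\xi_0$. By Lemma~\ref{lem:converge-w-limit} its $\omega$-limit set $\Omega\subseteq\MA_1(\ZA)$ is compact and positively invariant for the time-reversed dynamics (\ref{eqn:backward-mckvla-dynamics-actual}); equivalently, $\Omega$ is negatively invariant for the McKean--Vlasov flow (\ref{eqn:mckvla-dynamics}). Since $\xi_0$ is globally asymptotically stable for the Lipschitz ODE (\ref{eqn:mckvla-dynamics}), a converse Lyapunov theorem provides a continuous $W$ on $\MA_1(\ZA)$ with $W(\xi_0)=0<W(\nu)$ for $\nu\ne\xi_0$ and with $t\mapsto W(\mu(t))$ strictly decreasing along every non-constant solution of (\ref{eqn:mckvla-dynamics}). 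Picking $\nu^\sharp\in\Omega$ that maximizes $W$ over $\Omega$, the (unique) backward McKean--Vlasov trajectory through $\nu^\sharp$ stays in $\Omega$ by negative invariance, so strict monotonicity of $W$ forces $W$ to exceed $W(\nu^\sharp)$ immediately backward in time unless $\nu^\sharp=\xi_0$. Hence $\max_\Omega W=W(\xi_0)=0$, so $\Omega=\{\xi_0\}$, and therefore $\hat{\mu}(t)\to\xi_0$. In particular this $\hat{\mu}$ is admissible in the infimum defining $\tilde{s}(\xi)$.

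Finally I would obtain $s=\tilde{s}$ via two inequalities. For $\tilde{s}(\xi)\le s(\xi)$: with the admissible $\hat{\mu}$ above, let $m\to\infty$ in (\ref{eqn:equality-in-parts}); since $s(\hat{\mu}(mT))\to0$ (first paragraph) and $s\ge0$, its total cost equals $\lim_m\big(s(\xi)-s(\hat{\mu}(mT))\big)=s(\xi)$, whence $\tilde{s}(\xi)\le s(\xi)$. For $s(\xi)\le\tilde{s}(\xi)$ (trivial if $\tilde{s}(\xi)=+\infty$): take any admissible trajectory $\hat{\mu}$ for $\tilde{s}(\xi)$ with finite total cost $c$; for each $m$, its time-reversal restricted to $[0,mT]$ is, by Theorem~\ref{thm:flow-theorem}(c) together with the change of variables in (\ref{eqn:total-cost})--(\ref{eqn:finite-rate-evaluation}), a path $\mu^{(m)}$ from $\hat{\mu}(mT)$ to $\xi$ with $S_{[0,mT]}(\mu^{(m)}\mid\hat{\mu}(mT))\le c$; plugging this into (\ref{eqn:HJB}) with horizon $mT$ gives $s(\xi)\le s(\hat{\mu}(mT))+c$, and letting $m\to\infty$ (so $\hat{\mu}(mT)\to\xi_0$ and $s(\hat{\mu}(mT))\to0$) yields $s(\xi)\le c$; taking the infimum over $\hat{\mu}$ completes the argument. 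The main obstacle is the convergence $\hat{\mu}(t)\to\xi_0$ of the optimal path, i.e. excluding any nontrivial compact negatively invariant set of the McKean--Vlasov flow, which is precisely where global asymptotic stability and the Lyapunov argument are indispensable; the rest is bookkeeping with the already-established estimates and the dynamic programming relation (\ref{eqn:HJB}).
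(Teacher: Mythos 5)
Your proof is correct and follows the same overall skeleton as the paper's: use Lemmas~\ref{lem:one-trajectory} and \ref{lem:converge-w-limit} to produce an optimal reversed trajectory $\hat{\mu}$ whose $\omega$-limit set $\Omega$ lies in $\MA_1(\ZA)$ and is negatively invariant for the McKean--Vlasov flow, argue $\Omega=\{\xi_0\}$, pass to the limit in (\ref{eqn:equality-in-parts}) for the lower bound, and use the dynamic programming relation (\ref{eqn:HJB}) for the upper bound. The one place where you go genuinely beyond the paper is the identification $\Omega=\{\xi_0\}$: the paper simply asserts that $\{\xi_0\}$ is the only subset of $\MA_1(\ZA)$ invariant for both the forward and time-reversed dynamics, whereas you supply an actual argument via a converse Lyapunov function $W$, taking a maximizer $\nu^\sharp$ of $W$ over $\Omega$ and using strict decrease of $W$ along the forward flow together with negative invariance of $\Omega$ to force $\nu^\sharp=\xi_0$. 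This is a clean way to make rigorous the step the paper leaves implicit, and it is economical in that it only uses the positive invariance of $\Omega$ for (\ref{eqn:backward-mckvla-dynamics-actual}) actually established in Lemma~\ref{lem:converge-w-limit}, not the stronger bi-invariance claimed in the paper's proof. Two minor remarks: in the lower-bound step, $s\ge 0$ already gives $\int_0^{mT}\hat{r}\,dt\le s(\xi)$ from (\ref{eqn:equality-in-parts}), so the sharper claim $s(\hat{\mu}(mT))\to 0$ is not needed there (though it is needed for your upper bound, and your derivation of it via $s\le V(\cdot\mid\nu^*)$ and uniform continuity of $V$ is fine); and in the upper bound you correctly handle the fact that admissible reversed paths only approach $\xi_0$ asymptotically, by applying (\ref{eqn:HJB}) at $\nu=\hat{\mu}(mT)$ and letting $m\to\infty$, rather than applying it at $\nu=\xi_0$ as the paper suggests.
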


\begin{proof}
The path given by Lemma \ref{lem:one-trajectory} converges as shown in Lemma \ref{lem:converge-w-limit} to $\Omega$, which is contained in an $\omega$-limit set of the ODE (\ref{eqn:backward-mckvla-dynamics-actual}). So $\Omega$ is connected and compact. Since the path $\hat{\mu}(\cdot)$ also stays completely within $\MA_1(\ZA)$ (viewed as a subset of $\mathbb{R}^r$), $\Omega$ is a subset of $\MA_1(\ZA)$. Furthermore, $\Omega$ is invariant (both positively and negatively) to the dynamics defined by the ODE (\ref{eqn:backward-mckvla-dynamics-actual}). But then, by our assumption that $\xi_0$ is the unique globally asymptotically stable equilibrium for the McKean-Vlasov dynamics
\begin{equation}
  \label{eqn:forward-dynamics}
  \dot{\mu}(t) = A_{\mu(t)}^* \mu(t),
\end{equation}
we must have $\Omega = \{ \xi_0 \}$ because this is the only subset of $\MA_1(\ZA)$ that is invariant to both of the dynamics in (\ref{eqn:backward-mckvla-dynamics-actual}) and (\ref{eqn:forward-dynamics}). Letting $m \ra +\infty$ in (\ref{eqn:equality-in-parts}), it follows by lower semicontinuity of $s$ that
\begin{equation}
    \label{eqn:s-geq}
    s(\xi) \geq s(\xi_0) + \int_{[0, +\infty)} \Big[ \sum_{(i,j) \in \EA} (\hat{\mu}(t)(i)) \hat{\lambda}_{i,j}(\hat{\mu}(t))
             \tau^* \left( \frac{\hat{l}_{i,j}(t)}{\hat{\lambda}_{i,j}(\hat{\mu}(t))} - 1 \right) \Big]~dt.
\end{equation}
By Lemma \ref{lem:ugase-basevalue}, $s(\xi_0) = 0$, and thus
\[
    s(\xi) \geq \int_{[0, +\infty)} \Big[ \sum_{(i,j) \in \EA} (\hat{\mu}(t)(i)) \hat{\lambda}_{i,j}(\hat{\mu}(t))
             \tau^* \left( \frac{\hat{l}_{i,j}(t)}{\hat{\lambda}_{i,j}(\hat{\mu}(t))} - 1 \right) \Big] ~dt
\]
for the special path $\hat{\mu}$. This establishes that $s(\xi)$ is at least the right-hand side of (\ref{eqn:s}). By an application of (\ref{eqn:HJB}) with $\nu = \xi_0$, it is obvious that $s(\xi)$ is upper bounded by the right-hand side of (\ref{eqn:s}), whence equality holds in (\ref{eqn:s}) and the uniqueness of $s(\xi)$ follows.
\end{proof}

We are now ready to prove Theorem \ref{thm:invariant}.

\begin{proof}[Proof of Theorem \ref{thm:invariant}]
Take any arbitrary sequence of natural numbers growing to $+\infty$. By Lemma \ref{lem:HJB}, there is a subsequence that satisfies the large deviation principle with rate function $s$ such that (\ref{eqn:HJB}) holds. Given our assumption that $\xi_0$ is the unique globally asymptotically stable equilibrium for the McKean-Vlasov equation, we have $s \geq 0$, and $s(\xi_0) = 0$ by Lemma \ref{lem:ugase-basevalue}. By Lemma \ref{lem:uniqueness}, $s$ is uniquely specified by (\ref{eqn:s}), which is the same as (\ref{eqn:s-2}). Thus every sequence contains a further subsequence $(N_k, k \geq 1)$ such that $(\wp^{(N_k)}, k \geq 1)$ satisfies the large deviation principle with speed $N_k$ and the same rate function $s$ specified by (\ref{eqn:s-2}). By \cite[Ex. 4.4.15(a)-(b), pp. 147-148]{DZ}, it follows that $(\wp^{(N)}, N \geq 1)$ satisfies the large deviation principle with speed $N$ and rate function given by (\ref{eqn:s-2}).
\end{proof}

\subsection{Invariant measure: The general case}
\label{subsec:Inv-measure-multiple-eq}

We now treat the general case under assumption ({\bf B}). As noted earlier, under the hypothesis of Theorem \ref{thm:invariant}, assumption ({\bf B}) holds with $l=1$, $K_1 = \{ \xi_0 \}$, and moreover $s(\xi_0) = 0$ by Lemma \ref{lem:ugase-basevalue}. We generalize Lemma \ref{lem:ugase-basevalue} in Lemma \ref{lem:s-multiple-equilibria} by first proving the following lemma. The generalization of Lemma \ref{lem:uniqueness} is Lemma \ref{lem:uniqueness-multiple-equilibria}.

\begin{lemma}
\label{lem:multiple-equilibria-basevalue}
Under assumption {\em ({\bf B})}, the rate function $s$ satisfies the following:
\begin{itemize}
  \item There exists $\xi_0 \in K_{i_0}$ for some $i_0 = 1, 2, \ldots, l$ that satisfies $s(\xi_0) = 0$.
  \item There exist nonnegative real numbers $s_1, s_2, \ldots, s_l$ such that $\xi \in K_i$ implies $s(\xi) = s_i$.
\end{itemize}
\end{lemma}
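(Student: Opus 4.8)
The two assertions are largely independent; I would establish the second one (that $s$ is constant on each $K_i$) first and then read off the first from it.

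For the constancy, fix an index $i$ and two points $\nu_1,\nu_2\in K_i$. By part~(1) of assumption~({\bf B}), $\nu_1\sim\nu_2$, so $V(\nu_2 | \nu_1)=V(\nu_1 | \nu_2)=0$. The key observation is that the quasipotential is nothing but the infimum of the terminal-time rate functions over finite horizons: comparing the definitions (\ref{eqn:S(xi-nu)}) and (\ref{eqn:quasipotential}) gives $V(\nu_2 | \nu_1)=\inf_{T>0}S_T(\nu_2 | \nu_1)$, so from $V(\nu_2 | \nu_1)=0$ one obtains, for each $\varepsilon>0$, a horizon $T_\varepsilon>0$ with $S_{T_\varepsilon}(\nu_2 | \nu_1)<\varepsilon$. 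Feeding $\nu=\nu_1$ and $T=T_\varepsilon$ into the dynamic programming identity (\ref{eqn:HJB}) of Lemma~\ref{lem:HJB} yields $s(\nu_2)\le s(\nu_1)+S_{T_\varepsilon}(\nu_2 | \nu_1)<s(\nu_1)+\varepsilon$; letting $\varepsilon\downarrow 0$ gives $s(\nu_2)\le s(\nu_1)$, and the reverse inequality follows symmetrically from $V(\nu_1 | \nu_2)=0$. Thus $s$ is constant on $K_i$, say equal to $s_i$, and $s_i\ge 0$ because $s\ge 0$.

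For the first assertion, Lemma~\ref{lem:HJB} already supplies a point $\nu^*$ with $s(\nu^*)=0$, and the task is to transport this zero into one of the $K_i$ without increasing $s$. I would run the McKean--Vlasov flow $\mu(\cdot)$ from $\mu(0)=\nu^*$. Since it stays in the compact set $\MA_1(\ZA)$, its $\omega$-limit set $\Omega^*$ is nonempty, and by part~(3) of assumption~({\bf B}) it is contained in some $K_{i_0}$. Choose $\xi_0\in\Omega^*\subset K_{i_0}$ and a sequence $T_n\uparrow+\infty$ with $\mu(T_n)\to\xi_0$. Because the McKean--Vlasov trajectory has zero cost (take the controlled rates in (\ref{eqn:finite-rate-evaluation}) equal to the McKean--Vlasov rates, so the integrand vanishes since $\tau^*(0)=0$; cf.\ the second remark after Theorem~\ref{thm:flow-theorem}), we have $S_{T_n}(\mu(T_n) | \nu^*)=0$, and then (\ref{eqn:HJB}) gives $0\le s(\mu(T_n))\le s(\nu^*)+S_{T_n}(\mu(T_n) | \nu^*)=0$. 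Lower semicontinuity of the good rate function $s$ finally yields $s(\xi_0)\le\liminf_{n}s(\mu(T_n))=0$, so $\xi_0\in K_{i_0}$ satisfies $s(\xi_0)=0$ (consistently, $s_{i_0}=0$ by the constancy).

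I do not expect a genuine obstacle: the argument amounts to applying (\ref{eqn:HJB}) at a well-chosen finite horizon and then passing to a limit. The one place needing care is the first assertion — the zero of $s$ handed down by Lemma~\ref{lem:HJB} is an arbitrary point of $\MA_1(\ZA)$, and it is precisely part~(3) of assumption~({\bf B}), together with the zero cost of McKean--Vlasov paths and the lower semicontinuity of $s$, that lets one slide it into a $K_i$. For the constancy one must also resist taking $T\to 0$ or $T\to+\infty$ inside (\ref{eqn:HJB}) and instead use it at the finite horizon $T_\varepsilon$ furnished by $V(\nu_2 | \nu_1)=0$.
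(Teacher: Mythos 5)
Your proof is correct and follows essentially the same route as the paper: the constancy on each $K_i$ is obtained by feeding the small-cost paths guaranteed by $\nu_1\sim\nu_2$ into the dynamic-programming identity (\ref{eqn:HJB}), and the zero is located by running the zero-cost McKean–Vlasov flow from the $\nu^*$ of Lemma~\ref{lem:HJB} into an $\omega$-limit set (hence into some $K_{i_0}$ by ({\bf B})(3)) and invoking lower semicontinuity. The only cosmetic difference is that you prove the two bullets in the opposite order, which is harmless since they are independent.
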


\begin{proof}
The first statement immediately follows from the steps in the proof of Lemma \ref{lem:ugase-basevalue}: $\xi_0$ is now some element in the $\omega$-limit set for the McKean-Vlasov dynamics with initial condition $\nu^*$ satisfying $s(\nu^*) = 0$.

For the second statement, let $\nu, \xi \in K_i$. Fix $\varepsilon > 0$. Since $\nu \sim \xi$, there exists $T>0$ such that $S_T(\xi | \nu) \leq \varepsilon$. Using (\ref{eqn:HJB}), we get
\[
  s(\xi) \leq s(\nu) + S_T(\xi | \nu) \leq s(\nu) + \varepsilon.
\]
Reversing the role of $\xi$ and $\nu$, we get $s(\nu) \leq s(\xi) + \varepsilon$, whence $|s(\xi) - s(\nu)| \leq \varepsilon$. Since $\varepsilon$ was arbitrary, we must have $s(\nu) = s(\xi)$. So all points in the compact set  $K_i$ take the same value. The second statement follows.
\end{proof}

We now argue that the function $s$ that satisfies (\ref{eqn:HJB}), (\ref{eqn:equality-in-parts}), the condition $s \geq 0$, and the condition $\min_{\nu} s(\nu) = 0$ is indeed unique. We do this in two steps via the following lemmas.

\begin{lemma}
\label{lem:uniqueness-multiple-equilibria}
Assume that {\em ({\bf B})} holds. Let $s_1, s_2, \ldots, s_l$ be specified as the values on the compact sets $K_1, K_2, \ldots, K_l$. Let $s_{l'} \geq 0$ for $1 \leq l' \leq l$ and let $\min \{ s_1, s_2, \ldots, s_l \} = 0$. Then the solution to (\ref{eqn:HJB}) and (\ref{eqn:equality-in-parts}) is unique and is given by
  \begin{equation}
  \label{eqn:s-multiple-equilibria}
    s(\xi) = \inf_{l'} \inf_{\hat{\mu}} \left[ s_{l'}
             + \int_0^{+\infty} \Big[ \sum_{(i,j) \in \EA} (\hat{\mu}(t)(i)) \hat{\lambda}_{i,j}(\hat{\mu}(t))
               \tau^* \left( \frac{\hat{l}_{i,j}(t)}{\hat{\lambda}_{i,j}(\hat{\mu}(t))} - 1 \right) \Big] ~dt \right]
  \end{equation}
where the second infimum is over all $\hat{\mu}$ that are solutions to the dynamical system $\dot{\hat{\mu}}(t) = -\hat{L}(t)^* \hat{\mu}(t)$ for some family of rate matrices $\hat{L}(\cdot)$, with initial condition $\hat{\mu}(0) = \xi$, terminal condition $\hat{\mu}(t) \ra K_{l'}$ as $t \ra +\infty$, and $\hat{\mu}(t) \in \MA_1(\ZA)$ for all $t \geq 0$.
\end{lemma}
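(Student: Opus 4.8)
The plan is to follow the structure of the proof of Lemma~\ref{lem:uniqueness}, replacing the role of the singleton $\{\xi_0\}$ by the appropriate compact set $K_{l'}$ and carrying the parameters $s_1,\dots,s_l$ through as given data. Fix $\xi\in\MA_1(\ZA)$ and let $\hat\mu:[0,+\infty)\ra\MA_1(\ZA)$, together with rate matrices $\hat L(\cdot)$, be the optimal path of Lemma~\ref{lem:one-trajectory}, so that $s(\xi)=s(\hat\mu(mT))+\int_{[0,mT]}\hat r(\hat\mu(t),\hat L(t))\,dt$ for every $m\geq1$. Since $s(\xi)\leq C_1(T)<+\infty$ (by (\ref{eqn:HJB}) and Lemma~\ref{lem:S_T-bounding}) and the integrand is nonnegative, the numbers $s(\hat\mu(mT))$ decrease to a limit $s^{\star}\geq0$, the cost over the window $[mT,(m+1)T]$ equals $s(\hat\mu(mT))-s(\hat\mu((m+1)T))\to0$, and $s(\xi)=s^{\star}+I_\xi$ with $I_\xi:=\int_{[0,+\infty)}\hat r(\hat\mu(t),\hat L(t))\,dt<+\infty$. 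I will then prove the two inequalities $s(\xi)\geq(\ref{eqn:s-multiple-equilibria})$ and $s(\xi)\leq(\ref{eqn:s-multiple-equilibria})$; uniqueness of the solution to (\ref{eqn:HJB})--(\ref{eqn:equality-in-parts}) follows because the right-hand side of (\ref{eqn:s-multiple-equilibria}) is completely determined by the rates, the sets $K_i$, and the values $s_i$.

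The crux is to show that the $\omega$-limit set $\Omega=\bigcap_{t>0}\overline{\{\hat\mu(t'):t'\geq t\}}$ is contained in a single $K_{l'}$. By Lemma~\ref{lem:converge-w-limit}, $\Omega$ is positively invariant for the time-reversed McKean-Vlasov ODE (\ref{eqn:backward-mckvla-dynamics-actual}); using irreducibility (\textbf{A1}) with (\textbf{A3}) one checks that the reversed field $-A_{\eta}^{*}\eta$ is never tangent to $\partial\MA_1(\ZA)$ at a probability vector, so $\Omega$ lies in the interior of $\MA_1(\ZA)$ and the reversed flow $\Phi^{-}$ is a genuine flow near $\Omega$. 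Because the window-costs vanish, $\hat\mu$ is an asymptotic pseudotrajectory of $\Phi^{-}$, so $\Omega$ is compact, connected, invariant, and internally chain transitive for $\Phi^{-}$. Reversing time turns an internal chain of $\Phi^{-}$ into one of the forward McKean-Vlasov flow (\ref{eqn:forward-dynamics}) with endpoints swapped, whose flow segments cost zero and whose corrective jumps each cost at most $C_2\varepsilon$ by the third part of Lemma~\ref{lem:S_T-bounding}; hence $\Omega$ is chain transitive for (\ref{eqn:forward-dynamics}) as well and any two of its points are mutually reachable at zero quasipotential, i.e.\ $\sim$-equivalent. For $x\in\Omega$ the forward McKean-Vlasov $\omega$-limit set of $x$ is contained in $\Omega$ (invariance) and, by (\textbf{B})(3), in some $K_{l'}$; picking $p$ in it, $x\sim p$, and since by (\textbf{B})(1)--(2) the $\sim$-class of $p\in K_{l'}$ is exactly $K_{l'}$, we get $x\in K_{l'}$. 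The index $l'$ is the same for every $x\in\Omega$ (two points of $\Omega$ lying in different $K_i$'s would be $\sim$-equivalent, contradicting (\textbf{B})(2)), so $\Omega\subseteq K_{l'}$. By Lemma~\ref{lem:multiple-equilibria-basevalue}, $s\equiv s_{l'}$ on $\Omega$, and $\mathrm{dist}(\hat\mu(t),K_{l'})\to0$, so $\hat\mu$ is admissible in the inner infimum indexed by $l'$ in (\ref{eqn:s-multiple-equilibria}).

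For the lower bound, pick $\xi''\in\Omega\subseteq K_{l'}$; lower semicontinuity of $s$ and $s(\hat\mu(mT))\downarrow s^{\star}$ give $s_{l'}=s(\xi'')\leq s^{\star}$, hence $s(\xi)=s^{\star}+I_\xi\geq s_{l'}+I_\xi\geq(\ref{eqn:s-multiple-equilibria})$ since $\hat\mu$ is admissible for index $l'$. For the upper bound, fix any index $l''$ and any admissible path $\hat\nu$ with $\hat\nu(0)=\xi$, $\mathrm{dist}(\hat\nu(t),K_{l''})\to0$, and finite total cost $J$; set $\nu_m=\hat\nu(mT)$ and choose $\eta_m\in K_{l''}$ with $\rho_0(\nu_m,\eta_m)\to0$. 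For each $\varepsilon>0$ and all large $m$, the third part of Lemma~\ref{lem:S_T-bounding} gives $S_\varepsilon(\nu_m\mid\eta_m)\leq C_2\varepsilon$; concatenating the corresponding time-$\varepsilon$ path from $\eta_m$ to $\nu_m$ with the time-reversal of $\hat\nu|_{[0,mT]}$ (which by Theorem~\ref{thm:flow-theorem}(c) has cost $\int_{[0,mT]}\hat r\leq J$) yields a path from $\eta_m$ to $\xi$, so by (\ref{eqn:HJB}) with $\nu=\eta_m$ and $s(\eta_m)=s_{l''}$ we get $s(\xi)\leq s_{l''}+C_2\varepsilon+\int_{[0,mT]}\hat r$. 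Letting $m\to\infty$ and then $\varepsilon\downarrow0$ gives $s(\xi)\leq s_{l''}+J$; taking the infimum over $\hat\nu$ and over $l''$ yields $s(\xi)\leq(\ref{eqn:s-multiple-equilibria})$, which together with the lower bound completes the proof.

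The main obstacle is the second paragraph. In Lemma~\ref{lem:uniqueness} the set $\Omega$ was forced to be the unique set invariant under both the forward and reversed dynamics; here one must genuinely combine chain transitivity of $\Omega$ under the reversed flow (via the asymptotic-pseudotrajectory property coming from the vanishing window-costs, plus the nondegeneracy of that flow near $\partial\MA_1(\ZA)$) with the equivalence-class structure of assumption~(\textbf{B}) to rule out $\Omega$ straddling two of the $K_i$ and to place it inside a single one. The delicate point there is controlling the total cost of the corrective jumps along chains of unbounded length; everything else is a routine transcription of Subsection~\ref{subsec:Inv-measure-ugase}.
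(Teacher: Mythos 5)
Your proof follows the same skeleton as the paper's own, which is a three-sentence remark: ``the same steps of the proof of Lemma~\ref{lem:uniqueness} apply, with $\Omega=\{\xi_0\}$ replaced by $\Omega\subset K_{l'}$ and $s(\xi_0)$ replaced by $s_{l'}$.'' The essential difference is that the paper asserts $\Omega\subset K_{l'}$ without argument, while you attempt to supply one: you invoke asymptotic-pseudotrajectory/chain-transitivity theory for the reversed flow, convert chain transitivity of the reversed flow into chain transitivity of the forward flow, conclude all points of $\Omega$ are $\sim$-equivalent, anchor a point of $\Omega$ in some $K_{l'}$ through a forward $\omega$-limit set and (\textbf{B})(3), and use (\textbf{B})(1)--(2) to identify the $\sim$-class with $K_{l'}$. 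Your lower and upper bounds then mirror the paper's, with the upper bound fleshed out via an explicit concatenation using part~3 of Lemma~\ref{lem:S_T-bounding} rather than a one-line appeal to (\ref{eqn:HJB}).

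One ingredient is wrong as stated. You claim (\textbf{A1}) and (\textbf{A3}) imply that $-A_\eta^*\eta$ is nowhere tangent to $\partial\MA_1(\ZA)$, so that $\Omega$ lies in the interior and $\Phi^-$ is a genuine flow near $\Omega$. This fails: take $\eta=\delta_k$ and a state $i\ne k$ with no edge $(k,i)\in\EA$; then $(A_\eta^*\eta)(i)=\sum_j\lambda_{j,i}(\eta)\eta(j)=0$, so the reversed field is tangent in the $i$-direction. Hence the argument as written does not establish that $\Omega$ avoids the boundary, which you rely on to make $\Phi^-$ a flow near $\Omega$ before applying the Benaim-style machinery. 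This is repairable: the proof of Lemma~\ref{lem:converge-w-limit} already gives $V(\xi'\mid\nu)=0$ for consecutive subsequential limits $\xi',\nu$ of $\hat\mu(mT)$, and combining this with the vanishing window-costs and the uniform continuity of $V$ (Lemma~\ref{lem:V-uniform-continuity}) one can exhibit the needed near-zero-cost chains directly, without passing through an ``interior of the simplex'' step. The paper's own justification for $\Omega\subset K_{l'}$ is left implicit, so you are filling a genuine lacuna; the substance of your chain-transitivity route is sound, but the boundary-transversality step should be replaced by the direct $V$-continuity argument.
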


\begin{proof}
The same steps of the proof of Lemma \ref{lem:uniqueness} apply with the following modifications. $\Omega = \{ \xi_0 \}$ gets replaced by $\Omega \subset K_{l'}$ for some $l'$. Consequently, in the lower bound in (\ref{eqn:s-geq}), $s(\xi_0)$ gets replaced by $s_{l'}$.
\end{proof}

Recall that we defined $s_1, s_2, \ldots, s_l$ in (\ref{eqn:s-values}). We now assert the following.

\begin{lemma}
\label{lem:s-multiple-equilibria}
The rate function $s$ has values $s_1, s_2, \ldots, s_l$ given by (\ref{eqn:s-values}).
\end{lemma}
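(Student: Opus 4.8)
\textbf{Proof proposal for Lemma \ref{lem:s-multiple-equilibria}.}

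The plan is to identify the rate function $s$ with the Freidlin--Wentzell $W$-graph expression via the same combinatorial/variational argument used in \cite[Ch.~6]{Freidlin}, adapted to our setting. The starting point is that by Lemmas \ref{lem:HJB}, \ref{lem:one-trajectory}, \ref{lem:converge-w-limit}, and \ref{lem:multiple-equilibria-basevalue}, any subsequential rate function $s$ is constant on each $K_i$ with value $s_i := s|_{K_i}$, satisfies $s\ge 0$ with $\min_i s_i = 0$, and satisfies the dynamic programming relation (\ref{eqn:HJB}) together with the refined identity (\ref{eqn:equality-in-parts}) along an optimal reversed trajectory that converges into some $K_{l'}$. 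In particular, applying (\ref{eqn:HJB}) with $\nu\in K_i$, $\xi\in K_j$ and using the definition (\ref{eqn:quasipotential}) of the quasipotential together with the fact (from Lemma \ref{lem:multiple-equilibria-basevalue} and the paragraph following (\ref{eqn:V-KiKj})) that $V(\xi|\nu)$ depends only on the indices $i,j$, I would first derive the system of inequalities $s_j \le s_i + V(K_i,K_j)$ for all $i,j$, and conversely, by following the optimal infinite-duration path of Lemma \ref{lem:one-trajectory} from a point of $K_j$ and decomposing it at successive visits to the neighborhoods of the $K_i$'s, the reverse relation that $s_j$ equals the minimum over $i$ of $s_i$ plus the cost of an optimal excursion from $K_i$ into $K_j$. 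This identifies $(s_1,\dots,s_l)$ as the unique (up to the normalization $\min_i s_i=0$) solution of the ``distance'' recursion governed by the matrix $(V(K_i,K_j))$.

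Next I would invoke the standard graph-theoretic lemma of Freidlin and Wentzell---quoted in the excerpt as Lemma \ref{lem:FW-1.6} of the Appendix, together with the analogue of \cite[Lem.~6.3.1, Ch.~6]{Freidlin}---which states that the unique solution of such a recursion, normalized so that its minimum is zero, is exactly
\[
  s_{i'} \;=\; W(K_{i'}) - \min_i W(K_i),
\]
with $W(K_{i'})$ the minimal $V$-weight of a spanning $i'$-graph as in (\ref{eqn:W-compactset}). Concretely, one shows $W(K_{i'}) - \min_i W(K_i)$ satisfies the recursion $s_j \le s_i + V(K_i,K_j)$ (by the graph-surgery argument: given an optimal $i$-graph, re-route through the edge $(i\to j)$) and that equality is attained for the minimizing index (by deleting the outgoing edge of $j$ in an optimal $j$-graph). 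Uniqueness of the solution then forces $s_{i'}$ to coincide with $W(K_{i'}) - \min_i W(K_i)$, which is (\ref{eqn:s-values}).

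The main obstacle I expect is not the graph combinatorics---which is essentially verbatim from \cite[Ch.~6]{Freidlin}---but rather rigorously justifying the passage between the control-theoretic objects of this paper and the Freidlin--Wentzell machinery: namely that the subsequential rate function $s$, which a priori is only pinned down by (\ref{eqn:HJB}) and (\ref{eqn:equality-in-parts}) \emph{outside} the $\omega$-limit sets, is in fact the \emph{correct} one at the $K_i$'s. This requires the noisy-system arguments alluded to in the introduction and the Appendix: one must show, via the embedded Markov chain on neighborhoods of the $K_i$'s and the uniform large deviation estimates of Corollary \ref{cor:uniform-ldp-flows} and Lemma \ref{lem:V-uniform-continuity}, that the invariant measure $\wp^{(N)}$ concentrates on the $K_i$ with smallest $W(K_i)$ and that the exponential rate of its mass near $K_{i'}$ is governed by $W(K_{i'})$. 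I would handle the $\varepsilon$-neighborhood bookkeeping (choosing nested neighborhoods $U_i \supset K_i$, controlling costs of short excursions via the third bullet of Lemma \ref{lem:S_T-bounding} and the uniform continuity in Lemmas \ref{lem:uniform-continuity} and \ref{lem:V-uniform-continuity}, and letting $\varepsilon \downarrow 0$) following the modifications detailed in the Appendix, citing Lemma \ref{lem:FW-1.6} for the combinatorial core.
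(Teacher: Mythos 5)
Your proposal contains a genuine gap in the first two paragraphs, though your final paragraph does gesture toward the correct resolution, which is what the paper actually does.

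The paper's own proof of Lemma~\ref{lem:s-multiple-equilibria} is a one-line citation to Theorem~\ref{thm:FW-6.4.1} in the Appendix, which in turn carries out the full Freidlin--Wentzell embedded--Markov--chain argument (Lemmas~\ref{lem:FW-1.2} through~\ref{lem:FW-1.9} and the lemma on the transition probabilities of the chain $Z_n = \mu_N(\tau_n)$). That machinery studies the \emph{stochastic} system sampled at hitting times of neighborhoods of the $K_i$'s; it is not a consequence of the control-theoretic relations (\ref{eqn:HJB}) and (\ref{eqn:equality-in-parts}) alone, and the paper's introduction flags this explicitly as the point where the deterministic/control approach ``leaves some indeterminacy.''

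The flaw in your derivation is the claim that the relations $s_j \le s_i + V(K_i,K_j)$ together with the reverse inequality from Lemma~\ref{lem:one-trajectory} identify $(s_1,\dots,s_l)$ as the \emph{unique} solution (normalized by $\min_i s_i=0$) of a distance recursion governed by $(V(K_i,K_j))$. This is false. The relations you can extract from (\ref{eqn:HJB}) and (\ref{eqn:equality-in-parts}) reduce to $s_j = \min_{i}\,[s_i + V(K_i,K_j)]$ with $s\ge 0$ and $\min_i s_i = 0$; but the vector $s_i\equiv 0$ always satisfies this system (since $V(K_j,K_j)=0$), and more generally for $l=2$ with $V(K_1,K_2)=3$, $V(K_2,K_1)=5$ any $s_1\in[0,5]$, $s_2=0$ satisfies it, while the Freidlin--Wentzell answer singles out $s_1=2$, $s_2=0$. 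So there is no ``standard graph-theoretic lemma'' that pins the $s_i$'s down from this recursion alone, and the combinatorial identity $s_{i'}=W(K_{i'})-\min_i W(K_i)$ cannot be derived purely from uniqueness of a solution to it. The indeterminacy is genuine and is \emph{only} resolved by the probabilistic estimates on $\wp^{(N)}\{[K_i]_{r_1}\}$ in Theorem~\ref{thm:FW-6.4.1}. Your final paragraph correctly identifies that the embedded-chain and uniform-LDP arguments of the Appendix are needed, but it mischaracterizes their role: they are not merely ``rigorously justifying the passage'' from a deterministic derivation that has already produced the answer; they are the sole source of the values $s_1,\dots,s_l$ at the $\omega$-limit sets.
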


\begin{proof}
Immediate from Theorem \ref{thm:FW-6.4.1} in Appendix.
\end{proof}

\begin{Remark}
 In order to obtain these values, one has to go beyond the ODE method. One has to consider the empirical measure Markov process sampled at hitting times of neighborhoods of the compact sets. This is done in Freidlin and Wentzell \cite[Ch. 6]{Freidlin} for diffusions on a compact manifold, with $V$ satisfying a Lipschitz property. Thanks to Corollary \ref{cor:uniform-ldp-flows}, Lemma \ref{lem:S_T-bounding}, Lemma \ref{lem:uniform-continuity}, the same program can be carried out with straightforward modifications to account for the fact that we have to handle jumps and the fact that the minimum cost function $V(\cdot | \cdot)$ satisfies only the uniform continuity property. The appendix provides the necessary verification.
\end{Remark}

With this disambiguation for the values of $s$ at the compact sets $K_i$, we now ready to finish the proof of Theorem \ref{thm:invariant-multiple-equilibria}.

\begin{proof}[Proof of Theorem \ref{thm:invariant-multiple-equilibria}]
Same as that of Theorem \ref{thm:invariant}, but with the use of Lemmas \ref{lem:uniqueness-multiple-equilibria} and \ref{lem:s-multiple-equilibria}.
\end{proof}

The following sections complete the proofs of some assertions of Section~{\ref{sec:ldp-finite-time}}.

\section{Proof of Theorem \ref{thm:empirical-measure}}
\label{sec:Proof-empirical-measure}

The proof is based on a generalization of Sanov's theorem due to Dawson and G\"artner \cite{Dawson-Gartner}, the Girsanov transformation, and the Laplace-Varadhan principle. We proceed through a sequence of lemmas.

\subsection{The noninteracting case}
Consider first the noninteracting case.

\begin{lemma}
  \label{lem:non-interacting-empirical}
  Suppose that the initial conditions $\nu_N \ra \nu$ weakly. Then the sequence
  $ (P^{o, (N)}_{\nu_N} , N \geq 1 )$ satisfies the large deviation principle in $\MA_{1, \varphi}(\XA)$, endowed with the weak* topology $\sigma(\MA_{1, \varphi}(\XA), C_{\varphi}(\XA))$, with speed $N$ and good rate function $J(Q)$ given by (\ref{eqn:J-hat(Q)}).
\end{lemma}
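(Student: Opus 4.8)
The plan is to realize $P^{o,(N)}_{\nu_N}$ as the law of the empirical measure of $N$ \emph{independent} but not identically distributed $\XA$-valued random variables, where the $n$th variable has law $P_{z_n}$ determined by the initial state $z_n$, and then invoke a generalization of Sanov's theorem. First I would observe that, since the particles evolve independently under $\mathbb{P}^{o,(N)}_{z^N} = \bigotimes_{n=1}^N P_{z_n}$, the pushforward $P^{o,(N)}_{\nu_N} = \mathbb{P}^{o,(N)}_{z^N}\circ G_N^{-1}$ is exactly the law of $\frac{1}{N}\sum_{n=1}^N \delta_{Y_n}$, with $Y_n$ independent and $Y_n \sim P_{z_n}$. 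Grouping the indices $n$ by their initial state $z \in \ZA$, the empirical composition of initial states is $\nu_N$, and $\nu_N \to \nu$ weakly by hypothesis. Thus the relevant sequence of ``environments'' converges, and what is needed is a Sanov-type theorem for triangular arrays of independent variables whose laws $P_{z_n}$ are drawn from the finite family $\{P_z : z \in \ZA\}$ with mixing weights converging to $\nu$; this is precisely the generalization of Sanov's theorem due to Dawson and G\"artner \cite{Dawson-Gartner} that the section's preamble advertises.

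The key steps, in order, would be: (i) verify that $(\XA,d)$ is Polish (stated in the text) and that the family $\{P_z\}_{z\in\ZA}$ consists of Radon probability measures on $\XA$ concentrated on paths with finitely many jumps, each satisfying an exponential integrability bound of the form $\int_{\XA} e^{\alpha \varphi}\, dP_z < \infty$ for all $\alpha > 0$ — this follows because under $P_z$ the number of jumps $\varphi$ is stochastically dominated by a Poisson variable (all noninteracting rates are $0$ or $1$ on the irreducible graph $(\ZA,\EA)$), so moment generating functions of $\varphi$ are finite everywhere; this is what makes the weak* topology $\sigma(\MA_{1,\varphi}(\XA),C_\varphi(\XA))$ the natural one and guarantees the rate function is good in it. (ii) Apply the Dawson--G\"artner generalized Sanov theorem in the dual pair $(C_\varphi(\XA), C_\varphi(\XA)^*)$: the empirical measures of independent samples with laws $P_{z_n}$, $\nu_N \to \nu$, satisfy the large deviation principle with speed $N$ and rate function given by the Legendre--Fenchel transform of the limiting log-moment-generating functional $f \mapsto \sum_{z\in\ZA}\nu(z)\log\int_\XA e^f\,dP_z$, which is exactly $J(Q)$ as defined in (\ref{eqn:J-hat(Q)}). (iii) Confirm goodness of $J$: level sets $\{J \le B\}$ are closed (lower semicontinuity of a sup of continuous affine functionals) and relatively compact in the weak* topology, using the exponential $\varphi$-integrability to get uniform integrability of $\varphi$ along level sets and hence tightness. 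This yields the claimed large deviation principle.

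The main obstacle I anticipate is purely topological bookkeeping rather than probabilistic: one must be careful that the weak* topology $\sigma(\MA_{1,\varphi}(\XA),C_\varphi(\XA))$ — finer than ordinary weak convergence — is the setting in which both the abstract Dawson--G\"artner theorem applies and the rate function $J$ is good, and that the test-function class $C_\varphi(\XA)$ is rich enough to separate points of $\MA_{1,\varphi}(\XA)$ and to recover the Legendre-transform formula without gaps. The exponential integrability $\int e^{\alpha\varphi}\,dP_z < \infty$ for every $\alpha$ is the linchpin that makes all three of these work simultaneously; once it is in hand, the remainder is an application of \cite[Th. 2.1]{Leonard}'s argument (which treats the case $\nu_N = \delta_{z_0}$) verbatim, with the only change being that the single measure $P_{z_0}$ is replaced by the $\nu$-mixture $P$ of (\ref{eqn:P}) and the convergence $\nu_N \to \nu$ is fed into the generalized Sanov theorem in place of an exact identity. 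I would also remark that, alternatively, one can reduce to the i.i.d.\ case by a standard ``exponential equivalence'' / subadditivity argument exploiting that there are only finitely many distinct laws $P_z$, but routing everything through \cite{Dawson-Gartner} as the text suggests is cleaner.
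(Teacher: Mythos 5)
Your proposal is correct and takes essentially the same route as the paper: both reduce to independent, non-identically-distributed samples with laws drawn from the finite family $\{P_z\}_{z\in\ZA}$ weighted by $\nu_N \to \nu$, and invoke the Dawson--G\"artner generalization of Sanov's theorem in the weak* topology $\sigma(\MA_{1,\varphi}(\XA),C_\varphi(\XA))$. The paper's proof is briefer (it records that $\{P_z\}\subset\MA_{1,\varphi}(\XA)$ since noninteracting rates are bounded by $1$, and that $\{P_z\}$ is Feller continuous in the discrete topology, then cites Dawson--G\"artner's Theorem 3.5 directly), but your additional observations on exponential $\varphi$-integrability and goodness are exactly the supporting facts that make the citation go through.
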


\begin{proof}
The family $\{ P_z, z \in \ZA \}$ is clearly a subset of $\MA_{1, \varphi}(\XA)$ since the transition rate from any state to any of the other (at most $r$) states is upper bounded by 1. The family $\{ P_z, z \in \ZA \}$ is also Feller continuous in the discrete topology on $\ZA$. Since $\nu_N \rightarrow \nu$, by Dawson and G\"artner's \cite[Th. 3.5]{Dawson-Gartner} which is a generalization of Sanov's theorem, we have that $(P^{o, (N)}_{\nu_N}, N \geq 1)$ satisfies the large deviation principle in the weak* topology $\sigma(\MA_{1,\varphi}(\XA), C_{\varphi}(\XA))$ with speed $N$ and good rate function $J(Q)$ given by (\ref{eqn:J-hat(Q)}).
\end{proof}

Our next lemma states that $\MA_{1,\varphi}(\XA)$ contains all the probability measures of interest to us.

\begin{lemma}
  \label{lem:J(Q)-bound}
  If $J(Q) < +\infty$ then (1) $Q \in \MA_{1,\varphi}(\XA)$ and (2) $Q \circ \pi_0^{-1} = \nu$.
\end{lemma}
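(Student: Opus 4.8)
The plan is to probe the variational definition (\ref{eqn:J-hat(Q)}) of $J$ with two convenient families of test functions drawn from $C_\varphi(\XA)$: for (1) the jump-count functional $\varphi$ itself, and for (2) functions that depend on the path only through its initial coordinate. In each case the supremum defining $J(Q)$ produces a lower bound on $J(Q)$ that is violated unless the asserted conclusion holds.

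\emph{Part (1).} First observe $\varphi\in C_\varphi(\XA)$, since $\varphi$ is continuous and $\|\varphi\|_\varphi\le 1$. Substituting $f=\varphi$ into (\ref{eqn:J-hat(Q)}) gives
\[
  J(Q)\ \ge\ \int_\XA \varphi\,dQ\ -\ \sum_{z\in\ZA}\nu(z)\log\int_\XA e^{\varphi}\,dP_z .
\]
The essential point is a uniform exponential moment bound: under $P_z$ the tagged particle, while in state $i$, leaves at total rate $|\ZA_i|\le r-1$, so its jump count over $[0,T]$ is stochastically dominated by a Poisson$\bigl((r-1)T\bigr)$ random variable, whence $\int_\XA e^{\varphi}\,dP_z\le \exp\bigl((r-1)T(e-1)\bigr)=:C_0<\infty$ for every $z$ (this also re-confirms $P_z(\XA)=1$). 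Since $\sum_z\nu(z)=1$, it follows that $\int_\XA \varphi\,dQ\le J(Q)+\log C_0<\infty$, i.e. $Q\in\MA_{1,\varphi}(\XA)$.

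\emph{Part (2).} For an arbitrary $g:\ZA\to\mathbb{R}$, put $f=g\circ\pi_0$. This $f$ is bounded (as $\ZA$ is finite) and continuous, because the time-$0$ coordinate map $\pi_0$ is continuous on $D([0,T],\ZA)$ — Skorohod convergence forces convergence of the value at the left endpoint — hence $f\in C_\varphi(\XA)$. Since $P_z$ is carried by paths with $x(0)=z$, we have $\log\int_\XA e^{g\circ\pi_0}\,dP_z=g(z)$, so (\ref{eqn:J-hat(Q)}) yields
\[
  J(Q)\ \ge\ \int_\XA g\circ\pi_0\,dQ\ -\ \sum_{z\in\ZA}\nu(z)g(z)\ =\ \sum_{z\in\ZA}\bigl[(Q\circ\pi_0^{-1})(z)-\nu(z)\bigr]\,g(z).
\]
Viewing functions on the finite set $\ZA$ as vectors, the right-hand side is the inner product of the fixed vector $Q\circ\pi_0^{-1}-\nu$ with an arbitrary vector $g$; if $Q\circ\pi_0^{-1}\ne\nu$ this is unbounded over $g$ (take $g$ a large positive multiple of $Q\circ\pi_0^{-1}-\nu$), contradicting $J(Q)<\infty$. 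Therefore $Q\circ\pi_0^{-1}=\nu$.

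The only step that goes beyond a direct substitution into the variational formula is the exponential integrability $\int_\XA e^{\varphi}\,dP_z<\infty$, which I expect to be the main — though still routine — point; I would establish it via the standard stochastic-domination coupling of the jump count of a pure-jump process with bounded total jump rate against a homogeneous Poisson process.
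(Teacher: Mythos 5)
Your proposal is correct and follows essentially the same route as the paper's proof: for (1) you probe the variational formula with $f=\varphi$ and control $\int e^{\varphi}\,dP_z$ by stochastic domination of the jump count by a Poisson variable, and for (2) you probe it with functions of the form $f=g\circ\pi_0$ and let $g$ grow to force $J(Q)=+\infty$ when $Q\circ\pi_0^{-1}\neq\nu$. The only cosmetic difference is the Poisson parameter ($(r-1)T$ versus the paper's looser $rT$) and phrasing part (2) directly rather than by explicit contraposition.
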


\begin{proof}
Observe that $||\varphi||_{\varphi} \leq 1$, and the topology on $\XA$ is chosen so that $\varphi$ is continuous. Hence $\varphi \in C_{\varphi}(\XA)$. Recalling the expression for $J(Q)$ in (\ref{eqn:J-hat(Q)}), we get that $J(Q) < +\infty$ implies
\begin{equation}
  \label{eqn:finite-J(Q)}
  \int_{\XA} \varphi ~dQ - \sum_{z \in \ZA} \nu(z) \log \int_{\XA} e^{\varphi} ~dP_z < +\infty.
\end{equation}
Since the transition rates for $P_z$ are upper bounded by 1, and there are at most $r$ possibilities for jumps from any state, $\varphi$ is stochastically dominated by a Poisson random variable with parameter $rT$. Consequently $1 \leq \int_{\XA} e^{\varphi} ~dP_z < +\infty$ for each $z \in \ZA$, and it follows from (\ref{eqn:finite-J(Q)}) that $\int_{\XA} \varphi ~dQ < +\infty$, and so $Q \in \MA_{1, \varphi}(\XA)$.

To prove the second conclusion, suppose $\nu_Q = Q \circ \pi_0^{-1} \neq \nu$. Consider bounded functions $f(x) = f_0(\pi_0(x))$ that depend on $x$ only through the initial condition. Since $\nu_Q \neq \nu$, there exists a function $f$ of the above form that also satisfies $\sum_z f_0(z) \nu_Q(z) - \sum_z f_0(z) \nu(z) \neq 0$. By flipping the sign of $f$ if necessary and by scaling, we may assume that $\sum_z f_0(z) \nu_Q(z) - \sum_z f_0(z) \nu(z) = a$ for an arbitrary $a \in (0, +\infty)$. This $f$ is bounded continuous and hence is in $C_{\varphi}(\XA)$. A simple calculation yields
\[
  \int_{\XA} f ~dQ - \sum_{z \in \ZA} \nu(z) \log \int_{\XA} e^f ~ dP_z = \sum_z f_0(z) \nu_Q(z) - \sum_z f_0(z) \nu(z) = a.
\]
Since $a > 0$ was arbitrary, $J(Q) = +\infty$, and the second part is proved by contraposition.
\end{proof}

We next get an alternative expression for $J(Q)$ as a supremum over the more familiar space $C_b(\XA)$ of bounded continuous functions on $\XA$.

\begin{lemma}
  \label{lem:J(Q)-variational}
  For each $Q \in \MA_{1, \varphi}(\XA)$, $J(Q)$ defined in (\ref{eqn:J-hat(Q)}) admits the alternative characterization
  \begin{equation}
    \label{eqn:J(Q)-2}
    J(Q) = \sup_{f \in C_b(\XA)} \left[ \int_{\XA} f ~dQ - \sum_{z \in \ZA} \nu(z) \log \int_{\XA} e^f ~ dP_z \right].
  \end{equation}
\end{lemma}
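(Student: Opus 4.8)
The plan is to note first that $C_b(\XA) \subset C_{\varphi}(\XA)$, since any bounded continuous $f$ has $\|f\|_{\varphi} \le \|f\|_\infty < +\infty$ because $\varphi \ge 0$. Hence the supremum in (\ref{eqn:J(Q)-2}) is taken over a subset of the class appearing in the definition (\ref{eqn:J-hat(Q)}) of $J(Q)$, which immediately gives ``$\le$'' in the claimed identity. All the content is in the reverse inequality: given an arbitrary $f \in C_{\varphi}(\XA)$, I must exhibit bounded continuous functions along which the functional $\Lambda(g) := \int_{\XA} g\,dQ - \sum_{z\in\ZA}\nu(z)\log\int_{\XA} e^{g}\,dP_z$ converges to $\Lambda(f)$. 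The natural device is the two-sided truncation $f_M := (f \wedge M)\vee(-M) \in C_b(\XA)$, which converges pointwise to $f$ as $M \to +\infty$, increasing to $f$ on $\{f \ge 0\}$ (where $f_M = f\wedge M$) and decreasing to $f$ on $\{f < 0\}$ (where $f_M = f\vee(-M)$).

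I would then verify $\Lambda(f_M) \to \Lambda(f)$ term by term. For the linear term, $|f_M| \le |f| \le \|f\|_{\varphi}(1+\varphi)$ and $\int_{\XA}\varphi\,dQ < +\infty$ because $Q \in \MA_{1,\varphi}(\XA)$, so dominated convergence gives $\int_{\XA} f_M\,dQ \to \int_{\XA} f\,dQ$. For the exponential term, I would split the integral over $\XA$ by the sign of $f$: on $\{f \ge 0\}$ one has $e^{f_M}\uparrow e^{f}$, so monotone convergence yields $\int_{\{f\ge0\}} e^{f_M}\,dP_z \to \int_{\{f\ge0\}} e^{f}\,dP_z \in (0,+\infty]$; on $\{f<0\}$ one has $0 < e^{f_M} \le 1$ with $e^{f_M}\to e^{f}$ pointwise, so dominated convergence yields $\int_{\{f<0\}} e^{f_M}\,dP_z \to \int_{\{f<0\}} e^{f}\,dP_z \in (0,1]$. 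Adding the two pieces, $\int_{\XA} e^{f_M}\,dP_z \to \int_{\XA} e^{f}\,dP_z$ in $(0,+\infty]$; since $e^{f} > 0$ and $P_z$ is a probability measure the limit is strictly positive, and since $\log$ is continuous on $(0,+\infty]$ (with $\log(+\infty)=+\infty$) and $\int_{\XA} e^{f_M}\,dP_z \ge e^{-M} > 0$, we obtain $\log\int_{\XA} e^{f_M}\,dP_z \to \log\int_{\XA} e^{f}\,dP_z$ in $\mathbb{R}\cup\{+\infty\}$. As $\ZA$ is finite, the convex combination $\sum_{z}\nu(z)(\cdot)$ preserves this convergence. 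Hence $\Lambda(f_M) \to \Lambda(f) \in [-\infty,+\infty)$, so $\sup_{g\in C_b(\XA)}\Lambda(g) \ge \Lambda(f)$; taking the supremum over $f \in C_{\varphi}(\XA)$ gives $\sup_{g\in C_b(\XA)}\Lambda(g) \ge J(Q)$, which (with the trivial reverse bound) is the assertion; the argument is insensitive to whether $J(Q)$ is finite or $+\infty$.

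The main obstacle is purely bookkeeping around the possible infinitude of $\int_{\XA} e^{f}\,dP_z$: one has to be careful to truncate $f$ from the correct side on each part of the domain so that the monotone and dominated convergence theorems apply, and to check that the logarithm and the finite average over $\ZA$ never create an indeterminate $\infty - \infty$. Once these convergences are organized correctly, the lemma is just a comparison of two variational formulas over the nested classes $C_b(\XA) \subset C_{\varphi}(\XA)$.
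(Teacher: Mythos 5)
Your proof is correct and follows essentially the same truncation argument as the paper. The one small divergence: the paper observes that $\int_{\XA} e^{f}\,dP_z < +\infty$ for every $f \in C_{\varphi}(\XA)$ (this follows from $|f| \le \|f\|_{\varphi}(1+\varphi)$ together with the Poisson stochastic domination of $\varphi$ under $P_z$ used in the preceding lemma), and then applies dominated convergence directly to $e^{f_n}$, whereas you split the domain by the sign of $f$ precisely so as to avoid checking this finiteness; your bookkeeping is sound, but the hedge against $\int_{\XA} e^{f}\,dP_z = +\infty$ is actually superfluous in this setting.
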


\begin{proof}
In (\ref{eqn:J(Q)-2}), the supremum is taken over $C_b(\XA)$, the set of bounded continuous functions on $\XA$, while in (\ref{eqn:J-hat(Q)}) the supremum is over $C_{\varphi}(\XA)$.

Fix $Q \in \MA_{1,\varphi}(\XA)$. Since $C_b(\XA) \subset C_{\varphi}(\XA)$, $J(Q)$ defined by (\ref{eqn:J-hat(Q)}) is at least the right-hand side of (\ref{eqn:J(Q)-2}). For the other direction, let $f \in C_{\varphi}(\XA)$, and consider the truncations $f_n$ of the function $f$ to $[-n, n]$. Clearly $\{ f_n \} \subset C_b(\XA)$, and $f_n \ra f$ a.e.-$[Q]$ and a.e.-$[P_z]$. Since we also have $\int_{\XA} f ~dQ < +\infty$ and $\int_{\XA} e^f ~dP_z < +\infty$, both of which can be easily checked using $||f||_{\varphi} < +\infty$, an application of the Lebesgue dominated convergence theorem yields
\begin{eqnarray*}
  \lefteqn{ \lim_{n \ra +\infty} \left[ \int_{\XA} f_n ~dQ - \sum_{z \in \ZA} \nu(z) \log \int_{\XA} e^{f_n} ~ dP_z \right] } \\
  & = & \int_{\XA} f ~dQ - \sum_{z \in \ZA} \nu(z) \log \int_{\XA} e^f ~ dP_z.
\end{eqnarray*}
Since $f \in C_{\varphi}(\XA)$ was arbitrary, $J(Q)$ in (\ref{eqn:J-hat(Q)}) is at most the right-hand side of (\ref{eqn:J(Q)-2}).
\end{proof}

We now characterize $J(Q)$ further. We begin by getting a lower bound.

\begin{lemma}
\label{lem:J(Q)-lower}
Let $Q$ be such that $Q \circ \pi_0^{-1} = \nu$. Define $P$ as in (\ref{eqn:P}). We then have $H(Q|P) \leq J(Q)$.
\end{lemma}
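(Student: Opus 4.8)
The plan is to exploit the Donsker--Varadhan variational formula for relative entropy together with the mixture structure of $P$. Recall that for any probability measure $Q$ on $\XA$ absolutely continuous with respect to $P$,
\[
  H(Q \mid P) = \sup_{g \in C_b(\XA)} \left[ \int_{\XA} g \, dQ - \log \int_{\XA} e^g \, dP \right].
\]
If $Q \not\ll P$ the left side is $+\infty$ by definition; the first step is therefore to check that this case cannot occur when $J(Q) < +\infty$ (if $J(Q) = +\infty$ the inequality is trivial). This follows because $P = \sum_z \nu(z) P_z$ and, by Lemma 4.4, $J(Q) < +\infty$ forces $Q \circ \pi_0^{-1} = \nu$, so $Q$ is supported on the union of the supports of those $P_z$ with $\nu(z) > 0$; absolute continuity then reduces to checking $Q_z \ll P_z$ for the conditional laws, which is built into the finiteness of $J(Q)$ via the representation of the Radon--Nikodym derivative.

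The core estimate compares, term by term, the two variational formulas. Using Lemma 4.8, I may compute $J(Q)$ as a supremum over $f \in C_b(\XA)$:
\[
  J(Q) = \sup_{f \in C_b(\XA)} \left[ \int_{\XA} f \, dQ - \sum_{z \in \ZA} \nu(z) \log \int_{\XA} e^f \, dP_z \right].
\]
For each fixed $f \in C_b(\XA)$, the concavity of the logarithm (Jensen's inequality applied to the convex combination $\sum_z \nu(z)$) gives
\[
  \log \int_{\XA} e^f \, dP = \log \sum_{z \in \ZA} \nu(z) \int_{\XA} e^f \, dP_z \;\geq\; \sum_{z \in \ZA} \nu(z) \log \int_{\XA} e^f \, dP_z,
\]
so that
\[
  \int_{\XA} f \, dQ - \log \int_{\XA} e^f \, dP \;\leq\; \int_{\XA} f \, dQ - \sum_{z \in \ZA} \nu(z) \log \int_{\XA} e^f \, dP_z \;\leq\; J(Q).
\]
Taking the supremum over $f \in C_b(\XA)$ on the left gives exactly $H(Q \mid P) \leq J(Q)$, once I invoke that the Donsker--Varadhan supremum may be restricted to $C_b(\XA)$ (standard, since $\XA$ is Polish).

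I expect the only genuinely delicate point to be the restriction of the entropy supremum to bounded continuous functions and the corresponding absolute-continuity bookkeeping: one must make sure the Donsker--Varadhan formula in the form above is valid for the Polish space $\XA$ with its non-locally-compact metric $d$, and that no integrability obstruction arises when passing to the supremum. This is handled by standard approximation (truncation of unbounded test functions, as in the proof of Lemma 4.8) together with the observation that $\int_\XA e^f\,dP_z < +\infty$ for $f \in C_\varphi(\XA)$, which was already established via the Poisson domination of $\varphi$ in the proof of Lemma 4.3. Everything else is a one-line application of Jensen's inequality, so the lemma follows.
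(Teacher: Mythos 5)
Your proof is correct and follows exactly the same route as the paper: invoke Lemma~\ref{lem:J(Q)-variational} to write $J(Q)$ as a supremum over $C_b(\XA)$, apply Jensen's inequality (concavity of $\log$) to compare $\sum_z \nu(z)\log\int e^f\,dP_z$ with $\log\int e^f\,dP$, and then compare with the Donsker--Varadhan variational formula for $H(Q\mid P)$. The preliminary discussion of absolute continuity is unnecessary---the supremum comparison holds term-by-term for each $f$ regardless of whether $Q\ll P$, so the paper simply omits it---but it is not wrong.
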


\begin{proof}
For any $f \in C_b(\XA)$ and with $P$ as defined in (\ref{eqn:P}), Jensen's inequality yields
\[
  \int_{\XA} f ~dQ - \sum_z \nu(z) \log \int_{\XA} e^f ~dP_z \geq \int_{\XA} f ~dQ - \log \int_{\XA} e^f ~dP.
\]
By Lemma \ref{lem:J(Q)-variational}, $J(Q)$ is the supremum of the left-hand side over all $f \in C_b(\XA)$. By the variational formula for relative entropy (\cite[Lem. 6.2.13]{DZ}), $H(Q|P)$ is the supremum of the right-hand side over all $f \in C_b(\XA)$. This establishes the inequality.
\end{proof}

We next show that $J(Q)$ is upper bounded by another relative entropy. To do this, let us introduce the Polish space $(\hat{\XA}, \hat{d})$ where $\hat{\XA} = \ZA \times \XA$ and the metric $\hat{d}$ is
\[
  \hat{d}((i,x), (j,y)) = {\bf 1}_{ \{i \neq j\} } + d(x,y),
\]
where $d$ is the metric on $\XA$. The first component of $\hat{\XA}$ shall denote the initial condition. For two measures $\hat{R}_1$ and $\hat{R}_2$ on $\hat{\XA}$, let $H(\hat{R}_1 | \hat{R}_2)$ be the relative entropy of $\hat{R}_1$ with respect to $\hat{R}_2$.

For a fixed $\nu \in \MA_1(\ZA)$, let us now define $\hat{P}$ as
\begin{equation}
  \label{eqn:P-hat-defn}
  d\hat{P}(z,x) = \nu(z) dP_z(x).
\end{equation}
The push forward of $\hat{P}$ under the projection mapping $(z,x) \in \hat{\XA} \mapsto x \in \XA$ is clearly the $P$ defined in (\ref{eqn:P}).

Let $Q$ be such that $Q \circ \pi_0^{-1} = \nu$. We then define $\hat{Q}$ as
\begin{equation}
  \label{eqn:Q-hat-defn}
  d\hat{Q}(z,x) = dQ(x) {\bf 1}_{\{\pi_0(x)\}}(z).
\end{equation}
Observing that $\hat{\XA}$ is Polish and that the push forward of $\hat{Q}$ under the projection $(z,x) \mapsto z$ is $\nu$, it follows that there is a regular conditional probability measure $Q_z$ satisfying
\begin{equation}
  \label{eqn:Q-hat-alt}
  d\hat{Q}(z,x) = \nu(z) dQ_z(x).
\end{equation}
Putting (\ref{eqn:Q-hat-defn}) and (\ref{eqn:Q-hat-alt}) together and summing over $z$, we obtain that the second marginal of $\hat{Q}$ is
\begin{equation}
    \label{eqn:Q-marginal}
    dQ(x) = \sum_{z} dQ(x) {\bf 1}_{\{\pi_0(x)\}}(z) = \sum_z \nu(z) dQ_z(x).
\end{equation}

Since both $\hat{Q}$ and $\hat{P}$ have the same first marginal $\nu$, the decomposition result for relative entropy \cite[Th. D.8]{DZ} gives
\begin{equation}
  \label{eqn:relative-entropy-hat}
  H(\hat{Q} | \hat{P}) = \left\{
             \begin{array}{cl}
               \displaystyle \sum_{z \in \ZA} \nu(z) H(Q_z | P_z) & \mbox{if } Q_z \ll P_z \mbox{ a.e.-}[\nu] \\
               +\infty & \mbox{otherwise}.
             \end{array}
           \right.
\end{equation}
With these preliminaries, we are now ready to upper bound $J(Q)$.

\begin{lemma}
\label{lem:J(Q)-u-l}
Let $Q$ be such that $Q \circ \pi_0^{-1} = \nu$. With $\hat{P}$ and $\hat{Q}$ as above, we have $J(Q) \leq H(\hat{Q} | \hat{P})$.
\end{lemma}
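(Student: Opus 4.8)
The plan is to reduce the claim, via the disintegration $Q = \sum_{z}\nu(z) Q_z$, to the componentwise Donsker--Varadhan variational bound for relative entropy. If $H(\hat{Q}\,|\,\hat{P}) = +\infty$ there is nothing to prove, so we may assume $Q_z \ll P_z$ a.e.-$[\nu]$, in which case (\ref{eqn:relative-entropy-hat}) gives $H(\hat{Q}\,|\,\hat{P}) = \sum_{z \in \ZA} \nu(z) H(Q_z\,|\,P_z)$.

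By Lemma \ref{lem:J(Q)-variational} it suffices to show that for every $f \in C_b(\XA)$,
\[
  \int_{\XA} f\,dQ - \sum_{z \in \ZA} \nu(z) \log \int_{\XA} e^f\,dP_z \;\leq\; \sum_{z \in \ZA} \nu(z) H(Q_z\,|\,P_z),
\]
after which taking the supremum over $f \in C_b(\XA)$ yields $J(Q) \leq H(\hat{Q}\,|\,\hat{P})$. Fix such an $f$. Since $f$ is bounded, $\int_{\XA} e^f\,dP_z \in [e^{-\|f\|_\infty}, e^{\|f\|_\infty}] \subset (0, +\infty)$, so the logarithms are finite, and $\int_{\XA} |f|\,dQ_z < +\infty$ for every $z$. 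Using (\ref{eqn:Q-marginal}), namely $dQ(x) = \sum_z \nu(z)\, dQ_z(x)$, and boundedness of $f$, we may write
\[
  \int_{\XA} f\,dQ - \sum_{z \in \ZA} \nu(z) \log \int_{\XA} e^f\,dP_z
  = \sum_{z \in \ZA} \nu(z) \left[ \int_{\XA} f\,dQ_z - \log \int_{\XA} e^f\,dP_z \right].
\]

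The final step is to bound each bracketed term. By the variational formula for relative entropy (\cite[Lem. 6.2.13]{DZ}) applied on the Polish space $\XA$ with the pair $(Q_z, P_z)$, for the fixed test function $f \in C_b(\XA)$ we have
\[
  \int_{\XA} f\,dQ_z - \log \int_{\XA} e^f\,dP_z \;\leq\; H(Q_z\,|\,P_z).
\]
Multiplying by $\nu(z) \geq 0$ and summing over $z$ gives the displayed inequality, and the lemma follows. I do not anticipate a genuine obstacle here; the only points that require care are the reduction to the absolutely continuous case (handled by triviality of the bound otherwise) and the legitimacy of interchanging the sum over $z$ with the integral, both of which are routine given that $f$ is bounded and $\nu$ is a probability vector on the finite set $\ZA$.
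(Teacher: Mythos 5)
Your proposal is correct and follows essentially the same route as the paper: decompose $\int f\,dQ - \sum_z \nu(z)\log\int e^f\,dP_z$ into a $\nu$-weighted sum over $z$ using (\ref{eqn:Q-marginal}), bound each bracketed term by $H(Q_z\,|\,P_z)$ via the Donsker--Varadhan variational formula, combine with (\ref{eqn:relative-entropy-hat}), and take the supremum over $f\in C_b(\XA)$ using Lemma \ref{lem:J(Q)-variational}. The preliminary reduction to $Q_z \ll P_z$ a.e.-$[\nu]$ is harmless but unnecessary, since the variational representation already equals $+\infty$ when absolute continuity fails.
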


\begin{proof}
For $f \in C_b(\XA)$, by (\ref{eqn:Q-marginal}), we have $\int_{\XA} f ~dQ = \sum_{z} \nu(z) \int_{\XA} f ~dQ_z$, and so
\begin{eqnarray*}
  \lefteqn{ \int_{\XA} f ~dQ - \sum_z \nu(z) \log \int_{\XA} e^f ~dP_z } \\
  & = & \sum_z \nu(z) \left[ \int_{\XA} f ~dQ_z - \log \int_{\XA} e^f ~dP_z \right] \\
  & \leq & \sum_z \nu(z) \left( \sup_{f \in C_b(\XA)} \left[ \int_{\XA} f ~dQ_z - \log \int_{\XA} e^f ~dP_z \right] \right) \\
  & = & H(\hat{Q} | \hat{P}),
\end{eqnarray*}
where the last equality follows from the observation that the term within parenthesis in the immediately preceding inequality is the variational representation for the relative entropy $H(Q_z | P_z)$, and from (\ref{eqn:relative-entropy-hat}). Take supremum over all $f \in C_b(\XA)$ and use Lemma \ref{lem:J(Q)-variational} to deduce that $J(Q)$ is upper bounded by $H(\hat{Q} | \hat{P})$.
\end{proof}

\begin{lemma}
  \label{lem:J(Q)is-entropy}
  Let $Q \in \MA_{1,\varphi}(\XA)$. We then have
  \[
    J(Q) = \left\{
             \begin{array}{cl}
               H(Q | P), & \mbox{ if } Q \circ \pi_0^{-1} = \nu \\
               +\infty & \mbox{otherwise.}
             \end{array}
           \right.
  \]
\end{lemma}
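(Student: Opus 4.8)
The plan is to assemble the three preceding lemmas together with one bookkeeping identity, $H(\hat Q \mid \hat P) = H(Q \mid P)$, and nothing more. First dispose of the case $Q \circ \pi_0^{-1} \neq \nu$: if $J(Q) < +\infty$, Lemma~\ref{lem:J(Q)-bound}(2) forces $Q \circ \pi_0^{-1} = \nu$, so contrapositively $Q \circ \pi_0^{-1} \neq \nu$ gives $J(Q) = +\infty$, which is exactly the value claimed in the lemma.

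Now suppose $Q \circ \pi_0^{-1} = \nu$, so that $\hat P$ in (\ref{eqn:P-hat-defn}) and $\hat Q$ in (\ref{eqn:Q-hat-defn}) are well defined and (\ref{eqn:relative-entropy-hat}) applies. Lemma~\ref{lem:J(Q)-lower} gives $H(Q \mid P) \leq J(Q)$ and Lemma~\ref{lem:J(Q)-u-l} gives $J(Q) \leq H(\hat Q \mid \hat P)$. Hence it suffices to show $H(\hat Q \mid \hat P) = H(Q \mid P)$; the two inequalities then sandwich $J(Q)$ between equal quantities and force $J(Q) = H(Q \mid P)$, finishing the proof.

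To prove $H(\hat Q \mid \hat P) = H(Q \mid P)$, I would argue that $\hat P$ and $\hat Q$ are merely the images of $P$ and $Q$ under the Borel injection $\Theta : x \in \XA \mapsto (\pi_0(x), x) \in \hat\XA$, which has the Borel left inverse $(z,x) \mapsto x$. Indeed, integrating a test function against $P \circ \Theta^{-1}$, writing $P = \sum_{z} \nu(z) P_z$ as in (\ref{eqn:P}), and using that $P_z$ is concentrated on $\{x : \pi_0(x) = z\}$ shows $\hat P = P \circ \Theta^{-1}$; and (\ref{eqn:Q-hat-defn}) is literally the push-forward of $Q$ under $\Theta$, so $\hat Q = Q \circ \Theta^{-1}$. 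Because $\Theta$ has a measurable left inverse, $\Theta^{-1}(\mathcal{B}(\hat\XA)) = \mathcal{B}(\XA)$, and relative entropy is invariant under such a transformation, giving $H(\hat Q \mid \hat P) = H(Q \circ \Theta^{-1} \mid P \circ \Theta^{-1}) = H(Q \mid P)$. Alternatively, avoiding any abstract invariance statement, both $P$ and $Q$ decompose over the measurable partition $\{\,\{\pi_0 = z\}\,\}_{z \in \ZA}$ of $\XA$ as $P = \sum_z \nu(z) P_z$ and $Q = \sum_z \nu(z) Q_z$ with $P_z,Q_z$ supported on the $z$-th block (the latter because $\hat Q$ is concentrated on $\{(z,x) : z = \pi_0(x)\}$), and additivity of relative entropy over such a partition yields $H(Q \mid P) = \sum_z \nu(z) H(Q_z \mid P_z)$, which is precisely $H(\hat Q \mid \hat P)$ by (\ref{eqn:relative-entropy-hat}).

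The only step needing any care is this last identity — in particular checking that the $Q_z$ and $P_z$ live on disjoint blocks indexed by the initial coordinate, and matching the condition $Q \ll P$ with the condition ``$Q_z \ll P_z$ a.e.-$[\nu]$'' appearing in (\ref{eqn:relative-entropy-hat}). Everything else is a direct combination of results already in hand, so I do not anticipate a genuine obstacle.
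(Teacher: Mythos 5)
Your proof is correct and follows essentially the same line as the paper's: dispose of the case $Q\circ\pi_0^{-1}\neq\nu$ via Lemma~\ref{lem:J(Q)-bound}, then sandwich $H(Q\mid P)\leq J(Q)\leq H(\hat Q\mid\hat P)$ using Lemmas~\ref{lem:J(Q)-lower} and~\ref{lem:J(Q)-u-l}, and close the gap by showing $H(\hat Q\mid\hat P)=H(Q\mid P)$. The only difference is in the last bookkeeping step: the paper decomposes $H(\hat Q\mid\hat P)$ over the second marginal of $\hat\XA$ via \cite[Th.~D.8]{DZ} and observes that the regular conditional laws given $x$ coincide (both equal $\delta_{\pi_0(x)}$), so the conditional term vanishes; your first argument (pushforward by the injection $\Theta:x\mapsto(\pi_0(x),x)$, which has a measurable left inverse, hence preserves relative entropy) is a cleaner conceptual reformulation of that same computation, while your second argument decomposes instead over the first coordinate $z$ and uses that $Q$ and $P$ are mixtures with identical weights $\nu$ over the disjoint blocks $\{\pi_0=z\}$. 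All three routes are valid and interchangeable here.
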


\begin{proof}
If $Q \circ \pi_0^{-1} \neq \nu$, by Lemma \ref{lem:J(Q)-bound}, we have $J(Q) = +\infty$. So assume $Q \circ \pi_0^{-1} = \nu$. By Lemmas \ref{lem:J(Q)-lower} and \ref{lem:J(Q)-u-l}, we get
\[
  H(Q | P) \leq J(Q) \leq H(\hat{Q} | \hat{P})
\]
where $\hat{P}$ and $\hat{Q}$ are defined in (\ref{eqn:P-hat-defn}) and (\ref{eqn:Q-hat-defn}), respectively. The second marginal of $\hat{P}$ is $P = \sum_z \nu(z) P_z$. From the definition of $\hat{P}$ it is clear that $d\hat{P}(z,x) = dP(x) {\bf 1}_{\{\pi_0(x)\}}(z)$, so that the regular conditional probability measures of both $\hat{P}$ and $\hat{Q}$, given the second component $x$, are the same, that is, $d\hat{Q}(z|x) = d\hat{P}(z|x) = {\bf 1}_{\{\pi_0(x)\}}(z)$ a.e.-$[Q]$. In particular, $H(\hat{Q}(\cdot|x) | \hat{P}(\cdot|x)) = 0$ a.e.-$[Q]$. By the decomposition result for relative entropy \cite[Th. D.8]{DZ}, we get
\begin{eqnarray*}
  H(\hat{Q} | \hat{P}) & = & H(Q | P) + \int_{\XA} H(\hat{Q}(\cdot|x) | \hat{P}(\cdot|x)) ~dQ(x) \\
  & = & H(Q | P).
\end{eqnarray*}
The lemma follows.
\end{proof}

\subsection{Continuity of the function $h(Q)$}
\label{subsec:hQ-continuity}

We now proceed to address some preliminaries required for the interacting case. The Radon-Nikodym derivative (\ref{eqn:RND}) of $P^{(N)}_{\nu_N}$ with respect to $P^{o,(N)}_{\nu_N}$ is
\[
  \frac{dP^{(N)}_{\nu_N}}{dP^{o,(N)}_{\nu_N}}(Q) = \exp \{ N h(Q) \}.
\]
We now study the continuity property of $h(Q)$. Towards this, we first establish a regularity property for all $Q$ with $J(Q) < +\infty$. We then appeal to results of \cite{Leonard} to establish the continuity of $h(Q)$ when $J(Q) < +\infty$.

\begin{lemma}
\label{lem:sup-jump-condition}
Let $J(Q) < +\infty$ and suppose that the random variable $X$ is distributed according to $Q$. Then
\begin{equation}
  \label{eqn:sup-jump-condition}
  \sup_{t \in [0,T]} \mathbb{E} \left[ \sup_{u \in [t-\alpha, t+\alpha] \cap [0,T]} \{ {\bf 1}_{\{X_u \neq X_{u-}\}} \} \right] \ra 0 \mbox{ as } \alpha \downarrow 0.
\end{equation}
\end{lemma}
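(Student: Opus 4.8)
The plan is to reduce the estimate to a non‑interacting reference process and then invoke the entropy inequality. By Lemma~\ref{lem:J(Q)is-entropy}, the hypothesis $J(Q)<+\infty$ forces $Q\circ\pi_0^{-1}=\nu$ and $H(Q\,|\,P)<+\infty$, where $P=\sum_{z\in\ZA}\nu(z)P_z$ is the mixture of the non‑interacting laws in which every admissible transition rate equals $1$. Under each $P_z$, and hence under $P$, from any state there are at most $r$ admissible transitions, each firing at rate $1$, so the number of jumps of the path on any subinterval $[a,b]\subseteq[0,T]$ is stochastically dominated by a Poisson random variable with parameter $r(b-a)$ --- this is exactly the domination used in the proof of Lemma~\ref{lem:J(Q)-bound}. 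Writing $A_{t,\alpha}$ for the event that the path has at least one jump in the window $[(t-\alpha)^+,(t+\alpha)\wedge T]$, whose length is at most $2\alpha$, we obtain the uniform bound
\[
  \sup_{t\in[0,T]}P(A_{t,\alpha})\ \le\ 1-e^{-2r\alpha}\ \le\ 2r\alpha .
\]

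Next I would transfer this bound from $P$ to $Q$ using the entropy inequality (see, e.g., \cite[Lem. 6.2.13]{DZ} and its routine extension to bounded measurable functions): for every bounded measurable $f:\XA\ra\mathbb{R}$,
\[
  \int_{\XA}f\,dQ\ \le\ \log\int_{\XA}e^{f}\,dP+H(Q\,|\,P).
\]
Applying this with $f=\lambda\mathbf{1}_{A_{t,\alpha}}$ for $\lambda>0$ gives $\lambda\,Q(A_{t,\alpha})\le\log\bigl(1+(e^{\lambda}-1)P(A_{t,\alpha})\bigr)+H(Q\,|\,P)$. Since $H(Q\,|\,P)<+\infty$ implies $Q\ll P$, we have $Q(A_{t,\alpha})=0$ whenever $P(A_{t,\alpha})=0$; otherwise, once $\alpha$ is small enough that $2r\alpha<1$, the choice $\lambda=\log\bigl(1/P(A_{t,\alpha})\bigr)>0$ makes $(e^{\lambda}-1)P(A_{t,\alpha})=1-P(A_{t,\alpha})\le1$, so that
\[
  Q(A_{t,\alpha})\ \le\ \frac{\log 2+H(Q\,|\,P)}{\log\bigl(1/P(A_{t,\alpha})\bigr)}\ \le\ \frac{\log 2+H(Q\,|\,P)}{\log\bigl(1/(2r\alpha)\bigr)},
\]
where the last step uses $P(A_{t,\alpha})\le 2r\alpha<1$. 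The right-hand side is independent of $t$ and tends to $0$ as $\alpha\downarrow0$.

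To conclude, observe that for $x\in\XA$ the quantity $\sup_{u\in[t-\alpha,t+\alpha]\cap[0,T]}\mathbf{1}_{\{x_u\neq x_{u-}\}}$ is a genuine measurable path functional, equal to $1$ precisely when $x$ has a jump time in the window (there are only finitely many jumps, and the conventions at $0$ and $T$ are harmless for an upper bound), hence it is dominated by $\mathbf{1}_{A_{t,\alpha}}$. Therefore the left-hand side of (\ref{eqn:sup-jump-condition}) is at most $\sup_{t\in[0,T]}Q(A_{t,\alpha})$, which goes to $0$ as $\alpha\downarrow0$ by the previous display. I do not anticipate a real obstacle here: all the substance is in the Poisson domination under the non‑interacting reference measure together with the optimization over $\lambda$, which is the step that upgrades ``$P$‑probability small'' to ``$Q$‑probability small'' in spite of the fixed additive defect $H(Q\,|\,P)$.
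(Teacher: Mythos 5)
Your proof is correct, and it takes a genuinely different route from the paper's. Both arguments establish the same two ingredients --- the uniform $P$-probability bound $P(A_{t,\alpha})\le 2r\alpha$ via Poisson domination, and a transfer from ``small under $P$'' to ``small under $Q$'' using $H(Q\,|\,P)<+\infty$ (equivalently $J(Q)<+\infty$, by Lemma~\ref{lem:J(Q)is-entropy}). Where you differ is in the transfer step. The paper, following L\'eonard, writes $Q(K)=\int(dQ/dP)\mathbf{1}_K\,dP$ and invokes H\"older's inequality in the Orlicz pair $(\tau,\tau^*)$: it shows $\|dQ/dP\|_{\tau^*,P}<+\infty$ directly from $J(Q)<+\infty$ (via a pointwise comparison $\tau^*(u)\le 2u\log u$ for large $u$), and computes $\|\mathbf{1}_K\|_{\tau,P}=1/\tau^{-1}(1/P(K))\to 0$. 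You instead apply the Gibbs/Donsker--Varadhan variational inequality $\int f\,dQ\le\log\int e^f\,dP+H(Q\,|\,P)$ with $f=\lambda\mathbf{1}_{A_{t,\alpha}}$ and then optimize over $\lambda$, choosing $\lambda=\log(1/P(A_{t,\alpha}))$; the case $P(A_{t,\alpha})=0$ is handled by absolute continuity. Both give the same decay rate $O\bigl(1/\log(1/\alpha)\bigr)$ uniformly in $t$. Your argument is more elementary and self-contained --- it avoids the Orlicz-space H\"older inequality altogether --- which is a real simplification here; the paper's formulation is tied to the Orlicz machinery because that machinery is used repeatedly elsewhere in L\'eonard's development (for instance in controlling $\exp(N\alpha|h|)$ integrals), so packaging the bound in terms of the Orlicz norm of $dQ/dP$ is more reusable in that context. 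Your proof is a valid substitute.
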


\begin{proof}
A proof for the case when $Q \circ \pi_0^{-1} = \delta_{z_0}$ for some fixed $z_0$ can be found in \cite[eqn. (2.14), p. 309-310]{Leonard}. Our argument below is a simple modification and relies only on what we have thus far established for $J(Q)$.

Let $K = \{ x \in \XA : \exists u \in [t-\alpha, t+\alpha] \cap [0,T] \mbox{ satisfying } x_u \neq x_{u-} \}$. Since $J(Q) < +\infty$, it follows that $Q \ll P$. We may therefore write
\begin{eqnarray}
  \mathbb{E} \left[ \sup_{u \in [t-\alpha, t+\alpha] \cap [0,T]} \{ {\bf 1}_{ \{ X_u \neq X_{u-} \}} \} \right] & = & Q (K) \nonumber \\
    & = & \int_{\XA} \left( \frac{dQ}{dP} \right)~{\bf 1}_K ~dP \nonumber \\
    \label{eqn:Orlicz-Holder}
    & \leq & \left\| \frac{dQ}{dP} \right\|_{\tau^*, P} ~ \left\| {\bf 1}_K \right\|_{\tau, P}
\end{eqnarray}
where $|| f ||_{\tau^*, P}$ is the Orlicz norm
\[
  || f ||_{\tau^*, P} = \inf \left\{ a>0 : \int_{\XA} \tau^* \left( \frac{|f(x)|}{a} \right) ~dP(x) \leq 1 \right\}
\]
(with respect to the function $\tau^*$ and measure $P$), $|| g ||_{\tau, P}$ is a similarly defined Orlicz norm with respect to the function $\tau$ and measure $P$, and the inequality in (\ref{eqn:Orlicz-Holder}) is the H\"older inequality in Orlicz spaces. See the Appendix in \cite{Leonard} for a summary of key results on Orlicz spaces.

The lemma's proof will be complete if we can show that $\big\| \frac{dQ}{dP} \big\|_{\tau^*, P}$ is bounded, and $\left\| {\bf 1}_K \right\|_{\tau, P}$ vanishes as $\alpha \downarrow 0$. We proceed to justify these claims.

$J(Q) < +\infty$ implies $\big\| \frac{dQ}{dP} \big\|_{\tau^*, P} < +\infty$. Indeed, since
\[
  \lim_{u \ra +\infty} \frac{\tau^*(u)}{u \log u} = 1,
\]
choose a large enough $u_0 > 0$ such that $\tau^*(u) \leq 2 u \log u$ for $u \geq u_0$. Also observe that $\tau^*(u)$ is increasing in $u$ and $u
\log u \geq - e^{-1}$ for $u \geq 0$. Thus with $f = dQ/dP$,
\begin{eqnarray}
  \int_{\XA} \tau^*(f) ~dP & \leq & \tau^*(u_0) + \int_{\{x \in \XA: f(x) \geq u_0 \}} 2 f \log f ~dP \nonumber \\
  & \leq & \tau^*(u_0) + 2 J(Q) + 2e^{-1} \nonumber \\
  \label{eqn:Orlicz-bound}
  & < & +\infty.
\end{eqnarray}
Since $\tau^*$ is convex and $\tau^*(0) = 0$, Jensen's inequality yields
\[
  \tau^*(f/a) \leq \tau^*(f)/a \mbox{ for } a \geq 1.
\]
This fact in conjunction with (\ref{eqn:Orlicz-bound}) implies that $||f||_{\tau^*,P} < +\infty$.

Now consider $\left\| {\bf 1}_K \right\|_{\tau, P}$. Since $\tau(0) = 0$, we get $\tau(({\bf 1}_K)/a) = \tau(1/a) {\bf 1}_K$, and so
\[
  \int_{\XA} \tau \left( \frac{{\bf 1}_K}{a} \right) ~dP = \tau(1/a) \int_{\XA} {\bf 1}_K ~dP = \tau(1/a) P(K).
\]
Under $P$, the transition rates are upper bounded by 1. Moreover, there are at most $r$ possible next states. Since $K$ is the event that there is a transition in $[t-\alpha, t+\alpha] \cap [0,T]$, it follows that $P(K) \leq 2\alpha r$. From its definition, the Orlicz norm is the smallest positive $a$ such that $\tau(1/a) P(K) \leq 1$, and so
\[
  \left\| {\bf 1}_K \right\|_{\tau, P} = \frac{1}{\tau^{-1}(1/P(K))} \leq \frac{1}{\tau^{-1}(1/(2\alpha r))}.
\]
where the equality in the above chain holds because $\tau(u)$ is increasing in $u$ for $u \geq 0$, and the inequality holds because of the same property for $\tau^{-1}(u)$. This last upper bound vanishes as $\alpha \downarrow 0$.
\end{proof}

The following lemma implies Lemma \ref{lem:continuity-pi} as a corollary.

\begin{lemma}
\label{lem:continuity-pi-appendix}
Consider $\MA_1(D([0,T], \ZA))$ endowed with the topology of weak convergence and $D([0,T], \MA_1(\ZA))$ endowed with the topology induced by the metric $\rho_T$ defined in (\ref{eqn:metric-flow-space}). Then the mapping
\[
  \pi: \MA_1(D([0,T], \ZA)) \ra D([0,T], \MA_1(\ZA))
\]
is continuous at each $Q \in \MA_1(D([0,T], \ZA))$ where $J(Q) < +\infty$.
\end{lemma}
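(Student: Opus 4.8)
The plan is to prove the statement in three steps: (i) the limiting marginal flow $t \mapsto \pi(Q)(t)$ is uniformly continuous on $[0,T]$; (ii) whenever $Q_n \ra Q$ weakly one has $\pi(Q_n)(t) \ra \pi(Q)(t)$ in $\MA_1(\ZA)$ for each fixed $t \in [0,T]$; and (iii) this convergence is in fact uniform in $t$, which is precisely $\rho_T(\pi(Q_n),\pi(Q)) \ra 0$. The hypothesis $J(Q) < +\infty$ enters through the chain $Q \circ \pi_0^{-1} = \nu$ (Lemma \ref{lem:J(Q)-bound}) and $J(Q) = H(Q \mid P)$ (Lemma \ref{lem:J(Q)is-entropy}), so that $Q \ll P$. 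Consequently $Q$ is carried by $\XA$, hence $Q$-a.e.\ path has only finitely many jumps; moreover $Q$ has no fixed discontinuity, i.e.\ $Q(\{x : x(t) \neq x(t-)\}) = 0$ for every $t$, since the same holds for $P$, each $P_z$ being a bounded-rate jump process whose jump times have an absolutely continuous law.

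For step (i), for $s \le t$ and $i \in \ZA$ we have $|\pi(Q)(t)(i) - \pi(Q)(s)(i)| \le \mathbb{P}_Q(X_s \neq X_t) \le \mathbb{E}_Q\big[\sup_{u \in [s,t]} \mathbf{1}_{\{X_u \neq X_{u-}\}}\big]$, and for $|t-s| \le 2\alpha$ the right-hand side is bounded by the quantity in (\ref{eqn:sup-jump-condition}) (centre the window at $(s+t)/2$), which tends to $0$ as $\alpha \downarrow 0$ by Lemma \ref{lem:sup-jump-condition}; since $\ZA$ is finite, $\rho_0$ is equivalent to the $\ell^1$-distance on $\MA_1(\ZA)$, so $\pi(Q)$ has a modulus of continuity. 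For step (ii), the evaluation map $\pi_t : D([0,T],\ZA) \ra \ZA$ fails to be continuous only at paths with $x(t) \neq x(t-)$, a $Q$-null set by the previous paragraph, so the continuous mapping theorem gives $\pi(Q_n)(t) = Q_n \circ \pi_t^{-1} \ra Q \circ \pi_t^{-1} = \pi(Q)(t)$ (the endpoints $t = 0, T$ being handled by the left-continuity convention at $T$); this already yields the pointwise continuity of each $\pi_t$ at $Q$ claimed in Lemma \ref{lem:continuity-pi}.

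For step (iii) I would argue by contradiction. If $\rho_T(\pi(Q_n),\pi(Q)) \not\ra 0$, then along a subsequence there are $\varepsilon > 0$ and times $t_n \ra t^*$ with $\rho_0(\pi(Q_n)(t_n),\pi(Q)(t_n)) \ge \varepsilon$; the uniform continuity of $\pi(Q)$ gives $\rho_0(\pi(Q)(t_n),\pi(Q)(t^*)) \ra 0$ and step (ii) at $t^*$ gives $\rho_0(\pi(Q_n)(t^*),\pi(Q)(t^*)) \ra 0$, so the triangle inequality forces $\rho_0(\pi(Q_n)(t_n),\pi(Q_n)(t^*)) \ge \varepsilon/2$ for large $n$, whence (equivalence of $\rho_0$ and $\ell^1$, and $\|\mu - \nu\|_1 \le 2\,\mathbb{P}_{Q_n}(X_{t_n} \neq X_{t^*})$ for the marginals of $Q_n$) $\mathbb{P}_{Q_n}(X_{t_n} \neq X_{t^*}) \ge \delta_0 > 0$ for all large $n$. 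Since $\{X_{t_n} \neq X_{t^*}\}$ is contained in the event $\mathcal{J}_\gamma$ that the path jumps inside the open interval $(t^*-\gamma, t^*+\gamma)$ once $|t_n - t^*| < \gamma$, the portmanteau inequality $\limsup_n Q_n(\mathcal{J}_\gamma) \le Q(\overline{\mathcal{J}_\gamma})$ together with $\overline{\mathcal{J}_\gamma} \subseteq \{x : x \text{ jumps in } [t^*-\gamma',\, t^*+\gamma']\}$ for any $\gamma' > \gamma$ yields $\mathbb{P}_Q(x \text{ jumps in } [t^*-\gamma',\, t^*+\gamma']) \ge \delta_0$ for every $\gamma' > 0$; letting $\gamma' \downarrow 0$ and using that $Q$-a.e.\ path has finitely many jumps (so none can accumulate at $t^*$) gives $\mathbb{P}_Q(x(t^*) \neq x(t^*-)) \ge \delta_0 > 0$, contradicting $Q \ll P$. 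I expect step (iii) to be the main obstacle: the events ``the path jumps near $t^*$'' are neither open nor closed in the Skorohod topology --- a two-jump blip of unit height cannot approach a continuous path, while jumps accumulating at the boundary of a window may leak across a closed interval --- so the passage to the limit must be done by sandwiching these events between open and closed sets; the assumption $J(Q) < +\infty$ (via $Q \ll P$) is exactly what furnishes the modulus of continuity of $\pi(Q)$, the $Q$-a.s.\ continuity of each $\pi_t$, and the absence of a fixed discontinuity of $Q$ at $t^*$, and without it the statement fails, e.g.\ for $Q_n \equiv \delta_x$ with $x$ having a single jump.
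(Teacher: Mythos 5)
Your proof is correct, and it is a genuinely self-contained alternative to the paper's argument, which simply invokes L\'eonard's Lemma 2.8 after checking that the modulus estimate (\ref{eqn:sup-jump-condition}) (Lemma \ref{lem:sup-jump-condition}) carries over to non-chaotic initial conditions. Your decomposition into (i) a modulus of continuity for the limit flow, (ii) $Q$-a.s.\ continuity of each $\pi_t$, and (iii) an upgrade to uniform convergence via a portmanteau/sandwich argument, is a clean way to fill in what the paper outsources. Both routes ultimately hinge on the same consequences of $J(Q)<+\infty$: your step (i) uses (\ref{eqn:sup-jump-condition}) exactly as the paper does; your steps (ii) and (iii) instead lean on $Q\ll P$ to get absence of fixed discontinuities and isolated jumps, which is equivalent to, but packaged differently from, the estimate (\ref{eqn:sup-jump-condition}) (indeed, no-fixed-discontinuity also follows directly from (\ref{eqn:sup-jump-condition}) by letting $\alpha\downarrow0$). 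The one place your sketch understates how benign the setting is: because $\ZA$ is finite and discrete, Skorohod convergence in $D([0,T],\ZA)$ means the paths are eventually equal up to a time change close to the identity; hence \emph{every} cadlag path automatically has finitely many, isolated jumps, and the closure inclusion $\overline{\mathcal{J}_\gamma}\subseteq\{x:\ x \text{ jumps in } [t^*-\gamma,t^*+\gamma]\}$ (which you request for $\gamma'>\gamma$) actually holds with $\gamma'=\gamma$. Spelling this out would tighten the "sandwiching'' step you flagged as the main obstacle and would show it is less delicate than it might appear for general Polish state spaces. What the paper's route buys is brevity (one citation plus a single lemma); what your route buys is a self-contained and readable proof that exposes exactly where the hypothesis $J(Q)<+\infty$ is used.
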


\begin{proof}
Note that, by Lemma \ref{lem:J(Q)-bound}, $J(Q) < +\infty$ implies $Q \in \MA_{1,\varphi}(\XA)$. The statement of the above Lemma is the same as \cite[Lem. 2.8]{Leonard}. The only difference is that our representation for $J(Q)$ differs in order to handle nonchaotic initial conditions and allows $Q \circ \pi_0^{-1}$ to be any measure in $\MA_1(\ZA)$.

The proof of \cite[Lem. 2.8]{Leonard} holds verbatim if we can establish (\ref{eqn:sup-jump-condition}) (which is the same as \cite[eqn. (2.14)]{Leonard}) for the more general case under consideration. This is done in Lemma \ref{lem:sup-jump-condition}.
\end{proof}

This is an appropriate location to include the proof of Lemma \ref{lem:continuity-pi}, which is a corollary to the above Lemma.

\begin{proof}[Proof of Lemma \ref{lem:continuity-pi}]
The first part is a corollary to Lemma \ref{lem:continuity-pi-appendix}. Indeed, Lemma \ref{lem:continuity-pi-appendix} shows that $\pi$ is continuous under the coarser topology of weak convergence of probability measures metrized by $d_{\textsf{Sko}}$. Since the natural embedding of $\XA$ into $D([0,T], \ZA)$ (with topology induced by $d_{\textsf{Sko}}$) is continuous, it immediately follows that $\pi$ is a continuous mapping under the finer topology $\sigma(\MA_{1,\varphi}(\XA), C_{\varphi}(\XA))$.

To see the second part, fix a $Q$ such that $J(Q) < +\infty$, a $t \in [0,T]$, and consider a sequence $Q_N \ra Q$. By the first part, we have $\pi(Q_N) \rightarrow \pi(Q)$, which is the same as saying $\rho_T(\pi(Q_N), \pi(Q)) \ra 0$. But then
\[
  \rho_0(\pi_t(Q_N), \pi_t(Q)) \leq \rho_T(\pi(Q_N), \pi(Q)) \ra 0
\]
establishes the continuity of $\pi_t$.
\end{proof}

We now come to the continuity of the function $h(Q)$.

\begin{lemma}
\label{lem:h-continuous}
Consider the space $\MA_{1,\varphi}(\XA)$ endowed with the weak* topology $\sigma(\MA_{1,\varphi}(\XA), C_{\varphi}(\XA))$. The function $h: \MA_{1,\varphi}(\XA) \ra \mathbb{R}$ defined in (\ref{eqn:h(Q)}) is continuous at every $Q$ where $J(Q) < +\infty$.
\end{lemma}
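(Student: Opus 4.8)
The plan is to leverage the decomposition of $h(Q)$ into the jump term and the holding-time term in $h_1(x;\mu)$, and to show each is continuous (as a functional of $Q$, after integrating against $Q$) at points where $J(Q)<+\infty$. Writing out $h(Q) = \int_{\XA} h_1(x;\pi(Q))\,Q(dx)$ with $h_1$ from \eqref{eqn:h1}, the holding-time part is $-\int_{\XA}\int_{[0,T]} \sum_{j\in\ZA_{x_t}} (\lambda_{x_t,j}(\pi(Q)(t)) - 1)\,dt\,Q(dx)$, and the jump part is $\int_{\XA}\sum_{0\le t\le T}\mathbf{1}_{\{x_t\neq x_{t-}\}}\log\lambda_{x_{t-},x_t}(\pi(Q)(t-))\,Q(dx)$. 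Fix $Q$ with $J(Q)<+\infty$ and a sequence $Q_N\to Q$ in the weak* topology $\sigma(\MA_{1,\varphi}(\XA),C_\varphi(\XA))$. By Lemma~\ref{lem:J(Q)-bound}, $Q\in\MA_{1,\varphi}(\XA)$; by Lemma~\ref{lem:continuity-pi}, $\pi(Q_N)\to\pi(Q)$ in $D([0,T],\MA_1(\ZA))$ under $\rho_T$, so $\pi(Q_N)(t)\to\pi(Q)(t)$ uniformly in $t$ (away from the jump set of $\pi(Q)$, which is at most countable).

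First I would handle the holding-time term. The integrand $x\mapsto \int_{[0,T]}\sum_{j\in\ZA_{x_t}}(\lambda_{x_t,j}(\mu(t))-1)\,dt$ is bounded by $T\cdot r\cdot C$ uniformly (using the upper bound $C$ on the rates), hence lies in $C_b(\XA)\subset C_\varphi(\XA)$ once we check its continuity in $x$; continuity in $x$ holds because a small Skorohod perturbation of $x$ changes the occupation times of the states by a small amount, and $\mu\mapsto\lambda_{i,j}(\mu)$ is Lipschitz (assumption ({\bf A2})). Replacing $\mu = \pi(Q)$ by $\mu_N = \pi(Q_N)$ introduces an additional error bounded by $T r \,\mathrm{Lip}(\lambda)\,\rho_T(\pi(Q_N),\pi(Q))\to 0$, uniformly in $x$. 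Combining weak* convergence $\int f\,dQ_N\to\int f\,dQ$ for the fixed function $f$ with this uniform perturbation bound gives convergence of the holding-time part.

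The jump term is the delicate one and is where I expect the main obstacle. The integrand $g(x;\mu) = \sum_{0\le t\le T}\mathbf{1}_{\{x_t\neq x_{t-}\}}\log\lambda_{x_{t-},x_t}(\mu(t-))$ is \emph{not} bounded — it grows like $\varphi(x)$, since $|\log\lambda_{i,j}|$ is bounded above by $\max(|\log c|,|\log C|)$ on $\EA$ by ({\bf A3}) and the upper bound, so $|g(x;\mu)|\le (\text{const})\,\varphi(x)$, placing $g(\cdot;\mu)\in C_\varphi(\XA)$ provided $g(\cdot;\mu)$ is continuous on $\XA$. But continuity of $g(\cdot;\mu)$ in the Skorohod-plus-$\varphi$ metric is exactly the subtle point addressed in L\'eonard \cite[Lem. 2.9 and surrounding]{Leonard}: a small $d$-perturbation of $x\in\XA$ cannot create or destroy jumps (since $\varphi$ is continuous on $\XA$) and moves each jump time only slightly, so the finite sum $g$ varies continuously — here one again invokes the Lipschitz property of $\lambda$ and the fact that $\log$ is uniformly continuous on $[c,C]$. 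Then the argument is: (i) by weak* convergence with the fixed $C_\varphi$-function $f(x) = g(x;\pi(Q))$, $\int g(x;\pi(Q))\,Q_N(dx)\to \int g(x;\pi(Q))\,Q(dx)$; (ii) the difference $\int [g(x;\pi(Q_N))-g(x;\pi(Q))]\,Q_N(dx)$ is bounded by $\mathrm{Lip}(\log\circ\lambda)\,\rho_T(\pi(Q_N),\pi(Q))\int_{\XA}\varphi\,dQ_N$, and the integrals $\int\varphi\,dQ_N$ are uniformly bounded (this uses that $\varphi\in C_\varphi(\XA)$, so $\int\varphi\,dQ_N\to\int\varphi\,dQ<+\infty$), hence this term vanishes. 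The one technical care needed is that $\pi(Q_N)(t-)$ versus $\pi(Q)(t-)$ is controlled by $\rho_T$ only off the (countable) jump set of the limit; but the jump times of $x$ under $Q$ avoid any fixed countable set a.s.\ only if $Q$ has no atoms in its jump-time marginal — and here Lemma~\ref{lem:sup-jump-condition} is exactly the tool that rules out concentration of jumps at deterministic times, so the left-limit evaluation causes no problem. I would cite Lemma~\ref{lem:sup-jump-condition} and the corresponding steps of \cite[proof of Lem. 2.9]{Leonard} rather than reprove them, since the only modification needed for our setting is the restriction to transitions in $\EA$ and the replacement of $\delta_{z_0}$ by a general weak limit $\nu$, neither of which affects the jump-continuity argument.
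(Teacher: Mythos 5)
Your proof is correct and follows essentially the same route as the paper, which simply defers to L\'eonard \cite[Lem.~2.9]{Leonard} and notes that the one ingredient needing re-verification in the present setting is the continuity of $\pi$ at $Q$ (established via Lemma~\ref{lem:sup-jump-condition} and Lemma~\ref{lem:continuity-pi-appendix}). You have usefully unfolded the decomposition of $h$ into the bounded holding-time term and the $C_\varphi$-growth jump term, and the two-step ``fix the test function, then control the perturbation of the kernel $\mu\mapsto g(\cdot;\mu)$'' argument is exactly L\'eonard's.

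One small misdirection at the end: the worry you raise about $\pi(Q_N)(t-)$ versus $\pi(Q)(t-)$ being controlled by $\rho_T$ ``only off the jump set of the limit'' is not actually an issue here, because $\rho_T$ is the \emph{uniform} (sup) metric on $D([0,T],\MA_1(\ZA))$, not a Skorohod metric; uniform closeness of two cadlag paths automatically forces uniform closeness of their left limits. The place where Lemma~\ref{lem:sup-jump-condition} really earns its keep is slightly different: for the jump term one needs $g(\cdot\,;\pi(Q))$ to be a \emph{continuous} function on $\XA$ (so that it lies in $C_\varphi(\XA)$ and weak* convergence applies in your step~(i)), and this requires $\pi(Q)$ itself to be a continuous path. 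That continuity of $\pi(Q)$ (equivalently, $Q$ assigns mass zero to paths jumping at any fixed deterministic time) is precisely what Lemma~\ref{lem:sup-jump-condition} delivers when $J(Q)<+\infty$. With that relocation of the citation, the argument is complete.
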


\begin{proof}
The statement is the same as \cite[Lem. 2.9]{Leonard}. The same proof applies. That proof requires continuity of $\pi$, which is now established in Lemma \ref{lem:continuity-pi-appendix} under assumptions ({\bf A1})-({\bf A3}).
\end{proof}

\subsection{The interacting case}

We now address the interacting case.

\begin{proof}[Proof of Theorem \ref{thm:empirical-measure}]

Recall the statement of Theorem \ref{thm:empirical-measure}. We are now
given that the sequence of initial empirical measures $\nu_N \ra \nu$
weakly. By Lemma \ref{lem:non-interacting-empirical}, $(P^{o,(N)}_{\nu_N}, N \geq 1)$
satisfies the large deviation
principle in the topological space $\MA_{1,\varphi}(\XA)$ with rate
function $J(Q)$. By Lemma \ref{lem:J(Q)-bound} and Lemma
\ref{lem:h-continuous}, $h$ is continuous on the set $\{ Q \in
\MA_{1,\varphi}(\XA) \mid J(Q) < +\infty \}$. Furthermore, by
\cite[Lem. 2.10]{Leonard}, for every $\alpha > 0$, we have
\[
  \limsup_{N \ra +\infty} \frac{1}{N} \log \int_{\MA_{1,\varphi}(\XA)} e^{N\alpha |h|} ~dP^{o,(N)}_{\nu_N} < +\infty.
\]
Using the Laplace-Varadhan principle, see \cite[Prop. 2.5]{Leonard}, we can draw two conclusions. The first conclusion is that
\begin{equation}
  \label{eqn:varadhan-lemma}
  \frac{1}{N} \log \int_{\MA_{1,\varphi}(\XA)} e^{Nh} ~dP^{o,(N)}_{\nu_N} \ra \sup_{Q' \in \MA_{1,\varphi}(\XA)} [h(Q') - J(Q')]
\end{equation}
as $N \ra +\infty$. From (\ref{eqn:RND}), we have
\[
  e^{Nh} dP^{o,(N)}_{\nu_N} = dP^{(N)}_{\nu_N},
\]
a probability measure. The left-hand side in (\ref{eqn:varadhan-lemma}) is therefore always 0, and so $\sup_{Q' \in \MA_{1,\varphi}(\XA)} [h(Q') - J(Q')] = 0$.
The second conclusion is that $( P^{(N)}_{\nu_N}, N \geq 1)$ satisfies the large deviation principle in the topological space $\MA_{1,\varphi}(\XA)$ with good rate function
\[
  I(Q) = J(Q) - h(Q) - \inf_{Q'} [J(Q') - h(Q')] = J(Q) - h(Q)
\]
where the last equality holds because the infimum above is 0 by the first conclusion. This concludes the proof of the first part of Theorem \ref{thm:empirical-measure}.

\enlargethispage{3pt}
We now show (\ref{eqn:I(Q)}). By assumption ({\bf A1})-({\bf A3}), it is easy to see that there exists a constant $K$ such that $|h(Q)| \leq K (1 + \int_{\XA} \varphi ~dQ)$ so that if $Q \in \MA_{1,\varphi}(\XA)$ then $|h(Q)| < +\infty$. By Lemma \ref{lem:J(Q)is-entropy}, if either $Q \circ \pi_0^{-1} \neq \nu$ or $Q$ is not absolutely continuous with respect to $P$, then $J(Q) = +\infty$, and by the finiteness of $h(Q)$, we have $I(Q) = J(Q) - h(Q) = +\infty$. We may therefore assume $Q \circ \pi_0^{-1} = \nu$ and $Q \ll P$, whence, by Lemma \ref{lem:J(Q)is-entropy} once again, $J(Q) = H(Q|P) = \int dQ ~\log (dQ/dP)$. It therefore suffices to argue that
\[
  Q \circ \pi_0^{-1} = \nu \mbox{ and }  Q \ll P \quad \Rightarrow \quad H(Q | P) - h(Q) = H(Q | P(\pi(Q))).
\]
Let $\mu = \pi(Q)$ for convenience. Observe that the density
\[
  \frac{dP_z(\mu)}{dP_z}(\cdot) = \exp\{ h_1(\cdot, \mu) \}
\]
in (\ref{eqn:RND-pathlevel}) does not depend on $z$. It follows that the density of the mixture distribution $P(\mu)$ in (\ref{eqn:P(mu)}) with respect to the mixture $P$ in (\ref{eqn:P}) is
\[
  \frac{dP(\mu)}{dP}(x) = \exp \{ h_1(x,\mu) \}.
\]
Using this in (\ref{eqn:h(Q)}), we get
\[
  h(Q) = \int_{D([0,T], \ZA)} dQ ~\log \frac{dP(\mu)}{dP},
\]
from which
\begin{eqnarray*}
  H(Q|P) - h(Q) & = & \int_{D([0,T], \ZA)} dQ ~\log \frac{dQ}{dP} - \int_{D([0,T], \ZA)} dQ ~\log \frac{dP(\mu)}{dP} \\
  & = & \int_{D([0,T], \ZA)} dQ ~\log \frac{dQ}{dP(\mu)} \\
  & = & H(Q | P(\mu))
\end{eqnarray*}
follows. This concludes the proof.
\end{proof}
\eject

\section{Proof of Lemma \ref{lem:S_T-bounding}}
\label{sec:ProofOfS_T-bounding}

\allowdisplaybreaks
We address the first bullet. For ease of exposition, let us for now allow all possible transitions and ignore the constraint that only $\EA$ transitions are allowed. Consider the constant velocity path
\begin{equation}
  \label{eqn:const-velocity-path}
  \mu(t) = \left( 1 - \frac{t}{T} \right) \nu + \frac{t}{T} \xi, ~ t \in [0,T]
\end{equation}
for which
\[
  \dot{\mu}(t) = \frac{\xi - \nu}{T}, ~t \in [0,T].
\]
There is flow out of $i$ if $\xi(i) < \nu(i)$, and flow into $i$ otherwise. We now construct a rate matrix $L(t)$ with entries $l_{i,j}(t)$ that ensure the traversal of this constant velocity path.

Since there is conservation of mass $\sum_z \xi(z) = \sum_z \nu(z)$, we can construct mass transport parameters $\{ g_{i,j} \}$ such that for an $i$ with $\nu(i) > \xi(i)$ and a $j$ with $\nu(j) < \xi(j)$, the quantity $g_{i,j}$ is the fraction of the excess $\nu(i) - \xi(i)$ that goes from $i$ to $j$. In particular, $\{ g_{i,j} \}$ satisfies
\begin{eqnarray}
    g_{i,j} & \in & [0,1] \quad \mbox{for all } i,j \in \ZA \nonumber \\
    \label{eqn:no-mass-flow}
    g_{i,j} & = & 0 \quad \mbox{if } \nu(i) \leq \xi(i) \mbox{ or } \nu(j) \geq \xi(j) \\
    \label{eqn:no-mass-destruction}
    \sum_{j: \nu(j) < \xi(j)} g_{i,j} & = & 1 \quad \mbox{if } \nu(i) > \xi(i)
\end{eqnarray}
and finally
\begin{equation}
  \label{eqn:no-new-mass}
  \sum_{i: \nu(i) > \xi(i)} [\nu(i) - \xi(i)] g_{i,j} = \xi(j) - \nu(j) \quad \mbox{if } \nu(j) < \xi(j).
\end{equation}
Equation (\ref{eqn:no-mass-destruction}) says mass is not destroyed and (\ref{eqn:no-new-mass}) says new mass is not created (all mass entering $j$ must come from $i$'s with excesses).

Define the diagonal elements of the transition rate matrix $L(t)$ to be
\begin{equation}
  \label{eqn:constant-velocity-rate}
  l_{j,j}(t) = \left\{
    \begin{array}{cl}
    \frac{-(\nu(j) - \xi(j))}{T (\mu(t)(j))} & \mbox{if $j$ satisfies } \nu(j) > \xi(j) \\
    0 & \mbox{otherwise}.
    \end{array}
    \right.
\end{equation}
Now define the off-diagonal elements of $L(t)$ to be
\begin{equation}
  \label{eqn:constant-velocity-control}
  l_{j,i}(t) = \left\{
    \begin{array}{cl}
      0 & \mbox{if } \nu(j) \leq \xi(j), i \in \ZA \\
      -l_{j,j}(t) g_{j,i} & \mbox{if } \nu(j) > \xi(j) \mbox{ and } i \neq j.
    \end{array}
  \right.
\end{equation}

We next claim that $\dot{\mu}(t) = L(t)^* \mu(t)$. Indeed, for $i$ such that $\nu(i) \geq \xi(i)$, we
have
\begin{eqnarray*}
  \lefteqn{ \left( L(t)^* \mu(t) \right)(i) } \\*
    & = & \sum_{j} (\mu(t)(j)) ~ l_{j,i}(t) \\
    & = & (\mu(t)(i)) ~ l_{i,i}(t) + \hspace*{-.1in} \sum_{j: j \neq i, \nu(j) \leq \xi(j)} \hspace*{-.1in} (\mu(t)(j))~ l_{j,i}(t) + \hspace*{-.1in} \sum_{j: j \neq i, \nu(j) > \xi(j)} \hspace*{-.1in} (\mu(t)(j))~ l_{j,i}(t) \\
    & \stackrel{\mbox{(a)}}{=} & (\mu(t)(i))~ l_{i,i}(t) + 0 + 0 \\
    & \stackrel{\mbox{(b)}}{=} & \frac{\xi(i) - \nu(i)}{T}.
\end{eqnarray*}
In the above sequence of equalities, the second term in (a) vanished because $\nu(j) \leq \xi(j)$ implies $l_{j,i}(t) = 0$ (see (\ref{eqn:constant-velocity-control})); the third term vanished because, by (\ref{eqn:no-mass-flow}) and noticing that $i$ is the second argument, $\nu(i) \geq \xi(i)$ implies $g_{j,i} = 0$ which in turn implies $l_{j,i}(t) = 0$ again by (\ref{eqn:constant-velocity-control}). Lastly, (b) follows from (\ref{eqn:constant-velocity-rate}).

For $i$ such that $\nu(i) < \xi(i)$, we have
\begin{eqnarray*}
  \lefteqn{ \left( L(t)^* \mu(t) \right)(i) } \\
    & = & \sum_{j} (\mu(t)(j))~ l_{j,i}(t) \\
    & = & (\mu(t)(i))~ l_{i,i}(t) + \hspace*{-.1in} \sum_{j: j \neq i, \nu(j) \leq \xi(j)} \hspace*{-.1in} (\mu(t)(j))~ l_{j,i}(t) + \hspace*{-.1in} \sum_{j: j \neq i, \nu(j) > \xi(j)} \hspace*{-.1in} (\mu(t)(j))~ l_{j,i}(t) \\
    & \stackrel{\mbox{(a)}}{=} & 0 + 0 + \sum_{j: j \neq i, \nu(j) > \xi(j)} (\mu(t)(j))~ (- l_{j,j}(t) g_{j,i}) \\
    & \stackrel{\mbox{(b)}}{=} & \frac{1}{T} \sum_{j: j \neq i, \nu(j) > \xi(j)} (\nu(j) - \xi(j)) g_{j,i} \\
    & \stackrel{\mbox{(c)}}{=} & \frac{\xi(i) - \nu(i)}{T}.
\end{eqnarray*}
In the above sequence of equalities, (a) follows from (\ref{eqn:constant-velocity-rate}) and (\ref{eqn:constant-velocity-control}). Equation (b) follows from (\ref{eqn:constant-velocity-rate}), and (c) follows from (\ref{eqn:no-new-mass}). The above arguments establish $\dot{\mu}(t) = L(t)^* \mu(t)$.

Let us now evaluate the difficulty $S_{[0,T]}(\mu | \nu)$ of passage near this constant velocity path $\mu$. If we show that the integral in the right-hand side of (\ref{eqn:finite-rate-evaluation}) is finite, by Theorem \ref{thm:flow-theorem}, $S_{[0,T]}(\mu | \nu)$ equals this integral. Observe that $l_{i,j}(t)$ is not bounded if one of $\mu(0)(i) = \nu(i)$ or $\mu(T)(i) = \xi(i)$ equals 0, and so we do have some work to do.

The right-hand side of (\ref{eqn:finite-rate-evaluation}) can be expanded to be
\begin{eqnarray}
  \label{eqn:finite-rate-evaluation-detail1}
  \lefteqn{ \int_{[0,T]} \Big[ \sum_{i,j : j \neq i} \Big( (\mu(t)(i))~ l_{i,j}(t) \log \Big( \frac{l_{i,j}(t)}{\lambda_{i,j}(\mu(t))} \Big) } \\
  & & \quad \quad \quad \quad
  - (\mu(t)(i)) l_{i,j}(t) + (\mu(t)(i)) \lambda_{i,j}(\mu(t)) \Big) \Big] ~dt. \nonumber
\end{eqnarray}
From (\ref{eqn:constant-velocity-control}) and (\ref{eqn:constant-velocity-rate}), we get $(\mu(t)(i)) ~l_{i,j}(t) = T^{-1}(\nu(i) - \xi(i)) g_{i,j}$, and this is nonzero only if $\nu(i) > \xi(i)$ and $\nu(j) < \xi(j)$; see (\ref{eqn:no-mass-flow}). For convenience, let us define
\[
  \Upsilon = \{ (i,j) \mid j \neq i, ~\nu(i) > \xi(i), ~\nu(j) < \xi(j) \}.
\]
By assumptions ({\bf A2})-({\bf A3}), $|\log \lambda_{i,j}(\cdot)| \leq |\log C| + |\log c|$. Using these observations, (\ref{eqn:finite-rate-evaluation-detail1}) is upper bounded by
\begin{eqnarray}
  \label{eqn:const-velocity-control-ub-terms}
  \lefteqn{ \int_{[0,T]} \Big[ \sum_{(i,j) \in \Upsilon } \Big(
    T^{-1}(\nu(i) - \xi(i)) g_{i,j} \log \left( \frac{(\nu(i) - \xi(i)) g_{i,j}}{T (\mu(t)(i))} \right) }  \\
    & & \quad + ~ T^{-1} (\nu(i) - \xi(i)) g_{i,j} (|\log C| + |\log c| + 1) + C \Big) \Big] ~dt \nonumber \\
    & \leq & \sum_{(i,j) \in \Upsilon} (\nu(i) - \xi(i)) g_{i,j} |\log ((\nu(i) - \xi(i)) g_{i,j})| \nonumber \\
    & & \quad - ~ \sum_{i: \nu(i) > \xi(i)} (\nu(i) - \xi(i)) T^{-1} \int_{[0,T]} \log (\mu(t)(i)) ~dt \nonumber \\
    & & \quad + ~ \quad || \nu - \xi ||_1 |\log T| \nonumber \\
    & & \quad + ~ \quad || \nu - \xi ||_1 (|\log C| + |\log c| + 1) + C T r^2, \nonumber
\end{eqnarray}
where in arriving at the last three terms we have repeatedly used (\ref{eqn:no-mass-destruction}). The quantity $|| \nu - \xi ||_1$ is the total variation distance between $\xi$ and $\nu$. Let us now bound the first two terms on the right-hand side of (\ref{eqn:const-velocity-control-ub-terms}).

Observing that there is a constant $K$ such that $\sup_{x \in [0,1]} x |\log x| \leq K < +\infty$, the first term on the right-hand side of (\ref{eqn:const-velocity-control-ub-terms}) can be upper bounded as
\begin{eqnarray}
\lefteqn{ \sum_{(i,j) \in \Upsilon} (\nu(i) - \xi(i)) g_{i,j} |\log ((\nu(i) - \xi(i)) g_{i,j})| }  \nonumber \\
& \leq &  \sum_{i:\nu(i) > \xi(i)} (\nu(i) - \xi(i)) |\log (\nu(i) - \xi(i))|
\Biggl( \sum_{j : (i,j) \in \Upsilon} g_{i,j} \Biggr) \nonumber \\
& & \quad + ~ \sum_{(i,j) \in \Upsilon} (\nu(i) - \xi(i)) g_{i,j} |\log g_{i,j}| \nonumber \\
& \leq & \sum_{i:\nu(i) > \xi(i)} (\nu(i) - \xi(i)) |\log (\nu(i) - \xi(i))| + K || \nu - \xi ||_1 \nonumber \\
\label{eqn:const-velocity-control-ub-term1}
& \leq & \sum_i \left| \left( |\nu(i) - \xi(i)| \log |\nu(i) - \xi(i)| \right) \right| + K || \nu - \xi ||_1.
\end{eqnarray}

To bound the second term on the right-hand side of (\ref{eqn:const-velocity-control-ub-terms}), use (\ref{eqn:const-velocity-path}) and employ the change of variable $u = \mu(t)(i)$ to
get\vadjust{\eject}
\begin{eqnarray*}
  \lefteqn{ -(\nu(i) - \xi(i)) T^{-1} \int_{[0,T]} \log (\mu(t)(i)) ~dt = \int_{\nu(i)}^{\xi(i)} \log u ~ du } \\
  & = &  [u \log u - u]_{\nu(i)}^{\xi(i)} \\
  & \leq & |\xi(i) \log \xi(i) - \nu(i) \log \nu(i)| + |\nu(i) - \xi(i)|.
\end{eqnarray*}
Summing this over all $i$, we see that the second term in (\ref{eqn:const-velocity-control-ub-terms}) is upper bounded by
\begin{equation}
  \label{eqn:const-velocity-control-ub-term2}
  \sum_i \left| \xi(i) \log \xi(i) - \nu(i) \log \nu(i) \right| + || \nu - \xi ||_1.
\end{equation}
Since $\MA_1(\ZA)$ is compact, all terms in the upper bounds (\ref{eqn:const-velocity-control-ub-term1}) and (\ref{eqn:const-velocity-control-ub-term2}) are bounded. Substituting (\ref{eqn:const-velocity-control-ub-term1}) and (\ref{eqn:const-velocity-control-ub-term2}) on the right-hand side of  (\ref{eqn:const-velocity-control-ub-terms}), and noticing that $T>0$, we see that the right-hand side of (\ref{eqn:const-velocity-control-ub-terms}) is upper bounded, and this upper bound serves as an upper bound on $S_{[0,T]}(\mu | \nu)$, which we summarize as
\begin{eqnarray}
  S_{[0,T]}(\mu | \nu) & \leq & \sum_i \left| \left( |\nu(i) - \xi(i)| \log |\nu(i) - \xi(i)| \right) \right| \nonumber \\
  & & \quad + ~ \sum_i \left| \xi(i) \log \xi(i) - \nu(i) \log \nu(i) \right| \nonumber \\
  & & \quad + ~ || \nu - \xi ||_1 (|\log T|) \nonumber \\
  \label{eqn:C_3-T}
  & & \quad + ~ || \nu - \xi ||_1 (|\log C| + |\log c| + K + 2) + C T r^2 \\
  & \leq & C_3(T) \nonumber
\end{eqnarray}
for a suitable constant $C_3(T)$ that is independent of $\nu$ and $\xi$. This concludes the proof of the first bullet for the case when all transitions are allowed.

When only those transitions in the directed edge set $\EA$ can occur, since the Markov chain is irreducible (by assumption ({\bf A1})), there exists a finite sequence of intermediate points through which one can move from $\nu$ to $\xi$ in $m = m(r,\EA) < +\infty$ steps:
\[
  \nu = \nu^{(0)} \ra \nu^{(1)} \ra \cdots \ra \nu^{(m)} = \xi.
\]
Consider now the piecewise linear path that moves from $\nu$ to $\xi$ through the above sequence of points with velocities such that each segment is covered in time $T/m$. Then $S_{[0,T]}(\mu|\nu) \leq C_1(T) = m C_3(T/m)$, and the proof of the first bullet is complete.

$S_T(\xi|\nu) \leq C_1(T)$ follows immediately from (\ref{eqn:S(xi-nu)}), whence the second bullet follows.

To see the third bullet, we use (\ref{eqn:C_3-T}). Since $\MA_1(\ZA)$ is a subset of a finite dimensional space, the topology of weak convergence is the same as the topology induced by the total variation metric. In particular, if $\rho_0(\nu,\xi) \ra 0$ then $\nu(i) \ra \xi(i)$ for every $i \in \ZA$. As a consequence, for every $\varepsilon > 0$ and with $T = \varepsilon$, we can choose a $\delta > 0$ such that each of the first four terms in (\ref{eqn:C_3-T}) is upper bounded by $\varepsilon$, and so $\rho_0(\nu,\xi) < \delta$ implies
\[
  S_{\varepsilon}(\xi | \nu) \leq 4 \varepsilon + Cr^2\varepsilon \leq C_2 \varepsilon
\]
for some $C_2 < +\infty$, and the proof of the third bullet and the Lemma is complete. $\hfill \Box$

\section{Proofs of Lemma \ref{lem:uniform-continuity} and Lemma \ref{lem:V-uniform-continuity}}
\label{sec:uniform-continuity}

We begin with two useful lemmas.

\begin{lemma}
  \label{lem:tau-star-linear-bound}
  Let $L(t)$ be a matrix of rates such that the solution $\mu : [0,T] \ra \MA_1(\ZA)$ to the ODE $\dot{\mu}(t) = L(t)^* \mu(t)$ with $\mu(0) = \nu$ has $S_{[0,T]}(\mu | \nu) < +\infty$. There exists a constant $K < +\infty$ such that
  \[
    \int_{[0,T]} \Big[ \sum_{(i,j) \in \EA} (\mu(t)(i)) ~l_{i,j}(t) \Big] ~dt \leq S_{[0,T]}(\mu | \nu) + KT.
  \]
\end{lemma}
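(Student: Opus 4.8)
The plan is to reduce the statement to an elementary one-variable inequality, apply it pointwise in $t$ and termwise over $(i,j)\in\EA$, and then integrate. First I would rewrite the integrand of $S_{[0,T]}(\mu\,|\,\nu)$ in the explicit form used in (\ref{eqn:finite-rate-evaluation}). Since $\tau^*(u)=(u+1)\log(u+1)-u$ for $u>-1$ and $\tau^*(-1)=1$, substituting $u=l_{i,j}(t)/\lambda_{i,j}(\mu(t))-1$ gives, for each $(i,j)\in\EA$,
\[
  \lambda_{i,j}(\mu(t))\,\tau^*\!\left(\frac{l_{i,j}(t)}{\lambda_{i,j}(\mu(t))}-1\right)
  = l_{i,j}(t)\log\frac{l_{i,j}(t)}{\lambda_{i,j}(\mu(t))}-l_{i,j}(t)+\lambda_{i,j}(\mu(t)),
\]
with the convention $0\log 0=0$ covering the case $l_{i,j}(t)=0$ (consistent with $\tau^*(-1)=1$). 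By Theorem \ref{thm:flow-theorem}, the hypothesis $S_{[0,T]}(\mu\,|\,\nu)<+\infty$ together with $\dot{\mu}(t)=L(t)^*\mu(t)$ lets me read $S_{[0,T]}(\mu\,|\,\nu)$ as the integral over $[0,T]$ of the sum over $(i,j)\in\EA$ of the right-hand side above.

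The scalar fact I would establish is: for every $a>0$ and every $x\ge 0$,
\[
  x \;\le\; \bigl(x\log(x/a)-x+a\bigr)+(e-1)\,a .
\]
Setting $g(x):=x\log(x/a)-2x+ea$, so that the claim is $g\ge 0$, one has $g'(x)=\log(x/a)-1$, which vanishes only at $x=ea$, while $g''>0$; hence $g$ attains its minimum at $x=ea$, where $g(ea)=ea-2ea+ea=0$, and $g(0^+)=ea>0$. This proves the inequality, with a slack $(e-1)a$ that is uniform in $a$ (and in particular in $a\in[c,C]$).

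Applying this with $x=l_{i,j}(t)$ and $a=\lambda_{i,j}(\mu(t))\in[c,C]$, multiplying through by $\mu(t)(i)\ge 0$, and summing over $(i,j)\in\EA$ yields the pointwise bound
\[
  \sum_{(i,j)\in\EA}\mu(t)(i)\,l_{i,j}(t)
  \;\le\; \sum_{(i,j)\in\EA}\mu(t)(i)\,\lambda_{i,j}(\mu(t))\,\tau^*\!\left(\frac{l_{i,j}(t)}{\lambda_{i,j}(\mu(t))}-1\right)
  +(e-1)\sum_{(i,j)\in\EA}\mu(t)(i)\,\lambda_{i,j}(\mu(t)) .
\]
The last sum equals $(e-1)\sum_{i\in\ZA}\mu(t)(i)\sum_{j\in\ZA_i}\lambda_{i,j}(\mu(t))\le (e-1)\,C\sum_{i\in\ZA}\mu(t)(i)\,|\ZA_i|\le (e-1)(r-1)C=:K$, using $\lambda_{i,j}(\cdot)\le C$, $|\ZA_i|\le r-1$, and $\sum_i\mu(t)(i)=1$; in particular $K$ does not depend on $t$, $\mu$, or $L$. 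Integrating over $t\in[0,T]$ and using the identification of $S_{[0,T]}(\mu\,|\,\nu)$ with $\int_{[0,T]}\sum_{(i,j)\in\EA}\mu(t)(i)\lambda_{i,j}(\mu(t))\tau^*(\cdot)\,dt$ from the first step gives $\int_{[0,T]}\sum_{(i,j)\in\EA}\mu(t)(i)\,l_{i,j}(t)\,dt\le S_{[0,T]}(\mu\,|\,\nu)+KT$.

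I do not expect a serious obstacle. The points that need a little care are the boundary case $l_{i,j}(t)=0$ (handled by $0\log 0=0$ and $\tau^*(-1)=1$), checking that the slack constant can be chosen independently of the data (which is exactly the uniformity of $(e-1)a$ and the bounds $\lambda_{i,j}\le C$, $\sum_i\mu(t)(i)=1$), and — the one spot to be precise about — the appeal to Theorem \ref{thm:flow-theorem} to read $S_{[0,T]}(\mu\,|\,\nu)$ as the integral in (\ref{eqn:finite-rate-evaluation}) for the given pair $(\mu,L)$.
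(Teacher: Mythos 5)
Your proof is correct and follows the paper's own argument: after invoking Theorem \ref{thm:flow-theorem} to identify $S_{[0,T]}(\mu\,|\,\nu)$ with the integral in (\ref{eqn:finite-rate-evaluation}), both you and the paper reduce the claim to the same elementary linear lower bound on $\tau^*$ — your inequality $x\le\bigl(x\log(x/a)-x+a\bigr)+(e-1)a$ is, after the substitution $u=x/a$, exactly the paper's $\tau^*(u-1)\ge u-e+1$. The only difference is that you spell out the calculus verification of the scalar inequality and sharpen the constant slightly to $(e-1)(r-1)C$ in place of $(e-1)rC$, neither of which changes the substance.
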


\begin{proof}
It is easy to verify that $\tau^*(u-1) = u \log u - u + 1 \geq u - e + 1$ for all $u \geq 0$. By Theorem \ref{thm:flow-theorem}, $S_{[0,T]}(\mu | \nu) < +\infty$ implies that its evaluation is given by (\ref{eqn:finite-rate-evaluation}). Using these two facts, we get
\begin{eqnarray*}
 S_{[0,T]}(\mu | \nu) & = & \int_{[0,T]} \Big[\sum_{(i,j) \in \EA} (\mu(t)(i))~ \lambda_{i,j}(\mu(t)) ~\tau^* \left( \frac{l_{i,j}(t)}{\lambda_{i,j}(\mu(t))} - 1 \right) \Big] ~dt \\
 & \geq & \int_{[0,T]} \Big[\sum_{(i,j) \in \EA} (\mu(t)(i))~ \lambda_{i,j}(\mu(t)) \left( \frac{l_{i,j}(t)}{\lambda_{i,j}(\mu(t))} - e + 1 \right) \Big] ~dt \\
 & \geq & \int_{[0,T]} \Big[\sum_{(i,j) \in \EA} (\mu(t)(i))~ l_{i,j}(t) \Big] ~dt - (e-1)CrT,
\end{eqnarray*}
and the lemma follows.
\end{proof}

The next lemma bounds the increase in the cost due to time scaling on a fixed path between two points.

\begin{lemma}
  \label{lem:speed-up}
  Let $L(t)$ be a matrix of rates such that the solution $\mu : [0,T] \ra \MA_1(\ZA)$ to the ODE $\dot{\mu}(t) = L(t)^* \mu(t)$ with $\mu(0) = \nu$ has $S_{[0,T]}(\mu | \nu) < +\infty$ and $\mu(T) = \xi$. Let $0 < \alpha < +\infty$ be a time scaling. With $T' = T/\alpha$, consider the path $\{ \tilde{\mu}(t) = \mu(\alpha t) \mid t \in [0,T'] \}$ having $\tilde{\mu}(0) = \nu$ and $\tilde{\mu}(T') = \xi$. Then
  \[
    \dot{\tilde{\mu}}(t) = \tilde{L}(t)^* \tilde{\mu}(t), \quad t \in [0,T']
  \]
  where $\tilde{L}(t) = \alpha L(\alpha t)$. Furthermore, the scaled path $\tilde{\mu}: [0,T'] \ra \MA_1(\ZA)$ satisfies
  \begin{eqnarray}
    \label{eqn:push-faster-cost}
    \qquad S_{[0,T']}(\tilde{\mu} | \nu) & \leq & S_{[0,T]}(\mu | \nu) + |\log \alpha| \int_{[0,T]} \Big[ \sum_{(i,j) \in \EA} (\mu(t)(i)) l_{i,j}(t) \Big] ~dt~~ \\
    & &  + ~ \frac{|1-\alpha|}{\alpha} CrT. \nonumber
  \end{eqnarray}
\end{lemma}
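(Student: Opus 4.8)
The plan is to push everything back to the original time variable by the substitution $s=\alpha t$ and then compare the two integrands edge by edge. First I would dispose of the structural claims: differentiating $\tilde\mu(t)=\mu(\alpha t)$ gives $\dot{\tilde\mu}(t)=\alpha\dot\mu(\alpha t)=\alpha L(\alpha t)^*\mu(\alpha t)=(\alpha L(\alpha t))^*\tilde\mu(t)$, so $\tilde\mu$ solves $\dot{\tilde\mu}(t)=\tilde L(t)^*\tilde\mu(t)$ with $\tilde L(t)=\alpha L(\alpha t)$, i.e. the scaled rates are $\tilde l_{i,j}(t)=\alpha\,l_{i,j}(\alpha t)$; moreover $\tilde\mu(0)=\mu(0)=\nu$, $\tilde\mu(T')=\mu(\alpha T')=\mu(T)=\xi$, and $\tilde\mu\in\A$ because $\mu\in\A$ (Theorem~\ref{thm:flow-theorem}(b), since $S_{[0,T]}(\mu|\nu)<+\infty$) and a positive affine reparametrisation of time preserves absolute continuity. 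By Theorem~\ref{thm:flow-theorem}(b)--(c) it then suffices to compute the right-hand side of (\ref{eqn:finite-rate-evaluation}) for $\tilde\mu$ with the rates $\tilde l_{i,j}$ and check that it is finite: that number is then $S_{[0,T']}(\tilde\mu|\nu)$, and likewise the right-hand side of (\ref{eqn:finite-rate-evaluation}) for $\mu$ with the rates $l_{i,j}$ equals $S_{[0,T]}(\mu|\nu)$.

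The heart of the argument is an elementary identity for the Legendre conjugate. Writing $\tau^*(u-1)=u\log u-u+1$ for $u\geq 0$ (with the convention $0\log 0=0$, which also accounts for the value $\tau^*(-1)=1$), one checks directly that for all $u\geq 0$ and $\alpha>0$,
\[
  \tfrac{1}{\alpha}\,\tau^*(\alpha u-1)=\tau^*(u-1)+u\log\alpha+\tfrac{1-\alpha}{\alpha}\ \leq\ \tau^*(u-1)+u\,|\log\alpha|+\tfrac{|1-\alpha|}{\alpha}.
\]
Applying the substitution $s=\alpha t$ (so $dt=ds/\alpha$, $\tilde\mu(t)=\mu(s)$, $\tilde l_{i,j}(t)=\alpha l_{i,j}(s)$), the right-hand side of (\ref{eqn:finite-rate-evaluation}) for $\tilde\mu$ becomes $\tfrac1\alpha\int_{[0,T]}\sum_{(i,j)\in\EA}(\mu(s)(i))\lambda_{i,j}(\mu(s))\,\tau^*\!\big(\tfrac{\alpha l_{i,j}(s)}{\lambda_{i,j}(\mu(s))}-1\big)\,ds$. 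Inserting the displayed inequality with $u=l_{i,j}(s)/\lambda_{i,j}(\mu(s))$ (legitimate since $\lambda_{i,j}(\mu(s))\geq c>0$ on $\EA$ by ({\bf A3})), multiplying through by $\mu(s)(i)\lambda_{i,j}(\mu(s))$ and using $\lambda_{i,j}(\mu(s))\,u=l_{i,j}(s)$, then summing over $(i,j)\in\EA$ and integrating, yields
\[
  S_{[0,T']}(\tilde\mu|\nu)\leq S_{[0,T]}(\mu|\nu)+|\log\alpha|\!\int_{[0,T]}\!\Big[\sum_{(i,j)\in\EA}\mu(s)(i)\,l_{i,j}(s)\Big]ds+\tfrac{|1-\alpha|}{\alpha}\!\int_{[0,T]}\!\Big[\sum_{(i,j)\in\EA}\mu(s)(i)\,\lambda_{i,j}(\mu(s))\Big]ds.
\]

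To finish, I would estimate the last integral by $CrT$: for each $s$, $\sum_{(i,j)\in\EA}\mu(s)(i)\lambda_{i,j}(\mu(s))=\sum_i\mu(s)(i)\sum_{j\in\ZA_i}\lambda_{i,j}(\mu(s))\leq Cr$, since there are at most $r-1$ admissible jumps out of each state, each of rate at most $C$ (assumptions ({\bf A2})--({\bf A3})), and $\sum_i\mu(s)(i)=1$. The middle integral is finite by Lemma~\ref{lem:tau-star-linear-bound}, and $S_{[0,T]}(\mu|\nu)<+\infty$ by hypothesis, so the right-hand side of (\ref{eqn:finite-rate-evaluation}) for $\tilde\mu$ is finite; Theorem~\ref{thm:flow-theorem}(c) then identifies it with $S_{[0,T']}(\tilde\mu|\nu)$, and the chain of inequalities above is exactly (\ref{eqn:push-faster-cost}). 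I do not expect a genuine obstacle here: the only points that need care are the boundary case $l_{i,j}(t)=0$ (i.e. $u=0$) in the $\tau^*$-identity, which is handled by the stated conventions, and keeping the constant in the last term sharp so that it reads $\tfrac{|1-\alpha|}{\alpha}CrT$ rather than a looser $r^2$-bound.
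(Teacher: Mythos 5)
Your argument is correct and is essentially the paper's own proof: the same $\tau^*$-identity $\frac{1}{\alpha}\tau^*(\alpha u-1)=\tau^*(u-1)+u\log\alpha+\frac{1-\alpha}{\alpha}$, the same substitution $s=\alpha t$, and the same $Cr$ bound on $\sum_{(i,j)\in\EA}\mu(s)(i)\lambda_{i,j}(\mu(s))$. The only (welcome) extra care you take is in making the absolute-value step and the $u=0$ boundary case explicit, which the paper leaves implicit.
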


\begin{proof}
Clearly $\tilde{\mu}(0) = \mu(0) = \nu$ and $\tilde{\mu}(T') = \mu(\alpha T') = \mu(T) = \xi$. Since $\dot{\mu}(t) = L(t)^* \mu(t)$, we also have
\[
  \dot{\tilde{\mu}}(t) = \frac{d\mu(\alpha t)}{dt} = \alpha \dot{\mu}(\alpha t) = \alpha L(\alpha t)^* \mu(\alpha t) = \alpha L(\alpha t)^* \tilde{\mu}(t)
\]
from which $\tilde{L}(t) = \alpha L(\alpha t)$ is obvious. Its $(i,j)$th entry $\tilde{l}_{i,j}(t)$ equals $\alpha l_{i,j}(\alpha t)$. The cost of $\tilde{\mu} : [0,T'] \ra \MA_1(\ZA)$ is then
\begin{eqnarray*}
  S_{[0,T']}(\tilde{\mu} | \nu) & = & \hspace*{-.1in} \int_{[0,T']} \Big[ \sum_{(i,j) \in \EA} (\tilde{\mu}(t)(i))~ \lambda_{i,j}(\tilde{\mu}(t)) ~\tau^* \left( \frac{\tilde{l}_{i,j}(t)}{\lambda_{i,j}(\tilde{\mu}(t))} - 1 \right) \Big] dt \\
  & = & \hspace*{-.1in} \int_{[0,T']} \Big[ \sum_{(i,j) \in \EA} (\mu( \alpha t)(i))~ \lambda_{i,j}(\mu(\alpha t))~ \tau^* \left( \frac{\alpha l_{i,j}(\alpha t)}{\lambda_{i,j}(\mu(\alpha t))} - 1 \right) \Big] dt \\
  & = & \hspace*{-.1in} \int_{[0,T']} \Big[ \sum_{(i,j) \in \EA} (\mu( \alpha t)(i))~ \lambda_{i,j}(\mu(\alpha t)) \\
  & & \hspace*{-.1in} \times \alpha \Big\{ \tau^* \left( \frac{l_{i,j}(\alpha t)}{\lambda_{i,j}(\mu(\alpha t))} - 1 \right) + \frac{l_{i,j}(\alpha t)}{\lambda_{i,j}(\mu(\alpha t))} (\log \alpha) + \frac{1-\alpha}{\alpha} \Big\} \Big] dt
\end{eqnarray*}
where we have used the fact that
\[
  \tau^*(\alpha u - 1) = \alpha \left\{ \tau^*(u - 1) + u (\log \alpha) + \frac{1-\alpha}{\alpha} \right\}, \quad u \geq 0.
\]
Changing variables from $\alpha t$ to $t$ and continuing, we get
\begin{eqnarray*}
S_{[0,T']}(\tilde{\mu} | \nu) & = & \int_{[0,T]} \Big[ \sum_{(i,j) \in \EA} (\mu(t)(i))~ \lambda_{i,j}(\mu(t)) ~\tau^* \left( \frac{l_{i,j}(t)}{\lambda_{i,j}(\mu(t))} - 1 \right) \Big] ~dt \\
  & & + ~ (\log \alpha) \int_{[0,T]} \Big[ \sum_{(i,j) \in \EA} (\mu(t)(i))~ l_{i,j}(t) \Big] ~dt \\
  & & + ~ \frac{1-\alpha}{\alpha} \int_{[0,T]} \Big[ \sum_{(i,j) \in \EA} (\mu(t)(i))~ \lambda_{i,j}(\mu(t)) \Big] ~dt.
\end{eqnarray*}
Since the first term on the right-hand side above is $S_{[0,T]}(\mu | \nu)$ and $\lambda_{i,j}(\cdot) \leq C$, (\ref{eqn:push-faster-cost}) follows.
\end{proof}

\begin{proof}[Proof of Lemma \ref{lem:uniform-continuity}]
Fix $T > 0$. Fix an arbitrary $\varepsilon$ such that $0 < \varepsilon < T/4$. Let $\delta > 0$ be as given by part 3 of Lemma \ref{lem:S_T-bounding} so that $\rho_0(\nu,\xi) < \delta$ implies $S_{\varepsilon}(\xi|\nu) \leq C_2 \varepsilon$.

Let $\{ (\nu_i, \xi_i), i = 1,2 \}$ be two points in $\ZA \times \ZA$. By an abuse of notation, let $\rho_T$ given by
\[
  \rho_T((\nu_1, \xi_1), (\nu_2, \xi_2)) = \max \{ \rho_0(\nu_1, \nu_2), \rho_0 (\xi_1, \xi_2) \}.
\]
denote the metric on $\ZA \times \ZA$. Let $\rho_T((\nu_1, \xi_1), (\nu_2, \xi_2)) < \delta$. We need to show that $S_T(\xi_1 | \nu_1)$ and $S_T(\xi_2 | \nu_2)$ are close to each other.

Obviously, $\rho_0(\nu_1, \nu_2) < \delta$ and $\rho_0(\xi_2, \xi_1) < \delta$. Let $\mu$ denote the minimum cost path from $\nu_2$ to $\xi_2$ in time $T$ with cost $S_T(\xi_2 | \nu_2)$. Consider the path from $\nu_1$ to $\xi_1$ as follows:
\begin{itemize}
  \item Traverse the path from $\nu_1$ to $\nu_2$ in time $[0, \varepsilon]$, as given by part 3 of Lemma \ref{lem:S_T-bounding}. This traversal costs  at most $C_2 \varepsilon$.
  \item Given the optimal $[0,T]$-path $\mu$ from $\nu_2$ to $\xi_2$, consider the sped-up path $\tilde{\mu} : [0,T-2\varepsilon] \ra \MA_1(\ZA)$ given by $\tilde{\mu}(t) = \mu(\alpha t)$ with $\alpha = T/(T - 2\varepsilon)$. Travel from $\nu_2$ to $\xi_2$ in the duration $[\varepsilon, T-\varepsilon]$ along the path $\tilde{\mu}$.
  \item Traverse the path from $\xi_2$ to $\xi_1$ in time $[0, \varepsilon]$, again as given by part 3 of Lemma \ref{lem:S_T-bounding}. This traversal's cost  is also at most $C_2 \varepsilon$.
\end{itemize}
The minimum cost for traversal from $\nu_1$ to $\xi_1$ is at most the sum of these paths. Hence, by Lemmas \ref{lem:speed-up} and \ref{lem:tau-star-linear-bound}, we get
\begin{eqnarray*}
  S_T(\xi_1 | \nu_1) \hspace*{-.05in} & \leq & \hspace*{-.05in} 2 C_2 \varepsilon + S_T(\xi_2 | \nu_2) + \Big( \log \frac{T}{T-2\varepsilon} \Big) (S_T(\xi_2 | \nu_2) + K T) + \frac{2 \varepsilon}{T} C r T \\
  & \leq & \hspace*{-.05in} S_T(\xi_2 | \nu_2) + 2 C_2 \varepsilon + \Big( \frac{T}{T-2\varepsilon} - 1 \Big) (C_1(T) + K T) + 2 C r \varepsilon
\end{eqnarray*}
where we used $\log u \leq u - 1$ for $u > 0$. Observing that
\[
  \varepsilon < T/4 \Rightarrow \frac{T}{T - 2\varepsilon} - 1 = \frac{2\varepsilon}{T - 2\varepsilon} \leq \frac{4 \varepsilon}{T},
\]
we deduce that
\[
  S_T(\xi_1 | \nu_1) \leq S_T(\xi_2 | \nu_2) + C_4(T) \varepsilon
\]
where we may take $C_4(T) = 2 C_2 + 4K + 4C_1(T)/T + 2Cr$. Reversing the roles of $(\nu_1, \xi_1)$ and $(\nu_2, \xi_2)$, we deduce
\[
  |S_T(\xi_1 | \nu_1) - S_T(\xi_2 | \nu_2)| \leq C_4(T) \varepsilon.
\]
This concludes the proof that $(\nu,\xi) \mapsto S_T(\xi|\nu)$ is uniformly continuous.
\end{proof}

We now provide the proof of the result on uniform continuity of the analogous quantity $V(\xi | \nu)$.

\begin{proof}[Proof of Lemma \ref{lem:V-uniform-continuity}]
Fix $\varepsilon > 0$ and choose $\delta$ as in the third part of Lemma \ref{lem:S_T-bounding}. Let $(\nu_1, \xi_1)$ and $(\nu_2, \xi_2)$ be such that the starting points are $\delta$-close to each other and so are the ending points, that is, $\rho_0(\nu_1,\nu_2) < \delta$ and $\rho_0(\xi_1,\xi_2) < \delta$. Consider the following path:
\begin{itemize}
  \item Traverse the path from $\nu_1$ to $\nu_2$ in time $[0,\varepsilon]$, as given by part 3 of Lemma \ref{lem:S_T-bounding}. This traversal costs at most $C_2 \varepsilon$.
  \item Traverse by a path from $\nu_2$ to $\xi_2$ in finite time by a path with cost at most $V(\xi_2 | \nu_2) + C_2 \varepsilon$.
  \item Traverse the path from $\xi_2$ to $\xi_1$ in time $[0,\varepsilon]$, again as given by part 3 of Lemma \ref{lem:S_T-bounding}. This traversal's cost is also at most $C_2 \varepsilon$.
\end{itemize}
We then have
\[
  V(\xi_1 | \nu_1) \leq C_2 \varepsilon + ( V ( \xi_2 | \nu_2 ) + C_2 \varepsilon) + C_2 \varepsilon = V(\xi_2 | \nu_2) +  3 C_2 \varepsilon.
\]
Reversing the roles of $(\nu_1, \xi_1)$ and $(\nu_2, \xi_2)$ and via a similar argument, we deduce that
\[
  |V(\xi_1 | \nu_1) - V(\xi_2 | \nu_2)| \leq 3 C_2 \varepsilon
\]
which shows that $(\nu, \xi) \mapsto V(\xi | \nu)$ is uniformly continuous.
\end{proof}

\section{Proof of Theorem \ref{thm:JointLDP}}
\label{sec:ProofJointLDP}

Again, we proceed through a sequence of lemmas. Let $\nu_N \ra \nu$ weakly. By Theorem \ref{thm:flow-theorem}, the sequence of laws of the terminal measure $(p^{(N)}_{\nu_N,T}, N \geq 1)$ satisfies the large deviation principle with speed $N$ and good rate function $S_T(\xi|\nu)$. By Varadhan's lemma, for every $f \in C_b(\MA_1(\ZA))$, we have
\begin{equation}
  \label{eqn:varadhan-lemma-terminaltime}
  \lim_{N \ra +\infty} \frac{1}{N} \log \int_{\MA_1(\ZA)} e^{N f} ~dp^{(N)}_{\nu_N,T} = \sup_{\xi \in \MA_1(\ZA)} [f(\xi) - S_T(\xi|\nu)].
\end{equation}
Let us define
\begin{equation}
  \label{eqn:Lambda}
  \Lambda(f|\nu) = \sup_{\xi \in \MA_1(\ZA)} [f(\xi) - S_T(\xi|\nu)].
\end{equation}
Observe that the rate function admits the characterization (see, e.g., \cite[Th. 4.4.2]{DZ})
\begin{equation}
  \label{eqn:Lambda-to-Rate}
  S_T(\xi|\nu) = \sup_{f \in C_b(\MA_1(\ZA))} [f(\xi) - \Lambda(f|\nu)].
\end{equation}

\begin{lemma}
  \label{lem:Lambda-continuity}
  Let $f \in C_b(\MA_1(\ZA))$. The mapping $\nu \in \MA_1(\ZA) \mapsto \Lambda(f|\nu) \in \mathbb{R}$ is continuous.
\end{lemma}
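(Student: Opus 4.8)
The plan is to reduce the claim directly to the uniform continuity of $(\nu,\xi) \mapsto S_T(\xi|\nu)$ established in Lemma~\ref{lem:uniform-continuity}. The map $\Lambda(f|\nu)$ is an infimal-convolution-type supremum over $\xi$ of the function $f(\xi) - S_T(\xi|\nu)$, and such suprema inherit a modulus of continuity from the $\nu$-dependence of the integrand, uniformly in the suppressed variable $\xi$. First I would note that the supremum in \eqref{eqn:Lambda} is finite and attained: $f$ is bounded since $f \in C_b(\MA_1(\ZA))$, and $S_T(\xi|\nu) \le C_1(T) < +\infty$ for all $\xi, \nu$ by the second bullet of Lemma~\ref{lem:S_T-bounding}, while $S_T(\cdot|\nu)$ is a good rate function (hence lower semicontinuous) on the compact set $\MA_1(\ZA)$; thus $\Lambda(f|\nu) \in \mathbb{R}$ is well defined.

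Next, fix $\varepsilon > 0$. By Lemma~\ref{lem:uniform-continuity}, there exists $\delta > 0$ such that $\rho_0(\nu_1,\nu_2) < \delta$ implies $|S_T(\xi|\nu_1) - S_T(\xi|\nu_2)| < \varepsilon$ \emph{for every} $\xi \in \MA_1(\ZA)$ (this uniformity in $\xi$ is exactly what uniform continuity provides, since $\delta$ depends only on $\varepsilon$ and not on $\xi$). Then for any such $\nu_1,\nu_2$ and any $\xi$,
\[
  f(\xi) - S_T(\xi|\nu_1) \;\le\; f(\xi) - S_T(\xi|\nu_2) + \varepsilon \;\le\; \Lambda(f|\nu_2) + \varepsilon,
\]
and taking the supremum over $\xi$ gives $\Lambda(f|\nu_1) \le \Lambda(f|\nu_2) + \varepsilon$. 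Interchanging the roles of $\nu_1$ and $\nu_2$ yields $|\Lambda(f|\nu_1) - \Lambda(f|\nu_2)| \le \varepsilon$ whenever $\rho_0(\nu_1,\nu_2) < \delta$. This proves that $\nu \mapsto \Lambda(f|\nu)$ is (uniformly) continuous on $\MA_1(\ZA)$.

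There is no real obstacle here; the only point requiring a moment's care is that the bound on $|S_T(\xi|\nu_1) - S_T(\xi|\nu_2)|$ must be uniform in $\xi$, which is precisely the content of Lemma~\ref{lem:uniform-continuity} (as opposed to mere separate continuity in each argument), and that the supremum defining $\Lambda(f|\nu)$ is finite, which follows from boundedness of $f$ together with the uniform bound $S_T \le C_1(T)$ from Lemma~\ref{lem:S_T-bounding}. One could equally phrase the estimate using that $\sup$ is $1$-Lipschitz with respect to the uniform norm on its argument: $|\Lambda(f|\nu_1) - \Lambda(f|\nu_2)| \le \sup_{\xi}|S_T(\xi|\nu_1) - S_T(\xi|\nu_2)|$, and then invoke Lemma~\ref{lem:uniform-continuity} to control the right-hand side.
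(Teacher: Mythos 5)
Your proof is correct, and it takes a genuinely more streamlined route than the paper's. The paper argues via compactness: it notes that $(\nu,\xi)\mapsto f(\xi)-S_T(\xi|\nu)$ is jointly continuous (using Lemma~\ref{lem:uniform-continuity}) on the compact set $\MA_1(\ZA)\times\MA_1(\ZA)$, so the supremum over $\xi$ is attained at some $\xi_N$ for each $\nu_N$; given $\nu_N\to\nu$, it extracts a convergent subsequence $(\nu_N,\xi_N)\to(\nu,\xi)$, shows $\Lambda(f|\nu_N)\to\eta(\nu,\xi)$, and then verifies separately that $\xi$ is in fact a maximizer for $\eta(\nu,\cdot)$. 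You instead exploit the fact that uniform continuity of $(\nu,\xi)\mapsto S_T(\xi|\nu)$ gives a modulus $\delta(\varepsilon)$ \emph{independent of $\xi$}, and then invoke the $1$-Lipschitz property of the sup operator with respect to the uniform norm to conclude $|\Lambda(f|\nu_1)-\Lambda(f|\nu_2)|\le\sup_\xi|S_T(\xi|\nu_1)-S_T(\xi|\nu_2)|\le\varepsilon$. Your argument is shorter, avoids attainment of the supremum and the subsequence machinery entirely, and yields the slightly stronger conclusion that $\nu\mapsto\Lambda(f|\nu)$ is \emph{uniformly} continuous (which of course coincides with plain continuity here since $\MA_1(\ZA)$ is compact). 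Both approaches ultimately rest on Lemma~\ref{lem:uniform-continuity}; the key observation you make explicit, that it is uniform (not merely joint) continuity that supplies the $\xi$-uniform bound, is exactly right and is what lets you bypass compactness.
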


\begin{proof}
Since $f$ is continuous, by Lemma \ref{lem:uniform-continuity}, the mapping
\[
  \eta: (\nu, \xi) \in \MA_1(\ZA) \times \MA_1(\ZA) \mapsto f(\xi) - S_T(\xi|\nu) \in \mathbb{R}
\]
is jointly continuous. As $\MA_1(\ZA)$ is compact, the supremum in the definition of (\ref{eqn:Lambda}) is attained.

Let $\nu_N \ra \nu$ weakly, and for each $\nu_N$, let $\xi_N$ denote a point where the supremum in the definition of (\ref{eqn:Lambda}) is attained. In other words, $\Lambda(f|\nu_N) = \eta(\nu_N, \xi_N)$ for each $N$. The sequence $((\nu_N, \xi_N), N \geq 1)$ has a convergent subsequence that converges to $(\nu, \xi)$, for some $\xi$. Reindex so that we may take $(\nu_N, \xi_N) \ra (\nu, \xi)$ as $N \ra +\infty$. By the continuity of $\eta$,
\[
  \Lambda(f|\nu_N) = \eta(\nu_N, \xi_N) \ra \eta(\nu, \xi)
\]
as $N \ra +\infty$. The proof will be complete if we can show that $\Lambda(f|\nu) = \eta(\nu, \xi)$, that is, the supremum in $\eta(\nu, \cdot)$ is attained at $\xi$.

To see this, observe that for any $\xi'$, we have $\eta(\nu_N, \xi') \leq \eta(\nu_N, \xi_N)$, and so
\[
  \eta(\nu, \xi') = \lim_{N \ra +\infty} \eta(\nu_N, \xi') \leq \limsup_{N \ra +\infty} \eta(\nu_N, \xi_N) = \eta(\nu, \xi).
\]
This completes the proof of the lemma.
\end{proof}

Our next result show that the convergence in (\ref{eqn:varadhan-lemma-terminaltime}) is uniform. This is where our uniform large deviation result for nonchaotic initial conditions comes in handy.

Recall that $\MA_1^{(N)}(\ZA) \subset \MA_1(\ZA)$ is the subset of values taken by the initial empirical measure $\nu$ when there are $N$ particles.

\begin{lemma}
  \label{lem:uniform-convergence-lv-functional}
  The convergence in (\ref{eqn:varadhan-lemma-terminaltime}) is uniform in the following sense: for each $f \in C_b(\MA_1(\ZA))$, we have
  \begin{equation}
    \label{eqn:uniform-convergence-lv-functional}
    \lim_{N \ra +\infty} \sup_{\nu \in \MA_1^{(N)}(\ZA)}
      \left|
        \frac{1}{N} \log \int_{\MA_1(\ZA)} e^{Nf} ~dp^{(N)}_{\nu, T} - \Lambda(f|\nu)
      \right| = 0.
  \end{equation}
\end{lemma}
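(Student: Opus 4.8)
The plan is to prove~(\ref{eqn:uniform-convergence-lv-functional}) by establishing separately a uniform upper bound and a uniform lower bound on the difference $\frac1N\log\int e^{Nf}\,dp^{(N)}_{\nu,T}-\Lambda(f|\nu)$, that is, a ``uniform-in-$\nu$'' version of Varadhan's lemma. The ingredient that makes this possible beyond the pointwise statement~(\ref{eqn:varadhan-lemma-terminaltime}) is the uniform large deviation principle of Corollary~\ref{cor:uniform-ldp-flows}; since $\mu\mapsto\mu(T)$ is continuous from $(D([0,T],\MA_1(\ZA)),\rho_T)$ to $\MA_1(\ZA)$, that corollary transfers directly to the terminal laws, giving, for every compact $K\subseteq\MA_1(\ZA)$, every closed $F$ and every open $G$ in $\MA_1(\ZA)$, the bounds $\limsup_N\frac1N\log\sup_{\nu\in K}p^{(N)}_{\nu,T}(F)\le-\inf_{\nu\in K,\,\xi\in F}S_T(\xi|\nu)$ and $\liminf_N\frac1N\log\inf_{\nu\in K}p^{(N)}_{\nu,T}(G)\ge-\sup_{\nu\in K}\inf_{\xi\in G}S_T(\xi|\nu)$. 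First I would fix $f\in C_b(\MA_1(\ZA))$ and $\varepsilon>0$ and choose $\delta>0$, using uniform continuity of $f$ on $\MA_1(\ZA)$ and uniform continuity of $(\nu,\xi)\mapsto S_T(\xi|\nu)$ (Lemma~\ref{lem:uniform-continuity}), so that the oscillation of $f$, and of $S_T$ in each argument, over balls of radius $\delta$ is at most $\varepsilon$; then I would fix finite covers of $\MA_1(\ZA)$ by closed $\delta$-balls $\{\bar B(\xi_k,\delta)\}_{k\le m}$ (to discretize the state) and $\{\bar B(\nu_i,\delta)\}_{i\le p}$ (to discretize the initial condition).

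For the upper bound I would split $\int e^{Nf}\,dp^{(N)}_{\nu,T}\le\sum_k e^{N(f(\xi_k)+\varepsilon)}p^{(N)}_{\nu,T}(\bar B(\xi_k,\delta))$ and bound $\frac1N\log p^{(N)}_{\nu,T}(\bar B(\xi_k,\delta))$ by applying the transferred uniform upper bound with $K=\bar B(\nu_i,\delta)$, where $i$ is chosen so that $\nu\in\bar B(\nu_i,\delta)$; the resulting exponent $-\inf_{\nu'\in\bar B(\nu_i,\delta)}\inf_{\xi\in\bar B(\xi_k,\delta)}S_T(\xi|\nu')$ lies within $2\varepsilon$ of $-\inf_{\xi\in\bar B(\xi_k,\delta)}S_T(\xi|\nu)$ by the uniform continuity of $S_T$. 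Since $f(\xi_k)-\inf_{\xi\in\bar B(\xi_k,\delta)}S_T(\xi|\nu)=\sup_{\xi\in\bar B(\xi_k,\delta)}[f(\xi_k)-S_T(\xi|\nu)]\le\Lambda(f|\nu)+\varepsilon$, collecting terms (and absorbing $\frac{\log m}{N}\to0$ into the error) yields $\frac1N\log\int e^{Nf}\,dp^{(N)}_{\nu,T}\le\Lambda(f|\nu)+O(\varepsilon)$ for every $N$ past a threshold that is the maximum over the finitely many $i$, hence uniformly in $\nu\in\MA_1^{(N)}(\ZA)$.

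For the lower bound, for each $i$ I would pick $\xi_i^*$ attaining $\Lambda(f|\nu_i)=\sup_\xi[f(\xi)-S_T(\xi|\nu_i)]$ --- the supremum is attained because $f$ is continuous, $S_T(\cdot|\nu_i)$ is a good rate function, and $\MA_1(\ZA)$ is compact, exactly as in Lemma~\ref{lem:Lambda-continuity} --- and restrict the integral to the open ball $B(\xi_i^*,\delta)$, on which $f\ge f(\xi_i^*)-\varepsilon$. Applying the transferred uniform lower bound with $K=\bar B(\nu_i,\delta)$ and $G=B(\xi_i^*,\delta)$ and using uniform continuity of $S_T$ gives, for $\nu\in\bar B(\nu_i,\delta)$ and $N$ large, $\frac1N\log p^{(N)}_{\nu,T}(B(\xi_i^*,\delta))\ge-S_T(\xi_i^*|\nu_i)-O(\varepsilon)$, so $\frac1N\log\int e^{Nf}\,dp^{(N)}_{\nu,T}\ge f(\xi_i^*)-S_T(\xi_i^*|\nu_i)-O(\varepsilon)=\Lambda(f|\nu_i)-O(\varepsilon)\ge\Lambda(f|\nu)-O(\varepsilon)$, the last step using continuity of $\nu\mapsto\Lambda(f|\nu)$ (Lemma~\ref{lem:Lambda-continuity}), hence its uniform continuity on the compact set $\MA_1(\ZA)$. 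Taking $N$ past the maximum of the finitely many thresholds makes this uniform in $\nu$ as well, and the two one-sided estimates together give~(\ref{eqn:uniform-convergence-lv-functional}).

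The one place needing care --- and the only genuine obstacle beyond routine $\varepsilon/\delta$ bookkeeping --- is precisely this double discretization. One cannot apply Corollary~\ref{cor:uniform-ldp-flows} once with $K=\MA_1(\ZA)$, because that produces exponents of the form $-\inf_{\nu'}\inf_\xi S_T(\xi|\nu')$ (respectively $-\sup_{\nu'}\inf_\xi S_T(\xi|\nu')$), which are too crude to recover $\Lambda(f|\nu)$ for the given $\nu$. Localizing the initial condition to small balls and then passing from the infimum or supremum of $S_T$ over the ball to its value at $\nu$ via Lemma~\ref{lem:uniform-continuity} is what closes the gap, and this is the step where the uniform (nonchaotic-initial-condition) large deviation principle is genuinely used.
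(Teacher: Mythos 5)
Your argument is correct, but it takes a genuinely different route from the paper's. The paper proceeds by contradiction and compactness: it supposes the uniformity fails along an infinite set of $N$'s, picks an offending initial condition $\nu_N$ for each such $N$, extracts a further subsequence with $\nu_N\to\nu$, uses the continuity of $\nu\mapsto\Lambda(f|\nu)$ (Lemma~\ref{lem:Lambda-continuity}) to replace $\Lambda(f|\nu_N)$ by $\Lambda(f|\nu)$, and then observes that the putative violation contradicts the pointwise Varadhan's lemma~(\ref{eqn:varadhan-lemma-terminaltime}) applied to \emph{that} sequence $\nu_N\to\nu$. The nonchaotic extension enters there in the form of Theorem~\ref{thm:flow-theorem}, which is valid for any weakly converging sequence of initial conditions, not via Corollary~\ref{cor:uniform-ldp-flows}. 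Your argument is direct and quantitative: a double $\delta$-net discretization of both terminal and initial conditions combined with the uniform bounds of Corollary~\ref{cor:uniform-ldp-flows} (transferred through the continuous map $\mu\mapsto\mu(T)$) and the uniform continuity of $f$, $S_T$ (Lemma~\ref{lem:uniform-continuity}), and $\Lambda(f|\cdot)$ (Lemma~\ref{lem:Lambda-continuity}, upgraded to uniform continuity by compactness). In effect you reprove a uniform-in-initial-condition version of Varadhan's lemma by hand. Both are valid; the paper's soft argument is shorter and leans on the abstract LDP machinery, while yours is longer but makes explicit exactly where the uniformity over $\nu$ is manufactured (namely, in localizing the initial condition to a $\delta$-ball before applying the uniform LDP, a step you rightly identify as the one that cannot be skipped).
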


\begin{proof}
We will prove this by contradiction. Suppose the above limit is not zero. Then there is an $\varepsilon > 0$ and an infinite subset $\mathbb{V}_1 \subset \mathbb{N}$ such that
\[
  \sup_{\nu \in \MA_1^{(N)}(\ZA)}
      \left|
        \frac{1}{N} \log \int_{\MA_1(\ZA)} e^{Nf} ~dp^{(N)}_{\nu, T} - \Lambda(f|\nu)
      \right| > \varepsilon, \quad \mbox{for every } N \in \mathbb{V}_1,
\]
that is, the violations occur infinitely often. So we can find a sequence $( \nu_{N} )_{N \in \mathbb{V}_1 }$ such that
\[
  \left|
        \frac{1}{N} \log \int_{\MA_1(\ZA)} e^{N f} ~dp^{(N)}_{\nu_{N}, T} - \Lambda(f|\nu_{N})
      \right| > \varepsilon, \quad \mbox{for every } N \in \mathbb{V}_1.
\]
Extract a further subsequence, which is another infinite subset $\mathbb{V}_2 \subset \mathbb{V}_1$, such that $(\nu_{N})_{N \in \mathbb{V}_2} \ra \nu$ for some $\nu$. By Lemma \ref{lem:Lambda-continuity}, $(\Lambda(f|\nu_N))_{N \in \mathbb{V}_2} \ra \Lambda(f|\nu)$, and so

\begin{equation}
  \label{eqn:varadhan-lemma-tt-contradiction}
  \left|
        \frac{1}{N} \log \int_{\MA_1(\ZA)} e^{N f} ~dp^{(N)}_{\nu_{N}, T} - \Lambda(f|\nu)
      \right| > \frac{\varepsilon}{2}, \mbox{ for all sufficiently large } N \in \mathbb{V}_2.
\end{equation}
Construct a new initial state sequence $(\nu_N)_{N \geq 1}$ that matches with the above subsequence for $N \in \mathbb{V}_2$ and such that $\nu_N \ra \nu$. For such a sequence, by Theorem \ref{thm:flow-theorem} and Varadhan's lemma, (\ref{eqn:varadhan-lemma-terminaltime}) holds. But (\ref{eqn:varadhan-lemma-tt-contradiction}) for all sufficiently large $N \in \mathbb{V}_2$ is a contradiction to (\ref{eqn:varadhan-lemma-terminaltime}).
\end{proof}

\begin{proof}[Proof of Theorem \ref{thm:JointLDP}]
Consider the joint measure $\wp^{(N)}_{0,T}$ given by
\[
  d\wp^{(N)}_{0,T}(\nu, \xi) = d\wp^{(N)}_0(\nu) dp^{(N)}_{\nu, T}(\xi).
\]
We shall apply Feng and Kurtz's \cite[Prop. 3.25]{Feng-Kurtz}. To do this, we need to verify three conditions listed below.
\begin{itemize}
  \item {\em Exponential tightness}. The sequence $(\wp^{(N)}_{0,T}, N \geq 1)$, which comprises of probability measures on the compact product space, is trivially exponentially tight.
  \item {\em Uniform convergence of the Laplace-Varadhan functional in the initial condition}. For each $N$, the probability measure $p^{(N)}_{\nu, T}$ is supported on the compact subset $\MA_1^{(N)}(\ZA)$. By Lemma \ref{lem:uniform-convergence-lv-functional}, for each $f \in C_b(\MA_1(\ZA))$, the convergence of the Laplace-Varadhan functional is uniform, as given in (\ref{eqn:uniform-convergence-lv-functional}).
  \item {\em Continuity of Laplace-Varadhan functional in the initial condition}. For each $f \in C_b(\MA_1(\ZA))$, the function $\nu \mapsto \Lambda(f|\nu)$ is continuous, by Lemma \ref{lem:Lambda-continuity}.
\end{itemize}

Under the above conditions, Feng and Kurtz demonstrated in \cite[Prop. 3.25 and Rem. 3.26]{Feng-Kurtz} that if the first marginal sequence $(\wp^{(N)}_0, N \geq 1)$ satisfies the large deviation principle with speed $N$ and good rate function $s$, then so does the sequence of joint laws $(\wp^{(N)}_{0,T}, N \geq 1)$ with good rate function $S_{0,T}(\nu,\xi) = s(\nu) + S_T(\xi|\nu)$. This concludes the proof.
\end{proof}


\begin{appendix}

\section{Multiple $\omega$-limit sets}

In this appendix, we verify that the classical program of Freidlin-Wentzell \cite[Ch. 6]{Freidlin} can be extended to our setting. There are primarily two things to keep in mind. First, for all finite $N$, we have a jump process on the simplex. Second, the quantity $V(\cdot | \cdot)$ defined in (\ref{eqn:quasipotential}) is only uniformly continuous and not Lipschitz continuous. But this uniform continuity suffices. Though the changes are minor, we provide the entire sequence of lemmas with modified proofs for completeness and ease of verification.

\subsection{Auxiliary results}

We begin with a subset of the auxiliary results in \cite[Ch. 6]{Freidlin} that were shown for diffusions on a compact manifold.

\begin{lemma}{\em (Freidlin and Wentzell {\cite[Ch.6, Lemma
1.2]{Freidlin}})}
\label{lem:FW-1.2}
For any $\varepsilon > 0$ and any compact set $K \subset \MA_1(\ZA)$, there exists a $T_0$ such that for any $\nu, \xi \in K$ there exists a function $\mu(t), t \in [0,T], \mu(0) = \nu, \mu(T) = \xi, T \leq T_0$ with $S_{[0,T]}(\mu | \nu) \leq V(\xi | \nu) + \varepsilon$.
\end{lemma}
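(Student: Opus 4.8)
The plan is to reduce the statement to three ingredients already available: (i) the pointwise definition \eqref{eqn:quasipotential} of the quasipotential as an infimum of $S_{[0,T]}(\mu\mid\nu)$ over \emph{finite} durations, which for each fixed pair supplies a near-optimal path of some (a priori pair-dependent) finite duration; (ii) the ability to connect nearby points cheaply and quickly, furnished by the third bullet of Lemma~\ref{lem:S_T-bounding}; and (iii) the uniform continuity of $(\nu,\xi)\mapsto V(\xi\mid\nu)$ from Lemma~\ref{lem:V-uniform-continuity}. The uniform time bound $T_0$ will then be produced by a compactness argument on $K\times K$. The only genuine difficulty is the \emph{uniformity in time}: for a single pair the assertion is immediate from the definition of $V$, so the work lies entirely in making the duration bound independent of $(\nu,\xi)\in K$.

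First I would fix $\varepsilon>0$ and $K$, and let $\omega_V$ be a modulus of uniform continuity for $V$ on $\MA_1(\ZA)\times\MA_1(\ZA)$ (Lemma~\ref{lem:V-uniform-continuity}). With $C_2$ the constant of the third bullet of Lemma~\ref{lem:S_T-bounding}, choose $\varepsilon'\in(0,1]$ so small that $2C_2\varepsilon'\le\varepsilon/4$, and let $\delta=\delta(\varepsilon')>0$ be the associated radius, so that $\rho_0(\nu,\nu')<\delta$ implies $S_{\varepsilon'}(\nu'\mid\nu)\le C_2\varepsilon'$. Then pick $r\le\delta$ with $\omega_V(2r)\le\varepsilon/4$. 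For each $(\nu_0,\xi_0)\in K\times K$, definition \eqref{eqn:quasipotential} gives a finite time $T_{(\nu_0,\xi_0)}$ and an absolutely continuous path $\mu^0$ on $[0,T_{(\nu_0,\xi_0)}]$ with $\mu^0(0)=\nu_0$, $\mu^0(T_{(\nu_0,\xi_0)})=\xi_0$ and $S_{[0,T_{(\nu_0,\xi_0)}]}(\mu^0\mid\nu_0)\le V(\xi_0\mid\nu_0)+\varepsilon/2$ (finiteness of the cost forcing $\mu^0\in\A$ by Theorem~\ref{thm:flow-theorem}(b)).

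Next, for an arbitrary $(\nu,\xi)$ lying within $r$ of some $(\nu_0,\xi_0)$ in the product metric, I would concatenate three pieces: (a) a path from $\nu$ to $\nu_0$ on $[0,\varepsilon']$ of cost $\le C_2\varepsilon'$ (the third bullet of Lemma~\ref{lem:S_T-bounding}, since $\rho_0(\nu,\nu_0)<r\le\delta$); (b) the near-optimal path $\mu^0$ from $\nu_0$ to $\xi_0$; (c) a path from $\xi_0$ to $\xi$ on a further interval of length $\varepsilon'$ of cost $\le C_2\varepsilon'$. Because $S_{[0,\cdot]}(\cdot\mid\cdot)$ is the time-integral in \eqref{eqn:finite-rate-evaluation}, costs add over a concatenation of absolutely continuous paths meeting at the junction points, the total duration is $T_{(\nu_0,\xi_0)}+2\varepsilon'\le T_{(\nu_0,\xi_0)}+2$, and the total cost is at most
\[
C_2\varepsilon' + \bigl(V(\xi_0\mid\nu_0)+\tfrac{\varepsilon}{2}\bigr) + C_2\varepsilon'
\;\le\; V(\xi\mid\nu) + \omega_V(2r) + \tfrac{\varepsilon}{2} + 2C_2\varepsilon'
\;\le\; V(\xi\mid\nu) + \varepsilon .
\]

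Finally I would invoke compactness: the $r$-balls (product metric) centered at points of $K\times K$ cover the compact set $K\times K$, so a finite subcover exists, with centers $(\nu_0^{(1)},\xi_0^{(1)}),\dots,(\nu_0^{(m)},\xi_0^{(m)})$. Setting $T_0=\max_{1\le k\le m}T_{(\nu_0^{(k)},\xi_0^{(k)})}+2$ then works: every $(\nu,\xi)\in K\times K$ belongs to one of these balls, hence by the construction of the previous paragraph admits a connecting path of duration $\le T_0$ with cost $\le V(\xi\mid\nu)+\varepsilon$, which is exactly the assertion. The only routine points to verify carefully are the additivity of the action under concatenation and the consistency of the constant choices $\varepsilon'\to\delta(\varepsilon')\to r$, both of which are immediate from Lemma~\ref{lem:S_T-bounding}, Lemma~\ref{lem:V-uniform-continuity}, and the representation \eqref{eqn:finite-rate-evaluation}.
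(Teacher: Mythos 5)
Your proof is correct and follows essentially the same strategy as the paper's: use compactness to obtain a finite collection of near-optimal paths of bounded duration, and for an arbitrary pair $(\nu,\xi)$ splice on cheap short connectors (third bullet of Lemma~\ref{lem:S_T-bounding}) and absorb the discrepancy in quasipotential via uniform continuity (Lemma~\ref{lem:V-uniform-continuity}). The only cosmetic difference is that you take a finite cover of $K\times K$ directly, whereas the paper takes a finite $\delta$-net of $K$ and works with pairs of net points; these are interchangeable.
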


\begin{proof}
Fix $\varepsilon > 0$. By part 3 of Lemma \ref{lem:S_T-bounding}, with the constant $C_2$ as in that Lemma, and $\varepsilon_1 = \varepsilon/(4C_2)$, there is a $\delta_1 \in (0,\varepsilon_1)$ such that two points within a distance $\delta_1$ can be connected by a path of duration $\varepsilon_1$ and cost at most $C_2 \varepsilon_1 = \varepsilon/4$.

By Lemma \ref{lem:V-uniform-continuity}, there is a $\delta_2$ such that
\[
  \max \{\rho_0(\xi_1, \xi_2), \rho_0(\nu_1, \nu_2) \} < \delta_2 \mbox{ implies } | V(\xi_2 | \nu_2) - V(\xi_1 | \nu_1) | \leq \varepsilon/4.
\]

Let $\delta = \min \{ \delta_1, \delta_2 \}$. Choose a finite $\delta$-net $\{ \nu_i \}$ of points in $K$. Connect them with curves
$\mu_{i,j}(t), t \in [0, T_{i,j}], \mu_{i,j}(0) = \nu_i, \mu_{i,j}(T_{i,j}) = \nu_j$
such that
\[
  S_{[0,T_{i,j}]}(\mu_{i,j} | \nu_i) \leq V(\nu_j | \nu_i) + \varepsilon/4.
\]
For arbitrary $\nu, \xi \in K$, let $\nu_k$ and $\nu_l$ be the respective closest points on the net. We can now find a path from $\nu$ to $\nu_k$, then to $\nu_l$ along $\mu_{k,l}$, and then to $\xi$, with overall cost at most
\[
  \varepsilon/4 + (V(\nu_l|\nu_k) + \varepsilon/4) + \varepsilon/4 =  V(\nu_l|\nu_k) + 3 \varepsilon/4 \leq V(\xi|\nu) + \varepsilon
\]
where the last inequality follows from the uniform continuity in Lemma \ref{lem:V-uniform-continuity} and the choice of $\delta$. The duration of the path is $T_{k,l} + 2 \varepsilon_1 \leq T_0 := (\max_{i,j} T_{i,j}) + 2 \varepsilon_1$.
\end{proof}

For a set $A \subset \MA_1(\ZA)$, let $[A]_{\delta}$ denote the (open) $\delta$-neighborhood of $A$. Its closure will be denoted $\overline{[A]_{\delta}}$. For the following lemma, recall the notion of equivalence between two points: $\nu \sim \xi$ if $V(\xi|\nu) = V(\nu | \xi) = 0$ (see Section~\ref{subsec:Inv-measure-multiple-eq}).

\begin{lemma}{\em (Freidlin and Wentzell \cite[Ch.6, Lemma 1.6]{Freidlin})}
\label{lem:FW-1.6}
Let all points of a compact set $K$ be equivalent to each other, but not to any other point in $\MA_1(\ZA)$. For any $\varepsilon > 0$, $\delta > 0$, $\nu, \xi \in K$, there exists a $T > 0$ and a function $\mu(t), 0 \leq t \leq T$ with $\mu(0) = \nu, \mu(T) = \xi, \mu(t) \in [K]_{\delta}$ for all $t \in [0,T]$, and $S_{[0,T]}(\mu | \nu) < \varepsilon$.
\end{lemma}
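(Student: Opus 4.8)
### Proof proposal for Lemma \ref{lem:FW-1.6}

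The plan is to reduce this statement to Lemma \ref{lem:FW-1.2} by working entirely inside a suitable neighborhood of $K$, exploiting the fact that all points of $K$ are equivalent to each other. Fix $\varepsilon > 0$, $\delta > 0$, and $\nu, \xi \in K$. Since $\nu \sim \xi$ by hypothesis, we have $V(\xi | \nu) = 0$; moreover, for \emph{every} pair $\nu', \xi' \in K$ we have $V(\xi' | \nu') = 0$. The difficulty is that the witnessing near-optimal paths supplied by the definition of $V$ (or by Lemma \ref{lem:FW-1.2}) need not stay inside $[K]_\delta$: a priori the cheap path from $\nu$ to $\xi$ could wander far away and only return at the end. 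So the real content is a localization argument.

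First I would invoke Lemma \ref{lem:FW-1.2} with the compact set taken to be $K$ itself: there is a $T_0$ such that any two points of $K$ can be joined in time at most $T_0$ by a path of cost at most $V(\cdot|\cdot) + \varepsilon/2 = \varepsilon/2$. This gives short, cheap paths but not yet the confinement to $[K]_\delta$. To get confinement, I would argue as follows. Suppose, for contradiction, that for some $\varepsilon_0 > 0$ and $\delta_0 > 0$ no such confined path exists between some pair of points in $K$. Using a compactness/covering argument on $K$, together with the third part of Lemma \ref{lem:S_T-bounding} (which lets us patch in short cheap connecting segments between nearby points at cost $O(\varepsilon)$), it suffices to handle the case where $\nu$ and $\xi$ are arbitrarily close; and for nearby points the third bullet of Lemma \ref{lem:S_T-bounding} directly supplies a path of duration $\varepsilon_1$ and cost $\le C_2 \varepsilon_1$ — but again without the confinement guarantee. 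The key observation to force confinement is that if a near-optimal path from $\nu$ to $\xi$ (with cost $< \varepsilon/2$, say) ever exits $[K]_\delta$, then it passes through a point $\eta$ at distance exactly $\delta$ from $K$; splitting the path at the first such exit time and the last return time, and using $V(\eta | \nu) + V(\xi | \eta) \le S_{[0,T]}(\mu|\nu) < \varepsilon/2$ together with $V(\eta|\nu) \ge V(K, \{\eta\})$, we get a lower bound. Since $\eta \notin K$, by the hypothesis $\eta \not\sim \nu$, so $\max(V(\eta|\nu), V(\nu|\eta)) > 0$; but that alone is not quantitative. The clean way is: the set $\partial[K]_\delta$ (points at distance exactly $\delta$ from $K$) is compact and disjoint from $K$, hence by uniform continuity of $V$ (Lemma \ref{lem:V-uniform-continuity}) and the closedness of the equivalence class $K$, the quantity $\inf\{V(\xi'|\nu') + V(\eta|\nu') : \nu' \in K, \eta \in \partial[K]_\delta, \xi' \in K\}$ is bounded below — actually I would argue $\inf_{\eta \in \partial[K]_\delta, \nu' \in K} V(\eta | \nu') =: a_\delta > 0$, because this infimum is attained (compactness) at some pair $(\eta^\ast, \nu'^\ast)$ with $\eta^\ast \notin K$, and if it were $0$ then $\eta^\ast$ would be equivalent to a point of $K$ after also checking the reverse direction, contradicting the hypothesis. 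Choosing $\varepsilon$ small enough that $\varepsilon/2 < a_\delta$, no near-optimal path of cost $< \varepsilon/2$ can reach $\partial[K]_\delta$, hence it stays in $[K]_\delta$.

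The main obstacle I anticipate is making the lower bound $a_\delta > 0$ on ``the cost to leave $[K]_\delta$ starting from $K$'' fully rigorous: one must be careful that $V(\eta|\nu')=0$ for $\eta \in \partial[K]_\delta$ would, combined with a symmetric argument controlling $V(\nu'|\eta)$, force $\eta$ into the equivalence class of $K$ — and the equivalence class of $K$ is exactly $K$ by hypothesis, contradicting $\eta \notin K$. Actually only the one-sided bound $V(\eta|\nu') > 0$ for all $\nu' \in K$ might fail to be bounded away from $0$ if the reverse direction $V(\nu'|\eta)$ is the one that is positive; to handle this cleanly I would instead lower-bound the full round-trip cost $V(\eta|\nu') + V(\xi'|\eta) \ge$ (cost of some loop based at $K$ passing through $\eta$), and note that if this were $0$ for a sequence $\eta_n \to \eta^\ast$ then $V(\eta^\ast | \nu'^\ast) = V(\xi'^\ast|\eta^\ast) = 0$ for limiting points, and since also $V(\nu'^\ast|\xi'^\ast)$ can be made small (both in $K$), the triangle inequality for $V$ (subadditivity under concatenation, Lemma \ref{lem:S_T-bounding} part three plus concatenation of paths) gives $V(\nu'^\ast|\eta^\ast) = 0$ too, so $\eta^\ast \sim \nu'^\ast \in K$, forcing $\eta^\ast \in K$ — a contradiction. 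Once $a_\delta>0$ is established, the lemma follows by taking $\varepsilon' = \min(\varepsilon, a_\delta)$ in the application of Lemma \ref{lem:FW-1.2} (with a preliminary reduction via Lemma \ref{lem:S_T-bounding} part three to also absorb the short connecting segments within $[K]_\delta$), and concatenating.
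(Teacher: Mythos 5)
Your argument is correct once the remedy you yourself identify (lower-bounding the round-trip cost $V(\eta|\nu') + V(\xi'|\eta)$ rather than the one-sided $V(\eta|\nu')$, which could indeed vanish) is in place, and it rests on the same core insight as the paper's proof: if a sufficiently cheap path from $\nu$ to $\xi$ escapes $[K]_\delta$, it must pass through some $\eta$ outside $K$; splitting the path there gives $V(\eta|\nu) + V(\xi|\eta) \le S_{[0,T]}(\mu|\nu)$, and combining with $V(\nu|\xi)=0$ and subadditivity of $V$ under concatenation forces $\eta \sim \nu$, contradicting the hypothesis that $K$ is precisely the equivalence class. The packaging differs, though. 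The paper runs a direct sequence argument: take paths $\mu^{(n)}$ with $S_{[0,T_n]}(\mu^{(n)}|\nu)\to 0$ (possible because $V(\xi|\nu)=0$); if infinitely many escape $[K]_\delta$, extract a limit point $z$ in the compact set $\MA_1(\ZA)\setminus[K]_\delta$, show $V(z|\nu)=V(\xi|z)=0$ via part 3 of Lemma~\ref{lem:S_T-bounding} (where you appeal to Lemma~\ref{lem:V-uniform-continuity}), conclude $z\sim\nu$ --- a contradiction --- so some late $\mu^{(n)}$ is confined. You instead first establish, by compactness and continuity of $V$, the quantitative uniform bound $b_\delta := \inf\bigl\{V(\eta|\nu')+V(\xi'|\eta): \nu',\xi'\in K,\ \eta\in\partial[K]_\delta\bigr\}>0$, and then observe that any path from $\nu$ to $\xi$ of cost $<\min(\varepsilon,b_\delta)$ is automatically confined to $[K]_\delta$. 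The two are logically equivalent; your version is marginally stronger in that it confines \emph{all} sufficiently cheap paths, while the paper's is a bit more economical. One cleanup note: the opening detour in your writeup through Lemma~\ref{lem:FW-1.2} and a covering/$\delta$-net reduction is dispensable --- once $b_\delta>0$ is in hand, the existence of a path of cost below $\min(\varepsilon, b_\delta)$ is immediate from $V(\xi|\nu)=0$, with no need of the finite-time estimate supplied by Lemma~\ref{lem:FW-1.2}.
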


\begin{proof}
Fix $\varepsilon > 0, \delta > 0, \nu, \xi \in K$. We can find a sequence $(T_n, n \geq 1)$ and paths $\mu^{(n)}:[0,T_n] \ra \MA_1(\ZA)$ such that
$\mu^{(n)}(0) = \nu, \mu^{(n)}(T_n) = \xi$ for all $n \geq 1$, and $\varepsilon > S_{[0,T_n]} (\mu^{(n)} | \nu) \ra 0$ as $n \ra +\infty$. Observe that $\MA_1(\ZA) \setminus [K]_{\delta}$ is compact, and so if an infinite number of $\mu^{(n)}$ left $[K]_{\delta}$, there is a limit point $z$ outside $[K]_{\delta}$. Using part 3 of Lemma \ref{lem:S_T-bounding}, and $S_{[0,T_n]} (\mu^{(n)} | \nu) \ra 0$ as $n \ra +\infty$, it follows that $V(z | \nu) = V(\xi | z) = 0$. Together with $V(\nu | \xi) = 0$, we conclude that $V(\nu | z) = 0$ and so $z \sim \nu$. But then $z$ is an equivalent point outside $K$, which is a contradiction.

Hence $\mu^{(n)}$ goes outside $[K]_{\delta}$ for finitely many $n$. Take the first index larger than these. The corresponding path remains completely within $[K]_{\delta}$, and meets all the other requirements.
\end{proof}

In this section, we shall use the notation
$$\tau_A := \inf \{ t \geq 0 ~|~ \mu_N(t) \notin A \}.$$
The law for this exit time depends on $N$ and $\nu$ through the law $p^{(N)}_{\nu}$ for $\mu_N$. This dependence will be assumed as understood and will be suppressed for brevity.

\begin{lemma}{\em (Freidlin and Wentzell \cite[Ch.6, Lemma 1.7]{Freidlin})}.
Let all points of a compact set $K$ be equivalent to each other and let $K \neq \MA_1(\ZA)$. For a $\delta > 0$, let
$$\tau_{[K]_{\delta}} := \inf \{ t \geq 0 ~|~ \mu_N(t) \notin [K]_{\delta} \}.$$
For any $\varepsilon > 0$, there exists a $\delta > 0$ such that for all sufficiently large $N$ and all $\nu \in [K]_{\delta}$, we have
\[
  \mathbb{E} [\tau_{[K]_{\delta}}] < e^{+N\varepsilon}
\]
where the expectation is with respect to the measure $p^{(N)}_{\nu}$.
\end{lemma}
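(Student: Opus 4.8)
The plan is to get a uniform lower bound on the probability that $\mu_N$, started anywhere in $[K]_\delta$, leaves $[K]_\delta$ within a fixed time horizon, and then to iterate this estimate using the Markov property of $\mu_N$. Recall $V(\cdot\,|\,\cdot)$ is the quasipotential and $\nu\sim\xi$ means $V(\xi|\nu)=V(\nu|\xi)=0$; the hypothesis says all of $K$ is one $\sim$-class and $K\neq\MA_1(\ZA)$, so $d_0:=\max_{\eta\in\MA_1(\ZA)}\rho_0(\eta,K)>0$. First I would set up the geometry, all independently of $N$. Given $\varepsilon>0$, let $C_2$ be the constant of Lemma~\ref{lem:S_T-bounding}, put $\varepsilon_1=\varepsilon/(8C_2)$, and use part~3 of Lemma~\ref{lem:S_T-bounding} to pick $\delta_1\in(0,\varepsilon_1)$ such that $\rho_0(\eta,\eta')<\delta_1$ forces $S_{\varepsilon_1}(\eta'|\eta)\le\varepsilon/8$. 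Set $\delta=\tfrac14\min(d_0,\delta_1)$ (this is the radius claimed in the lemma). By connectedness of $\MA_1(\ZA)$ and continuity of $\rho_0(\cdot,K)$, fix $\xi^*\in\MA_1(\ZA)$ with $\rho_0(\xi^*,K)=\tfrac12\min(d_0,\delta_1)$ and a nearest point $k^*\in K$; then $\rho_0(\xi^*,k^*)<\delta_1$ and $\rho_0(\xi^*,K)>\delta$, so $\xi^*\notin\overline{[K]_\delta}$. Finally let $T_0=T_0(\varepsilon/8,K)$ be the uniform duration furnished by Lemma~\ref{lem:FW-1.2} and put $\mathcal T:=2\varepsilon_1+T_0$.

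Next, for each $\nu\in\overline{[K]_\delta}$ I would build a ``guide path'' $\mu^\nu\in\A$ with $\mu^\nu(0)=\nu$, duration $\mathcal T$, cost $S_{[0,\mathcal T]}(\mu^\nu|\nu)\le\tfrac38\varepsilon$, and which visits a point outside $\overline{[K]_\delta}$ by time $\mathcal T$. Pick $k_\nu\in K$ nearest to $\nu$, so $\rho_0(\nu,k_\nu)=\rho_0(\nu,K)\le\delta<\delta_1$, and concatenate: (i) a path from $\nu$ to $k_\nu$ of duration $\varepsilon_1$ and cost $\le\varepsilon/8$ (part~3 of Lemma~\ref{lem:S_T-bounding}); (ii) a path from $k_\nu$ to $k^*$ of duration $\le T_0$ and cost $\le V(k^*|k_\nu)+\varepsilon/8=\varepsilon/8$ (Lemma~\ref{lem:FW-1.2} together with $k_\nu\sim k^*$); (iii) a path from $k^*$ to $\xi^*$ of duration $\varepsilon_1$ and cost $\le\varepsilon/8$ (part~3 of Lemma~\ref{lem:S_T-bounding} again, since $\rho_0(\xi^*,k^*)<\delta_1$); (iv) the McKean-Vlasov flow $\dot\mu=A_\mu^*\mu$ run from $\xi^*$ for whatever time remains up to $\mathcal T$, which is a zero-cost path (cf.\ the second remark after Theorem~\ref{thm:flow-theorem}). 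Each piece is valued in $\MA_1(\ZA)$, the endpoints match, and $S$ is additive over time, so $\mu^\nu$ has the stated cost; moreover $\mu^\nu$ passes through $\xi^*\notin\overline{[K]_\delta}$ at a time $\le\mathcal T$, so $\mu^\nu$ lies in the set
\[
  G:=\bigl\{\eta\in D([0,\mathcal T],\MA_1(\ZA))\ :\ \rho_0(\eta(t),K)>\delta\ \text{for some}\ t\in[0,\mathcal T]\bigr\},
\]
which is open in the metric $\rho_{\mathcal T}$ (immediate from the triangle inequality).

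Now apply the uniform lower bound \eqref{eqn:uniform-ldp-lb} of Corollary~\ref{cor:uniform-ldp-flows} with the compact set $\overline{[K]_\delta}$ and the open set $G$:
\[
  \liminf_{N\to\infty}\frac1N\log\inf_{\nu\in\overline{[K]_\delta}}p^{(N)}_\nu\{\mu_N\in G\}
  \ \ge\ -\sup_{\nu\in\overline{[K]_\delta}}\inf_{\mu\in G}S_{[0,\mathcal T]}(\mu|\nu)\ \ge\ -\tfrac38\varepsilon,
\]
so there is $N_0$ with $\inf_{\nu\in\overline{[K]_\delta}}p^{(N)}_\nu\{\mu_N\in G\}\ge e^{-N\varepsilon/2}$ for all $N\ge N_0$. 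Since $\{\mu_N\in G\}\subseteq\{\tau_{[K]_\delta}\le\mathcal T\}$, this gives $\sup_{\nu\in[K]_\delta}p^{(N)}_\nu\{\tau_{[K]_\delta}>\mathcal T\}\le 1-e^{-N\varepsilon/2}$ for $N\ge N_0$. On $\{\tau_{[K]_\delta}>k\mathcal T\}$ the state $\mu_N(k\mathcal T)$ lies in $[K]_\delta$, so the Markov property yields $p^{(N)}_\nu\{\tau_{[K]_\delta}>(k+1)\mathcal T\}\le(1-e^{-N\varepsilon/2})\,p^{(N)}_\nu\{\tau_{[K]_\delta}>k\mathcal T\}$, hence $p^{(N)}_\nu\{\tau_{[K]_\delta}>k\mathcal T\}\le(1-e^{-N\varepsilon/2})^k$, and therefore
\[
  \mathbb E_\nu[\tau_{[K]_\delta}]=\int_0^\infty p^{(N)}_\nu\{\tau_{[K]_\delta}>t\}\,dt\ \le\ \mathcal T\sum_{k\ge0}(1-e^{-N\varepsilon/2})^k\ =\ \mathcal T\,e^{N\varepsilon/2}\ <\ e^{N\varepsilon}
\]
once $N$ is large enough that $\mathcal T<e^{N\varepsilon/2}$, uniformly in $\nu\in[K]_\delta$.

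The step I expect to be the main obstacle is making the guide path have a duration $\mathcal T$ that does not depend on the starting point $\nu$ while keeping its total cost below $\varepsilon$ (the short hops cost $\mathrm{O}(\varepsilon)$ automatically via Lemma~\ref{lem:S_T-bounding}, but one cannot afford to sit still for the leftover time, since that incurs a strictly positive rate). The resolution is twofold: use the \emph{uniform} duration bound of Lemma~\ref{lem:FW-1.2} for the traversal across $K$, and absorb the leftover time by coasting along the McKean-Vlasov flow, which is free. This is also precisely where the equivalence hypothesis on $K$ is used, namely to guarantee $V(k^*|k_\nu)=0$ so that the traversal from the (variable) entry point $k_\nu$ to the (fixed) exit-adjacent point $k^*$ costs at most $\varepsilon/8$.
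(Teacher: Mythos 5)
Your proof is correct and follows essentially the same route as the paper's: use part~3 of Lemma~\ref{lem:S_T-bounding} and Lemma~\ref{lem:FW-1.2} to build, from every $\nu \in \overline{[K]_\delta}$, a guide path of fixed duration and cost $O(\varepsilon)$ that visits a fixed point strictly outside $\overline{[K]_\delta}$ (padding the leftover time with the zero-cost McKean--Vlasov flow), then apply the uniform LDP lower bound of Corollary~\ref{cor:uniform-ldp-flows} and iterate via the Markov property. The only (cosmetic) refinement is that you phrase the lower bound with a single fixed open set $G$ of exiting trajectories rather than a $\nu$-dependent $\delta$-ball around the guide path, and you invoke the uniform time bound of Lemma~\ref{lem:FW-1.2} directly on $K$ instead of reducing to a finite $\delta$-net, which tightens the geometry without changing the argument.
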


\begin{proof}
Fix $\varepsilon > 0$. Again by part 3 of Lemma \ref{lem:S_T-bounding}, there is a $\delta_1 > 0$ such that two points $\delta_1$-close have a path connecting them of duration $\varepsilon_1 = \varepsilon/(4C_2)$ and cost at most $\varepsilon/4$. Choose $\xi$ outside $K$ such that $\rho_0(\xi, K) < \delta_1$. Choose $\delta < \rho_0 (\xi, K) / 2$; we thus have $0 < \delta < \rho_0 (\xi, K) / 2 < \rho_0 (\xi, K) < \delta_1$.

Consider a finite $\delta$-net of $K$. Lemma \ref{lem:FW-1.2} assures existence of paths that connect any pair of the net with cost at most $\varepsilon/4$. Let $T_0'$ denote the maximum time among these paths, where the maximum is over pairs belonging to the net, and let $T_0 = T_0' + 2 \varepsilon_1$. Traverse from any $\nu \in [K]_{\delta}$ to its nearest point on the net, then traverse from that point to the point on the net nearest to $\xi$, and thence to $\xi$. Now extend this path following the McKean-Vlasov dynamics so that the total duration is now $T_0$. This last appendage incurs no additional cost. Denote by $\mu$ the resulting path of duration $T_0$. Clearly $S_{[0,T_0]}(\mu | \nu) \leq 3 \varepsilon/4$.

Now, any trajectory that is strictly $\delta$-close to the trajectory $\mu$ exits $[K]_{\delta}$ at least once in the interval $[0,T_0]$ because for some $t \in [0,T_0]$, we have $\mu(t) = \xi$ which is at a distance greater than $2 \delta$ from $K$. We then have
\begin{eqnarray*}
  p^{(N)}_{\nu} \left\{ \tau_{[K]_{\delta}} < T_0 \right\} & \geq & p^{(N)}_{\nu} \left\{ \rho_{T_0} ( \mu_N, \mu ) < \delta \right\} \\
  & \geq & e^{-3\varepsilon N / 4}, \mbox{ for all } \nu \in [K]_{\delta}, \mbox{ for all } N \geq \mbox{ some } N_0,
\end{eqnarray*}
where the last inequality holds by (\ref{eqn:uniform-ldp-lb}) in Corollary \ref{cor:uniform-ldp-flows}. Consequently
\[
  p^{(N)}_{\nu} \left\{ \tau_{[K]_{\delta}} \geq T_0 \right\} \leq 1 - e^{-3\varepsilon N / 4}, \quad \mbox{ for all } \nu \in [K]_{\delta}, N \geq N_0.
\]
This uniform bound, the Markov property, and induction imply
\[
  p^{(N)}_{\nu} \left\{ \tau_{[K]_{\delta}} \geq mT_0 \right\} \leq \left( 1 - e^{-3\varepsilon N / 4} \right)^m,
\]
from which we obtain
\[
  \mathbb{E} [\tau_{[K]_{\delta}}] \leq T_0 \sum_{m \geq 0} (1 - e^{-3 \varepsilon N / 4})^m = T_0 e^{3 \varepsilon N / 4} < e^{\varepsilon N}
\]
where the last inequality holds for all sufficiently large $N$. This concludes the proof.
\end{proof}

\begin{lemma}{\em (Freidlin and Wentzell \cite[Ch.6, Lemma 1.8]{Freidlin})}.
\label{lem:FW-1.8}
Let $K$ be an arbitrary compact subset of $\MA_1(\ZA)$ and let $G$ be a neighborhood of $K$. For any $\varepsilon > 0$, there exists a $\delta > 0$ such that for all sufficiently large $N$ and all $\nu$ belonging to $\overline{g}$, with $g = [K]_{\delta}$ and $\overline{g} = \overline{[K]_{\delta}}$, we have
\[
  \mathbb{E} \left[\int_{[0,\tau_G]} {\bf 1}_{g} \left( \mu_N(t) \right)~ dt \right] > e^{-\varepsilon N},
\]
where the expectation is with respect to the measure $p^{(N)}_{\nu}$.
\end{lemma}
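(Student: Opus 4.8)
The plan is to reduce the statement to the uniform large‑deviation lower bound (\ref{eqn:uniform-ldp-lb}) of Corollary~\ref{cor:uniform-ldp-flows}. The crucial point is to produce, for one fixed duration $T$, a \emph{single} open set $G^{\mathrm{op}}$ of paths — not depending on the starting point $\nu$ — such that every path in $G^{\mathrm{op}}$ stays inside $G$ and spends a definite positive amount of time inside $g$, while for each $\nu\in\bar g$ there is a path out of $\nu$ lying in $G^{\mathrm{op}}$ of $S_{[0,T]}$‑cost at most $\varepsilon/4$. On the event $\{\mu_N\in G^{\mathrm{op}}\}$ one then has $\tau_G\ge T$ and $\int_0^{\tau_G}{\bf 1}_g(\mu_N(t))\,dt$ bounded below by a positive constant, and the uniform bound finishes the argument. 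To set constants: fix $\varepsilon>0$, small without loss of generality, and pick $\delta_0>0$ with $[K]_{3\delta_0}\subset G$. Then choose short times $T_1,T_2>0$ with $T:=T_1+T_2$ so that $rCT_1<\varepsilon/8$, $rCT_2<\varepsilon/8$, $C_2T_1<\varepsilon/8$ ($C$ the uniform upper bound on the rates, $C_2$ the constant of the third bullet of Lemma~\ref{lem:S_T-bounding}), and so that $T_1$ is small enough that any trajectory issued from a point of $\bar g$ of $S_{[0,T_1]}$‑cost at most $C_2T_1$ cannot leave $[K]_{2\delta_0}$: by Lemma~\ref{lem:tau-star-linear-bound} the $\ell^1$‑length of such a trajectory is at most the total transported mass $\int_0^{T_1}\sum_{(i,j)\in\EA}\mu(t)(i)l_{i,j}(t)\,dt\le S_{[0,T_1]}(\mu\mid\nu)+(e-1)CrT_1$, which is small, and $\rho_0$ is continuous with respect to weak convergence. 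Finally pick $\delta\in(0,\delta_0)$ small enough for all the above and with $\delta<\delta_1(T_1)$, the closeness threshold of the third bullet of Lemma~\ref{lem:S_T-bounding} at scale $T_1$.

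For $\nu\in\bar g$ pick $\kappa\in K$ with $\rho_0(\nu,\kappa)=\mathrm{dist}_{\rho_0}(\nu,K)\le\delta<\delta_1(T_1)$ (attained, $K$ compact), and let $\hat\mu^\nu:[0,T]\to\MA_1(\ZA)$ be the concatenation of a cost‑$S_{T_1}(\kappa\mid\nu)\le C_2T_1$ path from $\nu$ to $\kappa$ on $[0,T_1]$ (the infimum defining $S_{T_1}$ is attained, by the theorem preceding Lemma~\ref{lem:S_T-bounding}) with, on $[T_1,T]$, the constant path at $\kappa$ — a controlled trajectory $\dot{\hat\mu}=L^*\hat\mu$ with $L$ the zero rate matrix, of cost at most $rCT_2$ by (\ref{eqn:finite-rate-evaluation}). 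Then $\hat\mu^\nu\in\A$, $\hat\mu^\nu(0)=\nu$, $S_{[0,T]}(\hat\mu^\nu\mid\nu)\le C_2T_1+rCT_2<\varepsilon/4$ (additivity of $S$ over adjacent time intervals), $\hat\mu^\nu$ stays in $[K]_{2\delta_0}$ by the choice of $T_1,\delta$, and $\hat\mu^\nu(t)=\kappa\in K$ for $t\in[T_1,T]$. Now put
\[
  G^{\mathrm{op}}=\Big\{\eta\in D([0,T],\MA_1(\ZA)):\ \eta(t)\in[K]_{3\delta_0}\ \forall\,t\in[0,T],\ \int_0^T{\bf 1}_{g}(\eta(t))\,dt>T_2/2\Big\}.
\]
This set is open in $(D([0,T],\MA_1(\ZA)),\rho_T)$: the first condition is open because $[K]_{3\delta_0}$ is open and $[0,T]$ compact, and $\eta\mapsto\int_0^T{\bf 1}_g(\eta(t))\,dt$ is lower semicontinuous (Fatou plus lower semicontinuity of ${\bf 1}_g$, $g$ being open), so the second is open too. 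Moreover any $\eta$ with $\rho_T(\eta,\hat\mu^\nu)<\delta/2$ lies in $G^{\mathrm{op}}$: it stays within $[K]_{2\delta_0+\delta/2}\subset[K]_{3\delta_0}$, and on $[T_1,T]$ within $\delta/2$ of $\kappa\in K$, hence in $[K]_{\delta/2}\subset g$, contributing $T_2>T_2/2$ to the integral. In particular $\hat\mu^\nu\in G^{\mathrm{op}}$, so $\inf_{\eta\in G^{\mathrm{op}}}S_{[0,T]}(\eta\mid\nu)\le S_{[0,T]}(\hat\mu^\nu\mid\nu)<\varepsilon/4$ for every $\nu\in\bar g$.

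Applying (\ref{eqn:uniform-ldp-lb}) of Corollary~\ref{cor:uniform-ldp-flows} with the compact set $\bar g$ of initial conditions and the open set $G^{\mathrm{op}}$ gives
\[
  \liminf_{N\to\infty}\frac1N\log\inf_{\nu\in\bar g}p^{(N)}_{\nu}\{\mu_N\in G^{\mathrm{op}}\}\ \ge\ -\sup_{\nu\in\bar g}\inf_{\eta\in G^{\mathrm{op}}}S_{[0,T]}(\eta\mid\nu)\ \ge\ -\varepsilon/4 .
\]
Since $\mu_N(0)=\nu\in G$ and, on $\{\mu_N\in G^{\mathrm{op}}\}$, $\mu_N(t)\in[K]_{3\delta_0}\subset G$ throughout $[0,T]$, we have $\tau_G\ge T$ and $\int_0^{\tau_G}{\bf 1}_g(\mu_N(t))\,dt\ge\int_0^T{\bf 1}_g(\mu_N(t))\,dt>T_2/2$ on that event; hence $\mathbb{E}[\int_0^{\tau_G}{\bf 1}_g(\mu_N(t))\,dt]\ge\frac{T_2}{2}\,p^{(N)}_{\nu}\{\mu_N\in G^{\mathrm{op}}\}$ for every $\nu\in\bar g$, the expectation under $p^{(N)}_\nu$. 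Combining with the displayed $\liminf$ (the infimum over $\nu$ sits inside the logarithm, so the bound is uniform in $\nu\in\bar g$) and using $\frac{T_2}{2}e^{N\varepsilon/2}\to+\infty$, we conclude $\mathbb{E}[\int_0^{\tau_G}{\bf 1}_g(\mu_N(t))\,dt]>e^{-N\varepsilon}$ for all $\nu\in\bar g$ and all sufficiently large $N$, which is the claim.

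The main obstacle is precisely the requirement of one $\nu$‑independent open set, which forces $G^{\mathrm{op}}$ to encode two constraints of opposite character at once: membership in the \emph{fixed} neighbourhood $G$ (handled by the coarse enclosure $[K]_{3\delta_0}$) and a definite sojourn time in the \emph{shrinking} set $g=[K]_\delta$ (handled by the dwell segment at a point of $K$ together with the tube radius $\delta/2<\delta$). The only other point needing care is the short‑time displacement estimate ensuring that the approach segment of $\hat\mu^\nu$ does not escape $G$; this is immediate from Lemma~\ref{lem:tau-star-linear-bound}, a trajectory's $\ell^1$‑length being dominated by the total transported mass, which is small over a short time interval at finite cost.
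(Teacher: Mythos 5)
Your argument is correct and uses the same key ingredients as the paper (the uniform lower bound of Corollary~\ref{cor:uniform-ldp-flows}, the short‑path estimate in the third bullet of Lemma~\ref{lem:S_T-bounding}, and the linear bound of Lemma~\ref{lem:tau-star-linear-bound}), but it is organized a bit differently. The paper, for each $\nu\in\overline{g}$, builds an approach‑then‑dwell trajectory $\mu^\nu$ and works with the $\nu$‑dependent tube $\{\rho_T(\mu_N,\mu^\nu)\le\delta/3\}$, then passes to the exit time $\tau_g$ via $\int_{[0,\tau_G]}{\bf 1}_g(\mu_N(t))\,dt\ge\tau_g$ and the claim that on the tube $\tau_g\ge\min\{T,t_0\}$. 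You instead produce a single, $\nu$‑independent open set $G^{\mathrm{op}}$ of paths that simultaneously encodes confinement in $G$ and a definite sojourn time in $g$, verify that each $\hat\mu^\nu$ lands in $G^{\mathrm{op}}$ with small cost, and then bound the sojourn \emph{integral} directly on the event $\{\mu_N\in G^{\mathrm{op}}\}$. This buys two things. First, it lets you invoke (\ref{eqn:uniform-ldp-lb}) in exactly the form stated — for a fixed open set — rather than applying it to a $\nu$‑varying family of tubes, which is a uniform‑LDP subtlety the paper glides over. Second, by bounding $\int_0^T{\bf 1}_g(\mu_N(t))\,dt$ rather than $\tau_g$, you avoid the delicate issue that the approach segment of $\mu^\nu$ on $[0,\varepsilon_1]$, and the starting point $\nu\in\overline g$ itself (possibly on $\partial g$), need not keep the process inside $g$, so the paper's implication $\{\rho_T\le\delta/3\}\Rightarrow\{\tau_g\ge\min\{T,t_0\}\}$ requires an extra confinement estimate for the approach segment that is not spelled out. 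Your displacement estimate from Lemma~\ref{lem:tau-star-linear-bound} (modulo a harmless factor of $2$ in passing from transported mass to $\ell^1$‑velocity, and the routine passage between $\|\cdot\|_1$ and $\rho_0$ by compactness) supplies precisely this control; it could also be used to patch the paper's version.

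One small remark: in your construction the verification that $\delta/2$‑close paths lie in $G^{\mathrm{op}}$ is more than you need — for the corollary it suffices, as you also note, that $\hat\mu^\nu\in G^{\mathrm{op}}$ with $S_{[0,T]}(\hat\mu^\nu\mid\nu)<\varepsilon/4$. The rest of the bookkeeping (openness of $G^{\mathrm{op}}$ via the uniform metric and Fatou, cost of the dwell segment via $\tau^*(-1)=1$, additivity of $S_{[0,T]}$, and the final passage from $\tfrac{T_2}{2}e^{-N\varepsilon/2}$ to $e^{-N\varepsilon}$) is all correct.
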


\begin{proof}
Fix $\varepsilon > 0$. Choose $\delta_1$ small enough so that $[K]_{\delta_1} \subset G$. Next, choose $\delta_2$ as in part 3 of Lemma \ref{lem:S_T-bounding} so that with $\varepsilon_1 = \varepsilon/(2 C_2)$, any two $\delta_2$-close points can be connected by a path of duration $\varepsilon_1$ and cost at most $\varepsilon/2$. Now let $\delta < \min \{ \delta_1, \delta_2/2 \}$ and set $g = [K]_{\delta}$.

Fix $T > \varepsilon_1$. Take any $\nu \in \overline{g}$. Connect it to the closest point on $K$ (via a path of duration $\varepsilon_1$ and cost $\leq \varepsilon/2$) and then extend via the McKean-Vlasov path with this initial condition for a further duration of $T - \varepsilon_1$. Call the entire path of duration $T$ as $\mu$. So long as $\mu$ is inside the $\delta/3$ neighborhood of $K$, any $\delta/3$ neighborhood of $\mu$ lies completely inside $g = [K]_{\delta}$. By assumptions ({\bf A2})-({\bf A3}), the McKean-Vlasov dynamics has a bounded velocity field. Consequently, the part of $\mu$ that begins at $K$ and until either its exit from $\overline{[K]_{\delta/3}}$ or time $T$, whichever occurs earlier, is of duration at least $t_0$ for some $t_0 > 0$, independent of the starting point. It follows that
\[
  \{ \rho_T (\mu_N, \mu) \leq \delta/3 \} \mbox{ implies } \{ \tau_g \geq \min \{T,t_0\} \}.
\]
Furthermore, $\tau_G \geq \tau_g$ and ${\bf 1}_g(\mu_N(t)) = 1$ until the random path exits $g$. Thus
\begin{eqnarray*}
  \mathbb{E} \left[ \int_{[0,\tau_G]} {\bf 1}_g(\mu_N(t)) ~dt \right]
    & \geq & \mathbb{E} [\tau_g] \\
    & \geq & \mathbb{E} [\tau_g \cdot {\bf 1}\{ \rho_T (\mu_N, \mu) \leq \delta/3 \}] \\
    & \geq & \min \{ T, t_0 \}  \cdot p^{(N)}_{\nu} \left\{ \rho_T (\mu_N, \mu) \leq \delta/3 \right\} \\
    & \geq & \min \{ T, t_0 \}  \cdot e^{- N \varepsilon/2} \quad (\mbox{by (\ref{eqn:uniform-ldp-lb})})\\
    & \geq & e^{-N \varepsilon},
\end{eqnarray*}
where the last two inequalities hold for all sufficiently large $N$.
\end{proof}

\begin{lemma}{\em (Freidlin and Wentzell \cite[Ch.6, Lemma 1.9]{Freidlin})}.
\label{lem:FW-1.9}
Let $K$ be a compact subset of $\MA_1(\ZA)$ not containing any $\omega$-limit set entirely. There exist positive constants $c$ and $T_0$ such that for all sufficiently large $N$, any $T>T_0$, and any $\nu \in K$, we have
\[
  p^{(N)}_{\nu} \{ \tau_K > T \} \leq e^{-N c (T-T_0)}.
\]
\end{lemma}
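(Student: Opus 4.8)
The plan is to reduce the statement to the uniform finite-horizon estimate of Corollary~\ref{cor:uniform-ldp-flows} and then to iterate it over consecutive time blocks of a fixed length $T_0$ using the Markov property, exactly along the lines of Freidlin and Wentzell.

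\textbf{Step 1: a deterministic escape time.} Since $K$ contains no $\omega$-limit set entirely, I would first show there is a finite $T_0$ such that for every $\nu\in K$ the McKean--Vlasov trajectory $\mu^{\nu}(\cdot)$ solving (\ref{eqn:mckvla-dynamics}) from $\mu^{\nu}(0)=\nu$ satisfies $\mu^{\nu}(t)\notin K$ for some $t\in[0,T_0]$. Indeed, if no such $T_0$ existed one could choose $\nu_n\in K$ with $\mu^{\nu_n}(t)\in K$ for all $t\in[0,n]$; passing to a subsequential limit $\nu_n\to\nu_\infty\in K$ (using compactness of $K$ and the continuous dependence of solutions of (\ref{eqn:mckvla-dynamics}) on the initial condition, uniformly on compact time intervals, which holds because assumption ({\bf A2}) makes the vector field Lipschitz) one would get a trajectory $\mu^{\nu_\infty}(\cdot)$ lying in the closed set $K$ for all time, so its $\omega$-limit set --- nonempty, compact, and invariant --- would be contained in $K$, a contradiction.

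\textbf{Step 2: a uniform positive lower bound on the cost.} Put $F=\{\mu\in D([0,T_0],\MA_1(\ZA)):\mu(t)\in K\text{ for all }t\in[0,T_0]\}$, which is closed for the metric $\rho_{T_0}$, and set $a=\inf\{S_{[0,T_0]}(\mu\mid\mu(0)):\mu\in F\}$. I claim $a>0$. If instead $S_{[0,T_0]}(\mu_n\mid\mu_n(0))\to 0$ for some $\mu_n\in F$, then --- using that bounded sublevel sets of $\mu\mapsto S_{[0,T_0]}(\mu\mid\mu(0))$ are compact and this map is lower semicontinuous, both established within Step~1 of the proof of Lemma~\ref{lem:one-trajectory} via Lemmas~\ref{lem:speed-up} and \ref{lem:tau-star-linear-bound} --- a subsequence of $(\mu_n)$ converges to some $\mu_\infty\in F$ with $S_{[0,T_0]}(\mu_\infty\mid\mu_\infty(0))=0$. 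By the second Remark following Theorem~\ref{thm:flow-theorem} this $\mu_\infty$ must be the McKean--Vlasov trajectory started at $\mu_\infty(0)\in K$, which by Step~1 leaves $K$ before time $T_0$; hence $\mu_\infty\notin F$, a contradiction.

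\textbf{Step 3: one block, then iteration.} Since $\{\tau_K>T_0\}\subseteq\{\mu_N(t)\in K\text{ for all }t\in[0,T_0]\}=\{\mu_N\in F\}$, the upper bound (\ref{eqn:uniform-ldp-ub}) applied to the compact set $K$ and the closed set $F$ gives
\[
  \limsup_{N\to\infty}\frac1N\log\sup_{\nu\in K}p^{(N)}_{\nu}\{\tau_K>T_0\}\le -a<0,
\]
so there is $N_0$ with $\sup_{\nu\in K}p^{(N)}_{\nu}\{\tau_K>T_0\}\le e^{-Na/2}$ for all $N\ge N_0$. On the event $\{\tau_K>(m-1)T_0\}$ the chain is at a point of $K$ at time $(m-1)T_0$, so conditioning there and using the Markov property one gets, by induction on $m$, $p^{(N)}_{\nu}\{\tau_K>mT_0\}\le e^{-Nam/2}$ for every $\nu\in K$, $m\ge 1$, $N\ge N_0$. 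Finally, for $T>T_0$ take $m=\lfloor T/T_0\rfloor\ge 1$, so that $m>(T-T_0)/T_0$ and $p^{(N)}_{\nu}\{\tau_K>T\}\le p^{(N)}_{\nu}\{\tau_K>mT_0\}\le e^{-Na(T-T_0)/(2T_0)}$, which is the assertion with $c=a/(2T_0)$.

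I expect Step~2 to be the main obstacle: one needs the compactness of the union, over initial conditions ranging in $K$, of bounded sublevel sets of the path-space rate function, together with its lower semicontinuity --- that is, the finite-horizon rate function bounds of Theorem~\ref{thm:flow-theorem} have to be used in a way that is uniform in the initial point. This is precisely the ingredient already extracted in the proof of Lemma~\ref{lem:one-trajectory}, so I would simply invoke it.
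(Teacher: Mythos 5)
Your proof is correct, and it differs from the paper's proof of Lemma~\ref{lem:FW-1.9} in one organizing idea. The paper first replaces $K$ by a closed $\delta$-neighborhood $\overline{[K]_\delta}$ (which it first shows still contains no $\omega$-limit set entirely), then defines $A(\nu)$ as the minimum cost over $[0,T_0]$-paths from $\nu$ that remain in $\overline{[K]_\delta}$, proves $\nu \mapsto A(\nu)$ is continuous (via Lemmas~\ref{lem:S_T-bounding}, \ref{lem:tau-star-linear-bound}, \ref{lem:speed-up}) so that $A=\min_\nu A(\nu)>0$, and finally passes to the estimate by noting that the event $\{\tau_K>T_0\}$ keeps $\mu_N$ at uniform distance $\ge\delta$ from the sublevel set $\Phi_\nu(A-\varepsilon/2)$. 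The point of the $\delta$-enlargement is precisely to turn ``the trajectory never leaves $K$'' into ``the trajectory is at a definite distance from every low-cost path,'' which is the form in which Freidlin--Wentzell apply their uniform large-deviation estimate. You instead apply the closed-set form of the uniform upper bound (\ref{eqn:uniform-ldp-ub}) directly to the closed path set $F$, so no enlargement or distance argument is needed; in exchange you need the positivity $a>0$ of the rate function on $F$, which you obtain by the same compactness-plus-lower-semicontinuity argument (with varying initial point) already extracted in Step~1 of the proof of Lemma~\ref{lem:one-trajectory}. Both proofs rest on the same three pillars --- deterministic escape of the McKean--Vlasov flow, a uniform positive cost for confined paths, and the Markov-property iteration over blocks of length $T_0$ --- but yours is a bit shorter and matches the stated form of Corollary~\ref{cor:uniform-ldp-flows} more cleanly (the paper's displayed chain applies (\ref{eqn:uniform-ldp-ub}) to a set that varies with $\nu_1$, which requires a small extra reading). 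One small caveat worth noting explicitly when writing this up: in Step~1 the $\omega$-limit set of $\mu^{\nu_\infty}$ is nonempty because the flow stays in the compact set $K$; and in Step~2, compactness of the sublevel set $\{\mu : S_{[0,T_0]}(\mu\mid\mu(0))\le B\}$ already yields its closedness and hence the lower semicontinuity you invoke, so citing both is slightly redundant but harmless.
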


\begin{proof}
For a sufficiently small $\delta$, the closed $\delta$-neighborhood of $K$, denoted $\overline{[K]_{\delta}}$, does not contain any $\omega$-limit set entirely. Indeed, if this were not true, we can find a sequence of $\delta \downarrow 0$ such that each set in the nested decreasing sequence of sets $\overline{[K]_{\delta}}$ contains an $\omega$-limit set entirely. For a $\delta$, define $\Omega(\delta)$ to be the closure of the union of $\omega$-limit sets contained in $\overline{[K]_{\delta}}$. Clearly, $\Omega(\delta)$ is a positively invariant set, and the family indexed by $\delta$ is a nested decreasing sequence of nonempty compact sets. Then $\cap_{\delta} \Omega(\delta)$ is a nonempty compact invariant set in $K$. Further, it contains an $\omega$-limit set entirely, a contradiction.

For $\nu \in \overline{[K]_{\delta}}$, denote by $\tau(\nu)$ the time for first exit of the solution to the McKean-Vlasov equation with initial condition $\nu$ from the set $\overline{[K]_{\delta}}$. Since $\overline{[K]_{\delta}}$ does not contain any $\omega$-limit set entirely, $\tau(\nu) < +\infty$ for all $\nu \in \overline{[K]_{\delta}}$. The function $\tau(\nu)$ is upper semicontinuous, and consequently, it attains its largest value $\max_{\nu \in \overline{[K]_{\delta}}} \tau(\nu) = T_1 < +\infty$.

Set $T_0 = T_1 + 1$ and consider all paths of duration $T_0$ that take values only in $\overline{[K]_{\delta}}$. It is easy to see that this set is closed. It follows that for each $\nu \in \overline{[K]_{\delta}}$, we have that $S_{[0,T_0]}(\cdot | \nu)$ attains its minimum $A(\nu)$ on this set. Further, the mapping $\nu \mapsto A(\nu)$ is continuous, as can be shown by an easy application of Lemmas \ref{lem:S_T-bounding}, \ref{lem:tau-star-linear-bound} and \ref{lem:speed-up}. Thus $A := \min_{\nu \in \overline{[K]_{\delta}}} A(\nu)$ is attained. This minimum is strictly positive since there are no trajectories of the McKean-Vlasov equation among the paths under consideration.

Fix $\varepsilon < A$,  a $\nu \in K$, and consider the family of paths
\[
  \Phi_{\nu}(A - \varepsilon/2) = \{ \mu:[0,T_0] \ra \MA_1(\ZA) ~|~ S_{[0,T_0]}(\mu | \nu) \leq A - \varepsilon/2\}.
\]
Any path in this set exits $\overline{[K]_{\delta}}$ in the interval $[0,T_0]$. With initial state $\nu$, the event $\{ \tau_K > T_0 \}$ implies that the trajectory remains entirely within $K$, and since any path in $\Phi_{\nu}(A - \varepsilon/2)$ exits $\overline{[K]_{\delta}}$, we must have
$$\rho_{T_0} (\mu_N, \Phi_{\nu}(A - \varepsilon/2)) \geq \delta.$$
It follows that
\begin{eqnarray*}
  p^{(N)}_{\nu} \{ \tau_K > T_0 \} & \leq & p^{(N)}_{\nu} \{ \rho_{T_0} (\mu_N, \Phi_{\nu}(A - \varepsilon/2)) \geq \delta \}.
\end{eqnarray*}
By considering any $\nu_1 \in K$, and by using (\ref{eqn:uniform-ldp-ub}) of Corollary \ref{cor:uniform-ldp-flows}, we have
\begin{eqnarray*}
    \sup_{\nu_1 \in K} p^{(N)}_{\nu_1} \{ \tau_K > T_0 \} & \!\!\leq \!\!& \sup_{\nu_1 \in K} p^{(N)}_{\nu_1} \{ \rho_{T_0} (\mu_N, \Phi_{\nu}(A - \varepsilon/2)) \geq \delta \} \\
    & \!\!\leq\!\! & \exp \{-N (\inf \{ S_{[0,T]}(\mu | \nu_1) | \mu \,{\in}\, \Phi_{\nu_1}(A \,{-}\, \varepsilon/2) \}
                     \,{-}\, \varepsilon/2) \} \\
    &\!\!\!\! & \quad \quad \quad \quad \mbox{ for all sufficiently large } N \\
    & \!\!\leq\!\! & e^{-N (A - \varepsilon)} \quad \mbox{ for all sufficiently large } N.
\end{eqnarray*}
For our fixed $\nu \in K$, the Markov property then implies
\begin{eqnarray*}
  p^{(N)}_{\nu} \left\{ \tau_K > (m+1)T_0 \right\} & \leq &
  \mathbb{E} [{\bf 1}\{\tau_K \,{>}\, mT_0\} \cdot \mathbb{E} [ {\bf 1}\{ \tau_K \,{>}\, T_0\} ~|~ \mu^{(N)}(mT_0) ] ] \\
  & \leq & p^{(N)}_{\nu} \left\{ \tau_K > mT_0 \right\} \cdot \left( \sup_{\nu_1 \in K} p^{(N)}_{\nu_1}
  \left\{ \tau_K > T_0 \right\} \right) \\*
  & \leq & p^{(N)}_{\nu} \left\{ \tau_K > mT_0 \right\} \cdot e^{-N(A - \varepsilon)}.
\end{eqnarray*}
By induction, for a $T > T_0$, we have
\begin{eqnarray*}
  p^{(N)}_{\nu} \left\{ \tau_K > T \right\} & \leq & p^{(N)}_{\nu} \left\{ \tau_K > \left\lfloor \frac{T}{T_0} \right\rfloor T_0 \right\} \\
  & \leq & e^{-N(A-\varepsilon) \left( \left\lfloor \frac{T}{T_0} \right\rfloor \right) } \\
  & \leq & e^{-N(A - \varepsilon)(T/T_0-1)} = e^{-Nc(T-T_0)}
\end{eqnarray*}
for $c = (A - \varepsilon)/T_0$, and this completes the proof.
\end{proof}

The above theorem has the following immediate corollary.

\begin{corollary}{\em (Freidlin and Wentzell \cite[Ch.6, Corollary to Lemma 1.9]{Freidlin})}.
Let $K$ be a compact set not containing any $\omega$-limit set entirely. There exists a positive integer $N_0$ and a positive constant $c$ such that for $N \geq N_0$ and any $\nu \in K$, we have
\[
  \mathbb{E} [\tau_K] \leq T_0 + 1/(cN_0)
\]
where the expectation is with respect to the measure $p^{(N)}_{\nu}$.
\end{corollary}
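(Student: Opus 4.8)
The plan is to obtain $\mathbb{E}[\tau_K]$ by integrating the tail bound from the preceding Lemma~\ref{lem:FW-1.9}. First I would invoke that lemma to fix positive constants $c$ and $T_0$ and an integer $N_0$ such that, for all $N \geq N_0$, all $T > T_0$, and all $\nu \in K$,
\[
  p^{(N)}_{\nu} \{ \tau_K > T \} \leq e^{-Nc(T-T_0)}.
\]
(Here $c$, $T_0$ are the ones produced in Lemma~\ref{lem:FW-1.9}, and $N_0$ is any threshold beyond which the conclusion of that lemma holds.)

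Next I would use the standard layer-cake identity for the nonnegative random variable $\tau_K$, namely $\mathbb{E}[\tau_K] = \int_{0}^{+\infty} p^{(N)}_{\nu}\{\tau_K > t\}\,dt$, and split the integral at $t = T_0$. On $[0,T_0]$ we bound the integrand trivially by $1$, contributing at most $T_0$. On $[T_0,+\infty)$ we apply the tail estimate above to get
\[
  \int_{T_0}^{+\infty} p^{(N)}_{\nu}\{\tau_K > t\}\,dt \;\leq\; \int_{T_0}^{+\infty} e^{-Nc(t-T_0)}\,dt \;=\; \frac{1}{Nc} \;\leq\; \frac{1}{N_0 c},
\]
where the last inequality uses $N \geq N_0$. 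Adding the two pieces yields $\mathbb{E}[\tau_K] \leq T_0 + 1/(cN_0)$ uniformly over $\nu \in K$ and $N \geq N_0$, which is exactly the claim.

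There is essentially no serious obstacle here; the only minor points to be careful about are that the tail bound in Lemma~\ref{lem:FW-1.9} is asserted only for $T > T_0$ — which is precisely the range over which we integrate in the second piece — and that the bound $1/(Nc) \leq 1/(N_0 c)$ requires $N \geq N_0$, which is part of the hypothesis. One should also note that $\tau_K$ is almost surely finite for large $N$ by the tail bound, so the expectation is well defined and the layer-cake formula applies without qualification.
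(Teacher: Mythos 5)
Your proof is correct and is precisely the standard argument; the paper itself states this as an ``immediate corollary'' of Lemma~\ref{lem:FW-1.9} without spelling out the integration, and the layer-cake computation you give is exactly the implicit content. No gaps.
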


Recall the definition of $V$ given in (\ref{eqn:quasipotential}), and the notion of equivalence on $\MA_1(\ZA)$ given in Section \ref{subsec:Inv-measure-multiple-eq}. Under condition ({\bf B}) in Section \ref{subsec:Inv-measure-multiple-eq}, we have equivalent sets $K_1, \ldots, K_l$ to which all $\omega$-limit sets converge. We shall now define a discrete-time Markov chain of states at hitting times of neighborhoods of these compact sets. In order to bound the transition probabilities of this chain, recall the definitions of $\tilde{V}(K_i, K_j)$ given in (\ref{eqn:V-tilde}) and $V(K_i,K_j)$ given in (\ref{eqn:V-KiKj}).

Define the following quantities:
\begin{itemize}
  \item An $r_0$ such that $0 < r_0 < (1/2) \min_{i,j} \rho_0 (K_i, K_j)$,
  \item An $r_1$ such that $0 < r_1 < r_0$,
  \item The set $C$ as $C := \MA_1(\ZA) \setminus \cup_{i = 1}^l [K_i]_{r_0}$,
  \item The set $\Gamma_i$ as $\Gamma_i := \overline{[K_i]_{r_0}}$,
  \item The set $g_i$ as $g_i := [K_i]_{r_1}$, and finally,
  \item The set $g$ as $g := \cup_{i=1}^l g_i$.
\end{itemize}
Let us now define the following stopping times:
\begin{itemize}
  \item $\tau_0 := 0$,
  \item The time for exit from the union of the $r_0$ neighborhoods of the compact sets $K_i$'s, that is, $\sigma_n := \inf \{ t \geq \tau_n ~|~ \mu_N(t) \in C \}$,
  \item The time to re-enter $\overline{g}$, that is, $\tau_n := \inf \{ t \geq \sigma_n ~|~ \mu_N(t) \in \overline{g} \}$.
\end{itemize}
Finally, we define $Z_n := \mu_N(\tau_n)$. We shall use the notation $p^{(N)}(\nu, \overline{g_j})$ for $p^{(N)}_{\nu}(\mu_N(\tau_n) \in \overline{g_j})$ when $\mu_N(\tau_{n-1}) = \nu$.

\begin{lemma}{\em (Freidlin and Wentzell \cite[Ch.6, Lemma 2.1]{Freidlin})}.
For any $\varepsilon > 0$, there is a small enough $r_0 > 0$ such that for any $r_2$ satisfying $0 < r_2 < r_0$, there is an $r_1$ satisfying $0 < r_1 < r_2$ such that for all sufficiently large $N$, for all $\nu \in \overline{[K_i]_{r_2}}$, the one-step transition probabilities of $Z_n$ satisfy
\[
  \exp \{ -N (\tilde{V}(K_i, K_j) + \varepsilon) \} \leq p^{(N)}(\nu, \overline{g_j}) \leq \exp \{ -N (\tilde{V}(K_i, K_j) - \varepsilon) \}.
\]
\end{lemma}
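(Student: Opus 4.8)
The plan is to adapt the proof of Freidlin and Wentzell \cite[Ch.~6, Lemma~2.1]{Freidlin}, replacing their Lipschitz bounds on the quasipotential by the uniform continuity statements Lemma~\ref{lem:V-uniform-continuity} and Lemma~\ref{lem:uniform-continuity}, and their diffusion large deviation inputs by Corollary~\ref{cor:uniform-ldp-flows} together with Lemma~\ref{lem:S_T-bounding}, Lemma~\ref{lem:FW-1.2}, Lemma~\ref{lem:FW-1.6}, Lemma~\ref{lem:FW-1.9}, and the lemma preceding Lemma~\ref{lem:FW-1.8} (the analogue of \cite[Ch.~6, Lemma~1.7]{Freidlin}). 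Throughout, $N$ is taken large enough that a single jump of $\mu_N$, of size $\sqrt{2}/N$, is smaller than $r_0$; then a trajectory started in $[K_i]_{r_0}$ cannot reach another block $[K_{i'}]_{r_0}$ without first entering $C$, so $\sigma_n$ coincides with the exit time of $\mu_N$ from $[K_i]_{r_0}$ and $\mu_N(\sigma_n)$ lies within $2r_0$ of $K_i$; also, by the strong Markov property we may take $\nu=\mu_N(0)$, $\tau_0=0$, and bound $p^{(N)}_\nu\{Z_1\in\overline{g_j}\}$. Given $\varepsilon>0$, I first fix $r_0>0$ so small that the sets $\overline{[K_{i'}]_{2r_0}}$ are pairwise disjoint and contained in $G$; any point of $[K_{i'}]_{2r_0}$ can be joined to $K_{i'}$ and back by a path of cost at most $\varepsilon/8$ staying in a neighborhood of $K_{i'}$ that meets no other block (by Lemma~\ref{lem:S_T-bounding}, Lemma~\ref{lem:FW-1.2}, Lemma~\ref{lem:FW-1.6}, and uniform continuity of $V$); and $\mathbb{E}[\tau_{[K_i]_{r_0}}]\le e^{N\varepsilon/8}$ for all large $N$ and all starting points in $[K_i]_{r_0}$. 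For each pair with $\tilde{V}(K_i,K_j)<\infty$ I then fix, depending only on $\varepsilon$, a reference path $\mu^{i,j}$ from a point of $K_i$ to a point of $K_j$, avoiding $\bigcup_{i'\neq i,j}K_{i'}$, of cost at most $\tilde{V}(K_i,K_j)+\varepsilon/8$. Finally, given the arbitrary $r_2\in(0,r_0)$, I choose $r_1\in(0,r_2)$ so small that a sufficiently thin tube around each $\mu^{i,j}$, and around the cheap connectors emanating from $\overline{[K_i]_{r_2}}$, meets $\overline{[K_{i'}]_{r_1}}$ for $i'\neq i,j$ only inside the initial block and meets $\overline{[K_j]_{r_1}}$ only at the end.

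\emph{Lower bound.} Fix $\nu\in\overline{[K_i]_{r_2}}$ and concatenate: a cost-$\le\varepsilon/8$ path from $\nu$ to the initial point $\xi_i\in K_i$ of $\mu^{i,j}$ (Lemma~\ref{lem:S_T-bounding} to reach $K_i$, Lemma~\ref{lem:FW-1.6} to move within $K_i$); the path $\mu^{i,j}$, which has cost $\le\tilde{V}(K_i,K_j)+\varepsilon/8$, avoids $\bigcup_{i'\neq i,j}K_{i'}$, and, since $\rho_0(K_i,K_j)>2r_0$ and the trajectory cannot jump between blocks, forces passage through $C$; and a short continuation ending inside $[K_j]_{r_1/2}$. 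Call the result $\mu^\ast_\nu$, of duration $\le T^{\ast\ast}$ uniformly in $\nu$, with $S(\mu^\ast_\nu\mid\nu)\le\tilde{V}(K_i,K_j)+\varepsilon/4$. By the choice of $r_1$, every trajectory in a small enough tube of $\mu^\ast_\nu$ exits $\bigcup_{i'}[K_{i'}]_{r_0}$ out of block $i$ and then re-enters $\overline g$ for the first time in $\overline{g_j}$; hence $\{\rho_{T^{\ast\ast}}(\mu_N,\mu^\ast_\nu)<\delta'\}\subseteq\{Z_1\in\overline{g_j}\}$ (padding all paths to the common horizon via Lemma~\ref{lem:FW-1.6}). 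Corollary~\ref{cor:uniform-ldp-flows} then yields $p^{(N)}(\nu,\overline{g_j})\ge\exp\{-N(\tilde{V}(K_i,K_j)+\varepsilon)\}$ for all large $N$, uniformly in $\nu\in\overline{[K_i]_{r_2}}$; if $\tilde{V}(K_i,K_j)=+\infty$ the bound is vacuous.

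\emph{Upper bound.} Fix $\nu\in\overline{[K_i]_{r_2}}$ and a large $T^\ast$, and split $\{Z_1\in\overline{g_j}\}$ by the length of the excursion $[\sigma_1,\tau_1]$ out of $\overline g$. The set $\MA_1(\ZA)\setminus g$ is compact and, by ({\bf B}), disjoint from every $\omega$-limit set, so by Lemma~\ref{lem:FW-1.9} and the strong Markov property at $\sigma_1$ one has $p^{(N)}_\nu\{\tau_1-\sigma_1>T^\ast\}\le e^{-Nc(T^\ast-T_0)}$ for suitable $c,T_0>0$; choose $T^\ast$ so that this is $\le\tfrac12 e^{-N(\tilde{V}(K_i,K_j)-\varepsilon)}$. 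On the complement, decompose according to the window $m$ with $\sigma_1\in[mT_1,(m+1)T_1]$ for a fixed $T_1$: then $\mu_N(mT_1)\in\overline{[K_i]_{r_0}}$, and on the next interval of length $T_1+T^\ast$ the path leaves $\bigcup_{i'}[K_{i'}]_{r_0}$, enters $C$ within $2r_0$ of $K_i$, and reaches $\overline{g_j}$ without touching $\overline g$ in between. Prepending a cost-$\le\varepsilon/8$ path from $K_i$ to that entry point and appending a cost-$\le\varepsilon/8$ path from the endpoint to $K_j$ gives a $K_i\to K_j$ path avoiding $\bigcup_{i'\neq i,j}K_{i'}$, so every such window-path has cost $\ge\tilde{V}(K_i,K_j)-\varepsilon/4$. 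The Markov property at $mT_1$, Corollary~\ref{cor:uniform-ldp-flows} applied to the closed set of window-paths over the compact family of starting points $\overline{[K_i]_{r_0}}$, and the summation $\sum_{m\ge0}p^{(N)}_\nu\{\sigma_1\ge mT_1\}\le 1+\mathbb{E}[\sigma_1]/T_1\le 2e^{N\varepsilon/8}/T_1$ then give $p^{(N)}(\nu,\overline{g_j})\le e^{-N(\tilde{V}(K_i,K_j)-\varepsilon)}$ for all large $N$, uniformly in $\nu$. If $\tilde{V}(K_i,K_j)=+\infty$ the family of limiting window-paths is empty and the same argument gives the bound with $\tilde{V}(K_i,K_j)$ replaced by an arbitrary finite constant.

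\emph{Main difficulty.} The delicate point is the upper bound: the exit time $\sigma_1$ from the neighborhood of the stable set $K_i$ is not bounded in $N$ — it is typically of order $e^{N\varepsilon}$ — so a single finite-horizon large deviation estimate does not suffice. The remedy is the window-by-window summation above, whose convergence is controlled by $\mathbb{E}[\sigma_1]\le e^{N\varepsilon/8}$ (itself available only after $r_0$ is taken small); making this quantitatively compatible with the per-window cost lower bound $\tilde{V}(K_i,K_j)-\varepsilon/4$, together with the nested choice of $r_0$, then $r_2$, then $r_1$ so that the tubes around the reference and connector paths see exactly the correct blocks $\overline{g_{i'}}$, is where the bulk of the bookkeeping lies.
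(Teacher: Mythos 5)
Your lower bound follows the paper's line essentially verbatim: pick reference paths $\mu^{i,j}$ of cost $\le\tilde V(K_i,K_j)+$ small avoiding the other $K_{i'}$, attach cheap connectors from $\nu$ and to $\overline{g_j}$ via part~3 of Lemma~\ref{lem:S_T-bounding}, Lemma~\ref{lem:FW-1.2} and Lemma~\ref{lem:FW-1.6}, choose $r_1$ and a tube width so thin that any nearby trajectory exits $[K_i]_{r_0}$ and first re-enters $\overline g$ in $\overline{g_j}$, and invoke the uniform lower bound (\ref{eqn:uniform-ldp-lb}). No essential difference there.

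Your upper bound takes a genuinely different route. The paper decomposes on $\{\tau_1>T_1\}$ versus its complement, cites Lemma~\ref{lem:FW-1.9} for the first piece and the uniform upper bound~(\ref{eqn:uniform-ldp-ub}) together with~(\ref{eqn:lower-level-set}) for the second; implicit in this is the strong Markov property at the exit time $\sigma_0$ from $[K_i]_{r_0}$, since the absolute time $\tau_1$ also contains the possibly very long sojourn in $[K_i]_{r_0}$ and so is not itself bounded with overwhelming probability. You instead first bound the excursion length $\tau_1-\sigma_1$ by strong Markov at $\sigma_1$ and Lemma~\ref{lem:FW-1.9} (correctly applied to $\MA_1(\ZA)\setminus g$), and then handle the unbounded sojourn by decomposing $\sigma_1$ into deterministic windows $[mT_1,(m+1)T_1]$, applying the Markov property at the fixed times $mT_1$ and the uniform upper bound over the compact family $\overline{[K_i]_{r_0}}$ of starting points in each window, and summing with $\mathbb E[\sigma_1]\le e^{N\varepsilon/8}$ from the analogue of Freidlin--Wentzell's Lemma~1.7. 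This works and the constants can be arranged as you say, but it is a detour: having already applied the strong Markov property at the stopping time $\sigma_1$, you could apply it once more to bound $p_\nu\{Z_1\in\overline{g_j},\,\tau_1-\sigma_1\le T^*\}$ by $\sup_{\nu'} p_{\nu'}\{\text{hit }\overline{g_j}\text{ by }T^*\text{ without first hitting another }\overline{g_{i'}}\}$, the supremum ranging over the points reachable in one jump from $[K_i]_{r_0}$, and then finish with one application of~(\ref{eqn:uniform-ldp-ub}) and~(\ref{eqn:lower-level-set}). This eliminates the window summation and the appeal to the expected exit time entirely, and is what the paper's argument amounts to once the conditioning at $\sigma_0$ is made explicit. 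In short: your proof is correct, and your bookkeeping of the nested choices $r_0$, then $r_2$, then $r_1$ is careful, but the perceived ``main difficulty'' dissolves with the same strong Markov step you already use elsewhere in the argument.
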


\begin{proof}
For pairs with $\tilde{V}(K_i,K_j) = +\infty$, there is no smooth curve from $K_i$ to $K_j$ without touching one of the other compact sets. It follows that for any arbitrary $0 < r_1 < r_2 < r_0$, for all sufficiently large $N$, there is no path in $\MA_1^{(N)}(\ZA)$ from $[K_i]_{r_2}$ to $\overline{g}_j$ without touching $[K_{i'}]_{r_0}$ for some $i' \neq i, j$. Thus, for all sufficiently large $N$, we have $p^{(N)}(\nu, \overline{g}_j) = 0$. The validity of the lemma is obvious for such pairs.

For all other pairs $\tilde{V}(K_i, K_j) \leq V_0$ for some $V_0 < +\infty$.

Let us first argue the lower bound. Fix $\varepsilon > 0$. Choose $\delta$ as in part 3 of Lemma \ref{lem:S_T-bounding} so that with $\varepsilon_1 = \varepsilon/(10 C_2)$, any two points $\delta$-close can be connected by a path of duration $\varepsilon_1$ and cost at most $\varepsilon/10$. Set
\[
  r_0 = \min \{ \delta/2, (1/3)\min_{i,j} \rho_0(K_i,K_j) \}.
\]
Fix arbitrary $r_2$ satisfying $0 < r_2 < r_0$.

For each $(i,j)$ with $\tilde{V}(K_i, K_j) < +\infty$, choose paths $\mu^{i,j} : [0, T_{i,j}] \ra \MA_1(\ZA)$ such that
\begin{itemize}
  \item $\mu^{i,j}(0) \in K_i$,
  \item $\mu^{i,j}(T_{i,j}) \in K_j$,
  \item $\mu^{i,j}(t)$ does not touch $\cup_{i' \neq i,j} K_{i'}$ for $t \in [0, T_{i,j}]$, and
  \item $S_{[0,T]}(\mu^{i,j} | \mu^{i,j}(0)) \leq \tilde{V}(K_i, K_j) + 0.2 \varepsilon$.
\end{itemize}
Now choose $r_1$ so that
\begin{eqnarray*}
  r_1 \hspace*{-.1in} & < & \hspace*{-.1in} \min \Big\{ r_2, \frac{r_0}{2}, \frac{1}{2}\min \Big\{ \rho_0 \left( \mu^{i,j}(t), \cup_{i' \neq i,j} K_{i'} \right) ~|~ t \in [0, T_{i,j})], 1 \leq i,j \leq l \Big\} \Big\}.
\end{eqnarray*}
Also choose $\delta' < \min \{ r_0 - r_2, r_1 \}$ so that $r_2 + \delta' < r_0$ and hence $r_0 + \delta' < 2r_0 < \delta$ by the choice of $\delta$.

Take any $\nu \in \overline{[K_i]_{r_2}}$. Fix a finite $\delta'$-net of $K_i$. If $i \neq j$, consider the following path.
\begin{itemize}
  \item Connect $\nu$ to the nearest point $\nu_1 \in K_i$ with a path of duration $\varepsilon_1$ and cost at most $0.1 \varepsilon$.
  \item Connect $\nu_1$ to the nearest point $\nu_2$ on the $\delta'$-net of $K_i$ again with a path of duration $\varepsilon_1$ and cost at most $0.1 \varepsilon$.
  \item Let $\nu_3$ be the point on the $\delta'$-net nearest to $\mu^{i,j}(0)$. Traverse the path given by Lemma \ref{lem:FW-1.6} that connects $\nu_2$ to $\nu_3$ without leaving the $r_1$-neighborhood of $K_i$. Thanks to the finite number of points on the $\delta'$-net, this can be done in bounded time. Moreover, the cost is at most $0.1 \varepsilon$.
  \item Connect $\nu_3$ to $\mu^{i,j}(0)$ with path of duration $\varepsilon_1$ and cost at most $0.1 \varepsilon$.
  \item Then traverse the path given by $\mu^{i,j}$.
\end{itemize}
If $i = j$, then simply take $\nu$ to a point at a distance $r_0 + \delta'$ from $K_i$ and then to the nearest point in $K_i$. Note that $r_0 + \delta' < \delta$, and so the duration of this path is $2\varepsilon_1$ and cost at most $0.2 \varepsilon$. The constructed path is of bounded time duration, bounded say by $T_0$. We can thus extend all paths to duration $T_0$ along the McKean-Vlasov path, an appendage that incurs no additional cost. Call the resulting path $\mu(t), t \in [0,T_0]$. Clearly,
\[
  S_{[0,T_0]} (\mu | \nu) \leq \tilde{V}(K_i, K_j) + 0.6 \varepsilon.
\]

If $\rho_{T_0}(\mu_N, \mu) < \delta'$, then the trajectory $\mu_N$ begins at a point $\nu$ within an $r_0$-neighborhood of $K_i$, reaches the $\delta'$-neighborhood of $K_j$ and so hits $\overline{g}_j = \overline{[K_j]_{r_1}}$, is at most $\delta'$ distance away from the trajectory $\mu$, and hence does not hit the $r_0$-neighborhood of any $K_{i'}, i' \neq i,j$; then $\mu_N(\tau_n) \in \overline{g}_j$. In other words,
\[
  \{ \rho_{T_0} (\mu_N, \mu) < \delta' \} \subset \{ \mu_N(\tau_n) \in \overline{g}_j \},
\]
and so
\begin{eqnarray*}
  p^{(N)}(\nu, \overline{g}_j) & \geq &  p^{(N)}_{\nu} \{ \rho_{T_0} (\mu_N, \mu) < \delta' \} \\
  & \geq & \exp \{ -N (S_{[0,T_0]}(\mu | \nu) + 0.1 \varepsilon) \} \\
  & \geq & \exp \{ -N (\tilde{V}(K_i, K_j) + \varepsilon ) \},
\end{eqnarray*}
where the second inequality holds for all sufficiently large $N$ uniformly over the initial condition, thanks to (\ref{eqn:uniform-ldp-lb}) of Corollary \ref{cor:uniform-ldp-flows}. This establishes the lower bound.

We now prove the upper bound. Consider any path $\mu$ of some duration $T$ starting at $\nu$ in the $r_1$-neighborhood of $K_i$, ending at a point say $\xi$ in the $\delta'$-neighborhood of $\overline{g}_j$ at time $T$, and not touching any of the other compact sets $K_{i'}, i' \neq i,j$. By the choices of $r_1$ and $\delta'$, there are short paths from a point $\nu' \in K_i$ to $\nu$ and from $\xi$ to a point $\xi' \in K_j$, each of duration $\varepsilon_1$ and cost at most $0.1 \varepsilon$. The path that traverses from $\nu'$ to $\nu$, and then along $\mu$ to $\xi$, and thence to $\xi'$, has cost at most $S_{[0,T]}(\mu | \nu) + 0.2 \varepsilon \geq \tilde{V}(K_i, K_j)$, and so
\begin{equation}
  \label{eqn:lower-level-set}
  S_{[0,T]}(\mu | \nu) \geq \tilde{V}(K_i, K_j) - 0.2 \varepsilon.
\end{equation}
The same holds for any path $\mu$ of some duration $T$ starting at $\nu$ in the $r_1$-neighborhood of $K_i$, {\em touching} the $\delta'$-neighborhood of $\overline{g}_j$ at time in $[0,T]$, but not touching any of the other compact sets $K_{i'}, i' \neq i,j$.

By Lemma \ref{lem:FW-1.9}, with the set $C$ in place of $K$, a set that does not contain any $\omega$-limit set entirely, and with $T_1 = T_0 + V_0/c$ where $c,T_0$ are as specified in that lemma, we obtain
\begin{equation}
  \label{eqn:tau1-not-too-large}
  p^{(N)}_{\nu} \{ \tau_1 > T_1 \} \leq \sup_{\nu' \in C} p^{(N)}_{\nu'} \{ \tau_{C} > T_1 \} \leq e^{-N V_0}
\end{equation}
for all sufficiently large $N$.

Consider a trajectory $\mu_N$ with $\mu_N(0) = \nu \in \overline{[K_i]_{r_1}}$ and $\mu_N(\tau_1) \in \overline{g}_j$. There are two possibilities: (1) $\tau_1 > T_1$, or (2) $\tau_1 \leq T_1$ in which case the trajectory enters $\overline{g}_j$ in $[0,T_1]$. In this second case, with
\[
  \Phi_{[0,T_1], \nu}(v) := \{ \mu : [0,T_1] \ra \MA_1(\ZA) ~|~ S_{[0,T]}(\mu | \nu) \leq v \},
\]
we have
\begin{equation}
  \label{eqn:trajectory-is-far}
  \rho_{T_1} (\mu_N, \Phi_{[0,T_1], \nu}(\tilde{V}(K_i, K_j) - 0.3 \varepsilon)) \geq \delta'.
\end{equation}
To see this, note the conditions $\delta' < r_1$, $\tau_1 \leq T_1$, and $\mu_N(\tau_1) \in \overline{g}_j$. If $\mu$ is any trajectory satisfying $\rho_{T_1}(\mu_N, \mu) < \delta'$, then $\mu$ must hit the $\delta'$-neighborhood of $\overline{g}_j$ without touching any of the other compact sets $K_{i'}, i' \neq i,j$. From (\ref{eqn:lower-level-set}), subtracting an extra $0.1 \varepsilon$, we get $S_{[0,T_1]}(\mu | \nu) > \tilde{V}(K_i,K_j) - 0.3 \varepsilon$. By contraposition, under the noted conditions, any $\mu$ with $S_{[0,T_1]}(\mu | \nu) \leq \tilde{V}(K_i,K_j) - 0.3 \varepsilon$ must satisfy $\rho_{T_1}(\mu_N, \mu) \geq \delta'$, and hence (\ref{eqn:trajectory-is-far}) follows.

Putting the two cases together, we get
\begin{eqnarray*}
  p^{(N)}_{\nu} \{ \mu_N(\tau_1) \in \overline{g}_j \}
  & \leq & p^{(N)}_{\nu} \{ \tau_1 > T_1 \} \\
  & & +~ p^{(N)}_{\nu} \{ \rho_{T_1} (\mu_N, \Phi_{[0,T_1], \nu}(\tilde{V}(K_i, K_j) - 0.3 \varepsilon)) \geq \delta' \} \\
  & \stackrel{(a)}{\leq} & e^{-NV_0} + \exp \{ -N ( \tilde{V}(K_i, K_j) - 0.3 \varepsilon) + N (0.1 \varepsilon) \} \\
  & \stackrel{(b)}{\leq} & \exp \{ -N ( \tilde{V}(K_i, K_j) - \varepsilon) \}.
\end{eqnarray*}
In the above sequence of inequalities, (a) holds for all sufficiently large $N$ due to (\ref{eqn:tau1-not-too-large}), (\ref{eqn:uniform-ldp-ub}) of Corollary \ref{cor:uniform-ldp-flows}, and the definition of $\Phi_{[0,T_1], \nu}$. Inequality (b) also holds for all sufficiently large $N$ because $\tilde{V}(K_i, K_j) \leq V_0$. This proves the upper bound and completes the proof.
\end{proof}

Recall the definition $\mathbb{G}\{i\}$ in the paragraph preceding (\ref{eqn:W-compactset}), the definition of $W(K_i)$ in (\ref{eqn:W-compactset}), and the definition of $s_i, i = 1, \ldots, l$. We are now ready to state the main theorem of this appendix.

\begin{thm}{\em (Freidlin and Wentzell \cite[Ch.6, Theorem 4.1]{Freidlin})}.
\label{thm:FW-6.4.1}
Assume {\em ({\bf A1})}-{\em ({\bf A3})} and {\em ({\bf B})} hold. For any $\varepsilon > 0$, there is an arbitrarily small $r_1 > 0$ such that
\begin{eqnarray*}
  \exp \{ -N (s_i + \varepsilon) \} \leq \wp^{(N)} \{ [K_i]_{r_1} \} \leq \exp \{ -N (s_i - \varepsilon) \}
\end{eqnarray*}
where $s_i$ are defined in (\ref{eqn:s-values}).
\end{thm}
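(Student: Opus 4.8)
## Proof proposal

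The plan is to follow the Freidlin--Wentzell program in \cite[Ch. 6, \S 4]{Freidlin} essentially verbatim, using the auxiliary lemmas established above in this appendix as the substitutes for the diffusion-specific estimates. The key point is that all the ingredients Freidlin and Wentzell use---bounds on expected exit times, bounds on the one-step transition probabilities of the embedded chain $Z_n$, the cycle/graph estimates for the invariant measure of a finite Markov chain---have now been reproduced in our setting, with the only structural change being that $V(\cdot\,|\,\cdot)$ is merely uniformly continuous (Lemma~\ref{lem:V-uniform-continuity}) rather than Lipschitz. So the proof is a matter of assembling these pieces.

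First I would fix $\varepsilon>0$ and invoke Lemma~2.1 of this appendix (the two-sided bound on $p^{(N)}(\nu,\overline{g_j})$) to choose $r_0>r_2>r_1>0$ so that, for all sufficiently large $N$ and all $\nu\in\overline{[K_i]_{r_2}}$,
\[
  \exp\{-N(\tilde V(K_i,K_j)+\varepsilon')\}\le p^{(N)}(\nu,\overline{g_j})\le \exp\{-N(\tilde V(K_i,K_j)-\varepsilon')\}
\]
for a suitably small $\varepsilon'$ (to be scaled at the end). Next I would consider the discrete-time Markov chain $Z_n=\mu_N(\tau_n)$ on the state space $\overline{g}=\cup_i\overline{[K_i]_{r_1}}$, whose transition probabilities are controlled by the display above together with the fact (from the corollary to Lemma~1.9) that the inter-hitting times have expectation bounded uniformly in $N$ and $\nu$. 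A standard renewal/ergodic-ratio argument then expresses the invariant mass $\wp^{(N)}\{[K_i]_{r_1}\}$ in terms of the stationary distribution of $Z_n$ restricted to the $\overline{g_j}$'s, up to multiplicative factors that are subexponential in $N$ (this is where Lemma~1.7 and Lemma~1.8 of the appendix---bounds on time spent near a $K_i$ before exiting, and on exit times from neighborhoods---are needed to control the occupation time of $[K_i]_{r_1}$ between successive visits of the embedded chain, and where the corollary to Lemma~1.9 controls the total cycle length).

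Then I would apply the Markov-chain-tree / graph formula (Freidlin--Wentzell \cite[Ch.~6, Lemma~3.1 and \S4]{Freidlin}): the stationary probability that $Z_n$ lies in the cluster associated with $K_i$ is, up to subexponential corrections, $\exp\{-N\,W(K_i)\}\big/\sum_{i'}\exp\{-N\,W(K_{i'})\}$, where $W(K_i)=\min_{\mathcal G\in\mathbb G\{i\}}\sum_{(a,b)\in\mathcal G}V(K_a,K_b)$ as in (\ref{eqn:W-compactset}); here one also uses the identity relating $V(K_a,K_b)$ to sums of $\tilde V$'s along paths, recorded just before Theorem~\ref{thm:invariant-multiple-equilibria} and in Lemma~\ref{lem:FW-1.6}, to pass from the $\tilde V$-weighted one-step estimates to the $V$-weighted graph sums. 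Combining, $\tfrac1N\log\wp^{(N)}\{[K_i]_{r_1}\}\to -(W(K_i)-\min_{i'}W(K_{i'}))=-s_i$, which after tracking the $O(\varepsilon')$ slack and absorbing the subexponential factors gives exactly the claimed two-sided bound $\exp\{-N(s_i+\varepsilon)\}\le\wp^{(N)}\{[K_i]_{r_1}\}\le\exp\{-N(s_i-\varepsilon)\}$ for all large $N$; since $r_1$ can be taken arbitrarily small, the statement follows.

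The main obstacle---really the only nonroutine point---is making sure that every place in the Freidlin--Wentzell argument that invokes Lipschitz continuity of $V$ can be run with only uniform continuity. Lipschitz continuity is used in \cite{Freidlin} to show that shrinking the neighborhoods $[K_i]_{r_1}$ perturbs the relevant $V$-values by an amount $o(1)$ as $r_1\downarrow 0$, uniformly; here Lemma~\ref{lem:V-uniform-continuity} (together with Lemma~\ref{lem:FW-1.2} and part~3 of Lemma~\ref{lem:S_T-bounding}, which let one splice in short cheap connecting segments of cost $O(\varepsilon)$) delivers exactly the same conclusion, just without a quantitative modulus. One must therefore be careful that the choices of $r_0,r_1,\delta',N_0$ are made in the correct order---first $\varepsilon$, then the modulus-of-continuity $\delta$ from Lemmas \ref{lem:V-uniform-continuity} and \ref{lem:S_T-bounding}, then $r_0,r_2,r_1$ via appendix Lemma~2.1, then $N_0$---so that all the subexponential error terms are genuinely $o(N)$ and all the $\varepsilon'$-slacks add up to less than the target $\varepsilon$. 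Once the bookkeeping is arranged this way, the proof is a direct transcription of Freidlin and Wentzell's, and I would simply cite \cite[Ch.~6, \S4]{Freidlin} for the remaining combinatorial steps.
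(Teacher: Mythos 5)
Your proposal is correct and takes essentially the same approach as the paper: the paper's proof consists of a single sentence asserting that since analogues of all the auxiliary lemmas used in Freidlin--Wentzell \cite[Ch.~6, Th.~4.1]{Freidlin} have been verified in this appendix, that proof carries over verbatim. Your write-up simply unpacks the structure of the Freidlin--Wentzell argument (embedded chain, renewal/ergodic-ratio estimate, graph formula) and correctly identifies the uniform-continuity-vs-Lipschitz issue as the only point requiring care, which is consistent with what the paper leaves implicit.
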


\begin{proof}
All the steps of the proof of \cite[Ch.6, Th.4.1]{Freidlin} hold, since the analogs of all the lemmas used in that proof have now been verified to hold.
\end{proof}
\end{appendix}

\section*{Acknowledgements}
We thank an anonymous reviewer for suggestions that helped improve the presentation.

\end{document}